\titleformat{\subsection}[runin]{\normalfont\bfseries}{\thesubsection.}{3pt}{}
\newtheorem{theorem}{Theorem}[section]
\newtheorem{lemma}[theorem]{Lemma}
\newtheorem{proposition}[theorem]{Proposition}
\newtheorem{corollary}[theorem]{Corollary}
\newtheorem{fact}[theorem]{Fact}
\newenvironment{definition}[1][Definition]{\begin{trivlist}
\item[\hskip \labelsep {\bfseries #1}]}{\end{trivlist}}
\newenvironment{example}[1][Example]{\begin{trivlist}
\item[\hskip \labelsep {\bfseries #1}]}{\end{trivlist}}
\newenvironment{remark}[1][Remark]{\begin{trivlist}
\item[\hskip \labelsep {\bfseries #1}]}{\end{trivlist}}
\author{Dale Winter}
 \title{Mixing of frame flow for rank one locally symmetric spaces and measure classification}
\begin{document} 

\maketitle

\begin{abstract} %We prove mixing of frame flow for rank one locally symmetric spaces with finite Bowen-Margulis-Sullivan measure. This implies that the Burger-Roblin measure is ergodic for the action of the  horospherical subgroup. Together with the work of Roblin, this gives a classification of Radon measures invariant under the horospherical group, and also provides precise asymptotics for the Haar measure matrix coefficients. 

%For $G$ a connected simple linear Lie group of rank one and $\Gamma<G$ a discrete Zariski dense subgroup admitting a finite Bowen-Margulis-Sullivan measure $m^{\operatorname{BMS}}$, we show that the action of the one dimensional diagonalizable subgroup on $\Gamma \backslash G$ is mixing with respect to $m^{\operatorname{BMS}}$. This implies that the Burger-Roblin measure is ergodic for the action of the  horospherical subgroup. Together with the work of Roblin, this gives a classification of Radon measures invariant under the horospherical group, and also provides precise asymptotics for the Haar measure matrix coefficients. 

Let $G$ be a connected simple linear Lie group of rank one, and let $\Gamma <G$ be a discrete Zariski dense subgroup admitting a finite Bowen-Margulis-Sullivan measure $m^{\operatorname{BMS}}$. We show that the right translation action of the one dimensional diagonalizable subgroup is mixing on $(\Gamma \backslash G, m^{\operatorname{BMS}})$. Together with the work of Roblin, this proves ergodicity of the Burger-Roblin measure under the horospherical group $N$, establishes a classification theorem for $N$ invariant Radon measures on $\Gamma \backslash G$, and provides precise asymptotics for the Haar measure matrix coefficients. 
\end{abstract}

\section{Introduction} \label{sec1}

\subsection{Background and results.}
Mixing results have proven to be powerful tools in homogeneous dynamics. If $  G$ is a connected and simple Lie group, for instance, and if $ \Gamma <  G$ is a lattice, then the translation action of $ G$ on $ \Gamma \backslash G$ is known to be mixing for the Haar measure \cite{howe1979asymptotic}. Results like this, together with their effective refinements, have a huge range of applications including orbit counting asymptotics, shrinking target properties, and diophantine approximation \cite{duke_density_1993, eskin_mixing_1993, kleinbock_logarithm_1999, margulis2004some, oh_orbital_2010}.

The aim of this paper is to present and discuss a mixing result for thin subgroups of rank one matrix groups. Let $G$ be a connected simple linear group of real rank one, and let $\Gamma < G$ be discrete, torsion free, and non-elementary. For technical reasons we also assume that $G$ is not a finite cover of $\operatorname{PSL}_2(\mathbb{R})$. Associated to $G$ there is a rank one symmetric space $\tilde X$ with a negatively curved Riemannian metric $d$ induced by the Killing form. After re-scaling, we assume that the maximum sectional curvature of $d$ on $\tilde X$ is $-1$. The action of $G$ on $(\tilde X, d)$ is isometric, so extends to a (transitive) action of $G$ on the unit tangent bundle $\mathrm{T}^1\tilde X$; we identify $\tilde X = G/K$ and $\mathrm{T}^1 \tilde X = G/M$, where $K$ is the stabilizer of a point $o \in \tilde X$ and  $M<K$ is the stabilizer of a tangent vector $v_0 \in \mathrm{T}^1_o\tilde X$. Let $A=\lbrace a_t: t\in \mathbb{R}\rbrace< Z_G(M)$ be the one-parameter subgroup in the centralizer of $M$ such that the right action of $a_t$ on $G/M$ corresponds to geodesic flow on $\mathrm{T}^1\tilde X$ and such that $\mathrm{T}_eA $ is perpendicular to $\mathrm{T}_e K$.

%The aim of this paper is to present and discuss a mixing result in the case that $G$ has rank one and $\Gamma < G$ is a thin subgroup. Let $G$ be a connected, simple linear group of real rank one, and let $\Gamma < G$ be a discrete torsion free subgroup. For technical reasons we also assume that $G$ is not a finite cover of $\operatorname{PSL}_2(\mathbb{R})$. Associated to $G$ there is a rank one symmetric space $\tilde X$ with a negatively curved Riemannian metric $d$ induced by the Killing form. After re-scaling, we assume that the maximal curvature of $d$ on $\tilde X$ is $-1$. The action of $G$ on $(\tilde X, d)$ is isometric, so extends to a (transitive) action of $G$ on the unit tangent bundle $\mathrm{T}^1\tilde X$; we identify $\tilde X = G/K$ and $\mathrm{T}^1 \tilde X = G/M$, where $K$ is the stabilizer of a point $o \in \tilde X$ and  $M<K$ is the stabilizer of a tangent vector $v_0 \in \mathrm{T}^1_o\tilde X$. Let $A=\lbrace a_t: t\in \mathbb{R}\rbrace< Z_G(M)$ be the unique one-parameter subgroup in the centralizer of $M$ such that the right action of $a_t$ on $G/M$ corresponds to geodesic flow $\mathscr{G}_t$ on $\mathrm{T}^1\tilde X$ and such that $\mathrm{T}_eA $ is perpendicular to $\mathrm{T}_e K$.

Denote by  $m^{\operatorname{BMS}}$ the Bowen-Margulis-Sullivan measure on $\Gamma \backslash \mathrm{T}^1\tilde X = \Gamma \backslash G / M$. It is an $A$ invariant measure by construction (see subsection \ref{ss2.5}), and coincides with the measure of maximal entropy whenever $|m^{\operatorname{BMS}}| < \infty$ \cite{otal2004principe}. Babillot \cite{babillot_mixing_2002} showed  that the $A$ action on $(\Gamma \backslash G/M, m^{\operatorname{BMS}})$ is mixing whenever $|m^{\operatorname{BMS}}|<\infty$; see also the work of Kim \cite{kim_length_2006}, who proved that Babillot's assumption on non-arithmeticity of the length spectrum holds in our setting.

%Denote by  $m^{\operatorname{BMS}}$ the Bowen-Margulis-Sullivan measure on $\Gamma \backslash \mathrm{T}^1\tilde X = \Gamma \backslash G / M$, which is geodesic flow invariant (see subsection \ref{ss2.5} for the definition). Babillot \cite{babillot_mixing_2002} showed  that finiteness of $m^{\operatorname{BMS}}$ is sufficient for mixing of geodesic flow on $\Gamma \backslash \mathrm{T}^1\tilde X $ using non-arithmeticity of the length spectrum, which was established by Kim \cite{kim_length_2006}.

Our aim in the current work is to lift mixing of the geodesic flow from the unit tangent bundle $\Gamma \backslash G/M$ up to the ``frame bundle" $\Gamma\backslash G$ when $\Gamma < G$ is Zariski dense. More precisely, we lift the BMS measure on $\Gamma \backslash G/M$ to give an $MA$ invariant measure on $\Gamma\backslash G$ which, abusing notation, we will also denote by  $m^{\operatorname{BMS}}$. The standing assumptions for the introduction are that 
\[\mbox{ $\Gamma$ is Zariski dense and $|m^{\operatorname{BMS}}| < \infty$}.\]
Here is our main theorem; it is, in some sense, a generalization of Howe-Moore's result on the decay of matrix coefficients \cite{howe1979asymptotic} from lattices to general discrete groups with a finite BMS measure. 
\begin{theorem}[Mixing of frame flow] \label{target}
The right translation action of $A$ on $(\Gamma \backslash G, m^{\operatorname{BMS}})$ is mixing: for any $\psi_1, \psi_2 \in C_c(\Gamma \backslash G)$ we have
\[ \lim_{t\rightarrow\infty} \int_{\Gamma \backslash G} \psi_1(ga_t) \psi_2(g) dm^{\operatorname{BMS}}(g) =  \frac{m^{\operatorname{BMS}}(\psi_1)m^{\operatorname{BMS}}(\psi_2)}{|m^{\operatorname{BMS}}|}. \]
\end{theorem}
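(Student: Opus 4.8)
The plan is to bootstrap from Babillot's mixing theorem on the unit tangent bundle $\Gamma\backslash G/M$ up to the frame bundle $\Gamma\backslash G$, exploiting the fact that $A$ centralizes $M$, so that the right $M$-action commutes with the $a_t$-flow and preserves $m^{\operatorname{BMS}}$. The first step is to reduce to $M$-isotypic components. Since $M$ is compact, the Peter--Weyl theorem decomposes $C_c(\Gamma\backslash G)$ under the right $M$-action, and by density together with an equicontinuity argument controlling the resulting sum, it suffices to prove the limit formula when each $\psi_i$ lies in the isotypic component of an irreducible representation $\rho_i \in \hat M$. Because $m^{\operatorname{BMS}}$ is $M$-invariant, $m^{\operatorname{BMS}}(\psi_i)=0$ whenever $\rho_i$ is non-trivial; moreover, substituting $g\mapsto gu$ for $u\in M$ in the correlation $\int \psi_1(ga_t)\psi_2(g)\,dm^{\operatorname{BMS}}$ and using $ua_t=a_tu$ shows, via Schur orthogonality after averaging over $M$, that the correlation vanishes identically unless $\rho_2\cong\bar\rho_1$. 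Two cases then survive: (i) $\rho_1=\rho_2$ trivial, where $\psi_1,\psi_2$ descend to $\Gamma\backslash G/M$ and Babillot's theorem yields exactly the claimed right-hand side; and (ii) $\rho_1\cong\bar\rho_1$ non-trivial, where the asserted limit is $0$ and the task is to establish decay of the twisted correlation.

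For case (ii) I would rewrite the correlation geometrically. Using the Hopf parametrization and the product structure of $m^{\operatorname{BMS}}$ in terms of the Patterson--Sullivan densities on the boundary, the integral $\int \psi_1(ga_t)\psi_2(g)\,dm^{\operatorname{BMS}}$ transforms into an integral over $\Gamma\backslash G/M$ whose integrand is a matrix coefficient of $\rho$ evaluated at an $M$-valued holonomy $\vartheta_t(g)\in M$, which records how a frame is rotated upon flowing for time $t$ and projecting back to the base. Decay of the correlation is then equivalent to the statement that, as $t\to\infty$, this holonomy equidistributes in $M$ for Haar measure when integrated against the (mixing) base dynamics, so that the non-trivial matrix coefficient averages to zero.

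The main obstacle is precisely this equidistribution of holonomy: it is the frame-flow analogue of non-arithmeticity of the length spectrum, the latter (Kim's input) being what drives mixing on the base. The essential point is to rule out any proper closed subgroup $H<M$ into which the holonomy cocycle is cohomologous, since such an $H$ would produce a non-trivial $\rho$-eigendistribution for the flow and obstruct mixing. This is where Zariski density of $\Gamma$ is indispensable: it should force the algebraic hull of the $M$-valued holonomy cocycle---equivalently the closed subgroup of $M$ generated by its essential values---to be all of $M$. Concretely I expect to combine Babillot mixing on the base with a non-concentration argument: pass to a weak-$*$ limit of the pushed-forward correlation measures on $\Gamma\backslash G/M \times M$, show any such limit is invariant under the right $M$-action on the fiber, and use Zariski density to preclude the limiting object from being supported on a coset of a proper subgroup. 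Any non-trivial $\rho$ then integrates to zero against an $M$-invariant fiber measure, giving the required decay.

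Assembling the two cases completes the argument: the trivial $M$-type contributes the main term $m^{\operatorname{BMS}}(\psi_1)m^{\operatorname{BMS}}(\psi_2)/|m^{\operatorname{BMS}}|$ via Babillot, every non-trivial $M$-type decays to $0$ by the holonomy equidistribution of case (ii), and the Peter--Weyl expansion is reassembled using uniformity in $t$ and the compactness of the supports of $\psi_1,\psi_2$. I anticipate that the bulk of the technical work, and the only genuinely new ingredient beyond Babillot and Kim, lies in establishing the holonomy equidistribution from Zariski density.
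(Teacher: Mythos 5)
Your Peter--Weyl reduction and the Schur-orthogonality bookkeeping are sound, and case (i) correctly recovers the main term from Babillot plus Kim. The genuine gap is in case (ii): you reduce decay of the twisted correlations to ruling out a proper closed subgroup $H<M$ into which the holonomy cocycle is cohomologous. That is the obstruction to \emph{ergodicity} of the compact extension, not to \emph{mixing}. For a flow, the mixing obstructions are twisted coboundary equations $\chi(\vartheta_t(x))=e^{i\lambda t}\,W(xa_t)\overline{W(x)}$ coupling a character $\chi$ of $M$ to a frequency $\lambda$ in the $A$-direction; what must be excluded is a proper closed subgroup of $M^{\mathrm{ab}}\times A$, and such a subgroup can be a ``helix'' in $S^1\times\mathbb{R}$ that projects onto \emph{all} of $M^{\mathrm{ab}}$ and all of $A$. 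So knowing the holonomy hull is all of $M$ (equivalently, no proper $H<M$) does not rule out these obstructions, and $M$ has one-dimensional center in genuinely occurring cases ($G=\operatorname{SO}(3,1)_+$ with $M=\operatorname{SO}(2)$, and $G=\operatorname{SU}(n,1)$). This is precisely the gap the paper attributes to Flaminio--Spatzier, and closing it is the paper's main content: the transitivity group $\mathcal{H}_\Gamma(g)$ is deliberately defined as a subgroup of $MA$, not of $M$, and Theorem \ref{transitivitygroupsaredense} proves density in $MA$. The statement you want (hull $=M$, i.e.\ Corollary \ref{wtgd}, and $[M,M]$ in the closure of the strong transitivity group, Corollary \ref{transitivitygropuisnotsmall}) comes relatively cheaply from the Bruhat decomposition; excluding the helical subgroups is the hard part and uses Zariski density in a much more delicate way than ``algebraic hull $=M$'': one builds a smooth map from the limit set into the transitivity group whose differential is onto $\mathrm{T}_e(M^{\mathrm{ab}}\times A)$ (Proposition \ref{nicefunction}), so non-density would trap $\Lambda(\Gamma)$ locally in a codimension-one submanifold, and then a dynamical expansion/Taylor-polynomial argument (Proposition \ref{algcon}) shows this contradicts Zariski density of $\Gamma$.

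Secondarily, the pivotal step of your case (ii) --- pass to a weak-$*$ limit of correlation measures on $\Gamma\backslash G/M\times M$ and ``show any such limit is invariant under the right $M$-action on the fiber'' --- is asserted with no mechanism; fiberwise $M$-invariance of such limits is essentially equivalent to the equidistribution (hence the mixing) you are trying to prove, so as written the argument is circular at exactly the point where the new input is needed. For comparison, the paper avoids isotypic decomposition altogether: assuming mixing fails, Babillot's weak-limit lemma produces a non-constant $L^2$ function invariant under both horospherical $\sigma$-algebras $\Sigma_\pm$; the product structure of $m^{\operatorname{BMS}}$ and an approximation by ``permissible sequences'' upgrade this to invariance under $\overline{\mathcal{H}_\Gamma(g)}=MA$ (Theorem \ref{C5.3}), and $MA$-ergodicity gives the contradiction (Proposition \ref{trivialisenough}). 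To repair your outline you would need to replace ``no proper $H<M$'' by the joint rigidity statement in $M^{\mathrm{ab}}\times A$, which is where all the real work lies.
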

Results like Theorem \ref{target} are useful tools for orbit counting problems and equidistribution of translates of symmetric subgroups; see \cite{roblin_ergodicite_2003, kontorovich_apollonian_2011, oh_equidistribution_2013, mohammadi_matrix_2012} and also the survey paper \cite{oh_harmonic_2012}. Many papers in this area cite the work of Flaminio-Spatzier \cite{flaminio_geometrically_1990}, where Theorem \ref{target} was claimed under the additional assumptions that $\tilde X$ has constant curvature and that $\Gamma$ is geometrically finite. Unfortunately their argument appears to have a gap. For $\operatorname{dim}(\tilde X) \geq 4$ this gap is easily fixed using a commutator argument; the case $\operatorname{dim}(\tilde X) = 3$ is subtler, and led to the approach of Section \ref{sec4}. 

In fact, we will show in Section \ref{sec8} that $(\Gamma \backslash G, m^{\operatorname{BMS}}, A)$ is a $K$-system, and is Bernoulli whenever the geodesic flow ${(\Gamma \backslash G / M, m^{\operatorname{BMS}}, A)}$ is Bernoulli.

The finiteness assumption on $m^{\operatorname{BMS}}$ is fulfilled in many interesting cases. Recall that a group $\Gamma < G$ is geometrically finite if the unit neighborhood of the convex core of $\Gamma$ has finite Riemannian volume; this gives a natural generalization of the class of lattices in $G$ \cite{bowditch1995geometrical}.  We note that geometrical finiteness of $\Gamma$ implies finiteness of $m^{\operatorname{BMS}}$ \cite{sullivan_entropy_1984, corlette1999limit}. See also the work of Peign\'e for examples of geometrically infinite groups with finite BMS measure \cite{peigne2003patterson}. 

The proof of Theorem \ref{target} is actually rather general; one can replace the Bowen-Margulis-Sullivan measure by the $M$ invariant lift of any finite and $A$ ergodic quasi-product measure on $\Gamma \backslash G / M$. 

%Recall that a group $\Gamma < G$ is geometrically finite if the unit neighborhood of the convex core of $\Gamma$ has finite Riemannian volume; this gives a natural generalization of the class of lattices in $G$ \cite{bowditch1995geometrical}.  We note that finiteness of $m^{\operatorname{BMS}}$ follows whenever $\Gamma$ is geometrically finite \cite{sullivan_entropy_1984, corlette1999limit}. See also the work of Peign\'e for examples of geometrically infinite groups with finite BMS measure \cite{peigne2003patterson}. 

If $G$ is the identity component $\operatorname{SO}(n, 1)_+$ of the indefinite orthogonal group and $\tilde X = \mathbb{H}_\mathbb{R}^n$ is a real hyperbolic space (respectively if $G = \operatorname{SU}(n, 1)$ and  $\tilde X = \mathbb{H}_\mathbb{C}^n$ is a complex hyperbolic space), then $G$ acts simply transitively on the oriented orthonormal $n$-frame bundle over $\mathbb{H}_\mathbb{R}^n$ (respectively on the unitary $n$-frame bundle over $\mathbb{H}_\mathbb{C}^n$). In this case $\Gamma \backslash G$ is identified with the frame bundle over $\Gamma \backslash \tilde X$, and the $A$ action on $\Gamma \backslash G$ corresponds to frame flow. It is for this reason that we refer to the main theorem as ``mixing of frame flow". The geometric interpretation is not perfect, however: when $\tilde X$ is a quaternionic hyperbolic space, the $G$ action is no longer free on the (quaternionic) $n$-frame bundle; when $\tilde X$ is the Cayley hyperbolic plane, it's unclear even how we would define the (octonionic) frame bundle.

We denote by $N$ the expanding horospherical group. Using the mixing of the geodesic flow, Roblin proved that there is a unique $NM$ invariant and ergodic measure on $\Gamma \backslash G$, now called the Burger-Roblin measure, which is not supported on a  closed $NM$ orbit \cite{roblin_ergodicite_2003}. We denote the Burger-Roblin measure by $m^{\operatorname{BR}}$ (see subsection \ref{ss5.3}); it is known to be infinite unless $\Gamma$ is a lattice in $G$. 
\begin{theorem} \label{target2}
The right translation action of $N$ on $(\Gamma \backslash G, m^{\operatorname{BR}})$ is ergodic. 
\end{theorem}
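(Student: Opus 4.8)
The plan is to derive $N$-ergodicity from the $NM$-ergodicity already furnished by Roblin, treating the compact direction $M$ as the only remaining obstruction. Since $m^{\operatorname{BR}}$ is $NM$-invariant it is in particular $N$-invariant, so I would first form its decomposition into $N$-ergodic components $m^{\operatorname{BR}} = \int \mu_\xi \, d\nu(\xi)$. The group $M$ normalizes $N$ (because $M \subset Z_G(A)$ centralizes $A$, which normalizes $N$) and preserves $m^{\operatorname{BR}}$; consequently right translation $R_m$ by $m\in M$ carries each $N$-ergodic component to another one, and $M$ acts on the space of components preserving the factor measure $\nu$. The $NM$-ergodicity of $m^{\operatorname{BR}}$ then forces this $M$-action on components to be ergodic.

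Next I would exploit compactness of $M$. A measure-preserving ergodic action of a compact group on a standard probability space is transitive on a conull set, so the $N$-ergodic components form, up to $\nu$-null sets, a single $M$-orbit isomorphic to $M/M_0$, where $M_0 \le M$ is the stabilizer of one component $\mu_0$; concretely $m^{\operatorname{BR}} = \int_{M/M_0} (R_m)_* \mu_0 \, d\bar m$. Thus Theorem \ref{target2} is equivalent to the assertion that $M_0 = M$, i.e.\ that this orbit of components is a single point. Note that mere connectedness of $M$ does not settle this, since $M$ acts transitively, hence ergodically, on any homogeneous space $M/M_0$; a genuine dynamical input is required.

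The crux is to rule out a proper stabilizer $M_0 \lneq M$, and this is exactly where the upgrade from geodesic mixing to frame mixing enters. Babillot's mixing lives on $\Gamma\backslash G/M$ and, in Roblin's hands, only detects $NM$-invariant structure, so it cannot separate the components above. I would instead use the full frame mixing of Theorem \ref{target} on $(\Gamma\backslash G, m^{\operatorname{BMS}})$ to prove a frame-refined equidistribution statement for expanding horospheres: thickening an $N$-orbit in the commuting $A$-direction and in the contracting transverse direction produces a measure comparable to $m^{\operatorname{BMS}}$, and applying Theorem \ref{target} as $t\to\infty$ shows that the horospherical averages of any $\psi \in C_c(\Gamma\backslash G)$ converge while retaining full dependence on the $M$-fiber rather than only on its $M$-average. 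This pins down the conditional measures of $m^{\operatorname{BR}}$ along $N$-orbits up to the $N$-action alone, so no nontrivial $M$-equivariant partition into $N$-ergodic components can survive; hence $M_0 = M$ and $m^{\operatorname{BR}}$ is $N$-ergodic.

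I expect the main obstacle to be precisely this frame-refined thickening argument: one must control the transverse and the $M$ directions simultaneously while passing to the limit, and verify that the error incurred in un-thickening is negligible against $m^{\operatorname{BR}}$, which is an infinite measure unless $\Gamma$ is a lattice. An alternative, and perhaps cleaner, route is to re-run Roblin's derivation of horospherical ergodicity from mixing verbatim, feeding in Theorem \ref{target} in place of Babillot's theorem throughout; the content of Theorem \ref{target2} is then exactly that this substitution is legitimate, which again reduces to handling the $M$-coordinate in the mixing input.
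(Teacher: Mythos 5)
Your proposal is essentially sound, and your closing ``alternative route'' is in fact exactly what the paper does: Theorem \ref{target2} is proved as Corollary \ref{c6.4}, after re-running Roblin's equidistribution machinery with frame mixing (Theorem \ref{target}) substituted for Babillot's theorem. Concretely, the paper adapts Roblin's horosphere equidistribution to the frame bundle (Theorem \ref{R3.2}, using the $N^-\times A\times M\times N^+$ product structure of $m^{\operatorname{BMS}}$ in place of the $N^-\times A\times N^+$ structure on $\mathrm{T}^1\tilde X$), deduces $L^1(m^{\operatorname{BMS}})$ convergence of horoball averages $M_r(\phi)$ to the constant $m^{\operatorname{BMS}}(\phi)/|m^{\operatorname{BMS}}|$ (Theorem \ref{leavesequidistribute}), and then finishes in three lines: for a strictly $N^+$-invariant set $B$ with $m^{\operatorname{BR}}(B)>0$, the average $M_r(\tilde\chi_B)$ is identically $0$ or $1$ along each horosphere, so the constant limit forces $m^{\operatorname{BMS}}(B^c)=0$ and hence $m^{\operatorname{BR}}(B^c)=0$. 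Your main route is correct in outline but carries dispensable scaffolding: once the frame-refined equidistribution theorem is available, the ergodic decomposition of $m^{\operatorname{BR}}$, the $M$-action on components, and the identification of the component space with $M/M_0$ are all unnecessary --- the indicator-function argument gives ergodicity directly, with no need to analyze conditional measures or rule out a proper stabilizer $M_0$. (Your observation that connectedness of $M$ alone cannot settle the question is nevertheless accurate.)

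The one substantive point your sketch misplaces is the infinite-measure difficulty. You propose to control an un-thickening error ``against $m^{\operatorname{BR}}$,'' but the paper never integrates against the infinite measure at all: the averaging argument runs entirely in $L^1$ of the \emph{finite} measure $m^{\operatorname{BMS}}$, and the conclusion transfers to $m^{\operatorname{BR}}$ only at the two endpoints, using that an $N^+$-saturated set is determined by transverse data ($g^-$, $M$, $A$) on which $m^{\operatorname{BMS}}=m^{\sigma,\sigma}$ and $m^{\operatorname{BR}}=m^{\sigma,\lambda}$ induce equivalent measure classes --- this is why $m^{\operatorname{BR}}(B)>0$ implies $m^{\operatorname{BMS}}(B)>0$ and $m^{\operatorname{BMS}}(B^c)=0$ implies $m^{\operatorname{BR}}(B^c)=0$. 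Your acknowledged ``main obstacle'' (the frame-refined equidistribution itself) is real but is resolved exactly as you suspect, by Roblin's proofs verbatim with Theorem \ref{target} as input; so the gap in your attempt is not conceptual but consists of this unexecuted adaptation together with the missing BMS--BR transfer step.
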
	

This result, together with Roblin's work, allows a classification of all $N$ invariant Radon measures on $\Gamma \backslash G$ for $\Gamma$ geometrically finite. Before we state our classification theorem, we observe that there are other $N$ invariant measures on $\Gamma\backslash G$ coming from closed $NM$ orbits. For any $g \in G$ such that  $\Gamma gNM \subset G$ is closed, we define the group 
\[ M_0(g) = \lbrace m \in M : mh \in g^{-1}\Gamma g \mbox{ for some } h \in N \rbrace < M. \]
This is known to be a virtually abelian subgroup of $M$ (see \cite[Theorem 8.24]{raghunathan_discrete_1972}). Let $L(g) = \Gamma \backslash \Gamma gN\overline{M_0(g)}\subset \Gamma \backslash G$; it is a minimal closed $N$ invariant subset and carries a unique (up to proportionality) Radon measure invariant for $N\overline{M_0(g)}$ (see Section \ref{sec7}).
\begin{theorem} \label{measureclassificati0on} Suppose that $\Gamma < G$ is geometrically finite.  If $\nu$ is an $N\negmedspace$ invariant and ergodic Radon measure on $\Gamma \backslash G$, then, up to proportionality, we have either
\begin{itemize}
\item{ $\nu = m^{\operatorname{BR}}$ or }
\item{ $\nu$ is supported on some $L(g)$ and is the unique $N\overline{M_0(g)}$ invariant measure.} 
\end{itemize}
\end{theorem}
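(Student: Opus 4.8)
The plan is to deduce this classification theorem from the ergodicity result (Theorem \ref{target2}) together with Roblin's structural work, following the now-standard strategy for measure rigidity of horospherical actions on rank one homogeneous spaces. The key conceptual input is that an $N$-invariant ergodic Radon measure is determined by how its mass distributes relative to the geodesic flow direction $A$ and the centralizer directions $M$. I would first reduce the problem from $\Gamma \backslash G$ to the unit tangent bundle $\Gamma \backslash G / M$. Given an $N$-invariant ergodic Radon measure $\nu$ on $\Gamma \backslash G$, one pushes it forward under the quotient map $\pi\colon \Gamma \backslash G \to \Gamma \backslash G / M$ to obtain an $N$-invariant Radon measure $\bar\nu = \pi_*\nu$ on $\Gamma \backslash G / M$. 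Roblin's theorem on $\Gamma \backslash G / M$ classifies such measures: $\bar\nu$ is either the horospherical projection of the Burger-Roblin measure, or it is supported on a closed horocycle-type orbit. The crux is then to run this backward and determine all $N$-invariant ergodic lifts of each such $\bar\nu$.

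For the generic case, suppose $\bar\nu$ corresponds to the Burger-Roblin measure on the quotient. Here I would argue that $\nu$ must itself be $M$-quasi-invariant, and then use ergodicity together with the structure of the $M$-action to pin down $\nu = m^{\operatorname{BR}}$ up to scaling. The decisive tool is Theorem \ref{target2}: since $N$ acts ergodically on $(\Gamma \backslash G, m^{\operatorname{BR}})$, the measure $m^{\operatorname{BR}}$ is an extreme point among $N$-invariant measures projecting to $\bar\nu$, and any other $N$-invariant ergodic lift would have to be mutually singular with it while projecting to the same measure downstairs. A disintegration of $\nu$ over $\bar\nu$ along the $M$-fibers, combined with the $M$-equivariance forced by the fact that $M$ centralizes $A$ and normalizes $N$, shows that the conditional measures on the $M$-fibers must be invariant under a closed subgroup of $M$; ergodicity of $\nu$ under $N$ then forces these fiber measures to be the full Haar measure on $M$ (up to the closed-orbit exceptional locus), giving $\nu = m^{\operatorname{BR}}$.

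For the exceptional case, suppose $\bar\nu$ is supported on a closed $N$-orbit downstairs. Lifting to $\Gamma \backslash G$, the support of $\nu$ must lie in the preimage, which is a closed $NM$-orbit of the form $\Gamma \backslash \Gamma g NM$. Here I would analyze the stabilizer structure directly: the condition $\Gamma g NM$ closed, together with the definition of $M_0(g)$, identifies the minimal closed $N$-invariant subsets $L(g) = \Gamma \backslash \Gamma g N \overline{M_0(g)}$. Since $M_0(g)$ is virtually abelian (by the cited result of Raghunathan), $\overline{M_0(g)}$ is a closed subgroup of $M$ and $L(g)$ carries a unique $N\overline{M_0(g)}$-invariant Radon measure by the uniqueness statement established in Section \ref{sec7}. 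An $N$-invariant ergodic measure supported on the closed $NM$-orbit must, by minimality and ergodicity, be supported on a single such $L(g)$ and must coincide with this unique invariant measure up to proportionality.

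The main obstacle I anticipate is the lifting analysis in the generic case: controlling exactly which $N$-invariant measures on $\Gamma \backslash G$ project to the Burger-Roblin measure on $\Gamma \backslash G/M$, and ruling out exotic lifts that are invariant only under a proper closed subgroup of $M$ in the fiber direction. This is precisely where Theorem \ref{target2} does the essential work — without ergodicity of $N$ on $(\Gamma \backslash G, m^{\operatorname{BR}})$ one could not exclude such intermediate lifts, since the mere projection data is insufficient to determine the $M$-fiber behavior. The delicate point is to make the disintegration argument rigorous in the non-compact setting where $\nu$ is merely Radon rather than finite, so that conditional measures along $M$-fibers are well defined and the $M$-equivariance can be promoted from a measurable to a genuine invariance statement. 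I expect the geometric finiteness hypothesis to enter precisely here, guaranteeing that the exceptional locus of closed $NM$-orbits is well-behaved and that Roblin's downstairs classification applies in the required Radon generality.
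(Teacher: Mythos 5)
Your overall architecture coincides with the paper's: reduce modulo the compact group $M$ (the paper averages, setting $\hat\nu = \int_M m_*\nu\, dm$, which is precisely the $M$-invariant lift of your pushforward $\pi_*\nu$), apply Roblin's classification of $NM$-invariant ergodic measures to $\hat\nu$, use Theorem \ref{target2} in the Burger--Roblin branch, and use the $M_0(g)$ analysis in the closed-orbit branch. However, two of your steps would fail or are missing as written. In the generic case, the claim that ``ergodicity of $\nu$ under $N$ then forces these fiber measures to be the full Haar measure on $M$'' is wrong as a mechanism: ergodicity of a lift places no constraint on the fiber subgroup, as the exceptional measures $\rho_g$ themselves demonstrate --- they are $N$-ergodic while their fiber measures are Haar on the proper closed subgroup $\overline{M_0(g)} < M$. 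What actually does the work is the extremality consequence of Theorem \ref{target2} that you gesture at in the adjacent sentence: the identity $\int_M m_*\nu\, dm = m^{\operatorname{BR}}$ exhibits the $N$-ergodic measure $m^{\operatorname{BR}}$ as an average of $N$-invariant Radon measures, so $m_*\nu = m^{\operatorname{BR}}$ for almost every $m \in M$, and hence $\nu = m^{\operatorname{BR}}$; this is exactly the paper's Corollary \ref{c6.5}. That one-line extremality argument replaces your disintegration entirely, and it also dissolves your anticipated ``main obstacle'': since $M$ is compact, the projection is proper and no delicate Radon disintegration along fibers is needed anywhere in the proof.

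In the exceptional case, ``by minimality and ergodicity'' hides the actual content: the uniqueness statement from Section \ref{sec7} applies only to $N\overline{M_0(g)}$-invariant measures, and nothing in your argument establishes that the given $N$-ergodic measure $\nu$ is $\overline{M_0(g)}$-invariant in the first place. The paper fills this gap in two steps. Lemma \ref{l7.8} (an $\epsilon$-thickening argument inside $M$) shows that $\nu$ is supported on a single orbit $\Gamma\backslash\Gamma g'\overline{M_0(g')}N$; then Lemma \ref{l7.9} runs the same extremality trick with $\rho_{g'}$ in place of $m^{\operatorname{BR}}$: Roblin's uniqueness downstairs gives $\int_M m_*\nu = \int_M m_*\rho_{g'}$, and the $N$-ergodicity of $\rho_{g'}$ --- itself a lemma in the paper, proved from ergodicity of the dense subgroup $M_0(g')$ of $\overline{M_0(g')}$ acting on Haar measure --- forces $m_*\nu = \rho_{g'}$ for almost every $m \in \overline{M_0(g')}$, hence $\nu = \rho_{g'}$. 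So both branches of the classification rest on the same extremality mechanism applied to a \emph{known} $N$-ergodic target measure ($m^{\operatorname{BR}}$ or $\rho_{g'}$), never on ergodicity of the unknown measure $\nu$; your proposal has the right skeleton and correctly identifies Theorem \ref{target2} as the decisive input, but it misattributes this key step in both branches.
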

We can also use mixing of $(\Gamma \backslash G, m^{\operatorname{BMS}}, A)$ to compute the precise asymptotics for the decay of matrix coefficients with respect to the Haar measure. In the case when $\Gamma$ is a lattice in $G$, the following theorem simply reduces to the Howe-Moore theorem on the decay matrix coefficients. In general it follows from Theorem \ref{target} using the arguments of \cite[Chapter III]{roblin_ergodicite_2003}. 
\begin{theorem} \label{T1.3}
Denote by $m^{\operatorname{BR}}_*$ the \textup{BR} measure for the contracting horospherical subgroup, and by $m^{\operatorname{Haar}}$ the Haar measure on $\Gamma \backslash G$. For any ${\psi_1, \psi_2 \in C_c(\Gamma \backslash G)}$, we have 
\[  \lim_{t\rightarrow +\infty}  e^{(D - \delta) t} \int_{\Gamma \backslash G} \psi_1(g a_t) \psi_2(g) dm^{\operatorname{Haar}}(g) = \frac{m^{\operatorname{BR}}(\psi_1) m^{\operatorname{BR}}_*(\psi_2) }{|m^{\operatorname{BMS}}|}, \]
where $\delta$ is the critical exponent of the group $\Gamma$, and $D$ is the volume entropy of the symmetric space associated to $G$ (see subsection \ref{ss5.3}).
\end{theorem}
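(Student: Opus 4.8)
The plan is to deduce Theorem~\ref{T1.3} from the frame flow mixing of Theorem~\ref{target}, transplanting the argument of Roblin \cite[Chapter III]{roblin_ergodicite_2003} from the unit tangent bundle $\Gamma\backslash G/M$ to the frame bundle $\Gamma\backslash G$. The organizing observation is that all four measures in the statement arise from the same Hopf-type parametrization of $\mathrm{T}^1\tilde X$ by a pair of distinct boundary points together with a time coordinate, lifted $M$-invariantly by the Haar probability measure on $M$, and differ only through the conformal density placed in each horospherical direction. Writing $\{\mu_x\}$ for the Patterson-Sullivan density of dimension $\delta$ and $\{m_x\}$ for the visual (Lebesgue) density of dimension $D$, and letting the first slot record the expanding ($N$) direction and the second the contracting direction, one has schematically $m^{\operatorname{BMS}}\sim\mu\otimes\mu$, $m^{\operatorname{BR}}\sim m\otimes\mu$, $m^{\operatorname{BR}}_*\sim\mu\otimes m$, and $m^{\operatorname{Haar}}\sim m\otimes m$. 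In this language Theorem~\ref{T1.3} asserts precisely that trading a Patterson-Sullivan density for a visual density in the expanding direction carried by $\psi_1$ and in the contracting direction carried by $\psi_2$ costs the renormalization $e^{(D-\delta)t}$ and converts each BMS pairing into the corresponding Burger-Roblin pairing.

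First I would reduce, by a partition of unity and uniform continuity, to $\psi_1,\psi_2$ supported in small flow boxes, on which the (unimodular) Haar measure factors as the product of the invariant measures on the expanding horospherical group $N$, the contracting horospherical group $N^-$, the subgroup $M$, and the flow parameter. The engine of the argument is the equidistribution of expanding horospheres, which I would extract from Theorem~\ref{target} by a Margulis-type thickening: for $\psi_1\in C_c(\Gamma\backslash G)$ one shows that the leafwise visual (Lebesgue) measure on a bounded piece of an $N$-orbit, pushed forward by $a_t$ and renormalized by $e^{(D-\delta)t}$, converges weakly to a Patterson-Sullivan-weighted multiple of $m^{\operatorname{BR}}$. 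Concretely one thickens the horospherical integral by a small transversal $N^-MA$-box, recognizes the thickened average---via the commutation relations between $A$ and the horospherical groups---as a BMS integral $\int\psi_1(g a_t)\,\phi(g)\,dm^{\operatorname{BMS}}(g)$ against an auxiliary bump function $\phi$, applies the mixing statement of Theorem~\ref{target}, and then lets the transversal shrink. The exponential factor $e^{(D-\delta)t}$ emerges as the ratio, quantified by Sullivan's shadow lemma, between the visual and Patterson-Sullivan masses of a shadow seen at depth $t$ along the flow; this is the only place the dimension gap $D-\delta$ enters.

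With this equidistribution in hand, I would substitute the local product form of $m^{\operatorname{Haar}}$ into the matrix coefficient: evaluating the leafwise integral of $\psi_1(\,\cdot\,a_t)$ by the equidistribution lemma and integrating $\psi_2$ against the complementary densities produces, in the limit, the product $m^{\operatorname{BR}}(\psi_1)\,m^{\operatorname{BR}}_*(\psi_2)$, with the normalization $|m^{\operatorname{BMS}}|$ inherited from the mixing statement; the matching of each horospherical direction to the correct Burger-Roblin factor is exactly Roblin's conformal-density bookkeeping. The extra $M$-direction, absent from the $\Gamma\backslash G/M$ setting of \cite{roblin_ergodicite_2003}, causes no difficulty: $A$ centralizes $M$, the four measures are $M$-invariant lifts, and $M$ is compact, so the $M$-integration factors through and the entire argument runs verbatim on $\Gamma\backslash G$ once Theorem~\ref{target} is used in place of Babillot's geodesic flow mixing.

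The main obstacle is the equidistribution lemma of the second paragraph. The difficulty is that for $\delta<D$ the Patterson-Sullivan and visual densities are mutually singular, so no pointwise comparison of $m^{\operatorname{Haar}}$ with $m^{\operatorname{BMS}}$ is available, and both the precise rate $e^{(D-\delta)t}$ and the precise Burger-Roblin normalization must be produced dynamically, through the flow. This forces careful control of the uniformity of the thickening and unthickening over the box, and of the possible infinitude of $m^{\operatorname{BR}}$---handled using $\psi_i\in C_c(\Gamma\backslash G)$, which keeps all Burger-Roblin pairings finite, together with the finiteness of $m^{\operatorname{BMS}}$, which rules out escape of mass in the mixing step. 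The remaining computations are the routine manipulation of conformal densities, for which I would refer to \cite[Chapter III]{roblin_ergodicite_2003}.
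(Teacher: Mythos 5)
Your overall strategy coincides with the paper's: Theorem \ref{T1.3} is deduced from the frame flow mixing of Theorem \ref{target} by transplanting Roblin's Chapter III arguments to $\Gamma\backslash G$, with a thickening argument driving a horospherical equidistribution statement, the local product structure of the generalized BMS measures doing the bookkeeping, and the compact $M$-direction absorbed harmlessly since all measures are $M$-invariant lifts. The paper organizes the intermediate steps differently, though: it first proves Theorem \ref{R3.2} --- equidistribution of expanding horospherical pieces weighted by the \emph{Patterson--Sullivan} leafwise measure $\sigma_{H^+}$, with limit $\sigma_{H^+}(E^+)\,m^{\operatorname{BMS}}(\tilde\phi)/|m^{\operatorname{BMS}}|$ and \emph{no} exponential renormalization --- and then establishes the general matrix coefficient asymptotic for $m^{\nu^-,\nu^+}$ in subsection \ref{ss7.2}, of which Theorem \ref{T1.3} is the special case $\nu^-=\nu^+=\lambda$. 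You instead aim directly at the Haar coefficient via the renormalized \emph{Lebesgue}-leafwise equidistribution (an Oh--Shah/Mohammadi--Oh style statement). Your statement of that lemma is correct, including the Patterson--Sullivan weight on the piece and the $m^{\operatorname{BR}}$ limit.

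However, the mechanism you give for that lemma would fail as written, and this is the one genuine gap. Thickening a visual (Lebesgue) leafwise integral by a transversal $N^-MA$-box produces, for any choice of transverse weights, an integral against a measure that is Lebesgue along $N^+$-leaves --- that is, a Burger--Roblin-type pairing $\int\psi_1(ga_t)\phi(g)\,dm^{\operatorname{BR}}(g)$ --- and never a BMS integral: for $\delta<D$ the densities $\sigma$ and $\lambda$ are mutually singular (as you yourself observe in your last paragraph), so no amount of shrinking the transversal converts one leafwise measure into the other; moreover Sullivan's shadow lemma gives only two-sided multiplicative bounds, so it can supply uniformity estimates but not the exact rate $e^{(D-\delta)t}$ with the exact constant, which must instead come from the conformal scaling of the densities under the flow together with the mixing limit. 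The one-step ``thicken, recognize as a BMS integral, mix, shrink'' argument proves precisely the PS-leafwise statement, i.e.\ the paper's Theorem \ref{R3.2}; the passage from there to your renormalized Lebesgue version, or directly to the Haar coefficient, is exactly the conformal change-of-variables bookkeeping of Theorem 3.4 of \cite{roblin_ergodicite_2003}, which the paper invokes and which you also cite at the end. So the defect is local and is repaired by the very reference you give, but as stated your key step conflates the two equidistribution statements, and you should separate them: first the unrenormalized PS version by thickening and Theorem \ref{target}, then the density-trading argument that produces the factor $e^{(D-\delta)t}$ and the Burger--Roblin pairings.
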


Other applications of this result seem possible. For example, one should be able to use Theorem \ref{target} to obtain orbit counting asymptotics in the style of \cite{mohammadi_matrix_2012, oh_equidistribution_2013} for the action of $\Gamma$ on quotient spaces of $G$. Another natural question is to what extent the mixing and equidistribution results we state here can be made effective. For real hyperbolic spaces, this question has already been studied when the critical exponent is large \cite[Theorems 1.4, 1.6, 1.7]{mohammadi_matrix_2012}, and it is expected that those results can be generalized to other rank one spaces. 

\subsection{Outline of the proof of Theorem \ref{target}.}

The proof of Theorem \ref{target} breaks into two steps: a geometric argument, and a measure theoretic argument. For the geometric argument, we will define and study the transitivity group $\mathcal{H}_\Gamma(g)$ associated to each element $g$ in the support of the BMS measure. This is inspired by the ideas of Brin for compact manifolds \cite{brin_ergodic_1979}  and Flaminio-Spatzier for geometrically finite manifolds of constant curvature \cite{flaminio_geometrically_1990}. Traditionally, the transitivity group is defined as a subgroup of the structure group $M$. However, it is important in our setting to define the transitivity group as a subgroup $\mathcal{H}_\Gamma(g) < MA$. The first key step in the proof of Theorem \ref{target} is to show that the transitivity groups are dense in $MA$. We use algebraic properties of the Bruhat decomposition to see that $\overline{\mathcal{H}_\Gamma(g)}$ contains the commutator $[MA, MA]$. By working with the Lie algebra $\mathrm{T}_eG$, we then define a smooth function from the limit set $\Lambda(\Gamma)$ into the transitivity group; this allows us to convert properties of the transitivity group (such as not being dense) into constraints on the limit set (such as being contained in a smooth submanifold of the boundary). Finally we use expansion invariance of $\Lambda(\Gamma)$ in conjunction with those smooth constraints to force density of $\mathcal{H}_\Gamma(g)$. 

The next step is a measure theoretic argument, using density of the transitivity group to prove mixing of the frame flow; this is a variation on work of Babillot \cite{babillot_mixing_2002}. It is sufficient to show that if $\psi \in C_c(\Gamma \backslash G)$, then any $L^2(\Gamma \backslash G, m^{\operatorname{BMS}})$ weak limit of the sequence $\psi^{a_t}(g) := \psi(ga_t)$ is constant. It is known that any weak limit $\phi$ of $\psi^{a_t}$ has measure theoretic invariance under both the expanding and contracting horospherical groups. Using smoothing and the product structure of the BMS measure, we deduce that $\phi$ is invariant under the transitivity group. Density of the transitivity group then implies that $\phi$ is $MA$ invariant and so, using ergodicity of $(\Gamma \backslash G / M, m^{\operatorname{BMS}}, A)$, that $\phi$ is indeed constant. 

%One consequence of the mixing of frame flow is that $m^{\operatorname{BR}}$ is $N$-ergodic on $\Gamma \backslash G$. This allows us to extend Roblin's arguments to prove  Theorem \ref{measureclassificati0on}. We note that any $N$ invariant and  ergodic measure $\nu$ on $\Gamma \backslash G$ can be projected to an $NM$ invariant and  ergodic measure $\hat \nu$ by integrating over $M$ orbits. Roblin  classified the $NM$ invariant measures in \cite{roblin_ergodicite_2003}; either $\hat\nu = m^{\operatorname{BR}}$ or $\hat \nu$ is supported on a single closed $NM$ orbit. In the first case, ergodicity of $(\Gamma \backslash G, m^{\operatorname{BR}}, N)$ implies that $\nu = m^{\operatorname{BR}}$. The second case, when $\hat\nu$ is supported on a single closed $NM$ orbit, is comparatively simple and can be analyzed directly. 

\subsection{Organization of the paper.}

We begin in Section \ref{sec2} by fixing notation and recalling some necessary background material; of particular importance are the Bruhat decomposition for $G$, the construction of generalized BMS measures, and certain properties of geometrically finite groups.

	The argument begins in earnest in Section \ref{sec3}, where we define the transitivity groups and establish their basic properties. The main aim here is to show that the transitivity groups are indeed dense in $MA$.

	In Section \ref{sec5} we start our foray into measure theory; we begin with some technicalities, showing that measure theoretic invariance under the horospherical groups implies invariance under the transitivity group. This allows us to complete the proof of Theorem \ref{target} in Section \ref{sec6}, where we study weak limits of continuous functions under frame flow and show that they must be constant. 
	
	Section \ref{sec7} is concerned with applications: it begins with results on the equidistribution of horospheres and the decay of matrix coefficients, before using those results to establish the measure classification for horospherical groups.  The paper closes with Section \ref{sec8}, in which we refine the arguments of Section \ref{sec6} to give Bernoulliness or the Kolmogorov property for frame flow.

\subsection*{Acknowledgements:} I would like to thank my advisor, Hee Oh, for suggesting the problem and for her advice and many helpful comments throughout the project. I would also like to thank Ralf Spatzier for a very useful discussion of the constant curvature case which led to the formulation of Corollary \ref{inasmoothsubmanifold}, and Frederic Paulin, for pointing out that the mixing result might generalize to Gibbs and quasi-product measures.

\section{Preliminaries} \label{sec2} The first aim of this section is to establish notation. We will then review some background material that will be needed later on.

\subsection{Notation} We begin by fixing some notation, which will be used without comment throughout the rest of the paper. Let $G$ be a connected simple linear group of real rank one. As in the introduction we exclude the case that $G$ is a finite cover of $\operatorname{PSL}_2(\mathbb{R})$ unless explicitly stated otherwise. The classification theorem tells us that $G$ is locally isomorphic to one of the following groups; $\operatorname{SU}(n, 1)$, $\operatorname{Sp}(n, 1)$, $\operatorname{F}^{-20}_4$, or the identity component $\operatorname{SO}(n, 1)_+$ with $n \geq 3$. Let $K<G$ be a maximal compact subgroup, and let $\theta: G \rightarrow G$ be the associated Cartan involution. The Killing form gives a left $G$ invariant Riemannian metric $d$ on $\tilde X := G/K$, which we normalize so that the maximum sectional curvature of $(\tilde X, d)$ is minus one. We will write $\partial \tilde X$ for the geometric boundary of $\tilde X$ (see subsection \ref{properties of U and N}).

The letter $\Gamma$ will always denote a discrete, torsion free, and non-elementary subgroup of $G$. We will write $\Lambda(\Gamma)$ for the limit set of $\Gamma$ in $\partial \tilde X$.

Let $o = [K]$ be the natural base point of $\tilde X$, and fix a unit tangent vector $v_0 \in \mathrm{T}^1_{o}\tilde X$. We will write $\mathfrak{g}$ for the real Lie algebra $\mathrm{T}_eG$. As always, $\theta$ induces an orthogonal splitting of $\mathfrak{g}$ into a $(+1)$ eigenspace $\mathfrak k = \mathrm{T}_eK$ and a $(-1)$ eigenspace $\mathfrak p$;
\[ \mathfrak{g} = \mathfrak k \oplus \mathfrak{p}. \]
Write $\pi : G \rightarrow \tilde X$ for the natural projection and observe that $\mathfrak{k}$ is exactly the kernel of $d\pi: \mathfrak{g} \rightarrow \mathrm{T}_e(\tilde X)$. It follows that we can lift $ v_0$ to a unique unit vector $\hat v_0 \in \mathfrak{p}$. The one parameter subgroup
\[ A =\lbrace a_t :=  \mbox{exp}(t \hat v_0 ): t\in \mathbb{R} \rbrace < G \]
projects to a geodesic $a_to$ in $(\tilde X, d)$. We will write $M$ for the stabilizer $\mbox{stab}_{K}(v_0)$ of $v_0$ in $K$. We denote by $N^+$ (respectively $N^-$) the expanding (contracting) horospherical groups:
\[ N^+ = \lbrace h \in G: a_{-t} h a_t \rightarrow e \mbox{ as } t\rightarrow -\infty \rbrace; \]
\[ N^- = \lbrace n \in G: a_{-t} n a_t \rightarrow e \mbox{ as } t\rightarrow +\infty\rbrace. \]

\subsection{Properties of $M$.}The first remark is that we can characterize $M$ without reference to the symmetric space $\tilde X$. 
\begin{lemma}
The centralizer $Z_K(A)$ is equal to $M$. 
\end{lemma}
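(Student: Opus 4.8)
The statement reduces to a single fact about the adjoint action, so the plan is to show that both $M$ and $Z_K(A)$ coincide with the stabilizer $\lbrace m \in K : \operatorname{Ad}(m)\hat v_0 = \hat v_0\rbrace$ of $\hat v_0$ under the restricted adjoint representation. First I would recall the standard identification $\mathfrak p \cong \mathrm{T}_o\tilde X$ furnished by $d\pi$, under which $\hat v_0$ corresponds to $v_0$. Since $\operatorname{Ad}(K)$ preserves the Cartan decomposition $\mathfrak g = \mathfrak k \oplus \mathfrak p$, the isotropy representation of $K$ on $\mathrm{T}_o\tilde X$ is intertwined with $\operatorname{Ad}|_K$ acting on $\mathfrak p$; concretely, for $k \in K$ the differential at $o$ of the isometry $k : \tilde X \to \tilde X$ is $\operatorname{Ad}(k)|_{\mathfrak p}$. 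Consequently $M = \operatorname{stab}_K(v_0) = \lbrace m \in K : \operatorname{Ad}(m)\hat v_0 = \hat v_0\rbrace$.

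For the other inclusion I would use $\operatorname{Ad}$-equivariance of the exponential map. For any $m \in K$ and any $t$,
\[ m a_t m^{-1} = m \exp(t\hat v_0) m^{-1} = \exp\bigl(t\,\operatorname{Ad}(m)\hat v_0\bigr). \]
Thus $m$ centralizes the full group $A$ precisely when $\exp(t\operatorname{Ad}(m)\hat v_0) = \exp(t\hat v_0)$ for every $t$; differentiating this identity of one-parameter subgroups at $t = 0$ gives $\operatorname{Ad}(m)\hat v_0 = \hat v_0$, and the converse is immediate. Hence $Z_K(A) = \lbrace m \in K : \operatorname{Ad}(m)\hat v_0 = \hat v_0\rbrace$, which matches the description of $M$ above and completes the proof.

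The argument is essentially formal, so I do not anticipate a serious obstacle; the only point requiring care is that the centralizer is taken of the entire one-parameter group $A$ rather than of a single element $a_1$. This matters because $\exp$ need not be injective, so commuting with $a_1$ alone would only yield $\exp(\operatorname{Ad}(m)\hat v_0) = \exp(\hat v_0)$; using all $t$ and differentiating at the identity sidesteps this and produces the clean Lie-algebra equality $\operatorname{Ad}(m)\hat v_0 = \hat v_0$.
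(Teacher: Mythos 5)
Your proof is correct and follows essentially the same route as the paper: both arguments reduce the statement to the adjoint-action characterization $\operatorname{Ad}(m)\hat v_0 = \hat v_0$, using the identification $\mathfrak p \cong \mathrm{T}_o\tilde X$ via $d\pi$ (with $\mathfrak k \cap \mathfrak p = \lbrace 0 \rbrace$) for the $M$ side and differentiation of one-parameter subgroups at $t=0$ for the $Z_K(A)$ side. Your closing remark about centralizing all of $A$ rather than a single element is a nice point of care, though it does not change the substance.
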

\begin{proof}
Note first that if $m \in Z_K(A)$, then $ma_t K= a_tmK = a_tK$. Taking derivatives at time zero we see that $m \in \mbox{stab}_K( v_0)$ as expected.
On the other hand, if $m \in \mbox{stab}_K( v_0)$, then
\begin{eqnarray*}  v_0 &=& m_* v_0 \\
&=& d\pi \left(\mathrm{Ad}_m \hat  v_0\right).\end{eqnarray*}
It follows that $\hat v_0 - \mathrm{Ad}_m \hat v_0 \in \mathfrak{k}$, the kernel of $d\pi$. But both $ \hat v_0$ and $\mathrm{Ad}_m \hat v_0$ are in $\mathfrak{p}$. Thus 
\[ \hat v_0 - \mathrm{Ad}_m \hat v_0 \in \mathfrak{k} \cap \mathfrak{p}  = \lbrace 0 \rbrace \]
and $m \in Z_K(A)$ as required. 
\end{proof}
The following fact will be needed later. 
\begin{fact}
The dimension of the center of $M$ is at most one. 
\end{fact}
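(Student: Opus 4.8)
The plan is to reduce the statement to a computation on the Lie algebra and then invoke the classification of rank one groups recalled above. Write $\mathfrak m = \mathrm{T}_e M$; by the preceding lemma $M = Z_K(A)$, so $\mathfrak m = Z_{\mathfrak k}(\mathfrak a)$ is the standard centralizer appearing in the restricted root space decomposition $\mathfrak g = \mathfrak m \oplus \mathfrak a \oplus \bigoplus_{\lambda} \mathfrak g_\lambda$, where $\mathfrak a = \mathbb{R}\hat v_0$. The Lie algebra of the center $Z(M)$ is the fixed space $\mathfrak m^{M} = \lbrace X \in \mathfrak m : \mathrm{Ad}(m)X = X \text{ for all } m \in M \rbrace$, and since $\exp(tX)$ commutes with every $m$ exactly when $\mathrm{Ad}(m)X = X$, this is indeed $\mathrm{Lie}(Z(M))$. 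Now $\mathfrak m^{M} \subseteq \mathfrak m^{M^\circ}$, and because $M^\circ$ is connected the latter equals the center $\mathfrak z(\mathfrak m)$ of the Lie algebra. Hence $\dim Z(M) = \dim \mathfrak m^M \le \dim \mathfrak z(\mathfrak m)$, so it suffices to prove $\dim \mathfrak z(\mathfrak m) \le 1$. This quantity depends only on $\mathfrak g$, hence only on the local isomorphism type of $G$, so I may pass to the model groups on the classification list.

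I would then read off $\mathfrak m = Z_{\mathfrak k}(\mathfrak a)$ in each case from standard structure theory and compute its center directly. For $\operatorname{SO}(n, 1)_+$ with $n \ge 3$ one has $\mathfrak m \cong \mathfrak{so}(n-1)$, which is one-dimensional abelian when $n = 3$ and semisimple (hence centerless) for $n \ge 4$. For $\operatorname{SU}(n,1)$ one has $\mathfrak m \cong \mathfrak u(n-1) = \mathfrak{su}(n-1)\oplus \mathbb{R}$, whose center is the single line of scalar skew-Hermitian matrices. For $\operatorname{Sp}(n,1)$ one has $\mathfrak m \cong \mathfrak{sp}(n-1)\oplus\mathfrak{sp}(1)$, a sum of semisimple ideals, so $\mathfrak z(\mathfrak m)=0$; and for $\operatorname{F}^{-20}_4$ one has $\mathfrak m \cong \mathfrak{spin}(7)\cong\mathfrak{so}(7)$, which is semisimple, again giving $\mathfrak z(\mathfrak m)=0$. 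In every case $\dim \mathfrak z(\mathfrak m) \le 1$, which proves the Fact.

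There is no genuine analytic obstacle here: the only real content is the identification of $\mathfrak m$ in the quaternionic and octonionic cases, where it matters that the analogue of the complex-structure line is absent — the extra rotations of the (quaternionic) normal directions assemble into the \emph{non-abelian} factor $\mathfrak{sp}(1)$ rather than a central torus, and $\operatorname{Spin}(7)$ is simple. I would also flag that the bound is sharp, attained precisely in the complex hyperbolic case together with the isolated low-dimensional coincidence $\operatorname{SO}(3,1)_+ \cong \operatorname{PSL}_2(\mathbb{C})$; it is exactly this possible extra central circle that forces the later arguments to enlarge the transitivity group from $M$ to $MA$.
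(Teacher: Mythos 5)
Your proof is correct and follows essentially the same route as the paper: both arguments run through the classification of rank one groups and read off the center of $M$ case by case, the paper listing the groups $M$ explicitly ($\operatorname{SO}(n-1)$, $\operatorname{S}(\operatorname{U}(n-1)\times\operatorname{U}(1))$, $\operatorname{SO}(3)\times\operatorname{Sp}(n-1)$, $\operatorname{Spin}(7)$) while you perform the equivalent check at the Lie algebra level via $\mathfrak m = Z_{\mathfrak k}(\mathfrak a)$ and the bound $\dim Z(M) \le \dim \mathfrak z(\mathfrak m)$. Your reduction step and all four case computations are accurate, so this is just a slightly more self-contained packaging of the paper's own argument.
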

\noindent To see this we write down the group $M$ in every case that interests us (see \cite{kim_counting_2011, kim2003geometry}). If $G = \operatorname{SO}(n, 1)_+$, then $M$ is $\operatorname{SO}(n-1)$. The other cases are:

\begin{itemize} 
\item { if $G = \operatorname{Sp}(n, 1)$, then $M = \operatorname{SO} (3) \times \operatorname{Sp}(n-1)$;}
\item { if $G = \operatorname{SU}(n, 1)$, then 
\begin{eqnarray*}  M \negthickspace \negthickspace \negthickspace &=& \negthickspace \operatorname{S}(\operatorname{U}(n-1) \times \operatorname{U}(1))\\\negthickspace
&:=& \negthickspace  \left\lbrace S =  \left(   \begin{array}{ccc} R &0 &0\\ 0 &\alpha &0 \\ 0 & 0 & \alpha \end{array} \right): R \in \operatorname{U}(n-1) , \alpha \in \mathbb{C}, \mbox{ and } \det S = 1 \right\rbrace;  \end{eqnarray*}  
}  
\item{ and if $G = \mathrm{F}_4^{-20}$,  then $M = \operatorname{Spin}(7)$.}
\end{itemize} 
Of these groups, the only ones with non-discrete center are $\operatorname{S}(\operatorname{U}(n-1) \times \operatorname{U}(1))$ and $\operatorname{SO}(2)$, where the centers are one dimensional.

\subsection{Algebraic properties of the Bruhat decomposition.} 
For the rest of this section, a blackboard bold character (such as $\mathbb{G}$) will always refer to an $\mathbb{R}$-algebraic subgroup of $\mathrm{GL}_n(\mathbb{C})$. We will denote the real points of such a group by $\mathbb{G(R)}$. It will be necessary to use both the Zariski topology and the Euclidean topology and to switch regularly between the two. A subscript ``0'' will denote the identity component of a group in the Zariski topology, so for instance $\mathbb{G}_0$ will be the Zariski identity component of $\mathbb{G}$. A subscript ``+'' will mean the Euclidean identity component, e.g. $\mathbb{G(R)}_+$ is the Euclidean identity component in the real points of $\mathbb{G}$. The words open, closed, dense, compact, and so forth used without qualification will refer to the Euclidean topology. When referring to the Zariski topology we will be explicit.

All of the groups we've discussed so far are actually algebraic. More precisely, there is a Zariski connected $\mathbb{R}$-algebraic subgroup $\mathbb{G} < \mathrm{GL}_n(\mathbb{C})$ with $\mathbb{R}$-rank one and $G = \mathbb{G(R)}_+$ is the identity component in the group of real points. We denote by $\mathbb{K}$ the Zariski closure of $K$ in $\mathbb{G}$. Algebraicity of compact real matrix groups implies that $\mathbb{K}(\mathbb{R}) = K$. We choose a maximal $\mathbb{R}$-split torus $\mathbb{S} < \mathbb{G}$ such that $\mathbb{S(R)}_+ = A$. Let $\mathbb{M} = Z_\mathbb{K}(\mathbb{S})$ and again note that  $M = \mathbb{M(R)} < K$. Finally let $\mathbb{N}^-$ be the negative root group of $\mathbb{S}$ and $\mathbb{N}^+$ the opposite horospherical group. Once again we have $N^- = \mathbb{N^-(R)}$ and $N^+ = \mathbb{N^+(R)}$. 

Let $\mathbb{P}$ be the minimal $\mathbb{R}$-parabolic subgroup of $\mathbb{G}$ containing $\mathbb{S}$ and $\mathbb{N^-}$, and let $\mathbb{W} = \mathbb{N^+.P} \subset \mathbb{G}$. The Bruhat decomposition tells us that $\mathbb{W}$ is Zariski open in $\mathbb{G}$ and that the product map
\[ \mathbb{N^+ \times P} \rightarrow \mathbb{W}\]
is an $\mathbb{R}$-isomorphism of varieties. Now let $\mathbb{L} = Z_\mathbb{G}(\mathbb{S})$ be the centralizer of $\mathbb{S}$ in $\mathbb{G}$. This is a Levi factor for $\mathbb{P}$, so the product map
\begin{equation} \mathbb{W} = \mathbb{N^+ \times L \times N^-} \label{eqn1} \end{equation}
is also an $\mathbb{R}$-isomorphism of varieties. We begin by identifying the Levi factor.

\begin{lemma} \label{L2.3}
We have $\mathbb{L}(\mathbb{R})_+ = M_+A$ and $\mathbb{L} = \mathbb{M}_0.\mathbb{S}$.
\end{lemma}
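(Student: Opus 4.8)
The plan is to prove the two claimed identities $\mathbb{L}(\mathbb{R})_+ = M_+ A$ and $\mathbb{L} = \mathbb{M}_0 . \mathbb{S}$ by exploiting the fact that $\mathbb{L} = Z_{\mathbb{G}}(\mathbb{S})$ is the centralizer of a maximal $\mathbb{R}$-split torus, hence a Levi factor whose structure is governed by the rank-one hypothesis. First I would recall the standard structure theory: for a reductive group the centralizer of a maximal $\mathbb{R}$-split torus $\mathbb{S}$ decomposes as the almost-direct product of $\mathbb{S}$ with the anisotropic kernel, and in the $\mathbb{R}$-rank one case $\mathbb{S}$ is one-dimensional. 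The anisotropic kernel is precisely $\mathbb{M} = Z_{\mathbb{K}}(\mathbb{S})$ intersected appropriately, so the natural candidate for the Levi identity is $\mathbb{L} = \mathbb{M}_0 . \mathbb{S}$, where the product is almost direct (the two factors meet in a finite central subgroup). To establish this I would compare Lie algebras: at the level of $\mathfrak{g}$, the centralizer $\mathfrak{z}_{\mathfrak{g}}(\mathfrak{s})$ is the zero root space, which splits as $\mathfrak{m} \oplus \mathfrak{a}$ by the restricted root space decomposition, giving equality of Lie algebras $\mathrm{Lie}(\mathbb{L}) = \mathrm{Lie}(\mathbb{M}) \oplus \mathrm{Lie}(\mathbb{S})$ and hence, since $\mathbb{M}_0.\mathbb{S} \subseteq \mathbb{L}$ with matching dimension and $\mathbb{L}$ Zariski connected, equality of the algebraic groups.

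For the real-points identity I would argue as follows. The inclusion $M_+ A \subseteq \mathbb{L}(\mathbb{R})_+$ is immediate, since $A = \mathbb{S}(\mathbb{R})_+$ and $M = \mathbb{M}(\mathbb{R})$ both centralize $\mathbb{S}$, so their product lies in the Euclidean identity component of the real points of $\mathbb{L}$. For the reverse inclusion I would use the Cartan-type decomposition of the reductive real group $L := \mathbb{L}(\mathbb{R})_+$: a connected reductive Lie group factors as the product of its maximal compact subgroup and the exponential of the $(-1)$-eigenspace of the Cartan involution. Here the relevant $(-1)$-eigenspace inside $\mathrm{Lie}(\mathbb{L})$ is exactly $\mathfrak{a} = \mathrm{T}_e A$, because $\mathfrak{m} \subseteq \mathfrak{k}$ lies in the $(+1)$-eigenspace, so $L = (K \cap L)_+ \cdot \exp(\mathfrak{a}) = M_+ A$, using that the maximal compact of $L$ is the Euclidean identity component of $Z_K(A) = M$, which is $M_+$.

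The key verification, and what I expect to be the main obstacle, is confirming that the maximal compact subgroup of $L$ is precisely $M_+$ rather than something larger. This requires knowing that $Z_{\mathbb{G}}(\mathbb{S})(\mathbb{R})$ contains no compact directions beyond those already in $K$; equivalently, that the anisotropic kernel of $\mathbb{L}$ is $\mathbb{M}$. I would handle this by invoking the lemma proved earlier that $Z_K(A) = M$, combined with the restricted root space computation showing $\dim \mathfrak{l} = \dim \mathfrak{m} + 1$. One subtlety to watch is the distinction between $M$ and its identity component $M_+$, and between $\mathbb{M}$ and $\mathbb{M}_0$: the real-points statement correctly uses $M_+$ (the Euclidean identity component) because passing to Euclidean identity components can lose the disconnected pieces of $M$, while the algebraic statement uses $\mathbb{M}_0$ (the Zariski identity component). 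I would make sure the almost-direct product $\mathbb{M}_0 . \mathbb{S}$ accounts for the finite intersection $\mathbb{M}_0 \cap \mathbb{S}$, which lies in the center and does not affect the dimension count, so the equality $\mathbb{L} = \mathbb{M}_0 . \mathbb{S}$ follows from dimension and Zariski-connectedness as sketched.
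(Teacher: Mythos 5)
Your proof is correct, but it runs the argument in the opposite direction from the paper and with heavier machinery. The paper proves the real-points identity first and infinitesimally: since $M_+A \subset \mathbb{L}(\mathbb{R})_+$ and $\mathrm{T}_e\mathbb{L}(\mathbb{R})_+ = Z_{\mathfrak{g}}(\hat v_0) = \mathfrak{m}\oplus\mathfrak{a} = \mathrm{T}_e(M_+A)$, equality follows from connectedness alone --- no global Cartan decomposition and no identification of the maximal compact of $L$ is needed. It then deduces the algebraic identity \emph{from} the real one: $\mathbb{M}_0.\mathbb{S}$ is Zariski closed and contains $M_+A = \mathbb{L}(\mathbb{R})_+$, which is Zariski dense in $\mathbb{L}$, so $\mathbb{L} \subset \mathbb{M}_0.\mathbb{S}$. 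You instead prove $\mathbb{L} = \mathbb{M}_0.\mathbb{S}$ directly, via the restricted root space computation, a dimension count, and the fact that the centralizer of a torus in a connected group is connected, and then obtain the real statement from the global Cartan decomposition of the reductive group $L = \mathbb{L}(\mathbb{R})_+$. Both routes rest on comparable standard facts (the paper's Zariski-density step also implicitly uses connectedness of $\mathbb{L}$, since a connected group has no proper finite-index closed subgroups); what yours buys is extra structural information --- it exhibits $M_+$ as the maximal compact of $L$ with vector part $A$, and makes the algebraic identity independent of the real-points computation --- while the paper's route is more economical, reducing everything to the single Lie algebra identity $Z_{\mathfrak{g}}(\hat v_0) = \mathfrak{m}\oplus\mathfrak{a}$. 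One small remark: you invoke finiteness of $\mathbb{M}_0\cap\mathbb{S}$, which the paper only proves in the lemma immediately following this one; there is no circularity, however, because for your dimension count it suffices that $\mathrm{Lie}(\mathbb{M})\cap\mathrm{Lie}(\mathbb{S}) = 0$, which is immediate from $\mathfrak{m}\subset\mathfrak{k}$ and $\mathfrak{a}\subset\mathfrak{p}$ (and this in turn already forces the intersection to be finite). Do make the step ``the maximal compact of $L$ is $M_+$'' explicit if you write this up: one needs $K\cap L \subset Z_K(A) = M$ (the paper's earlier lemma) together with connectedness of $K\cap L$, which follows since the Cartan decomposition retracts the connected group $L$ onto it.
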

\begin{proof}
Clearly $M_+A \subset \mathbb{L}(\mathbb{R})_+$. On the other hand 
\begin{eqnarray*} \mathrm{T}_e\mathbb{L(R)}_+ &=& Z_{\mathfrak{g}}(\hat  v_0)\\
&=& \mathrm{T}_e(M_+A), \end{eqnarray*}
and the first claim follows. For the second claim we note that  $\mathbb{M}_0.\mathbb{S} \subset \mathbb{L}$. On the other hand $\mathbb{M}_0.\mathbb{S}$ is Zariski closed, and $\mathbb{L(R)}_+ \subset \mathbb{L}$ is Zariski dense, so $\mathbb{L} \subset \mathbb{M}_0.\mathbb{S}$ as expected. 
\end{proof}

\begin{lemma} \label{L2.4}
The group $M$ is connected.
\end{lemma}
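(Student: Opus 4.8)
The plan is to bypass the case-by-case classification and argue uniformly, using only that $\tilde X$ has rank one. The rank-one hypothesis forces $K$ to act transitively on the unit sphere of $\mathfrak p$; since $M = \operatorname{stab}_K(v_0)$, this exhibits $K/M$ as a sphere, and connectivity of $M$ then drops out of the long exact homotopy sequence of the fibration $M \to K \to K/M$.

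Concretely, I would proceed as follows. First, $K$ is connected, being a maximal compact subgroup of the connected Lie group $G = \mathbb G(\mathbb R)_+$. Second, since $G$ acts transitively on $\mathrm{T}^1 \tilde X = G/M$ and $K = \operatorname{stab}_G(o)$, the group $K$ acts transitively on the fibre $\mathrm{T}^1_o \tilde X$, i.e.\ on the unit sphere of $\mathrm{T}_o\tilde X \cong \mathfrak p$, with stabilizer of $v_0$ equal to $M$. Hence the orbit map is a diffeomorphism
\[ K/M \;\cong\; \mathrm{T}^1_o\tilde X \;\cong\; S^{d-1}, \qquad d := \dim \tilde X. \]
Because we exclude finite covers of $\operatorname{PSL}_2(\mathbb R)$ and only allow $\operatorname{SO}(n,1)_+$ with $n \ge 3$, in every remaining case $d \ge 3$, so $\pi_1(S^{d-1}) = 0$. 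Finally, $K \to K/M$ is a principal $M$-bundle, hence a fibration, and its long exact homotopy sequence contains the segment
\[ \pi_1(K/M) \longrightarrow \pi_0(M) \longrightarrow \pi_0(K). \]
Since $\pi_1(K/M) = \pi_1(S^{d-1}) = 0$ and $\pi_0(K) = 0$, exactness forces $\pi_0(M) = 0$; that is, $M$ is connected.

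I expect the only delicate point to be the identification $K/M \cong S^{d-1}$ together with the bound $d \ge 3$: this is exactly where rank one and the exclusion of the two-dimensional space $\mathbb H^2_{\mathbb R}$ are used, and it is sharp, since for $\operatorname{SL}_2(\mathbb R)$ one has $d = 2$ and $M = \lbrace \pm I \rbrace$ disconnected, matching $\pi_1(S^1) = \mathbb Z$. As a consistency check, connectivity can also be read off the explicit list of the previous Fact: $\operatorname{SO}(n-1)$, $\operatorname{SO}(3)\times\operatorname{Sp}(n-1)$ and $\operatorname{Spin}(7)$ are manifestly connected, while the projection $\operatorname{S}(\operatorname{U}(n-1)\times\operatorname{U}(1)) \to \operatorname{U}(n-1)$ onto the $R$-block is a double cover whose monodromy along a generator of $\pi_1(\operatorname{U}(n-1))$ swaps the two sheets, so that its total space is connected as well.
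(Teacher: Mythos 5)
Your proof is correct, and it takes a genuinely different route from the paper's. The paper stays inside the algebraic-group framework it has just set up: by the relative Bruhat decomposition the complement of the big cell $\mathbb{W}(\mathbb{R})$ in $\mathbb{G}(\mathbb{R})$ is a copy of $\mathbb{P}(\mathbb{R})$ of codimension at least two, so $G \cap \mathbb{W}(\mathbb{R}) = \mathbb{W}(\mathbb{R})_+ = N^+ \times \mathbb{L}(\mathbb{R})_+ \times N^-$; since $M \subset G \cap \mathbb{L}(\mathbb{R}) \subset \mathbb{W}(\mathbb{R})_+$, uniqueness of the Bruhat coordinates places $M$ inside $\mathbb{L}(\mathbb{R})_+ = M_+A$ (Lemma \ref{L2.3}), whence $M = M_+$. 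You instead argue purely topologically from rank-one geometry: $K$ connected, $K/M \cong S^{d-1}$ simply connected once $d \ge 3$, and exactness of $\pi_1(K/M) \to \pi_0(M) \to \pi_0(K)$ in the homotopy sequence of the principal bundle (which is valid at the level of pointed sets, so your conclusion $\pi_0(M)=0$ is legitimate). Notably, both proofs hinge on the same numerical fact and fail identically for covers of $\operatorname{PSL}_2(\mathbb{R})$: there the codimension of $\mathbb{P}(\mathbb{R})$, respectively the dimension of the sphere $K/M$, drops to one, matching the disconnected $M = \lbrace \pm I \rbrace$ in $\operatorname{SL}_2(\mathbb{R})$ — your observation that the bound is sharp is exactly the paper's remark that this is where $\operatorname{PSL}_2(\mathbb{R})$ must be excluded. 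Your route is more elementary and avoids the algebraic machinery, though the one ingredient you should justify or cite is the transitivity of $K$ on the unit sphere of $\mathfrak{p}$: it follows from the Cartan decomposition $\mathfrak{p} = \operatorname{Ad}(K)\mathfrak{a}$ together with the fact that the restricted Weyl group of a rank-one group acts on $\mathfrak{a} \cong \mathbb{R}$ by $\pm 1$ (the paper asserts the equivalent transitivity of $G$ on $\mathrm{T}^1\tilde X$ in Section \ref{sec1} as background). What the paper's route buys is economy within its own architecture: the Bruhat apparatus ($\zeta$-coordinates, Lemma \ref{L2.3}, the product decomposition of the big cell) is needed anyway for Proposition \ref{dense in M}, so connectedness of $M$ comes essentially for free there, and it feeds directly into the corollary that $N^+ \times M \times A \times N^- \to \mathbb{W}(\mathbb{R}) \cap G$ is a diffeomorphism. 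Your closing consistency check is also correct, including the monodromy computation exhibiting $\operatorname{S}(\operatorname{U}(n-1)\times\operatorname{U}(1))$ as a connected double cover of $\operatorname{U}(n-1)$.
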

\begin{remark} This is the stage at which finite covers of $\operatorname{PSL}_2(\mathbb{R})$ must be handled separately.
\end{remark}
\begin{proof}
By the relative Bruhat decomposition, the complement
\[ \mathbb{G}(\mathbb{R}) \setminus \mathbb{W}(\mathbb{R})  \simeq \mathbb{P}(\mathbb{R})\]
has codimension at least two in $\mathbb{G}(\mathbb{R})$. It follows that 
\[ \mathbb{W}(\mathbb{R})_+ = G \cap \mathbb{W}(\mathbb{R}).\]
As a consequence of this we have
\begin{eqnarray*} M &\subset& G \cap \mathbb{L}(\mathbb{R})\\
& \subset & \mathbb{W}(\mathbb{R})_+\\
& =&  N^+ \times \mathbb{L}(\mathbb{R})_+ \times N^-. \end{eqnarray*}
Thus $M \subset \mathbb{L}(\mathbb{R})_+$ by uniqueness properties of the decomposition (\ref{eqn1}), and $M \subset M_+$ by Lemma \ref{L2.3}.
\end{proof}
\begin{corollary}
The product map
\[ N^+ \times M \times A \times N^- \rightarrow \mathbb{W}(\mathbb{R}) \cap G\]
is a diffeomorphism.
\end{corollary}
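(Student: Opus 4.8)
The plan is to assemble the diffeomorphism from the pieces already in hand. The Bruhat decomposition (\ref{eqn1}) gives that the product map $\mathbb{N^+ \times L \times N^-} \rightarrow \mathbb{W}$ is an $\mathbb{R}$-isomorphism of varieties; restricting to real points, the product map $\mathbb{N^+}(\mathbb{R}) \times \mathbb{L}(\mathbb{R}) \times \mathbb{N^-}(\mathbb{R}) \rightarrow \mathbb{W}(\mathbb{R})$ is a diffeomorphism. Intersecting with $G$, I would use the relation $\mathbb{W}(\mathbb{R})_+ = G \cap \mathbb{W}(\mathbb{R})$ established in the proof of Lemma \ref{L2.4}, together with the fact that $N^+ = \mathbb{N^+}(\mathbb{R})$ and $N^- = \mathbb{N^-}(\mathbb{R})$ are already connected (being unipotent real groups, hence Euclidean-connected), so the only factor that needs care is the Levi factor $\mathbb{L}(\mathbb{R})$.

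First I would identify the Levi contribution inside $G$. By Lemma \ref{L2.3} we have $\mathbb{L}(\mathbb{R})_+ = M_+ A$, and by Lemma \ref{L2.4} we have $M = M_+$, so $\mathbb{L}(\mathbb{R})_+ = MA$. Since the two horospherical factors are connected, the preimage of $\mathbb{W}(\mathbb{R}) \cap G = \mathbb{W}(\mathbb{R})_+$ under the real product map is exactly $N^+ \times (\mathbb{L}(\mathbb{R}) \cap G)$ sandwiched appropriately, and the relevant component of $\mathbb{L}(\mathbb{R})$ meeting $G$ is $\mathbb{L}(\mathbb{R})_+ = MA$. Thus the diffeomorphism $\mathbb{N^+}(\mathbb{R}) \times \mathbb{L}(\mathbb{R}) \times \mathbb{N^-}(\mathbb{R}) \rightarrow \mathbb{W}(\mathbb{R})$ restricts to a diffeomorphism $N^+ \times MA \times N^- \rightarrow \mathbb{W}(\mathbb{R}) \cap G$.

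It then remains to split $MA$ as $M \times A$ smoothly. Here I would invoke the fact that $M$ is compact and $A \cong \mathbb{R}$ is simply connected, with $M \cap A = \{e\}$ (since $A \subset \mathbb{S}(\mathbb{R})_+$ is split while $M \subset K$ is compact, their intersection is trivial), and that $A$ centralizes $M$ by construction (recall $A < Z_G(M)$ from the introduction, equivalently $M = Z_K(A)$). Consequently the product map $M \times A \rightarrow MA$ is a group isomorphism onto its image and a diffeomorphism. Composing with the restricted Bruhat diffeomorphism yields that $N^+ \times M \times A \times N^- \rightarrow \mathbb{W}(\mathbb{R}) \cap G$ is a diffeomorphism.

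I expect the only genuine subtlety to be the bookkeeping around connected components: verifying that intersecting $\mathbb{W}(\mathbb{R})$ with $G$ does not produce extra components in the Levi factor beyond $MA$, and that the unipotent factors contribute no disconnection. Both points are secured by the codimension-two complement argument of Lemma \ref{L2.4} (which forces $\mathbb{W}(\mathbb{R}) \cap G = \mathbb{W}(\mathbb{R})_+$) and by the connectedness of $N^\pm$ and of $M$; once these are in place the result is a formal consequence of the variety isomorphism (\ref{eqn1}) restricted to real points. No hard computation is needed, only a careful application of the preceding lemmas.
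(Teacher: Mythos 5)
Your proposal is correct and takes essentially the same route as the paper: both reduce the claim to the real-points restriction of the Bruhat isomorphism (\ref{eqn1}), the identity $\mathbb{W}(\mathbb{R})_+ = G \cap \mathbb{W}(\mathbb{R})$ secured by the codimension-two argument in the proof of Lemma \ref{L2.4}, and the identification $M \times A \cong \mathbb{L}(\mathbb{R})_+$ from Lemmas \ref{L2.3} and \ref{L2.4}. The paper's two-line proof leaves the component bookkeeping implicit; you have simply spelled it out.
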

\begin{proof}
Lemmas \ref{L2.3}  and \ref{L2.4} imply that $M \times A \rightarrow \mathbb{L}(\mathbb{R})_+$ is a diffeomorphism. The claim now follows by the decomposition (\ref{eqn1}). 
\end{proof}
We're now in a position to state one of the two technical facts we need from this section. Let $W = \mathbb{W}(\mathbb{R})_+$. We write
\[ \zeta = \zeta_{N^+} \times \zeta_M \times \zeta_A \times \zeta_{N^-}: W \rightarrow N^+ \times M\times A\times N^-. \]
for the inverse to the multiplication map.
 
\begin{proposition} \label{dense in M}
Suppose that $\Gamma < G$ is Zariski dense. Then $\zeta_M(\Gamma \cap W) $ is Zariski dense in $M$, and generates a dense subgroup $\langle \zeta_M(\Gamma \cap W)\rangle < M$. \end{proposition}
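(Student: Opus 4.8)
The plan is to establish Zariski density of the \emph{set} $\zeta_M(\Gamma \cap W)$ first, and then upgrade this to Euclidean density of the \emph{subgroup} it generates by a dimension count combined with connectedness of $M$ (Lemma \ref{L2.4}). The geometric input is that $\zeta_M$ is, up to the central torus $\mathbb{S}$, the restriction of a regular projection. Concretely, the decomposition (\ref{eqn1}) gives a regular surjection $\mathbb{W} \to \mathbb{L}$, and since $\mathbb{S}$ is central in the Levi $\mathbb{L} = \mathbb{M}_0.\mathbb{S}$ (Lemma \ref{L2.3}), composing with $\mathbb{L} \to \mathbb{L}/\mathbb{S}$ yields a regular surjection $p : \mathbb{W} \to \mathbb{L}/\mathbb{S}$. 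The composite $\mathbb{M}_0 \hookrightarrow \mathbb{L} \to \mathbb{L}/\mathbb{S}$ is a surjective homomorphism with kernel $\mathbb{M}_0 \cap \mathbb{S}$; as $\mathbb{S}$ is a one-dimensional split torus while $\mathbb{M}_0 \subseteq \mathbb{K}$ is of compact type (if $\mathbb{S} \subseteq \mathbb{M}_0$ then $A \subseteq K$, which is absurd), this kernel is a proper, hence finite, subgroup of $\mathbb{S}$, so $q : \mathbb{M}_0 \to \mathbb{L}/\mathbb{S}$ is a finite central isogeny.

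For Zariski density I would first note that $\Gamma \cap W = \Gamma \cap \mathbb{W}$: indeed $\mathbb{W}(\mathbb{R})_+ = G \cap \mathbb{W}(\mathbb{R})$ (the codimension $\geq 2$ observation from the proof of Lemma \ref{L2.4}), and $\Gamma \subseteq G$ consists of real points. Since $\mathbb{W}$ is Zariski open in the irreducible group $\mathbb{G}$ and $\Gamma$ is Zariski dense, $\Gamma \cap \mathbb{W}$ is Zariski dense in $\mathbb{G}$, hence in $\mathbb{W}$. Applying the surjective regular map $p$, the image $p(\Gamma \cap W)$ is Zariski dense in $\mathbb{L}/\mathbb{S}$. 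For $\gamma \in \Gamma \cap W$ the corollary following Lemma \ref{L2.4} gives $\zeta_L(\gamma) = \zeta_M(\gamma)\zeta_A(\gamma)$ with $\zeta_A(\gamma) \in A \subseteq \mathbb{S}$, so $p(\gamma) = q(\zeta_M(\gamma))$; thus $q(\zeta_M(\Gamma\cap W))$ is Zariski dense in $\mathbb{L}/\mathbb{S}$. Writing $Z = \overline{\zeta_M(\Gamma\cap W)}^{\,Z} \subseteq \mathbb{M}_0$ (note $\zeta_M$ lands in $M \subseteq \mathbb{M}_0$), finiteness of $q$ makes $q(Z)$ Zariski closed and dense, so $q(Z) = \mathbb{L}/\mathbb{S}$; since $q$ is finite and $\mathbb{M}_0$ is irreducible, $\dim Z = \dim(\mathbb{L}/\mathbb{S}) = \dim \mathbb{M}_0$ forces $Z = \mathbb{M}_0$. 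Hence $\zeta_M(\Gamma \cap W)$ is Zariski dense in $M$.

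The generated subgroup $\langle \zeta_M(\Gamma \cap W)\rangle$ then has the same Zariski closure $\mathbb{M}_0$, and so does its Euclidean closure $H$, which is a compact subgroup of $M$ (we have $\langle \zeta_M(\Gamma \cap W)\rangle \subseteq H \subseteq M \subseteq \mathbb{M}_0$, the last inclusions since $M$ is connected, so $\overline{H}^{\,Z} = \mathbb{M}_0$). Because a compact linear group is the compact real form of its Zariski closure, $\dim_{\mathbb{R}} H = \dim_{\mathbb{C}}\mathbb{M}_0 = \dim_{\mathbb{R}} M$, so $H$ is open in $M$; being also closed and $M$ connected (Lemma \ref{L2.4}), we conclude $H = M$, i.e. $\langle \zeta_M(\Gamma\cap W)\rangle$ is dense in $M$.

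The main obstacle, and the reason for routing through $\mathbb{L}/\mathbb{S}$, is that $\zeta_M$ is not itself a regular map into $\mathbb{M}_0$: one can only project regularly onto the Levi $\mathbb{L}$, and disentangling the $M$-part from the central $A$-part is exactly what the quotient by $\mathbb{S}$ and the isogeny $q$ accomplish. The point requiring the most care is the passage from Zariski density in $\mathbb{L}/\mathbb{S}$ back to $\mathbb{M}_0$ across the finite kernel, which I handle by the dimension argument above rather than by attempting to lift $q$; a second, more routine, point is the standard comparison $\dim_{\mathbb{R}} H = \dim_{\mathbb{C}}\overline{H}^{\,Z}$ for compact groups used in the final step.
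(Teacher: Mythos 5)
Your proof is correct, and it departs from the paper's at the key middle step: how Zariski density in the Levi is disentangled into density in $\mathbb{M}_0$. The paper goes \emph{up} rather than \emph{down}: after observing, as you do, that $\Gamma \cap W = \Gamma \cap \mathbb{W}$ is Zariski dense in $\mathbb{W}$ and projecting to $\mathbb{L}$, it applies a constructibility lemma (Lemma \ref{L2.7}) to conclude that the pre-image of $\zeta_{\mathbb{L}}(\Gamma \cap W)$ under the product map $\mathbb{M}_0 \times \mathbb{S} \rightarrow \mathbb{L}$ is Zariski dense in $\mathbb{M}_0 \times \mathbb{S}$; that pre-image is a finite union, indexed by $\mathbb{M}_0 \cap \mathbb{S}$, of translated copies of the graph $\lbrace (\zeta_M(\gamma), \zeta_A(\gamma)) \rbrace$, and irreducibility of $\mathbb{M}_0 \times \mathbb{S}$ forces each copy --- in particular the $g = e$ copy --- to be dense, whence density of $\zeta_M(\Gamma \cap W)$ upon projecting to the first factor. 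You instead pass to the quotient $\mathbb{L}/\mathbb{S}$ and run the finite isogeny $q: \mathbb{M}_0 \rightarrow \mathbb{L}/\mathbb{S}$ backwards, using closedness of finite morphisms and a dimension count; this is in effect a dualization of the paper's argument that trades the pre-image lemma and the finite-union/irreducibility trick for the existence of algebraic quotients and properness of finite morphisms. Both routes pivot on the same finiteness of $\mathbb{M}_0 \cap \mathbb{S}$, which the paper isolates as a lemma (proved via compactness of the real points) and which you re-derive more directly and, arguably, more transparently: it is a proper Zariski-closed subgroup of a one-dimensional split torus, properness being forced since $\mathbb{S} \subseteq \mathbb{M}_0 \subseteq \mathbb{K}$ would put $A$ inside $K$. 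For the second assertion the paper has a one-line endgame --- the Euclidean closure of the generated subgroup is simultaneously Zariski closed (algebraicity of compact matrix groups) and Zariski dense, hence equals $M$ --- while your dimension-plus-openness argument is a slightly longer rendering of the same underlying fact; connectedness of $M$ (Lemma \ref{L2.4}) enters both versions. All the steps you flag as delicate (closedness of $q(Z)$, the comparison $\dim_{\mathbb{R}} H = \dim_{\mathbb{C}} \overline{H}$ for compact groups) are handled correctly.
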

Before proving this we need two lemmas. 
\begin{lemma}
The intersection  $\mathbb{M}_0 \cap \mathbb{S}$ is finite. 
\end{lemma}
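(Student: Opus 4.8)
The plan is to exploit the rank-one hypothesis, which forces $\mathbb{S}$ to be one-dimensional, together with the compactness of $K = \mathbb{K}(\mathbb{R})$. Since $\mathbb{S}$ is an $\mathbb{R}$-split torus of dimension one it is isomorphic to $\mathbb{G}_m$, and every proper Zariski closed subgroup of $\mathbb{G}_m$ is finite (zero-dimensional). As $\mathbb{M}_0 \cap \mathbb{S}$ is a Zariski closed subgroup of $\mathbb{S}$, it therefore suffices to rule out the equality $\mathbb{M}_0 \cap \mathbb{S} = \mathbb{S}$; equivalently, to show that $\mathbb{S}$ is not contained in $\mathbb{M}_0$.

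To do this I would first record the chain of containments $\mathbb{M}_0 \subset \mathbb{M} = Z_\mathbb{K}(\mathbb{S}) \subset \mathbb{K}$. Suppose, for contradiction, that $\mathbb{S} \subset \mathbb{M}_0$. Then $\mathbb{S} \subset \mathbb{K}$, and passing to real points gives $\mathbb{S}(\mathbb{R}) \subset \mathbb{K}(\mathbb{R}) = K$. But $A = \mathbb{S}(\mathbb{R})_+$ is a non-compact one-parameter subgroup (its projection to $\tilde X$ is a complete geodesic), whereas $K$ is compact. This contradiction shows $\mathbb{S} \not\subset \mathbb{M}_0$.

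Consequently $\mathbb{M}_0 \cap \mathbb{S}$ is a proper Zariski closed subgroup of the one-dimensional torus $\mathbb{S}$, hence finite, which is the claim. The conceptual content is simply that a split torus cannot embed into a compact group.

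I do not anticipate a genuine obstacle here; the only point requiring mild care is that $\mathbb{M}_0$, being the identity component of an $\mathbb{R}$-group, is itself an $\mathbb{R}$-subgroup (automatic in characteristic zero), so that the intersection $\mathbb{M}_0 \cap \mathbb{S}$ is $\mathbb{R}$-defined and the passage to real points is legitimate. Finiteness is in any case a geometric (dimension) property, so the argument is insensitive to the field of definition.
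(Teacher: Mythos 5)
Your proof is correct, and it takes a genuinely different (and shorter) route than the paper's, though both rest on the same underlying tension: a nontrivial split torus cannot have its real points inside the compact group $K = \mathbb{K}(\mathbb{R})$. The paper sets $\mathbb{H} = \mathbb{M}_0 \cap \mathbb{S}$, notes that $\mathbb{H}(\mathbb{R})$ is compact while every element of $\mathbb{S}(\mathbb{R})$ other than $\pm 1$ generates a non-compact subgroup, so that $\mathbb{H}_0(\mathbb{R})$ has at most two elements; it then invokes Zariski density of the real points of the connected $\mathbb{R}$-group $\mathbb{H}_0$ (a genuinely nontrivial input, via unirationality of connected groups over the infinite perfect field $\mathbb{R}$) to conclude $\mathbb{H}_0$ is trivial, and finishes with finiteness of the component group of $\mathbb{H}$. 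You instead use the rank-one hypothesis head-on: $\mathbb{S} \simeq \mathbb{G}_m$ is an irreducible curve, so any proper Zariski closed subgroup (indeed any proper closed subset) is automatically finite, and the only case left to exclude is $\mathbb{S} \subset \mathbb{M}_0 \subset \mathbb{K}$, which the non-compactness of $A = \mathbb{S}(\mathbb{R})_+$ rules out at the level of real points. What your route buys is the complete avoidance of the density-of-real-points fact --- you never need to know anything about the real points of the intersection itself, which a priori could be small. What you give up is generality: the paper's argument transfers verbatim to a higher-rank split torus, since the elements of $(\mathbb{R}^\times)^k$ generating bounded subgroups form the finite set $\lbrace \pm 1 \rbrace^k$, whereas your dimension count is special to $\dim \mathbb{S} = 1$ (which is all that is needed here). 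Your closing remark about $\mathbb{M}_0$ being defined over $\mathbb{R}$ is accurate but, as you yourself observe, dispensable: finiteness is a geometric property, and your contradiction step only uses containments of real points, which hold regardless of fields of definition.
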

\begin{proof}
Write $\mathbb{H}$ for the intersection $\mathbb{M}_0 \cap \mathbb{S}$. We note that $\mathbb{H(R)} \subset \mathbb{M}_0(\mathbb{R})$ is compact. But every element of $\mathbb{S(R)}$ other than $\pm1$ generates a non-compact subgroup of $\mathbb{G(R)}_+$. It follows that $\mathbb{H}_0(\mathbb{R}) \subset \mathbb{H(R)}$ has at most two elements. The real points in $\mathbb{H}_0$ are Zariski dense, so $\mathbb{H}_0$ is finite, and therefore trivial.  Finally note that $\mathbb{H}_0$ has finite index in $\mathbb{H}$ itself, so we conclude that $\mathbb{M}_0 \cap \mathbb{S} = \mathbb{H}$ is also finite. 
\end{proof}
\begin{lemma} \label{L2.7} Let 
\[p:  \mathbb{M}_0 \times \mathbb{S} \rightarrow \mathbb{L}\] 
be the product map and suppose that $\Gamma \subset \mathbb{L}$ is Zariski dense. Then the pre-image $p^{-1}(\Gamma) \subset \mathbb{M}_0 \times \mathbb{S}$ is Zariski dense. 
\end{lemma}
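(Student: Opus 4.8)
The plan is to exploit the fact that $p$ is a surjective homomorphism of algebraic groups together with the standard fact that a morphism of algebraic groups has Zariski closed image. First I would record the structure of $p$. Because $\mathbb{M} = Z_{\mathbb{K}}(\mathbb{S})$ centralizes $\mathbb{S}$ and $\mathbb{M}_0 \subset \mathbb{M}$, the subgroups $\mathbb{M}_0$ and $\mathbb{S}$ commute, so the product map $p : \mathbb{M}_0 \times \mathbb{S} \to \mathbb{L}$ is a homomorphism of algebraic groups. By Lemma \ref{L2.3} we have $\mathbb{L} = \mathbb{M}_0.\mathbb{S}$, so $p$ is surjective, and its kernel is the antidiagonal copy $\lbrace (h, h^{-1}) : h \in \mathbb{M}_0 \cap \mathbb{S} \rbrace$ of $\mathbb{M}_0 \cap \mathbb{S}$, which is finite by the previous lemma. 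Set $\Delta = p^{-1}(\Gamma)$; this is a subgroup of $\mathbb{M}_0 \times \mathbb{S}$, and I write $Z$ for its Zariski closure, which is an algebraic subgroup.

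The first substantive step is to show that $p(Z) = \mathbb{L}$. Since $p$ restricts to a homomorphism of algebraic groups $Z \to \mathbb{L}$, its image $p(Z)$ is Zariski closed (the image of a morphism of algebraic groups is a closed subgroup). On the other hand, surjectivity of $p$ gives $p(\Delta) = p(p^{-1}(\Gamma)) = \Gamma$, so $p(Z) \supseteq p(\Delta) = \Gamma$. As $p(Z)$ is Zariski closed and $\Gamma$ is Zariski dense in $\mathbb{L}$ by hypothesis, we conclude $p(Z) = \mathbb{L}$.

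The second step is to upgrade this to $Z = \mathbb{M}_0 \times \mathbb{S}$. The key observation is that $\ker p = p^{-1}(e) \subseteq p^{-1}(\Gamma) = \Delta \subseteq Z$, so $Z$ already contains the kernel of $p$. For any subgroup $Z$ containing $\ker p$ one has the elementary identity $p^{-1}(p(Z)) = (\ker p)\cdot Z = Z$. Applying this with $p(Z) = \mathbb{L}$ yields $Z = p^{-1}(\mathbb{L}) = \mathbb{M}_0 \times \mathbb{S}$, which is precisely the assertion that $p^{-1}(\Gamma) = \Delta$ is Zariski dense.

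I expect the only point requiring genuine care — and the main potential obstacle — to be the claim that the image of a morphism of algebraic groups is Zariski closed, together with the convention about real versus complex points: here one works in the ambient complex variety, where this closedness is standard, since a constructible subgroup is automatically closed. It is worth noting that finiteness of $\ker p$ from the previous lemma, while conceptually reassuring (it makes $p$ an isogeny), is not logically required by the argument; all that is used is the inclusion $\ker p \subseteq \Delta$, which holds automatically because $\Delta$ is a subgroup and contains $p^{-1}(e)$.
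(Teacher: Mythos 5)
Your argument is internally coherent, but it proves a different statement from the one the paper needs: every substantive step uses that $\Gamma$ is a \emph{subgroup} of $\mathbb{L}$, and that hypothesis is not available. You invoke the group structure three times: to know that the Zariski closure $Z$ of $\Delta = p^{-1}(\Gamma)$ is an algebraic subgroup, to know that $p(Z)$ is Zariski closed as the image of a homomorphism of algebraic groups, and to run the kernel-absorption identity $p^{-1}(p(Z)) = (\ker p)\cdot Z = Z$ (which also needs $e \in \Gamma$ to get $\ker p \subseteq \Delta$). In the lemma, however, $\Gamma \subset \mathbb{L}$ is only a Zariski dense \emph{subset} --- the letter $\Gamma$ is overloaded here, and the paper's global convention that $\Gamma$ denotes a discrete subgroup of $G$ does not apply, since such a group does not sit inside $\mathbb{L}$. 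Indeed, in the lemma's sole application (the proof of Proposition \ref{dense in M}), it is applied with ``$\Gamma$'' equal to $\zeta_{\mathbb{L}}(\Gamma \cap W)$, the set of Levi components of the Bruhat decompositions of elements of $\Gamma$. This set is not a subgroup: $\zeta_{\mathbb{L}}$ is not multiplicative across the Bruhat factors, and products of elements of $\Gamma \cap W$ need not even lie in $W$. Nor can you salvage the argument by passing to the subgroup generated by the set: you would then conclude density of $p^{-1}(\langle \zeta_{\mathbb{L}}(\Gamma\cap W)\rangle)$, which is strictly larger than the preimage $p^{-1}(\zeta_{\mathbb{L}}(\Gamma \cap W))$ that the subsequent finite-union argument in Proposition \ref{dense in M} requires.

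The paper's proof avoids group theory entirely and works for an arbitrary dense subset. Suppose $V = \overline{p^{-1}(\Gamma)}$ were a proper subvariety of $\mathbb{M}_0 \times \mathbb{S}$. Since $\mathbb{M}_0 \times \mathbb{S}$ is irreducible and $p$ has finite fibers --- this is exactly where the preceding lemma ($\mathbb{M}_0 \cap \mathbb{S}$ finite) enters --- one gets $\dim p(V) \leq \dim V < \dim (\mathbb{M}_0 \times \mathbb{S}) = \dim \mathbb{L}$, so $p(V)$ contains no nonempty Zariski open subset of $\mathbb{L}$. By Chevalley's theorem $p(V)$ is constructible, hence contained in a finite union of proper subvarieties of $\mathbb{L}$; since $\Gamma \subset p(V)$ (using $\mathbb{L} = \mathbb{M}_0.\mathbb{S}$ from Lemma \ref{L2.3}), this contradicts Zariski density of $\Gamma$. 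Your closing remark that finiteness of the kernel is logically dispensable is accurate \emph{within your subgroup argument}, but it is a telltale sign of the divergence: in the set-theoretic proof the finiteness is precisely what makes the dimension count work. So your mechanism is correct and even more economical in the special case of subgroups, but as a proof of the lemma in the generality in which the paper uses it, it has a genuine gap.
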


\begin{proof} Suppose not, that is suppose that the Zariski closure $V = \overline{ p^{-1}( \Gamma)}$ is a proper subvariety of $\mathbb{M}_0 \times \mathbb{S}$. Then the image $p(V)$ cannot contain any Zariski open subsets of $\mathbb{\mathbb{L}}$ for dimensional reasons. On the other hand, $p(V)$ is constructible. It follows that $p(V)$ is contained in a finite union of proper subvarieties of $\mathbb{L}$. This contradicts Zariski density of ${\Gamma}$. \end{proof}

\begin{proof}[Proof of Proposition \ref{dense in M}] Suppose that $\Gamma \subset G$ is Zariski dense. Then $\Gamma \cap W = \Gamma \cap \mathbb{W}$ is Zariski dense in $\mathbb{W}$. If we write $\zeta_\mathbb{L}$ for the $\mathbb{L}$ component of the isomorphism $ \mathbb{W} \rightarrow \mathbb{N^+ \times L \times N^-}$ above, then we conclude that $\zeta_\mathbb{L}(\Gamma \cap W ) \subset \mathbb{L}$ is Zariski dense. By Lemma \ref{L2.7}, it follows that 
\[ \tilde \Gamma :=p^{-1}(\zeta_\mathbb{L}(\Gamma \cap W )) \subset \mathbb{M}_0 \times \mathbb{S}\]
is Zariski dense, where $p: \mathbb{M}_0\times \mathbb{S} \rightarrow \mathbb{L}$ is the product map. But $\tilde \Gamma$ is very nearly the same as $\zeta_{M} \times \zeta_{A} (\Gamma \cap W )$; in fact
\[ \tilde \Gamma = \bigcup_{g \in \mathbb{M}_0\cap \mathbb{S}} \lbrace ( g^{-1}\zeta_{M}(\gamma  ),  g\zeta_{A} (\gamma )): \gamma \in \Gamma \cap W \rbrace. \]
This is a union of finitely many sets and it follows (since $\mathbb{M}_0\times \mathbb{S}$ is irreducible) that each set
\[ \lbrace ( g^{-1}\zeta_{M}(\gamma  ),  g\zeta_{A} (\gamma )): \gamma \in \Gamma \cap W \rbrace\]
is Zariski dense.  In particular, the case $g = e$ implies that $\zeta_M(\Gamma \cap W ) \subset \mathbb{M}_0$ is Zariski dense as required. 
The second statement is quicker to prove. The subgroup $H<M$ generated by $\zeta_M(\Gamma \cap W )$ is Zariski dense in $M$. The Euclidean closure $\bar{H}$ is therefore both Zariski closed (by algebraicity of compact matrix groups) and Zariski dense, so $\bar{H} = M$. 
\end{proof}

\subsection{Properties of $N^-$ and $N^+$ and the boundary at infinity.} \label{properties of U and N} The geometric boundary $\partial \tilde X$ of  $\tilde X$ may be identified with the set of unit speed  geodesic rays $x(t)$ in $\tilde X$ under the equivalence relation $x_1(t) \sim x_2(t)$ if $d(x_1(t), x_2(t))$ is bounded. We recall that the isometric action of $G$ extends continuously to the boundary. For any tangent vector $v \in \mathrm{T}^1\tilde X$ we write $ v^+$ for the forward end point of the associated geodesic $v(t)$ in $\partial  \tilde X$, and  $v^-$ for the backward end point. Of particular importance are the forward and backward end points, $v_0^+$ and $v_0^-$,  of our fixed tangent vector $v_0$. It will also be convenient to talk about the forward and backward end points of group elements, by which we mean
\[ g^+ := (g v_0)^+ \mbox{  and } g^- := (g v_0)^-.\]
Note that $g^+ = g(v_0^+)$ and that $g^- = g(v_0^-)$. It can be seen that \\ ${P :=\mathbb{P(R)} =  MAN^-}$ is the stabilizer subgroup of $v_0^+$. On the other hand the map
\[ N^- \rightarrow \partial \tilde X, n \mapsto n^- \]
is a diffeomorphism onto $\partial \tilde X \setminus \lbrace v_0^+ \rbrace$, so $N^-$ doesn't stabilize $v_0^-$. We denote the inverse map by
\[ \partial \tilde X \setminus \lbrace v_0^+ \rbrace \rightarrow N^-, \xi \mapsto n(\xi).\]
Similarly $N^+MA$ is the stabilizer of $v_0^-$, while
\[ N^+ \rightarrow \partial \tilde X, h \mapsto h^+ \]
is a diffeomorphism onto $\partial \tilde X \setminus \lbrace v_0^- \rbrace$, whose inverse we denote by $\xi \mapsto h(\xi)$.

\begin{proposition}\label{dense in U and N}
Suppose that $\Gamma < G$ is Zariski dense and that $\xi \in \partial \tilde X$. Then
\begin{itemize}  
\item{ the set $\lbrace h(\gamma\xi): \gamma \in \Gamma, \gamma \xi \neq v_0^- \rbrace$ is Zariski dense in $N^+$, and }
\item{ the set $\lbrace n(\gamma\xi): \gamma\in \Gamma, \gamma\xi \neq v_0^+ \rbrace$ is Zariski dense in $N^-$.}
\end{itemize} 
\end{proposition}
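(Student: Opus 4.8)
The plan is to recognize the boundary parametrizations $\xi \mapsto h(\xi)$ and $\xi \mapsto n(\xi)$ as restrictions of the algebraic Bruhat projections, and then to invoke the soft fact that a dominant morphism of irreducible varieties carries Zariski dense subsets to Zariski dense subsets. I spell out the second bullet; the first is identical after interchanging the roles of $\mathbb{N}^+$ and $\mathbb{N}^-$ (and of $v_0^+$ and $v_0^-$).

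\textbf{Reduction to the orbit of $v_0^-$.} Since $G$ acts transitively on $\partial\tilde X$, I first choose $g_0 \in G$ with $g_0 v_0^- = \xi$. Then $\gamma\xi = (\gamma g_0)v_0^-$ for every $\gamma$, so it suffices to prove that $\{\, n((\gamma g_0)v_0^-) : \gamma\in\Gamma,\ (\gamma g_0)v_0^- \neq v_0^+\,\}$ is Zariski dense in $\mathbb{N}^-$. Setting $\Gamma' = \Gamma g_0$, note that right translation by $g_0$ is an $\mathbb{R}$-automorphism of the variety $\mathbb{G}$, so $\Gamma'$ is again Zariski dense in $\mathbb{G}$.

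\textbf{Identifying $n(\cdot)$ with a Bruhat projection.} Recall that $N^+MA$ stabilizes $v_0^-$ and that $N^-$ acts simply transitively on $\partial\tilde X\setminus\{v_0^+\}$ via $n\mapsto nv_0^-$. Consequently, for $g\in G$ one has $gv_0^-\neq v_0^+$ precisely when $g$ lies in the opposite big cell $\mathbb{W}'$, i.e.\ the Zariski open set on which the product map $\mathbb{N}^-\times\mathbb{L}\times\mathbb{N}^+\to\mathbb{G}$ is an isomorphism onto its image (this is the Bruhat decomposition for the parabolic opposite to $\mathbb{P}$; its real points give the stabilizer $N^+MA$ of $v_0^-$). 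Moreover, on $\mathbb{W}'$ the element $n(gv_0^-)\in N^-$ is exactly the $\mathbb{N}^-$-component of $g$ in that decomposition: writing $g = n'\,\ell\,h'$ with $n'\in\mathbb{N}^-$, $\ell\in\mathbb{L}$, $h'\in\mathbb{N}^+$, we get $gv_0^- = n'v_0^-$, so $n(gv_0^-)=n'$ by uniqueness. Hence $\Phi\colon g\mapsto n(gv_0^-)$ is the restriction to $\mathbb{W}'$ of an $\mathbb{R}$-morphism $\Phi\colon\mathbb{W}'\to\mathbb{N}^-$, and $\Phi$ is surjective (it is the identity on $\mathbb{N}^-$), hence dominant.

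\textbf{Passing density through $\Phi$.} Because $\mathbb{W}'$ is Zariski open and dense in the irreducible variety $\mathbb{G}$, and $\Gamma'\setminus\mathbb{W}'$ lies in the proper closed set $\mathbb{G}\setminus\mathbb{W}'$, irreducibility forces $\overline{\Gamma'\cap\mathbb{W}'}=\mathbb{G}$; thus $S:=\Gamma'\cap\mathbb{W}'$ is Zariski dense in $\mathbb{W}'$. Finally, for any dominant morphism $f\colon V\to Y$ of irreducible varieties and any Zariski dense $S\subset V$, the closed set $f^{-1}(\overline{f(S)})$ contains $S$, hence contains $\overline{S}=V$, so $\overline{f(V)}\subseteq\overline{f(S)}$ and dominance yields $\overline{f(S)}=Y$. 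Applying this to $f=\Phi$ shows that $\Phi(S)=\{\,n(\gamma\xi):\gamma\xi\neq v_0^+\,\}$ is Zariski dense in $\mathbb{N}^-$, as required.

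The main obstacle is the bookkeeping of the middle step: one must verify that the locus $\{gv_0^-\neq v_0^+\}$ coincides exactly with the opposite big cell, and that $n(\cdot)$ there is genuinely the regular Bruhat projection and not merely a smooth map; once this algebraic identification is secured the remaining steps are formal. For the first bullet one repeats the argument with $\mathbb{W}'$ replaced by $\mathbb{W}=\mathbb{N}^+.\mathbb{P}$, using that $P=MAN^-$ fixes $v_0^+$ and that $h(\cdot)$ is the $\mathbb{N}^+$-projection on $\mathbb{W}$.
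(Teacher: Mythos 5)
Your proof is correct and takes essentially the same approach as the paper: both arguments rest on the Zariski-openness of the Bruhat cell, the algebraicity of the projection onto the horospherical factor, and the soft fact that dominant morphisms carry Zariski dense sets to Zariski dense sets. The only cosmetic difference is that you work at the group level, right-translating $\Gamma$ by $g_0$ and projecting $\Gamma g_0 \cap \mathbb{W}'$ inside $\mathbb{G}$ itself, whereas the paper descends to the flag variety, identifying $\partial \tilde X$ with $\mathbb{G}/\mathbb{P}$ and pushing the Zariski dense orbit $\Gamma f^{-1}(\xi)$ through the open cell $\mathbb{W}/\mathbb{P} \cong \mathbb{N}^+$ --- an equivalent bookkeeping choice, since the quotient map is itself a dominant morphism.
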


\begin{proof}We identify $G / P$ with $\partial \tilde X $ via the orbit map
\[ f: gP \mapsto g(v_0^+) = g^+.\]
Note that $f$ is equivariant for the left $G$ action. Under this identification $W/P$ corresponds exactly to $\partial \tilde X \setminus \lbrace v_0^-\rbrace$. Let 
\[ \psi: \mathbb{W/P} = \mathbb{N}^+ \mathbb{\times P/P} \rightarrow \mathbb{N}^+ \]
be the natural projection. Note that 
\[ \psi ( f^{-1}(\zeta)) = h(\zeta)\]
whenever $\zeta \in \partial \tilde X \setminus \lbrace v_0^- \rbrace $. Now let $\xi \in \partial \tilde X$ and suppose that $\Gamma \subset G$ is Zariski dense. Then $\Gamma f^{-1}(\xi)$ is Zariski dense in $ \mathbb{G}/\mathbb{P}$. It follows that 
\begin{eqnarray*} & &(\Gamma f^{-1}(\xi) \cap \mathbb{W/P})\subset \mathbb{W/P} \mbox{ is Zariski dense } \\
&\Rightarrow&  (f^{-1}( \Gamma \xi \setminus \lbrace v_0^- \rbrace)) \subset \mathbb{W/P} \mbox{ is Zariski dense } \\
&\Rightarrow&  \psi \circ f^{-1}( \Gamma \xi \setminus \lbrace v_0^- \rbrace) \subset \mathbb{N}^+ \mbox{ is Zariski dense } \\
&\Rightarrow&  \lbrace h(\gamma\xi): \gamma\in\Gamma, \gamma\xi \neq v_0^- \rbrace  \subset \mathbb{N}^+ \mbox{ is Zariski dense }\end{eqnarray*}
as required. The other statement is similar. 
\end{proof}

%\begin{remark}
%The Bruhat decomposition also tells us that the compliment $ \mathbb{W}^c \subset \mathbb{G}$ is isomorphic as a variety to $\mathbb{P}$. Passing to real points we see that the codimension of $W^c$ in $G$ is equal to the dimension (over $\mathbb{R}$) of $N^-$. In particular the codimension is at least two in all the cases we're interested in. It follows that $W = \mathbb{W(R)}_+ = G \cap \mathbb{W(R)}$.  
%\end{remark}

\subsection{Bowen-Margulis-Sullivan measures.} \label{ss2.5}
Next we recall the construction of the BMS measures. This is essentially a synopsis of the necessary material from Chapter I of \cite{roblin_ergodicite_2003}.

As in Section 2.1, we consider the unit tangent bundle $G/M = \mathrm{T}^1\tilde X$. We will write 
\[ \partial ^2 \tilde X := (\partial \tilde X)^2 \setminus \lbrace \mbox{diagonal} \rbrace. \]
We denote the Busemann function by
\begin{eqnarray*} \beta: (\partial \tilde X) \times \tilde X \times \tilde X&\rightarrow& \mathbb{R}  \\
 \beta_\xi(x_1, x_2) &=& \lim_{t\rightarrow + \infty} [d (x_1, \xi(t)) - d(x_2, \xi(t))],\end{eqnarray*}
where $\xi(t)$ is any geodesic whose forward end point is $\xi \in \partial \tilde X$. Using the projection maps  $\mathrm{T}^1\tilde X \rightarrow \tilde X$ and $G \rightarrow \tilde X$  we may also evaluate the Busemann function on tangent vectors or on group elements.

A conformal density of dimension $\delta$ on $\tilde X$ is a family of measures\\ $ {\mu = \lbrace \mu_x : x \in \tilde X\rbrace} $ on $\partial \tilde X$ which are mutually absolutely continuous and satisfy 
\[ \frac{d\mu_{ x_1}}{d\mu_{x_2}} (\xi) = e^{\delta \beta_\xi(x_2, x_1)} \]
for all pairs $x_i \in \tilde X$. The conformal density is said to be invariant for a subgroup $\Gamma < G$ if
\[ \gamma_*\mu_x  = \mu_{\gamma x}\]
for all $\gamma\in\Gamma$ and all $x \in \tilde X$. 

Conformal densities allow us to construct measures on $\mathrm{T}^1\tilde X$. Our choice of base point $o \in \tilde X$ gives a diffeomorphism
\begin{eqnarray*} \mathrm{T}^1\tilde X &\rightarrow& \partial^2 \tilde X \times \mathbb{R}\\
 v &\mapsto& (v^+, v^-, \beta_{v^-}(v, x_0) ). \end{eqnarray*}
For $\Gamma$ invariant conformal densities $\nu, \mu$ of dimensions $\delta_\nu, \delta_\mu$ we can therefore define a measure on $\mathrm{T}^1\tilde X$ by
 \[ d\tilde m^{\nu, \mu}(v) =e^{\delta_\nu\beta_{v^-}( o, v) + \delta_\mu\beta_{v^+}(o, v) } d\nu_{o}(v^-)d\mu_{o}(v^+) ds.\]
Direct calculation shows that this measure is independent of the choice of base point and left $\Gamma$ invariant. It is invariant for geodesic flow whenever $\delta_\mu = \delta_\nu$. We now lift $\tilde m^{\nu, \mu}$ from $\mathrm{T}^1\tilde X$ to give an $M$ invariant measure on $G$, which we will also denote by $\tilde m^{\nu, \mu}$. Since $\tilde m^{\nu, \mu}$ is $\Gamma$ invariant, it then descends to give a measure $m^{\nu, \mu}$ on $\Gamma \setminus G$, which we will call the generalized BMS measure associated to the pair $\nu, \mu$.

Certain conformal densities lead to particularly interesting measures. One of these is the Patterson-Sullivan density; if $\Gamma<G$ is of divergence type (for example, if $\Gamma$ is geometrically finite), with critical exponent $\delta$, then there is a unique $\Gamma$ invariant conformal density of dimension $\delta$ supported on $\Lambda(\Gamma)$, called the Patterson-Sullivan density $\sigma$. When we talk about the BMS measure without qualification we mean $m^{\operatorname{BMS}} := m^{\sigma, \sigma}$; it is an $MA$ invariant measure on $\Gamma \backslash G$, and our principle object of study. The support of the BMS measure will play a significant role in what follows, so observe that
\[ \mathrm{supp}(m^{\operatorname{BMS}}) = \lbrace g \in G : g^ + \in \Lambda (\Gamma) \mbox{ and } g^- \in \Lambda(\Gamma) \rbrace .\]

\subsection{} \label{ss5.3} There is also a unique $G$ invariant conformal density whose dimension is given by the volume entropy $D= D(\tilde X)$ (see the next paragraph); we will denote it by $\lambda$. We can express $\lambda_o$ explicitly as the unique $K$ invariant probability measure on $\partial \tilde X$. This allows us to construct the Burger-Roblin measure $m^{\operatorname{BR}} :=m^{\sigma, \lambda}$ (respectively $m^{\operatorname{BR}}_* := m^{ \lambda, \sigma}$). These are the most interesting examples of measures invariant under the expanding (respectively contracting) horospherical groups.  We can also recover the Haar measure on $G$ as a generalized BMS measure: it is given by ${m^{\operatorname{Haar}} = m^{\lambda, \lambda}}$. 

The volume entropy of $\tilde X$ is defined in terms of the volume of large metric balls;
\[ D(\tilde X) := \lim_{R\rightarrow+\infty} \frac{\log(\mbox{vol}(B(R, o)))}{R}. \]
The limit exists and is independent of base point; it coincides with the topological entropy of geodesic flow on compact quotients of $\tilde X$. For the cases we are interested in we have explicit values of $D(\tilde X)$ as follows.

\begin{table}[h]
%\caption{Volume entropy or rank one symmetric spaces}
\centering
\begin{tabular}{l c}
\hline \hline
Group & Volume entropy\\
\hline
$\operatorname{SO}(n, 1)_+$ & $n -1$ \\
$\operatorname{SU}(n, 1)$& $2n$ \\
$\operatorname{Sp}(n, 1) $& $4n + 2$ \\
$\operatorname{F}_4^{-20} $& $22$ \\
\end{tabular}
\end{table}

\subsection{Geometrically finite groups} General discrete subgroups $\Gamma < G$ can be very difficult to understand, so it is often useful to restrict our attention to certain subclasses whose behavior is simpler. For our purposes the most useful class is of geometrically finite groups. Let $\operatorname{CH}(\Lambda(\Gamma)) \subset \tilde X$ be the convex hull of $\Lambda(\Gamma)$ (that is, the minimal convex subset containing all geodesics joining two elements of $\Lambda(\Gamma)$). Denote by $\operatorname{CC}(\Gamma)$ the convex core of $\Gamma$, i.e. the projection of $\operatorname{CH}(\Lambda(\Gamma))$ to $\Gamma \backslash \tilde X$. 

\begin{definition}
We say that the discrete group $\Gamma < G$ is geometrically finite if every $\epsilon$ neighborhood of $ \operatorname{CC}(\Gamma)$ has finite (Riemannian) volume. 
\end{definition}

The significance of this definition for our purposes is that it ensures finiteness and ergodicity of the BMS measure on $\Gamma \backslash \mathrm{T}^1\tilde X$. 

\begin{theorem}[Theorem 4.1 and Corollary 5.5 of \cite{corlette1999limit}]
If $\Gamma < G$ is geometrically finite, then $|m^{\operatorname{BMS}}| < \infty$ and the $A$ action on $(\Gamma \backslash G /M, m^{\operatorname{BMS}})$ is ergodic. 
\end{theorem}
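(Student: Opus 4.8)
The statement packages two logically independent assertions — finiteness of $m^{\operatorname{BMS}}$ and ergodicity of the geodesic flow — and I would attack them separately. For finiteness, the plan is to exploit the structure theory of geometrically finite groups: by Bowditch's characterization the limit set $\Lambda(\Gamma)$ consists of radial limit points together with finitely many $\Gamma$-orbits of bounded parabolic points, and correspondingly a fixed thickening of the convex core $\operatorname{CC}(\Gamma)$ decomposes as a compact piece glued to finitely many standard cusp regions, one for each orbit of parabolic fixed points. Over the compact piece $m^{\operatorname{BMS}}$ has finite mass automatically, since it is a Radon measure and the relevant set is precompact; so everything reduces to bounding the mass of $m^{\operatorname{BMS}}$ over a single cusp neighbourhood.

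Inside a cusp around a bounded parabolic fixed point $p$ with stabiliser $\Gamma_p$, I would use the explicit product description of the lifted measure, $d\tilde m^{\sigma,\sigma}(v) = e^{\delta \beta_{v^-}(o,v) + \delta \beta_{v^+}(o,v)}\, d\sigma_o(v^-)\, d\sigma_o(v^+)\, ds$, together with a Sullivan-type shadow lemma for the Patterson--Sullivan density $\sigma$. The essential input is a global measure formula describing the $\sigma_o$-mass of small boundary balls centred near $p$: such an estimate controls $\sigma_o$ in terms of the distance of the relevant geodesic to the orbit $\Gamma_p o$ and the rank of $\Gamma_p$. Feeding this into the product formula and unfolding the sum over $\Gamma_p$, the finiteness of the cusp contribution reduces to convergence of a single series (equivalently an integral over a horosphere) governed by the critical exponent $\delta$ weighed against the rank of the parabolic subgroup. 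The curvature normalisation (maximal sectional curvature $-1$) and the rank one symmetric structure enter precisely here, to guarantee the inequality on $\delta$ that forces convergence.

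For ergodicity I would route through the Hopf--Tsuji--Sullivan dichotomy rather than argue by hand. The first step is to verify that a geometrically finite $\Gamma$ is of divergence type, i.e. that its Poincar\'e series diverges at $s=\delta$; this uses the existence and regularity of $\sigma$ and can be extracted from the same shadow-lemma estimates as above. Given divergence type, the Hopf--Tsuji--Sullivan theorem asserts that $\sigma$ gives full measure to the radial (conical) limit set and that the diagonal $A$-action on $(\Gamma\backslash G/M, m^{\operatorname{BMS}})$ is ergodic. Concretely this is a Hopf argument: Birkhoff averages of a compactly supported continuous function are constant along the $N^+$- and $N^-$-leaves carrying the conditional measures of $m^{\operatorname{BMS}}$, and radial recurrence of $\sigma$-almost every geodesic propagates this local constancy to global constancy.

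The hard part will be the cusp estimate in the finiteness argument. The decomposition of the convex core and the compact-part bound are essentially formal, and the ergodicity is standard machinery once divergence type is established; but producing a sharp enough global measure formula for $\sigma_o$ near a bounded parabolic point in a \emph{general} rank one symmetric space --- where distinct cusps may have different ranks and the horospherical geometry is subtler than in the constant curvature model --- is the technical heart. This is exactly the content supplied by Corlette--Iozzi, which is why I would cite rather than reprove it here.
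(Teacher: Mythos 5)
This statement is imported verbatim from Corlette--Iozzi (Theorem 4.1 and Corollary 5.5 of \cite{corlette1999limit}); the paper gives no proof of its own, and your sketch --- finiteness via the compact-plus-cusps decomposition of the convex core with a shadow-lemma/global measure formula controlling each cusp, and ergodicity via divergence type and the Hopf--Tsuji--Sullivan dichotomy --- is a correct reconstruction of exactly the argument in that cited source. Your closing judgement is also right: the cusp estimate for general rank one spaces is the genuinely hard step, and citing \cite{corlette1999limit} for it is precisely what the paper does.
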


Geometrical finiteness of $\Gamma$ also gives a decomposition of $\Lambda(\Gamma)$ into the radial limit set and the parabolic limit set. We say that a point $\xi \in \Lambda(\Gamma)$ is parabolic if the stabilizer $\Gamma _\xi :=  \mbox{stab}_\Gamma(\xi)$ is parabolic (i.e. if the fixed point set of $\Gamma_\xi$ in $\partial \tilde X$ is exactly $\lbrace \xi \rbrace$). We say that $\xi$ is bounded parabolic if it is parabolic and if the action of $\Gamma_\xi$ on $\Lambda(\Gamma) \setminus \lbrace \xi \rbrace$ is cocompact. We write $\Lambda_p(\Gamma)$ for the set of bounded parabolic limit points of $\Gamma$. We say that $\xi \in \Lambda(\Gamma)$ is a radial limit point if every geodesic ray $x(t)$ in $\tilde X$ ending at $\xi$ has some $r$ neighborhood which meets $\Gamma o$ in an infinite number of points. We write $\Lambda_r(\Gamma)$ for the radial limit set. 

\begin{theorem}[See \cite{bowditch1995geometrical}]
If $\Gamma < G$ is geometrically finite, then the limit set is the disjoint union of the radial and bounded parabolic limit sets. Moreover there are finitely many $\Gamma$ orbits of parabolic limit points. 
\end{theorem}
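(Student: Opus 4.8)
The statement is classical and due to Bowditch, so here I only sketch the route I would take, deriving everything from the finite-volume hypothesis on the convex core via the Margulis lemma for the pinched negatively curved space $(\tilde X, d)$. Write $\mathcal{N} = \mathcal{N}_\epsilon(\operatorname{CC}(\Gamma))$ for a fixed $\epsilon$-neighborhood of the convex core, so that $\operatorname{vol}(\mathcal{N}) < \infty$ by geometric finiteness. Every $\xi \in \Lambda(\Gamma)$ is the forward endpoint of a geodesic ray lying in $\operatorname{CH}(\Lambda(\Gamma))$, hence projecting into $\mathcal{N}$; the plan is to read off the dichotomy radial/parabolic from the asymptotic behaviour of such projected rays.

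First I would prove disjointness $\Lambda_r(\Gamma) \cap \Lambda_p(\Gamma) = \emptyset$. If $\xi$ is parabolic, then $\Gamma_\xi = \operatorname{stab}_\Gamma(\xi)$ fixes only $\xi$ and preserves every horoball $H$ based at $\xi$, with the displacement $d(x, \gamma x)$ for $\gamma \in \Gamma_\xi \setminus \lbrace e \rbrace$ tending to $0$ as $x$ descends into $H$; equivalently, the distance from a deep point of $H$ to the nearest orbit point $\gamma o$ with $\gamma \notin \Gamma_\xi$ tends to infinity. A ray $x(t) \to \xi$ eventually enters and stays in such an $H$, so no fixed $r$-neighborhood of $x(t)$ can meet $\Gamma o$ infinitely often, whence $\xi \notin \Lambda_r(\Gamma)$.

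For exhaustiveness and finiteness I would invoke the thick--thin decomposition of $\mathcal{N}$ supplied by the Margulis lemma: for a suitable Margulis constant $\mu$, the $\mu$-thick part of $\mathcal{N}$ has injectivity radius bounded below and hence, having finite volume, is compact, while the $\mu$-thin part is a disjoint union of tubular neighbourhoods of short closed geodesics together with finitely many cusp regions forming the ends of $\mathcal{N}$. Finiteness of the number of cusps follows because each cusp region has volume bounded below by a constant depending only on $\mu$ and the curvature bounds, so that $\operatorname{vol}(\mathcal{N}) < \infty$ permits only finitely many. Lifting the cusps to $\tilde X$ produces a $\Gamma$-invariant family of horoballs whose basepoints, within one cusp, form a single $\Gamma$-orbit of parabolic points; this yields the asserted finiteness of orbits. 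Finally, projecting a ray $x(t) \to \xi$ to $\Gamma \backslash \tilde X$, either it returns to the compact thick part for arbitrarily large $t$ --- producing orbit points in a bounded neighbourhood of the ray infinitely often, so $\xi \in \Lambda_r(\Gamma)$ --- or it is eventually trapped in a single cusp region, in which case $\xi$ is the basepoint of the corresponding horoball and is therefore parabolic.

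The main obstacle is the last clause, namely upgrading ``parabolic'' to ``bounded parabolic'': one must show that compactness of the cross-section of the cusp region (equivalently, compactness of the thick part together with cocompactness of $\Gamma_\xi$ on a horosphere) translates into cocompactness of the $\Gamma_\xi$-action on $\Lambda(\Gamma) \setminus \lbrace \xi \rbrace$. This, together with the careful horoball-disjointness bookkeeping needed to make the ends of $\mathcal{N}$ biject with $\Gamma$-orbits of parabolic points, is the technical heart of the argument and is exactly where the pinched-curvature Margulis lemma does the real work.
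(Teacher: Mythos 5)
The paper does not prove this statement at all --- it is imported verbatim from Bowditch \cite{bowditch1995geometrical} as background --- so the comparison here is with the statement itself rather than with an in-paper argument. Your outline follows the standard Bowditch-style route (Margulis lemma in pinched curvature, thick--thin decomposition of a finite-volume neighborhood of the convex core, volume lower bound for cusps, trapping dichotomy for rays), and the steps you do sketch are sound: disjointness via a precisely invariant horoball family, compactness of the thick part from finite volume plus the injectivity-radius lower bound, finiteness of cusps from the volume bound, and the dichotomy ``returns to the thick part infinitely often'' versus ``eventually trapped in one cusp, hence converges to that cusp's parabolic basepoint.''

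However, as you yourself flag, the proposal does not prove the clause that makes the theorem what it is: that every parabolic limit point is \emph{bounded} parabolic, i.e.\ that $\Gamma_\xi$ acts cocompactly on $\Lambda(\Gamma) \setminus \lbrace \xi \rbrace$. Your trapping argument only yields $\Lambda(\Gamma) = \Lambda_r(\Gamma) \cup \lbrace \mbox{parabolic points} \rbrace$, which is strictly weaker; and note that the paper's remark immediately following the theorem (``any parabolic limit point is bounded parabolic since it can't be radial'') \emph{presupposes} this clause, so it cannot be invoked here without circularity. The missing argument is concrete and must be supplied: for $\eta \in \Lambda(\Gamma) \setminus \lbrace \xi \rbrace$, the geodesic $(\xi, \eta)$ lies in $\operatorname{CH}(\Lambda(\Gamma))$ and exits the precisely invariant horoball $H_\xi$ at a point of bounded Busemann height; one then shows these exit points lie, modulo $\Gamma_\xi$, in a compact set --- essentially because a $\Gamma_\xi$-orbit of exit points escaping to infinity along the horosphere would project into the thin part and hence back inside the horoball family, contradicting that they sit on $\partial H_\xi$ within the convex hull, whose thick part is compact --- and finally transfers this cocompactness to $\Lambda(\Gamma) \setminus \lbrace \xi \rbrace$ via the endpoint map from the horosphere to the boundary. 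This step, together with the bookkeeping you mention that makes cusp ends of $\mathcal{N}$ biject with $\Gamma$-orbits of parabolic points (needed so that ``finitely many cusps'' really gives ``finitely many orbits''), is where the actual content of Bowditch's theorem lives; deferring it leaves the proposal a correct roadmap but not a proof of the stated result.
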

Note that, if $\Gamma$ is geometrically finite, then any parabolic limit point of $\Gamma$ is bounded parabolic (since it can't be radial). Assuming that $\Gamma$ is geometrically finite also allows us to choose disjoint collections of horoballs for each parabolic fixed point. By a horoball based at $\eta \in \partial \tilde X$ we mean a set of the form $\lbrace x \in \tilde X:  \beta_\eta(x, o) < T \rbrace$. 

\begin{lemma}[See, for example,  Lemma 1 of Proposition 1.10 in \cite{roblin_ergodicite_2003}]
Suppose that $\Gamma < G$ is geometrically finite and that $\xi \in \partial \tilde X$ is a parabolic fixed point. There is a collection of horoballs $\lbrace B_{\eta}: \eta \in \Gamma \xi\rbrace$ such that 
\begin{itemize}
\item{ $B_\eta$ is based at $\eta$, }
\item{ the sets $B_\eta$ are pairwise disjoint, and }
\item{ $\gamma B_\eta = B_{\gamma \eta}$ for all $\gamma \in \Gamma$ and $\eta \in \Gamma \xi$.}
\end{itemize}
\end{lemma}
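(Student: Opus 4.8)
The plan is to construct the entire family as the $\Gamma$-orbit of a single, sufficiently deep horoball based at $\xi$, and to reduce the three required properties to one \emph{precise invariance} statement. Concretely, I would fix a horoball $B_\xi = \{x \in \tilde X : \beta_\xi(x,o) < T\}$ and, for each $\eta = \gamma\xi \in \Gamma\xi$, set $B_\eta := \gamma B_\xi$. Since $\gamma$ is an isometry carrying $\xi$ to $\gamma\xi$, it carries horoballs based at $\xi$ to horoballs based at $\gamma\xi$, so $B_\eta$ is automatically based at $\eta$, and the equivariance $\gamma' B_\eta = \gamma'\gamma B_\xi = B_{\gamma'\eta}$ is immediate from the definition. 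Everything therefore hinges on two points: well-definedness of $B_\eta$, and pairwise disjointness.

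For well-definedness the key observation is that $\Gamma_\xi$ preserves $B_\xi$. Because $\Gamma$ is torsion free and $\xi$ is the \emph{only} boundary fixed point of $\Gamma_\xi$, every nontrivial $p \in \Gamma_\xi$ is parabolic and hence preserves every horosphere centered at $\xi$; in Busemann terms $\beta_\xi(po,o) = 0$, which together with the isometry equivariance $\beta_{p\xi}(px,py)=\beta_\xi(x,y)$ and the cocycle relation $\beta_\xi(x,z)=\beta_\xi(x,y)+\beta_\xi(y,z)$ gives $pB_\xi = B_\xi$. Consequently, if $\gamma\xi = \gamma'\xi$ then $\gamma^{-1}\gamma' \in \Gamma_\xi$ fixes $B_\xi$, so $\gamma B_\xi = \gamma' B_\xi$ and $B_\eta$ does not depend on the chosen representative.

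The real content is disjointness, and here I would shrink $B_\xi$ (let $T \to -\infty$) and reduce to the assertion that, for $T$ negative enough, $\gamma B_\xi \cap B_\xi \neq \emptyset$ forces $\gamma \in \Gamma_\xi$. Granting this, if $\gamma_1 B_\xi$ meets $\gamma_2 B_\xi$ with $\gamma_1\Gamma_\xi \neq \gamma_2\Gamma_\xi$, then $\gamma_2^{-1}\gamma_1 \notin \Gamma_\xi$ yet its translate of $B_\xi$ meets $B_\xi$, a contradiction, so the family is pairwise disjoint. To prove the reduced statement I would use that $\xi$ is \emph{bounded} parabolic: $\Gamma_\xi$ acts cocompactly on $\Lambda(\Gamma)\setminus\{\xi\}$, so there is a compact set $C \subset \Lambda(\Gamma)\setminus\{\xi\}$ with $\Gamma_\xi C = \Lambda(\Gamma)\setminus\{\xi\}$. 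Given $\gamma \notin \Gamma_\xi$ with $\gamma B_\xi \cap B_\xi \neq \emptyset$, I would apply an element of $\Gamma_\xi$ (which fixes $B_\xi$) to move the base point $\gamma\xi$ into $C$, reducing to $\gamma\xi \in C$, a base point at definite distance from $\xi$. Writing the translate as $\gamma B_\xi = \{y : \beta_{\gamma\xi}(y,o) < T + \beta_{\gamma\xi}(\gamma o,o)\}$, I would show that the depth parameter $\beta_{\gamma\xi}(\gamma o,o)$ is bounded above uniformly over $\{\gamma : \gamma\xi \in C\}$, which is where discreteness of $\Gamma$ (properness of the orbit $\Gamma o$) combines with cocompactness. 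A Gromov-product estimate at $o$ then shows that a horoball at $\xi$ and one at a point of $C$ of uniformly bounded depth are disjoint once $T$ is sufficiently negative, yielding the contradiction.

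The main obstacle is precisely the control near $\xi$. The translates $\gamma\xi$ accumulate at $\xi$ (indeed $\Gamma\xi$ is dense in $\Lambda(\Gamma)$), and the threshold depth needed to separate two horoballs degenerates as their base points $\xi$ and $\gamma\xi$ approach one another, so no naive uniform choice of $T$ can work directly. Bounded parabolicity is exactly the hypothesis that tames this degeneration: folding everything back into the compact set $C$ by $\Gamma_\xi$ restores a uniform separation estimate, and in fact the existence of such a precisely invariant horoball is essentially equivalent to $\xi$ being a bounded parabolic point. Making the uniform upper bound on $\beta_{\gamma\xi}(\gamma o,o)$ fully rigorous — controlling how deep $\gamma o$ can sit in a horoball at a point of $C$ while $\gamma\xi$ stays in $C$ — is the one genuinely technical step.
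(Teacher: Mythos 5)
The paper does not prove this lemma at all --- it is quoted directly from Roblin (Lemma 1 of Proposition 1.10 in \cite{roblin_ergodicite_2003}, going back to Bowditch \cite{bowditch1995geometrical}) --- so the only meaningful comparison is with that standard argument, and your skeleton is exactly it: define $B_\eta := \gamma B_\xi$ for $\eta = \gamma\xi$, get basedness and equivariance for free, get well-definedness from the fact that $\Gamma_\xi$ preserves every horoball at $\xi$, and reduce pairwise disjointness of the family to precise invariance of a single sufficiently deep horoball ($\gamma B_\xi \cap B_\xi \neq \emptyset \Rightarrow \gamma \in \Gamma_\xi$ for $T \ll 0$). All of these reductions are correct. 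Two small glosses: the claim that every nontrivial element of $\Gamma_\xi$ is parabolic needs slightly more than torsion-freeness plus the definition --- you must rule out loxodromic elements of $\Gamma_\xi$, which follows from the standard discreteness lemma that a parabolic and a loxodromic element of a discrete group cannot share a fixed point (and $\Gamma_\xi$ does contain a parabolic, since otherwise its common fixed set would be a pair of points). Your homomorphism argument $\beta_\xi(p o, o) = 0$ for parabolic $p$ is fine: if the Busemann cocycle of $p$ at $\xi$ were $t \neq 0$, every displacement would be $\geq |t|$, contradicting $\inf_x d(x, px) = 0$.

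The one step you flag --- the uniform upper bound on $\beta_{\gamma\xi}(\gamma o, o)$ over $\{\gamma \in \Gamma : \gamma\xi \in C\}$ --- is indeed the crux, and here your stated mechanism (``properness of the orbit $\Gamma o$ combines with cocompactness'') is not by itself sufficient: properness does not prevent orbit points from sitting arbitrarily deep in horoballs (parabolic orbits already converge to $\xi$ along horospheres, and depth is a horospherical, not metric, constraint). The bound is true, but the mechanism that closes it is non-conicality of $\xi$, i.e.\ the dichotomy $\Lambda(\Gamma) = \Lambda_r(\Gamma) \sqcup \Lambda_p(\Gamma)$ that the paper itself quotes from \cite{bowditch1995geometrical}. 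Concretely: suppose $\gamma_n\xi \in C$ while $t_n := \beta_{\gamma_n\xi}(\gamma_n o, o) \to +\infty$; note $\beta_\xi(\gamma_n^{-1}o, o) = -t_n$, so $x_n := \gamma_n^{-1}o$ lies at horodepth $t_n$ below the horosphere at $\xi$ through $o$. Since $\gamma_n\xi \in C$ and $C$ is compact with $\xi \notin \overline{C}$, the Gromov products $(\xi \mid \gamma_n\xi)_o$ are uniformly bounded, so the geodesics $(\xi, \gamma_n\xi)$ pass within uniform distance $R_0$ of $o$; applying $\gamma_n^{-1}$, the point $x_n$ lies within $R_0$ of a geodesic line $(\zeta_n, \xi)$ with $\zeta_n := \gamma_n^{-1}\xi \in \Lambda(\Gamma)\setminus\{\xi\}$. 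Now apply cocompactness a \emph{second} time, choosing $p_n \in \Gamma_\xi$ with $p_n \zeta_n \in C$: since $p_n$ preserves $\beta_\xi(\cdot, o)$, the orbit point $p_n x_n$ still has horodepth $t_n$ and lies within $R_0$ of the line $(p_n\zeta_n, \xi)$, which again passes within bounded distance of $o$; by convergence of asymptotic geodesics in a $\mathrm{CAT}(-1)$ space, points of horodepth $t_n$ on such lines are within uniformly bounded distance of the ray $[o, \xi)$. Thus infinitely many distinct points of $\Gamma o$ lie in a bounded neighborhood of $[o,\xi)$ and converge to $\xi$, exhibiting $\xi$ as a radial limit point --- contradicting $\xi \in \Lambda_p(\Gamma)$. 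With this depth bound $D$ in hand, your Gromov-product separation estimate ($\beta_\xi(y,o) + \beta_\eta(y,o) \geq -2(\xi\mid\eta)_o - c$ for all $y$, with $(\xi\mid\eta)_o$ bounded over $\eta \in C$) shows any $T < -\tfrac{1}{2}(2\sup_{\eta\in C}(\xi\mid\eta)_o + c + D)$ works, since $D$ is independent of $T$, and your proof is complete.
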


We take this opportunity to fix one further piece of notation: we say that an element $g \in G$ is loxodromic if its fixed point set (in $\tilde X \cup \partial \tilde X$) consists of exactly two boundary points. One of those fixed points will be attracting, and one repelling, and $g$ acts as a translation along the geodesic joining the two. If $\Gamma$ is non elementary, then the fixed points of loxodromic elements are dense in the limit set.

\section{The transitivity group} \label{sec3} \label{sec4}

This section is the heart of the proof of Theorem \ref{target}. We begin by defining the transitivity groups and investigating their basic properties in subsection \ref{definetransgp}. The rest of the section is then concerned with showing that the transitivity groups are dense in $M \times A$ whenever $\Gamma < G$ is Zariski dense. The main result is Theorem \ref{transitivitygroupsaredense}.

\subsection{Defintion and basic properties of the transitivity group.} \label{definetransgp}

\begin{definition} Fix $g\in \mathrm{supp}(m^{\operatorname{BMS}})$. We will say that $g'  \in \mathrm{supp}(m^{\operatorname{BMS}}) $ is reachable from $g$ if there is a sequence $h_i \in N^- \cup N^+, i = 1 \ldots k$ and a $\gamma \in \Gamma$ such that
\[ gh_1 h_2, \ldots h_r \in \mathrm{supp}(m^{\operatorname{BMS}}) \mbox{ for all $0 \leq r \leq k$, and } \]
\[ \gamma gh_1 h_2\ldots h_k = g'. \]
\end{definition}

\begin{remark}
For lattices the BMS measure has full support, so the first condition can be ignored.
\end{remark}
For $g \in  \mathrm{supp}(m^{\operatorname{BMS}})$ we denote by
\[ \mathcal{H}_\Gamma(g) := \lbrace (m, a) \in M\times A: gma \mbox{ is reachable from } g \rbrace.\]
\begin{lemma} \label{l3.1} For every $g \in  \mathrm{supp}(m^{\operatorname{BMS}})$, the subset $\mathcal{H}_\Gamma(g) \subset MA$ is a subgroup; we will call it the transitivity group of $\Gamma$ at $g$. 
\end{lemma}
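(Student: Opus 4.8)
The plan is to establish two structural properties of the reachability relation and then read off the group axioms from them. Throughout I will exploit two invariance properties of the support. Since $\mathrm{supp}(m^{\operatorname{BMS}}) = \lbrace g : g^+, g^- \in \Lambda(\Gamma)\rbrace$ and both $M$ and $A$ fix $v_0^+$ and $v_0^-$, the set $\mathrm{supp}(m^{\operatorname{BMS}})$ is invariant under left multiplication by $\Gamma$ and under right multiplication by $MA$. I will also use that $MA$ normalizes each of $N^+$ and $N^-$: for $A$ this is the defining property of the horospherical groups, and for $m \in M$ it follows from $a_{-t}(mhm^{-1})a_t = m(a_{-t}ha_t)m^{-1}$, since $m$ centralizes $A$.

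First I would show that reachability is an equivalence relation on $\mathrm{supp}(m^{\operatorname{BMS}})$. Reflexivity is immediate from the empty sequence with $\gamma = e$. For transitivity, given a chain from $g$ to $g'$ with data $(h_i)_{i\le k}, \gamma$ and a chain from $g'$ to $g''$ with data $(\tilde h_j)_{j \le l}, \gamma'$, I would concatenate into $(h_1, \dots, h_k, \tilde h_1, \dots, \tilde h_l)$ with terminal group element $\gamma'\gamma$; every partial product past step $k$ equals $\gamma^{-1} g' \tilde h_1 \cdots \tilde h_s$, which lies in the support by left $\Gamma$-invariance. For symmetry I would reverse a chain to $(h_k^{-1}, \dots, h_1^{-1})$, legitimate since $N^+$ and $N^-$ are groups, and observe that its partial products equal $\gamma g h_1 \cdots h_{k-r}$, again in the support by $\Gamma$-invariance, with terminal group element $\gamma^{-1}$.

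Second I would prove the right-equivariance statement: for any $p \in MA$, the element $g'$ is reachable from $g$ if and only if $g'p$ is reachable from $gp$. Given a chain from $g$ to $g'$, I conjugate each step to $h_i' = p^{-1} h_i p \in N^- \cup N^+$, so that the partial products become $g h_1 \cdots h_r p$, which remain in the support by right $MA$-invariance, and the terminal relation becomes $\gamma(gp\, h_1' \cdots h_k') = g'p$ with the same $\gamma$. Applying this with $p^{-1}$ gives the converse. The group axioms then follow quickly. The identity lies in $\mathcal{H}_\Gamma(g)$ by reflexivity. For closure, if $gm_1a_1$ and $gm_2a_2$ are reachable from $g$, equivariance by $p = m_2 a_2$ turns the first into ``$gm_1a_1m_2a_2$ reachable from $gm_2a_2$'', and transitivity with ``$gm_2a_2$ reachable from $g$'' yields ``$gm_1a_1m_2a_2$ reachable from $g$''; since $A$ centralizes $M$, this element equals $g(m_1m_2)(a_1a_2)$, so $(m_1,a_1)(m_2,a_2) \in \mathcal{H}_\Gamma(g)$. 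For inverses, symmetry gives ``$g$ reachable from $gma$'', and equivariance by $(ma)^{-1}$ gives ``$g(ma)^{-1}$ reachable from $g$'', that is $(m,a)^{-1} \in \mathcal{H}_\Gamma(g)$.

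I expect the main obstacle to be bookkeeping rather than depth: the only way any of these constructions can fail is for an intermediate partial product to leave $\mathrm{supp}(m^{\operatorname{BMS}})$. The crux of the argument is therefore to verify, in each of the three operations (concatenation, reversal, and right translation), that every partial product returns to the support, and this is precisely where the left $\Gamma$-invariance and right $MA$-invariance of $\mathrm{supp}(m^{\operatorname{BMS}})$ are indispensable.
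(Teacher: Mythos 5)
Your proposal is correct and is essentially the paper's own argument in modular form: composing your symmetry (reversal) and right $MA$-equivariance (conjugating each $h_i$ by $p \in MA$, which is legitimate since $MA$ normalizes $N^\pm$) reproduces exactly the paper's conjugate inverse sequence $(ma)h_k^{-1}(ma)^{-1}, \ldots, (ma)h_1^{-1}(ma)^{-1}$, and your concatenation-plus-equivariance step for closure reproduces its sequence $\tilde h_1, \ldots, \tilde h_l, (\tilde m\tilde a)^{-1}h_1\tilde m\tilde a, \ldots, (\tilde m\tilde a)^{-1}h_k\tilde m\tilde a$ with the same terminal element $\gamma\tilde\gamma$. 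Your explicit isolation of the two support invariances (left $\Gamma$, right $MA$) makes the bookkeeping the paper leaves implicit fully transparent, but the underlying mechanism is identical.
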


\begin{remark}
Brin gave a related definition \cite{brin_ergodic_1979} for compact manifolds, and Flaminio-Spatzier generalized that idea to geometrically finite hyperbolic manifolds \cite{flaminio_geometrically_1990}; their notion corresponds to our ``strong transitivity group'', which we will discuss shortly.
\end{remark}

\begin{proof}
We first check that the transitivity group contains inverses. Suppose that $(m, a) \in \mathcal{H}_\Gamma(g)$. Take a sequence $h_1 \ldots h_k \in N^- \cup N^+$ and $\gamma \in \Gamma$ for $ma$, i.e.
\[ gh_1 h_2, \ldots h_r \in \mathrm{supp}(m^{\operatorname{BMS}}) \mbox{ for all $0 \leq r \leq k$, and } \]
\[ \gamma gh_1 h_2\ldots h_k = gma. \]
The conjugate inverse sequence $(ma) h_k^{-1} (ma)^{-1} ,\ldots (ma) h_{1}^{-1} (ma)^{-1}$ satisfies
\begin{eqnarray*}  g ma h_k^{-1} \ldots h_r^{-1} (ma)^{-1} &=&\gamma g h_1 \ldots h_k  h_k^{-1} \ldots h_r^{-1} (ma)^{-1}  \\
&=&\gamma  gh_1 \ldots h_{r-1} (ma)^{-1} \end{eqnarray*}
which is in supp$(m^{\operatorname{BMS}})$ because $gh_1 \ldots h_{r-1}$ is. Similarly
\begin{eqnarray*} \gamma^{-1} g ma h_k^{-1} \ldots h_1^{-1} (ma)^{-1} &=& g h_1 \ldots h_k  h_k^{-1} \ldots h_1^{-1} (ma)^{-1}  \\
&=& g(ma)^{-1}. \end{eqnarray*}

Next we check that products are contained in the transitivity group. Suppose that $ma$ and $\tilde m\tilde a$ are both in $\mathcal{H}_\Gamma(g)$ and take admissible sequences $h_i, \tilde h_i \in N^-\cup N^+$ and  $\gamma, \tilde \gamma \in \Gamma$ such that 
\[ g m a = \gamma g h_1 \ldots h_k \mbox{ and } g  \tilde m \tilde a = \tilde \gamma g \tilde h_1 \ldots \tilde h_l.\]
We claim that $\tilde h_1 \ldots \tilde h_l, (\tilde m\tilde a)^{-1} h_1 \tilde m\tilde a, \ldots (\tilde m \tilde a)^{-1} h_k \tilde m\tilde a$ is a sequence for $ma\tilde m \tilde a$ together with $\gamma \tilde \gamma$. The argument is similar to the one above. 

\end{proof}

\begin{lemma}
If $ \gamma_0 \in \Gamma$, $u \in N^- \cup N^+ \cup A$, and if $g, \gamma_0 g u \in \mathrm{supp}(m^{\operatorname{BMS}})$, then $\mathcal{H}_\Gamma(g) = \mathcal{H}_\Gamma(\gamma_0 g u)$ as subgroups of $M\times A$. 
\end{lemma}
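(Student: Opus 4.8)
The plan is to reduce the statement to two structural invariances of the support and then to handle the horospherical and the $A$ directions by genuinely different mechanisms. First I would record two facts about $S := \mathrm{supp}(m^{\operatorname{BMS}}) = \{g : g^+, g^- \in \Lambda(\Gamma)\}$. Since $M = \mathrm{stab}_K(v_0)$ and $A$ is geodesic flow along the geodesic through $v_0$, every element of $MA$ fixes both $v_0^+$ and $v_0^-$; hence $(gc)^\pm = gc(v_0^\pm) = g^\pm$ for all $c \in MA$, so $S$ is right $MA$-invariant. And because $\Lambda(\Gamma)$ is $\Gamma$-invariant and $(\gamma_0 g)^\pm = \gamma_0(g^\pm)$, the set $S$ is left $\Gamma$-invariant. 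These let me reduce the claim: by left $\Gamma$-invariance both $gu$ and $\gamma_0 gu$ lie in $S$, so it suffices to prove (i) $\mathcal{H}_\Gamma(\gamma_0 g') = \mathcal{H}_\Gamma(g')$ for any $g' \in S$, and (ii) $\mathcal{H}_\Gamma(gu) = \mathcal{H}_\Gamma(g)$ for $u \in N^- \cup N^+ \cup A$ with $g, gu \in S$. Chaining (i) at $g' = gu$ with (ii) then yields the theorem.

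For (i) I would simply translate a connecting sequence. If $(m,a) \in \mathcal{H}_\Gamma(g')$ via $g'ma = \gamma\, g' h_1\cdots h_k$ with all partial products in $S$, then $(\gamma_0 g')ma = (\gamma_0\gamma\gamma_0^{-1})(\gamma_0 g')h_1\cdots h_k$, and the partial products $\gamma_0(g'h_1\cdots h_r)$ remain in $S$ by left $\Gamma$-invariance; replacing $\gamma_0$ by $\gamma_0^{-1}$ gives the reverse inclusion. The $A$-case of (ii), with $u = a_0 \in A$, is handled by right-multiplying the connecting word: since $A$ commutes with $M$ and normalizes $N^\pm$, one has $(ga_0)ma = (gma)a_0 = \gamma\,(ga_0)(a_0^{-1}h_1 a_0)\cdots(a_0^{-1}h_k a_0)$ with each $a_0^{-1}h_i a_0 \in N^\pm$, while the partial products $(g h_1\cdots h_r)a_0$ stay in $S$ by right $MA$-invariance; the reverse inclusion follows on using $a_0^{-1}$.

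The horospherical case of (ii), $u \in N^\pm$, is where I would switch strategy. I would first observe that reachability is an equivalence relation on $S$: reflexivity is trivial, and symmetry and transitivity follow by the same inverse-and-concatenation manipulations already used in the proof of Lemma \ref{l3.1}, where left $\Gamma$-invariance of $S$ is exactly what keeps all intermediate products in the support. Writing $u' = (ma)^{-1}u(ma) \in N^\pm$ (as $MA$ normalizes $N^\pm$), I have the identity $(gu)ma = (gma)u'$. I would then run the chain $gu \sim g \sim gma \sim (gma)u'$: the first link is a single horospherical step between the points $g, gu \in S$, the middle link is the hypothesis $(m,a) \in \mathcal{H}_\Gamma(g)$, and the last link is a single step $u'$ between $gma \in S$ and $(gma)u' = (gu)ma$, the latter lying in $S$ again by right $MA$-invariance. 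Transitivity gives $(gu)ma \sim gu$, i.e. $(m,a) \in \mathcal{H}_\Gamma(gu)$, and the opposite inclusion is the identical argument based at $gu$ with $u^{-1}$.

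The main obstacle, and the reason the two sub-cases of (ii) cannot share a single argument, is precisely this asymmetry between the $N^\pm$ and $A$ directions. The clean ``conjugate the connecting word'' argument of the $A$-case breaks for $u \in N^\pm$: conjugating an $N^-$ letter by an $N^+$ element need not return an element of $N^- \cup N^+$ (the horospherical groups are non-abelian in the complex, quaternionic and octonionic cases), so the word no longer consists of admissible steps. Conversely, the equivalence-relation argument that rescues $N^\pm$ fails for $A$, since $g$ and $ga_0$ are in general \emph{not} in the same reachability class — reachability only permits $N^\pm$ steps, and $(e,a_0) \in \mathcal{H}_\Gamma(g)$ is not given. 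Getting the bookkeeping right so that every intermediate point produced by either manipulation is certified to lie in $S$ — which is where the right $MA$-invariance and left $\Gamma$-invariance of the support are used repeatedly — is the only delicate point; everything else is formal.
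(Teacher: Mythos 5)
Your proof is correct and takes essentially the same route as the paper: the $\Gamma$- and $A$-cases use the paper's exact sequences, and unwinding your reachability chain $gu \sim g \sim gma \sim (gma)u'$ with $u' = (ma)^{-1}u(ma)$ recovers verbatim the paper's word $u^{-1}, h_1, \ldots, h_k, (ma)^{-1}u(ma)$ for the horospherical case, so the equivalence-relation packaging (a viewpoint the paper itself adopts in subsection \ref{ss7.4}) is only presentational. One minor correction to your closing side remark: conjugating an $N^-$ letter by an $N^+$ element fails to land in $N^- \cup N^+$ even when both horospherical groups are abelian (e.g.\ for $G = \operatorname{SO}(n,1)_+$), so non-abelianness is not the obstruction there.
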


\begin{proof}
Suppose that $h_1 \ldots h_k$ and $\gamma$ is a sequence for $  m a\in \mathcal{H}_\Gamma(g)$. Then 

\begin{itemize}

\item{ $h_1, \ldots h_k$ and $\gamma_0 \gamma \gamma_0^{-1}$ is a sequence for $  ma$ in $\mathcal{H}_\Gamma(\gamma_0g)$,}
\item{ ${u}^{-1} , h_1, \ldots h_k,  (am)^{-1}{u} (am) $ and $ \gamma $ is a sequence for $  ma$ in $\mathcal{H}_\Gamma(g u)$ whenever $ u \in N^-\cup N^+$, and}
\item{ ${u}^{-1} h_1{u},  \ldots ,{u}^{-1} h_k {u} $ and $ \gamma $ is a sequence for $  ma$ in $\mathcal{H}_\Gamma(g u)$ whenever $u \in A$.}

\end{itemize}

\end{proof} 

\begin{lemma}
If $m \in M$ and $g \in  \mathrm{supp}(m^{\operatorname{BMS}})$, then 
\[\mathcal{H}_\Gamma(g  m) =  m^{-1} \mathcal{H}_\Gamma(g)  m.\]
\end{lemma}

\begin{proof}
Suppose that $h_1 \ldots h_k$ and $\gamma$ is a sequence for $\tilde m\tilde a \in \mathcal{H}_\Gamma(g)$.  Then $  m^{-1} h_1 m , \ldots  m^{-1} h_k m$ and $\gamma$ is a sequence for $ m^{-1} \tilde m \tilde a  m$ in $\mathcal{H}_\Gamma(g m)$.
\end{proof}

\begin{remark}
We have just seen that the transitivity group $\mathcal{H}_\Gamma(g)$ depends very little on which $g$ we choose. In particular, the question we are most interested in, whether $\mathcal{H}_\Gamma(g) < MA$ is dense, is entirely independent of the choice of $g \in \mathrm{supp}(m^{\operatorname{BMS}})$. For this reason we will often refer to the transitivity group without reference to its base point. Note that the transitivity group does depend very much on the discrete subgroup $\Gamma$.
\end{remark}

With these basics in hand we define the strong transitivity group 
\[\mathcal{H}_\Gamma^s(g)  = (  M \times \lbrace e \rbrace)  \cap \mathcal{H}_\Gamma(g)\]
and the weak transitivity group $\mathcal{H}_\Gamma^w(g)$, which is the projection $\pi_M(\mathcal{H}_\Gamma(g))$ of the transitivity group to the $M$ coordinate. Observe that 
\[\mathcal{H}_\Gamma^s(g) < \mathcal{H}_\Gamma^w(g) < M.\] 

To finish this section we will use the algebraic properties of the Bruhat decomposition to show that the transitivity group is reasonably large. Let 
\[ W = \lbrace g \in G \mbox{ such that } g^+ \neq v_0^- \rbrace\]
and let
\[ \zeta = \zeta_{N^+} \times \zeta_M \times \zeta_A \times  \zeta_{N^-} : W \rightarrow  N^+ \times M \times A \times N^- \]
be the inverse to the product map as before. 

\begin{lemma} \label{lemma3.3} Let  $g \in \mathrm{supp}(m^{\operatorname{BMS}})$. Then $\zeta_M(g^{-1}\Gamma g\cap W) \subset \mathcal{H}_\Gamma^w(g)$. 
\end{lemma}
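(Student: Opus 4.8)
The plan is to take any $\gamma \in \Gamma$ with $g^{-1}\gamma g \in W$, write its Bruhat decomposition $g^{-1}\gamma g = hman$ with $h \in N^+$, $m = \zeta_M(g^{-1}\gamma g) \in M$, $a \in A$ and $n \in N^-$, and exhibit an explicit reachability path of length two (one move in $N^+$ followed by one move in $N^-$) showing that $(m^{-1}, a^{-1}) \in \mathcal{H}_\Gamma(g)$. Since $\mathcal{H}_\Gamma(g)$ is a group by Lemma \ref{l3.1}, this gives $(m, a) \in \mathcal{H}_\Gamma(g)$ and hence $m \in \mathcal{H}_\Gamma^w(g)$, which is exactly the claim.

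The algebraic heart of the path is a single rearrangement. Since $MA$ normalizes $N^-$ (indeed $MAN^- = P$ is a subgroup), I can push $n$ past $ma$: setting $\tilde n = (ma)\, n\, (ma)^{-1} \in N^-$ we have $man = \tilde n\, (ma)$, and therefore
\[ \gamma g = g\,h\,m\,a\,n = g\,h\,\tilde n\,(ma). \]
Rearranging gives $\gamma^{-1}\,(g h \tilde n) = g\,m^{-1} a^{-1}$. This identifies the candidate path: start at $g$, apply $h \in N^+$, then apply $\tilde n \in N^-$, and finally translate by $\gamma^{-1} \in \Gamma$ to land on $g m^{-1} a^{-1}$. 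Thus $h_1 = h$, $h_2 = \tilde n$ and the group element $\gamma^{-1}$ form a candidate sequence witnessing $(m^{-1}, a^{-1}) \in \mathcal{H}_\Gamma(g)$.

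The step requiring real care, and the main obstacle, is verifying that every partial product of this path lies in $\mathrm{supp}(m^{\operatorname{BMS}}) = \lbrace g' : (g')^+, (g')^- \in \Lambda(\Gamma)\rbrace$; this is precisely the support condition in the definition of reachability. The endpoint $gm^{-1}a^{-1}$ and the intermediate point $g h \tilde n = \gamma g\, m^{-1} a^{-1}$ are immediate, since the support is left $\Gamma$-invariant ($\Gamma$ preserves $\Lambda(\Gamma)$) and right $MA$-invariant ($M$ and $A$ fix both $v_0^+$ and $v_0^-$): thus $gm^{-1}a^{-1}$ has the same endpoints $g^\pm$ as $g$, while $gh\tilde n$ has endpoints $\gamma(g^\pm)$, and both pairs lie in $\Lambda(\Gamma)$. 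The genuinely nontrivial point to check is the first intermediate product $gh$. Here I would combine the two expressions $gh = \gamma g\, n^{-1} a^{-1} m^{-1}$ and $gh = g\,h$ with the boundary-fixing properties from subsection \ref{properties of U and N}: since $N^-$, $A$ and $M$ all fix $v_0^+$ (as $MAN^-$ stabilizes $v_0^+$), the first expression gives $(gh)^+ = \gamma(g^+) \in \Lambda(\Gamma)$; and since $N^+$ fixes $v_0^-$ (as $N^+MA$ stabilizes $v_0^-$), the second gives $(gh)^- = g^- \in \Lambda(\Gamma)$. Hence $gh \in \mathrm{supp}(m^{\operatorname{BMS}})$, so all partial products lie in the support.

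With the support condition verified, the sequence $h_1 = h$, $h_2 = \tilde n$, $\gamma^{-1}$ is admissible, so $(m^{-1}, a^{-1}) \in \mathcal{H}_\Gamma(g)$. Inverting via Lemma \ref{l3.1} gives $(m, a) \in \mathcal{H}_\Gamma(g)$, and projecting to the $M$-coordinate yields $m = \zeta_M(g^{-1}\gamma g) \in \mathcal{H}_\Gamma^w(g)$. As $\gamma$ was arbitrary among elements of $\Gamma$ with $g^{-1}\gamma g \in W$, we conclude $\zeta_M(g^{-1}\Gamma g \cap W) \subset \mathcal{H}_\Gamma^w(g)$.
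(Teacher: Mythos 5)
Your proof is correct and takes essentially the same route as the paper: both start from the Bruhat factorization $g^{-1}\gamma g = hman$ and produce a two-step reachability path (one move in each of $N^+$ and $N^-$, with a conjugation through $MA$), the only cosmetic difference being that you land on $g(ma)^{-1}$ and then invert via Lemma \ref{l3.1}, whereas the paper's sequence $n^{-1},\ (ma)^{-1}h^{-1}(ma)$ together with $\gamma$ reaches $gma$ directly. Your explicit verification that every partial product lies in $\mathrm{supp}(m^{\operatorname{BMS}})$ is correct and fills in a detail the paper leaves implicit.
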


\begin{proof}
Suppose that $g^{-1} \gamma g = h ma  n $ (with $h \in N^+, m \in M, a\in A, n \in N^-$). Then $\gamma g = g (g^{-1} \gamma g) = gh ma n $. Thus $ n^{-1},(ma)^{-1}  h^{-1}ma$, and $\gamma$ give a sequence for $ma$ in $\mathcal{H}_\Gamma(g)$.
\end{proof}

\begin{corollary} \label{wtgd}
Let  $g \in \mathrm{supp}(m^{\operatorname{BMS}})$. If $\Gamma <G$ is Zariski dense, then the weak transitivity group $\mathcal{H}_\Gamma^w(g)$ is dense in $M$.
\end{corollary}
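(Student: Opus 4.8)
The plan is to deduce the corollary directly from Proposition \ref{dense in M} together with Lemma \ref{lemma3.3}, exploiting the fact that Proposition \ref{dense in M} holds for an arbitrary Zariski dense subgroup of $G$, and in particular for the conjugate $g^{-1}\Gamma g$.

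First I would observe that $g^{-1}\Gamma g$ is again Zariski dense in $\mathbb{G}$. Conjugation by $g \in G = \mathbb{G(R)}_+$ is an $\mathbb{R}$-regular automorphism of $\mathbb{G}$, so it carries the Zariski closure of $\Gamma$ onto the Zariski closure of $g^{-1}\Gamma g$; since the former is all of $\mathbb{G}$, so is the latter. Moreover $g^{-1}\Gamma g$ is discrete, torsion free, and non-elementary (being a conjugate of $\Gamma$), so Proposition \ref{dense in M} is applicable to it.

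Applying that proposition with $g^{-1}\Gamma g$ in place of $\Gamma$, the subgroup $\langle \zeta_M(g^{-1}\Gamma g \cap W)\rangle$ generated by $\zeta_M(g^{-1}\Gamma g \cap W)$ is dense in $M$. On the other hand, Lemma \ref{lemma3.3} supplies the inclusion $\zeta_M(g^{-1}\Gamma g \cap W) \subset \mathcal{H}_\Gamma^w(g)$. Since $\mathcal{H}_\Gamma^w(g) = \pi_M(\mathcal{H}_\Gamma(g))$ is the image of the subgroup $\mathcal{H}_\Gamma(g) < MA$ under the group homomorphism $\pi_M$, it is itself a subgroup of $M$; consequently it contains the subgroup generated by $\zeta_M(g^{-1}\Gamma g \cap W)$, which is already dense. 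Therefore $\mathcal{H}_\Gamma^w(g)$ is dense in $M$, as claimed.

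There is essentially no obstacle once the earlier results are in hand; the only point needing a moment's care is the passage from $\Gamma$ to $g^{-1}\Gamma g$, namely verifying that conjugation by a real point of $\mathbb{G}$ preserves Zariski density so that Proposition \ref{dense in M} may legitimately be invoked for the conjugated group. The remaining steps are formal: combining the inclusion of Lemma \ref{lemma3.3} with the observations that $\mathcal{H}_\Gamma^w(g)$ is a subgroup of $M$ and that any subgroup containing a dense subgroup is itself dense.
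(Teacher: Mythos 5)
Your proof is correct and takes essentially the same route as the paper, which likewise deduces the corollary immediately from Lemma \ref{lemma3.3} and Proposition \ref{dense in M}. The one detail you spell out that the paper leaves implicit---that Proposition \ref{dense in M} must be applied to the conjugate $g^{-1}\Gamma g$, which remains Zariski dense since conjugation by $g$ is an algebraic automorphism of $\mathbb{G}$---is exactly the right point to check, and your remaining steps (that $\mathcal{H}_\Gamma^w(g)$ is a subgroup of $M$ containing the dense subgroup $\langle \zeta_M(g^{-1}\Gamma g \cap W)\rangle$) are the intended ones.
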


\begin{proof}
This follows immediately from Lemma \ref{lemma3.3} and Proposition \ref{dense in M}. 
\end{proof}

\begin{remark}
In the real hyperbolic case, Corollary \ref{wtgd} follows from the arguments of Flaminio-Spatzier \cite{flaminio_geometrically_1990}. This is enough to prove ergodicity of the frame flow; mixing of frame flow, however, requires density of the transitivity group $\mathcal{H}_\Gamma(g)$ in $MA$, which does not follow immediately from Corollary \ref{wtgd} unless $M$ is semisimple.

%This, together with measure theoretic arguments of the next two sections, is enough to prove ergodicity of the frame flow. This is also the statement which follows most clearly from the work of Flaminio-Spatzier \cite{flaminio_geometrically_1990}. It's less clear whether density of the weak transitivity group is enough to establish mixing, or whether their argument establishes density of the strong transitivity group in $M$.  
\end{remark}

\begin{corollary} \label{transitivitygropuisnotsmall} 
Let  $g \in \mathrm{supp}(m^{\operatorname{BMS}})$. If $\Gamma< G$ is Zariski dense, then the closure of the strong transitivity group contains the commutator $[M, M]$.
\end{corollary}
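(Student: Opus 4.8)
The plan is to exploit the fact that $A\cong\mathbb{R}$ is abelian and centralizes $M$, so that inside $M\times A$ every commutator is forced into the $M$-factor. First I would record the elementary computation: for any $(m_1,a_1),(m_2,a_2)\in M\times A$ we have
\[ [(m_1,a_1),(m_2,a_2)] = ([m_1,m_2],\, a_1a_2a_1^{-1}a_2^{-1}) = ([m_1,m_2],\, e), \]
the $A$-component vanishing by commutativity. Since $\mathcal{H}_\Gamma(g)$ is a subgroup (Lemma \ref{l3.1}), any such commutator of elements of $\mathcal{H}_\Gamma(g)$ again lies in $\mathcal{H}_\Gamma(g)$; as its $A$-component is trivial it in fact lies in $(M\times\{e\})\cap\mathcal{H}_\Gamma(g)=\mathcal{H}_\Gamma^s(g)$. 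This is the conceptual crux: abelianness of $A$ converts commutators of the weak transitivity group into genuine elements of the strong transitivity group, for free.

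Next I would feed in the density already established. By Corollary \ref{wtgd}, the weak transitivity group $\mathcal{H}_\Gamma^w(g)=\pi_M(\mathcal{H}_\Gamma(g))$ is dense in $M$. Given $m_1,m_2\in\mathcal{H}_\Gamma^w(g)$, I pick arbitrary lifts $(m_1,a_1),(m_2,a_2)\in\mathcal{H}_\Gamma(g)$; the computation above gives $[m_1,m_2]\in\mathcal{H}_\Gamma^s(g)$, with the particular values of $a_1,a_2$ irrelevant. Thus $\mathcal{H}_\Gamma^s(g)$ contains every commutator of elements of the dense subgroup $\mathcal{H}_\Gamma^w(g)$.

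Finally I would pass to closures. Fixing arbitrary $m_1,m_2\in M$ and approximating them by sequences drawn from $\mathcal{H}_\Gamma^w(g)$, continuity of the commutator map $M\times M\to M$ together with the previous step places $[m_1,m_2]$ in the closed set $\overline{\mathcal{H}_\Gamma^s(g)}$. Hence $\overline{\mathcal{H}_\Gamma^s(g)}$, being a closed subgroup, contains every commutator of $M$ and therefore the subgroup they generate, namely $[M,M]$. I do not expect a serious obstacle here; the only point needing a little care is this last step, where one must remember that $[M,M]$ is by definition \emph{generated} by commutators, so that capturing all commutators inside the closed group $\overline{\mathcal{H}_\Gamma^s(g)}$ already yields the full commutator subgroup.
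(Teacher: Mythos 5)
Your proposal is correct and follows essentially the same route as the paper: the paper's proof likewise observes that $[(m_1,a_1),(m_2,a_2)]=([m_1,m_2],e)\in\mathcal{H}_\Gamma^s(g)$ and then concludes $\overline{\mathcal{H}_\Gamma^s(g)}\supset[\overline{\mathcal{H}_\Gamma^w(g)},\overline{\mathcal{H}_\Gamma^w(g)}]=[M,M]$ using the density from Corollary \ref{wtgd}. Your write-up merely makes explicit the continuity-of-commutator and generated-subgroup points that the paper leaves implicit.
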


\begin{proof}
For $m_i \in \mathcal{H}_\Gamma^w(g)$ and $(m_i, a_i ) \in \mathcal{H}_\Gamma(g)$ the commutator lands in $\mathcal{H}_\Gamma^s(g)$:
\[ [(m_1, a_1), ( m_2, a_2)] = ( [m_1, m_2], 0) \in \mathcal{H}_\Gamma^s(g).\]
It follows that $\overline{ \mathcal{H}_\Gamma^s(g)}  > [ \overline{ \mathcal{H}_\Gamma^w(g)}, \overline{ \mathcal{H}_\Gamma^w(g)}]   = [M, M].$
\end{proof}

\begin{remark}
If $M$ is semisimple, then this fact, together with non-arithmeticity of the length spectrum of $\Gamma$, is enough to prove density of $\mathcal{H}_\Gamma(g) <MA$. In the case that $M$ has non-trivial center, however, a more careful approach is needed.
\end{remark}

\subsection{Smooth constraints on the limit set.}  \label{useful equation} Our next target is Corollary \ref{inasmoothsubmanifold}. In order to motivate our approach, however, we will start with an example, 
\begin{example} Let $G = \operatorname{PSL}_2(\mathbb{R}) = \operatorname{Isom}_+(\mathbb{H}_\mathbb{R}^2)$. We pick the natural base point $i \in \mathbb{H}_\mathbb{R}^2$, and the natural (upward pointing) unit tangent vector. ÊThen $A < G$ is  the diagonal subgroup and $M$ is trivial. A short calculation gives
\[ N^+ = \left\lbrace \left( \begin{array}{cc} 1 & 0 \\ x& 1 \end{array} \right) : x\in \mathbb{R} \right\rbrace \mbox{ and } N^- = \left\lbrace \left( \begin{array}{cc} 1 & x\\ 0& 1 \end{array} \right): x \in \mathbb{R} \right\rbrace . \]
We also identify $\partial \mathbb{H}^2_\mathbb{R}$ with $\mathbb{R} \cup \lbrace \infty \rbrace$ in the usual way. 
Assume that $\Gamma < \operatorname{PSL}_2(\mathbb{R})$ is non-elementary and that ${\lbrace 0, 1, \infty \rbrace \subset \Lambda(\Gamma)}$. Then $e \in \mathrm{supp}(m^{\operatorname{BMS}})$, and the transitivity group $\mathcal{H}_\Gamma(e)$ is dense in $MA  = A $.
\end{example} \label{usefulequation}
\begin{proof} The first claim is clear. For the second, we define a smooth map $f : \partial( \mathbb{H}^2) \setminus \lbrace 0, \infty \rbrace  \rightarrow A$ by sending $\xi$ to $g_6$ as in the following picture.\\

\begin{tikzpicture} [scale = 0.8]

\draw (0, 0) -- (4, 0);
\draw[dashed] (0, 2) -- (4, 2);
\draw (2, 0) -- (2, 3);
\draw (4, 0) -- (4, 3);
\draw[fill] (4,2) circle [radius=0.025];
\node [above left] at (4,2) {$g_1$};
\draw[fill] (2,2) circle [radius=0.025];
\node [above right] at (2,2) {$e$};
\node[below] at (2, 0) {$0$};
\node[below] at (4, 0) {$1$};
\draw [->] (4,2) -- (4,2.5);
\draw [->] (2,2) -- (2,2.5);

\draw (5, 0) -- (9, 0);
\draw (9, 0) -- (9, 3);
\draw (7,0) arc (180:0:1) ;
\draw [dashed] (10,1) arc (360:0:1) ;
\draw[fill] (8,1) circle [radius=0.025];
\node [above left] at (8,1) {$g_2$};
\draw[fill] (9,2) circle [radius=0.025];
\node [above right] at (9,2) {$g_1$};
\node[below] at (7, 0) {$0$};
\node[below] at (9, 0) {$1$};
\draw [->] (9,2) -- (9,2.5);
\draw [->] (8,1) -- (7.5,1);

\draw (10, 0) -- (14, 0);
\draw (12, 0) -- (12, 3);
\draw (12,0) arc (180:0:1) ;
\draw [dashed] (13,1) arc (360:0:1) ;
\draw[fill] (12,2) circle [radius=0.025];
\node [above left] at (12,2) {$g_3$};
\draw[fill] (13,1) circle [radius=0.025];
\node [above right] at (13,1) {$g_2$};
\node[below] at (12, 0) {$0$};
\node[below] at (14, 0) {$1$};
\draw [->] (12,2) -- (12,1.5);
\draw [->] (13,1) -- (12.5,1);

\end{tikzpicture}

\begin{tikzpicture} [scale = 1.1]
\draw (8, 0) -- (11, 0);
\draw (9, 0) -- (9, 2.5);
\draw (9,0) arc (180:0:0.55) ;
\draw [dashed] (9, 0.315) circle [radius=.315]; 
\draw[fill] (9,0.625) circle [radius=0.025];
\node [above left] at (9,0.625) {$g_6$};
\draw[fill] (9.27, 0.47) circle [radius=0.025];
\node [below right] at (9.27,0.47) {$g_5$};
\node[below] at (9, 0) {$0$};
\node[below] at (10.12, 0) {$\xi$};
\draw [->] (9,0.625) -- (9,1.125);
\draw [->] (9.27, 0.47) -- (9.54, 0.67);

\draw (4, 0) -- (7, 0);
\draw (6.12, 0) -- (6.12, 2.5);
\draw (5,0) arc (180:0:0.55) ;
\draw [dashed] (6.12,1) circle [radius=1] ;
\draw[fill] (6.12,2) circle [radius=0.025];
\node [below left] at (5.866,0.866/2) {$g_5$};
\draw[fill] (5.27,0.47) circle [radius=0.025];
\node [above right] at (6.12,2) {$g_4$};
\node[below] at (5, 0) {$0$};
\node[below] at (6.12, 0) {$\xi$};
\draw [->] (5.27, 0.47) -- (5.54,0.67);
\draw [->] (6.12, 2) -- (6.12, 1.5);

\draw (0, 0) -- (3, 0);
\draw (2.12, 0) -- (2.12, 2.5);
\draw (1,0) --(1, 2.5) ;
\draw [dashed] (0,2) --(3, 2) ;
\draw[fill] (2.12,2) circle [radius=0.025];
\draw[fill] (1,2) circle [radius=0.025];
\node [above right] at (2.12,2) {$g_4$};
\node [above right] at (1,2) {$g_3$};
\node[below] at (1, 0) {$0$};
\node[below] at (2.12, 0) {$\xi$};
\draw [->] (2.12,2) -- (2.12,1.5);
\draw [->] (1, 2) -- (1, 1.5);

\end{tikzpicture}

Algebraically this is just the map
\begin{eqnarray*}F:  \xi \negthickspace  &\mapsto& \left( \begin{array}{cc} 1 & 1\\0&1 \end{array}\right) \left( \begin{array}{cc} 1 & 0\\-1&1 \end{array}\right)\left( \begin{array}{cc} 1 & 1\\0&1 \end{array}\right) \left( \begin{array}{cc} 1 & 0\\-\xi &1 \end{array}\right) \left( \begin{array}{cc} 1 & \xi^{-1} \\0&1 \end{array}\right)\left( \begin{array}{cc} 1 & 0\\-\xi&1 \end{array} \right) \\
\negthickspace&=& \left( \begin{array}{cc} -\xi & 0\\0& - \xi^{-1} \end{array}\right)\\
\negthickspace&=& \left( \begin{array}{cc} \xi & 0\\0& \xi^{-1} \end{array}\right). \end{eqnarray*}
The points are that this map is a local diffeomorphism, that it carries ${1 \in \partial (\mathbb{H}^2)} $ to the identity in $A\simeq\mathbb{R}$, and that it carries the limit set into the transitivity group. Since $\Gamma$ is not elementary we know that $1$ is not isolated in the limit set, and so that $e$ is not isolated in the transitivity group. But a subgroup of $\mathbb{R}$ where the identity is not isolated is necessarily dense. 
\end{proof}

\begin{remark} For $G = \operatorname{SL}_2(\mathbb{R})$, this argument is sufficient to prove that the transitivity groups contain $M = \lbrace  \pm \operatorname{Id} \rbrace$. Together with the results of the next two sections that suffices to prove mixing of "frame flow" for $\operatorname{SL}_2(\mathbb{R})$ in the sense of  Theorem \ref{target}.
\end{remark}

We now aim to prove density of the transitivity groups for general $G$ and $\Gamma <G$ Zariski dense. The strategy is roughly the same as that in the example above; construct a map from the limit set to the transitivity group and then use smoothness properties of that map and density properties of the limit set to conclude that the transitivity group is dense. Unfortunately, we can't write our map down explicitly in the general case. Let $\tilde M = [M, M]$ be the commutator subgroup of $M$ and $M^{\mathrm{ab}} = M/\tilde M$ be the abelianization. We will write $\tilde \pi: M \times A  \rightarrow M^{\mathrm{ab}} \times A $ for the projection map. 
\begin{proposition} \label{nicefunction} Suppose that $\Gamma < G$ is Zariski dense. Fix $g \in \mathrm{supp}(m^{\operatorname{BMS}})$. There is a neighborhood $U$ of $g^-$ in $ \partial \tilde X$ and a smooth map
\[ \Phi : U \rightarrow M \times A \]
such that
\begin{itemize}
{\item $\Phi$ maps $\Lambda(\Gamma) \cap U$ into $\mathcal{H}_\Gamma(g)$, }
{\item $\Phi (g^-) = (e, e)$ is the identity,  and }
{\item the differential map $d (\tilde \pi \circ \Phi) : \mathrm{T}_{g^-}( \partial \tilde X) \rightarrow \mathrm{T}_e ( M^{\mathrm{ab}} \times A)$ is surjective.  }
\end{itemize}
\end{proposition}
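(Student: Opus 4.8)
The plan is to imitate the $\operatorname{PSL}_2(\mathbb{R})$ computation of the preceding example, replacing the explicit $2\times 2$ matrices by an intrinsic construction built from the Bruhat decomposition. The guiding principle is that a closed ``holonomy'' path of horospherical moves which begins and ends over the same geodesic must close up in the Levi factor $MA$, and that if every intermediate frame of the path has both of its endpoints in $\Lambda(\Gamma)$, then the reachability argument of Lemma \ref{lemma3.3} (with $\gamma = e$, or with a $\gamma$ supplied by Proposition \ref{dense in U and N}) places the resulting element of $MA$ inside $\mathcal{H}_\Gamma(g)$. So I would first set up such a path depending smoothly on a backward endpoint $\xi$, normalised to degenerate to the constant path when $\xi = g^-$.

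Concretely, writing $W = N^+MAN^-$ for the big Bruhat cell and using that right multiplication by $N^-$ fixes the forward endpoint while right multiplication by $N^+$ fixes the backward endpoint, I would build a smooth family $\omega(\xi)$ of representatives of the nontrivial Weyl element as an alternating product of $N^+$ and $N^-$ factors, the factors being determined by the horospherical coordinates $n(\cdot), h(\cdot)$ of $\xi$ together with finitely many fixed auxiliary limit points (for $G = \operatorname{PSL}_2(\mathbb{R})$ this recovers the hexagon $h_1\cdots h_6$ of the example). Because any two representatives of the longest Weyl element in $N_G(MA)/MA$ differ by an element of $MA$, the map
\[ \Phi(\xi) := \omega(g^-)^{-1}\,\omega(\xi) \]
takes values in $MA$, is smooth on a neighbourhood $U$ of $g^-$, and satisfies $\Phi(g^-) = (e,e)$. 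Since $\omega(\xi)$ is a product of horospherical factors whose partial products have endpoints among $\{g^+, g^-, \xi\}$ and the fixed auxiliary points, choosing those auxiliary points in $\Lambda(\Gamma)$ guarantees that for $\xi \in \Lambda(\Gamma)$ every vertex of the corresponding path lies in $\operatorname{supp}(m^{\operatorname{BMS}})$; reachability then yields $\Phi(\Lambda(\Gamma)\cap U)\subset \mathcal{H}_\Gamma(g)$, which is the first two bullet points.

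It remains to arrange the differential condition, and this is where the real work lies. Parametrising $U$ by the backward-endpoint coordinate $n(\xi)\in N^-$, so that $\mathrm{T}_{g^-}\partial\tilde X \cong \mathfrak{n}^-$, the first-order behaviour of $\Phi$ is governed by a bracket: differentiating the alternating product at $\xi = g^-$ expresses $d\Phi_{g^-}(Y)$, for $Y \in \mathfrak{n}^-$, as the $\mathfrak{m}\oplus\mathfrak{a}$-component of $[X_0, Y]$ for a fixed $X_0\in\mathfrak{n}^+$ coming from the auxiliary part of the path. Thus surjectivity of $d(\tilde\pi\circ\Phi)$ reduces to the purely Lie-theoretic statement that the composite
\[ \mathfrak{n}^- \xrightarrow{\ [X_0,\,\cdot\,]\ } \mathfrak{m}\oplus\mathfrak{a} \longrightarrow \mathfrak{m}^{\mathrm{ab}}\oplus\mathfrak{a} \]
is onto, where the second arrow kills $[\mathfrak{m},\mathfrak{m}]$. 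The $\mathfrak{a}$-component is covered because $[\mathfrak{g}_\alpha,\mathfrak{g}_{-\alpha}]$ contains the coroot; the $\mathfrak{m}^{\mathrm{ab}}$-component only needs to be reached when $M$ has nontrivial centre, which by the Fact that $\dim Z(M)\le 1$ happens exactly for $\operatorname{SU}(n,1)$ and $\operatorname{SO}(3,1)$ and involves a one-dimensional target. I would verify surjectivity in these two cases directly from the root-space description of $\mathfrak{m}$, choosing $X_0$ (and, if necessary, letting a second factor of the path vary with $\xi$) so that the bracket reaches the central direction of $\mathfrak{m}$.

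The main obstacle is the simultaneous bookkeeping in the construction of $\omega$: one must produce a single family that (i) is smooth on a full neighbourhood of $g^-$ and not merely on the limit set, (ii) lands exactly in $MA$ rather than only to first order, and (iii) has all partial products supported on limit-set geodesics whenever $\xi\in\Lambda(\Gamma)$, while (iv) still admitting a surjective differential. Requirements (ii) and (iii) pull against (iv), since closing the path up perfectly in $MA$ tends to force cancellations that also annihilate the differential, so the auxiliary limit points and the placement of the varying factor must be chosen with care. I expect this to absorb most of the technical effort.
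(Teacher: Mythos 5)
Your overall architecture is the paper's: the proof there builds exactly such a closed alternating horospherical word, namely $g \mapsto gh_0 \mapsto gh_0 n_\xi \mapsto gh_0 n_\xi h_0^{-1}f_2(n_\xi) \mapsto gh_0 n_\xi h_0^{-1}f_2(n_\xi)f_3(n_\xi) = g\Phi(\xi)$, with a single auxiliary point $(gh_0)^+ \in \Lambda(\Gamma)$, smooth in $\xi$ through the coordinate $n_\xi = n((gh_0)^{-1}\xi)$, and with $\gamma = e$ sufficing for reachability. Where you close the path by Weyl-representative bookkeeping ($\Phi(\xi) = \omega(g^-)^{-1}\omega(\xi)$), the paper simply solves the two endpoint equations $(\tilde\Phi)^\pm = v_0^\pm$ by the implicit function theorem to get $f_2, f_3$ landing the product in $MA$ exactly; these are equivalent mechanics, and your point (ii) is not actually in tension with (iv) — the derivative of the paper's map is $d\tilde\Phi(Y) = \operatorname{Ad}_{h_0}(Y)$ modulo $\hat{\mathfrak g} := \mathfrak n^+ + \mathfrak n^- + [\mathfrak m, \mathfrak m]$, which agrees with your single-bracket formula $\pi([X_0,Y])$ to first order in $X_0$ (and exactly, mod $\hat{\mathfrak g}$, for $Y \in \mathfrak g_{-\alpha}$), so the perfect closing does not annihilate the differential.

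The genuine gap is the one you flag in your last paragraph but do not close, and it is not mere bookkeeping: $X_0$ is \emph{not} a free parameter. For the partial products to stay in $\operatorname{supp}(m^{\operatorname{BMS}})$, the auxiliary factor must have the form $h_0 = h(g^{-1}\eta)$ with $\eta \in \Lambda(\Gamma)$, so ``choosing $X_0$ so that the bracket reaches the central direction'' is not available; you must prove that the set of \emph{good} $h_0$ (those with surjective bracket map) is large enough to meet the set of \emph{admissible} ones. The paper does this in two steps that your proposal is missing: Lemma \ref{l4.2}, a symmetric-tensor argument (using the Weyl element to generate a third symmetry of the $4$-linear form $\Xi$, forcing it to vanish identically if no single $V$ worked), which produces one $V$ with $\{[V,Y_1],[V,Y_2]\}$ a basis of $\mathfrak g/\hat{\mathfrak g}$ uniformly in $G$ and with no case division; and Corollary \ref{c4.3}, which upgrades this to a nonempty Zariski-open set $L \subset N^+$ of good $h_0$, which is then intersected with the Zariski-dense set $\{h(\gamma\xi)\}$ supplied by Proposition \ref{dense in U and N}. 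Your case-by-case route for $\operatorname{SU}(n,1)$ and $\operatorname{SO}(3,1)_+$ can be repaired to the same effect — in each case one checks that the good $X_0$ form a nonempty Zariski-open set (e.g.\ those with nonvanishing $\mathfrak g_\alpha$-component, since $Y \mapsto \pi([X_0,Y])$ on $\mathfrak g_{-\alpha}$ covers $\mathfrak a \oplus \mathfrak z(\mathfrak m)$ via the Hermitian pairing) and then quotes Proposition \ref{dense in U and N} — but as written, with existence of a single good $X_0$ and no openness-plus-density step, the three bullet points of Proposition \ref{nicefunction} cannot be achieved simultaneously. The paper's symmetry lemma is precisely the device that resolves this tension, and is the main idea your sketch lacks.
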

There are two cases here depending on whether the abelianization of $MA$ has dimension one or two. We will consider the latter case and leave the (simpler) former as an exercise. 

\begin{lemma} \label{l4.2} Let $\hat{\mathfrak{g}} \subset \mathfrak{g}$ be the subspace spanned by the Lie algebras of $N^+, N^-,$ and $\tilde M$. There exist tangent vectors $V \in \mathrm{T}_eN^+$ and $Y_i \in \mathrm{T}_eN^-$ such that 
\[  \lbrace [V, Y_1], [V, Y_2] \rbrace  \mbox{ is a basis of } \mathfrak{g}/\hat{\mathfrak{g}}. \]
\end{lemma}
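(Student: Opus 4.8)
The plan is to work entirely in the Lie algebra, using the restricted root space decomposition together with invariance of the Killing form $B$. Write $\mathfrak{g} = \mathfrak{g}_{-2\lambda}\oplus\mathfrak{g}_{-\lambda}\oplus\mathfrak{g}_0\oplus\mathfrak{g}_\lambda\oplus\mathfrak{g}_{2\lambda}$ for the decomposition under $\mathrm{ad}_{\hat v_0}$, where $\lambda$ is the restricted root, $\mathfrak{n}^+ = \mathrm{T}_eN^+ = \mathfrak{g}_\lambda\oplus\mathfrak{g}_{2\lambda}$, $\mathfrak{n}^- = \mathrm{T}_eN^- = \mathfrak{g}_{-\lambda}\oplus\mathfrak{g}_{-2\lambda}$, and $\mathfrak{g}_0 = \mathfrak{m}\oplus\mathfrak{a}$ with $\mathfrak m = \mathrm{T}_eM$, $\mathfrak a = \mathrm{T}_eA$. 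Since $M$ is compact, $\mathfrak{m}$ is reductive, so $\mathfrak{m} = \mathfrak{z}(\mathfrak{m})\oplus\tilde{\mathfrak{m}}$ where $\tilde{\mathfrak m}=\mathrm{T}_e\tilde M=[\mathfrak m,\mathfrak m]$. Hence $\hat{\mathfrak{g}} = \mathfrak{n}^-\oplus\tilde{\mathfrak{m}}\oplus\mathfrak{n}^+$ and the quotient $\mathfrak{g}/\hat{\mathfrak{g}}$ is canonically $\mathfrak{z}(\mathfrak{m})\oplus\mathfrak{a}$, which is exactly the center $\mathfrak{z}(\mathfrak{g}_0)$ of the Levi. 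In the case under consideration $\dim\mathfrak{z}(\mathfrak{m})=1$, so this space is two dimensional; fix $H_0$ spanning $\mathfrak{a}$ and a generator $Z_0$ of $\mathfrak{z}(\mathfrak{m})$.

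First I would reduce to brackets landing in $[\mathfrak{g}_\lambda,\mathfrak{g}_{-\lambda}]$. For $V\in\mathfrak{n}^+$ and $Y\in\mathfrak{n}^-$, only the $\mathfrak{g}_0$-components of $[V,Y]$ survive modulo $\hat{\mathfrak{g}}$, and taking $V\in\mathfrak{g}_\lambda$ and $Y_i\in\mathfrak{g}_{-\lambda}$ isolates them. Thus it suffices to produce a single $V\in\mathfrak{g}_\lambda$ for which the composite $\mathrm{ad}_V\colon\mathfrak{g}_{-\lambda}\to\mathfrak{g}_0\to\mathfrak{z}(\mathfrak{g}_0)$ is onto, and then pick $Y_1,Y_2$ whose images form a basis. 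Because $\mathfrak{g}_0 = \tilde{\mathfrak{m}}\oplus\mathfrak{z}(\mathfrak{g}_0)$ is a $B$-orthogonal decomposition (the center and derived subalgebra of the reductive $\mathfrak g_0$ are $B$-orthogonal, and $\mathfrak a\perp\mathfrak m$), the image of $[V,Y]$ in $\mathfrak{z}(\mathfrak{g}_0)$ is detected by the two numbers $B([V,Y],H_0)$ and $B([V,Y],Z_0)$. Invariance of $B$ rewrites these as $\lambda(H_0)\,B(V,Y)$ and $B([Z_0,V],Y)$ respectively, so surjectivity of $\mathrm{ad}_V$ is equivalent to linear independence of the two functionals $Y\mapsto B(V,Y)$ and $Y\mapsto B([Z_0,V],Y)$ on $\mathfrak{g}_{-\lambda}$. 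Since $B$ restricts to a nondegenerate pairing between $\mathfrak{g}_\lambda$ and $\mathfrak{g}_{-\lambda}$, this is precisely the assertion that $V$ and $[Z_0,V]$ are linearly independent in $\mathfrak{g}_\lambda$.

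It remains to find such a $V$, which is where the only real content lies. The operator $\mathrm{ad}_{Z_0}$ preserves $\mathfrak{g}_\lambda$ and, since $Z_0\in\mathfrak{m}\subset\mathfrak{k}$, is skew-symmetric for the inner product $-B(\,\cdot\,,\theta\,\cdot\,)$; a nonzero skew operator is never a scalar, so the moment $\mathrm{ad}_{Z_0}|_{\mathfrak{g}_\lambda}\neq 0$ is established, some $V$ is not an eigenvector and then $V,[Z_0,V]$ are independent. To see $\mathrm{ad}_{Z_0}|_{\mathfrak{g}_\lambda}\neq 0$: if it vanished, then applying $\theta$ (which fixes $Z_0$ and interchanges $\mathfrak{g}_{\pm\lambda}$) would force $Z_0$ to centralize $\mathfrak{g}_{-\lambda}$ as well, hence $\mathfrak{g}_{\pm2\lambda}=[\mathfrak{g}_{\pm\lambda},\mathfrak{g}_{\pm\lambda}]$ and all of $\mathfrak{g}_0$ (which it already centralizes), so $Z_0\in\mathfrak{z}(\mathfrak{g})=0$, contradicting simplicity of $\mathfrak{g}$. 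I expect this final nonvanishing — converting the structure of $\mathfrak m$ into the simplicity obstruction — to be the crux; everything else is bookkeeping with root spaces and the Killing form. One could alternatively verify the lemma from the explicit models of the $\mathfrak{m}$-action on $\mathfrak{g}_\lambda$ for $\operatorname{SU}(n,1)$ and $\operatorname{SO}(3,1)_+$ (the only groups with $\dim\mathfrak{z}(\mathfrak{m})=1$, i.e. the two-dimensional abelianization case), but the uniform argument above avoids that case split.
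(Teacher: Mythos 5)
Your argument is correct, and it takes a genuinely different route from the paper's. The paper proves Lemma \ref{l4.2} by contradiction, with no root-space analysis at all: fixing a basis $Z_3,\ldots,Z_r$ of $\hat{\mathfrak{g}}$, it introduces the multilinear map $\Xi(V_1,V_2,Y_1,Y_2)=[V_1,Y_1]\wedge[V_2,Y_2]\wedge Z_3\wedge\cdots\wedge Z_r\in\bigwedge^r\mathfrak{g}\cong\mathbb{R}$, observes that $\Xi\not\equiv 0$ because semisimplicity gives $\mathfrak{g}=[\mathfrak{g},\mathfrak{g}]\subset\hat{\mathfrak{g}}+[\mathrm{T}_eN^-,\mathrm{T}_eN^+]$, and shows that if no trio $(V,Y_1,Y_2)$ existed then $\Xi$ would carry three antisymmetries (the wedge identity $\Xi(V_1,V_2,Y_1,Y_2)=-\Xi(V_2,V_1,Y_2,Y_1)$; antisymmetry in the $V$'s, by polarizing the assumed vanishing of $\Xi(V,V,Y_1,Y_2)$; and antisymmetry in the $Y$'s, obtained by conjugating with a Weyl representative $k_0$ that swaps $\mathrm{T}_eN^{\pm}$ and preserves $\hat{\mathfrak{g}}$), forcing $\Xi\equiv 0$, a contradiction. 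You instead identify $\mathfrak{g}/\hat{\mathfrak{g}}$ with $\mathfrak{z}(\mathfrak{g}_0)=\mathfrak{z}(\mathfrak{m})\oplus\mathfrak{a}$ and reduce, via invariance of $B$ and the nondegenerate $\mathfrak{g}_\lambda\times\mathfrak{g}_{-\lambda}$ pairing, to the linear independence of $V$ and $[Z_0,V]$, which skewness of $\mathrm{ad}_{Z_0}$ plus simplicity of $\mathfrak{g}$ settles; each of these steps checks out, including the $B$-orthogonality $\tilde{\mathfrak{m}}\perp\mathfrak{z}(\mathfrak{g}_0)$ and the identities $B([V,Y],H_0)=\lambda(H_0)B(V,Y)$ and $B([V,Y],Z_0)=B([Z_0,V],Y)$. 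What each buys: the paper's tensor argument is uniform and essentially structure-free, and its auxiliary maps $\Xi$, $\hat\Xi$ are reused verbatim in Corollary \ref{c4.3}; your lemma feeds into that corollary unchanged, since your conclusion is exactly the nonvanishing $\Xi(V,V,Y_1,Y_2)\neq 0$. Your version is more constructive (any $V\notin\ker(\mathrm{ad}_{Z_0}|_{\mathfrak{g}_\lambda})$ works) and isolates the true obstruction, namely that a $Z_0$ centralizing $\mathfrak{g}_{\pm\lambda}$ would be central in $\mathfrak{g}$. The one statement you use without proof is $[\mathfrak{g}_{\pm\lambda},\mathfrak{g}_{\pm\lambda}]=\mathfrak{g}_{\pm 2\lambda}$: this is a standard rank-one fact, vacuous when $2\lambda$ is not a root (covering $\mathfrak{so}(3,1)$), and in the remaining case $\mathfrak{su}(n,1)$ it is immediate since $\dim\mathfrak{g}_{2\lambda}=1$ and the nonvanishing $[\mathfrak{g}_\lambda,\mathfrak{g}_\lambda]\neq 0$ is classical --- so it calls for a citation, not a repair.
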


\begin{proof}
Fix a basis $Z_3, \ldots Z_r$ for $\hat{ \mathfrak{g}}$. We consider the map
\[ \Xi : (\mathrm{T}_eN^+)^2 \times (\mathrm{T}_eN^-)^2 \rightarrow \bigwedge ^r \mathfrak g = \mathbb{R} \]
defined by 
\[ (V_1, V_2, Y_1, Y_2) \rightarrow [V_1, Y_1] \wedge [V_2, Y_2] \wedge Z_3\wedge   \ldots \wedge Z_r.\]
Observe that this map is linear in all four variables, and that it has a symmetry
\[ \Xi(V_1, V_2, Y_1, Y_2) = - \Xi(V_2, V_1, Y_2, Y_1). \]
Note that $\Xi$ cannot be identically zero; semisimplicity of $\mathfrak{g}$ implies that 
\[ \mathfrak g = [\mathfrak g, \mathfrak g] \subset \hat{\mathfrak{g}}  + [\mathrm{T}_eN^-. \mathrm{T}_eN^+], \]
so vectors in $[\mathrm{T}_eN^-. \mathrm{T}_eN^+]$ must span $\mathfrak{g}/\hat{\mathfrak{g}}$.

Our task is to find some trio $V \in \mathrm{T}_eN^+$ and $Y_i \in \mathrm{T}_eN^-$ such that $\Xi(V, V, Y_1, Y_2)$ is non-zero. We will argue by contradiction. If no such trio exists, then we get a second symmetry
\[ \Xi(V_1, V_2, Y_1, Y_2) = - \Xi(V_2, V_1, Y_1, Y_2), \]
by linearity.

We want one more symmetry of $\Xi$. Let $k_0 \in K$ be a representative of the non-trivial element of the Weyl group. Then
\begin{itemize}
\item{ $\operatorname{Ad}_{k_0} \mathrm{T}_eA = \mathrm{T}_eA$,}
\item{ $\operatorname{Ad}_{k_0} \mathrm{T}_eM = \mathrm{T}_eM$, }
\item{ $\operatorname{Ad}_{k_0} \mathrm{T}_eN^\pm = \mathrm{T}_eN^\mp$, and }
\item{ The vectors $\operatorname{Ad}_{k_0}(Z_3) \ldots \operatorname{Ad}_{k_0}(Z_r)$ still give a basis for $\hat{\mathfrak{g}}$.} 
\end{itemize}
Now let $Y \in \mathrm{T}_eN^-$ and $V_i \in \mathrm{T}_eN^+$. Then our contradictory assumption implies that
\begin{eqnarray*} 0 &=& \Xi( \mathrm{Ad}_{k_0}Y, \mathrm{Ad}_{k_0}Y, \mathrm{Ad}_{k_0}V_1, \mathrm{Ad}_{k_0}V_2)\\
&=& \mathrm{Ad}_{k_0}([Y, V_1]) \wedge \mathrm{Ad}_{k_0}([Y, V_2]) \wedge Z_3\wedge \ldots  \wedge Z_r \\
&=& \mathrm{Ad}_{k_0}([Y, V_1]) \wedge \mathrm{Ad}_{k_0}([Y, V_2]) \wedge \mathrm{Ad}_{k_0} Z_3 \wedge \ldots \wedge \mathrm{Ad}_{k_0}  Z_r \\
&=& [Y, V_1] \wedge [Y, V_2] \wedge Z_3 \wedge \ldots  \wedge Z_r \\
&=& \Xi(V_1, V_2, Y, Y).\end{eqnarray*}
This gives our final symmetry $\Xi(V_1, V_2, Y_1, Y_2) = - \Xi(V_1, V_2, Y_2, Y_1)$. But any tensor satisfying these three symmetries is identically zero, which is a contradiction. 

\end{proof}
\begin{corollary} \label{c4.3}
Fix $Y_i$ as in Lemma \ref{l4.2}. There is a non-empty Zariski open set $L \subset N^+$ such that $\lbrace \mathrm{Ad}_{h} Y_1, \mathrm{Ad}_{h} Y_2 \rbrace  \mbox{ is a basis of } \mathfrak{g}/\hat{\mathfrak{g}} \mbox{ whenever } h \in L$.
\end{corollary}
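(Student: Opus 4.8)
The plan is to reinterpret the desired property as the non-vanishing of a single regular function on $N^+$, after which both Zariski-openness and non-emptiness follow with little effort. Retaining the basis $Z_3, \ldots, Z_r$ of $\hat{\mathfrak{g}}$ from the proof of Lemma \ref{l4.2} (so that $r = \dim \mathfrak{g}$), I would introduce
\[ \Omega : N^+ \longrightarrow \textstyle\bigwedge^r \mathfrak{g} \cong \mathbb{R}, \qquad \Omega(h) = \mathrm{Ad}_h Y_1 \wedge \mathrm{Ad}_h Y_2 \wedge Z_3 \wedge \ldots \wedge Z_r. \]
Since $\lbrace Z_3, \ldots, Z_r \rbrace$ spans $\hat{\mathfrak{g}}$ and $\dim(\mathfrak{g}/\hat{\mathfrak{g}}) = 2$ by Lemma \ref{l4.2}, the value $\Omega(h)$ is non-zero exactly when $\mathrm{Ad}_h Y_1, \mathrm{Ad}_h Y_2, Z_3, \ldots, Z_r$ form a basis of $\mathfrak{g}$, i.e. precisely when $\lbrace \mathrm{Ad}_h Y_1, \mathrm{Ad}_h Y_2 \rbrace$ descends to a basis of $\mathfrak{g}/\hat{\mathfrak{g}}$. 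Thus the natural candidate is $L = \lbrace h \in N^+ : \Omega(h) \neq 0 \rbrace$.

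Zariski-openness of $L$ is then formal: the adjoint representation $\mathrm{Ad} : \mathbb{G} \to \mathrm{GL}(\mathfrak{g})$ is a morphism of algebraic groups, so its restriction to $\mathbb{N}^+$ is regular, and wedging is multilinear; hence $\Omega$ is a regular function on $\mathbb{N}^+$ and its non-vanishing locus is Zariski open, as is its intersection with $N^+$. The only genuine content is to check $\Omega \not\equiv 0$. For this I would evaluate $\Omega$ along the one-parameter subgroup $h = \exp(tV)$, where $V \in \mathrm{T}_e N^+$ is the vector produced together with $Y_1, Y_2$ in Lemma \ref{l4.2}. Expanding $\mathrm{Ad}_{\exp(tV)} = \exp(t\,\mathrm{ad}_V)$ gives $\mathrm{Ad}_{\exp(tV)} Y_i = Y_i + t[V, Y_i] + O(t^2)$; because $Y_i \in \mathrm{T}_e N^- \subset \hat{\mathfrak{g}}$, the constant term is annihilated by the wedge with $Z_3 \wedge \ldots \wedge Z_r$, so the lowest surviving order is quadratic,
\[ \Omega(\exp(tV)) = t^2\, [V, Y_1] \wedge [V, Y_2] \wedge Z_3 \wedge \ldots \wedge Z_r + O(t^3) = t^2\, \Xi(V, V, Y_1, Y_2) + O(t^3). \]
The coefficient $\Xi(V, V, Y_1, Y_2)$ is exactly the quantity shown to be non-zero in the proof of Lemma \ref{l4.2}, so $\Omega(\exp(tV)) \neq 0$ for all small $t \neq 0$, and therefore $L$ is non-empty.

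The step demanding the most care is this non-emptiness computation. One has to be honest about which $O(t^2)$ contributions actually survive the wedge against $Z_3 \wedge \ldots \wedge Z_r$ — only the components transverse to $\hat{\mathfrak{g}}$ do, so that the higher-order corrections to each individual factor do not feed into the $t^2$ coefficient, which receives contributions only from the product of the two linear terms (the linear and constant terms vanishing modulo $\hat{\mathfrak{g}}$) — and to confirm that the surviving quadratic term is literally the non-degenerate $\Xi(V, V, Y_1, Y_2)$ rather than some unrelated bracket. Everything else, namely the reformulation through $\Omega$ and the appeal to algebraicity of $\mathrm{Ad}$, is routine.
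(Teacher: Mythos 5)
Your proposal is correct and follows essentially the same route as the paper: the paper defines the identical wedge function $\hat\Xi(h) = \mathrm{Ad}_h Y_1 \wedge \mathrm{Ad}_h Y_2 \wedge Z_3 \wedge \ldots \wedge Z_r$, shows it is non-constant by computing the second derivative along a curve $\alpha(t)$ with $\alpha'(0) = V$, obtaining exactly your quadratic coefficient $\Xi(V,V,Y_1,Y_2) \neq 0$ from Lemma \ref{l4.2}, and takes $L$ to be the complement of the zero locus. Your explicit Taylor-expansion bookkeeping of which terms die modulo $\hat{\mathfrak{g}}$ is just a more detailed rendering of the same computation (and correctly fixes the paper's harmless omission of the factor $2$ in the second derivative).
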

\begin{proof} Fix a basis $Z_3, \ldots,  Z_r$ for $\hat{ \mathfrak{g}}$ and consider the function 
\begin{eqnarray*} \hat \Xi : N^+ \hspace{-3mm} &\rightarrow& \bigwedge^r \mathfrak{g} \\
 h &\mapsto&  \mathrm{Ad}_{h} Y_1\wedge  \mathrm{Ad}_{h} Y_2\wedge  Z_3\wedge \ldots \wedge Z_r.\end{eqnarray*}
 The first observation is that $\hat \Xi$ is non-constant; for any curve $\alpha(t)$ passing through the identity of $N^+$ we have 
\[ \frac{ d^2}{dt^2} \hat \Xi (\alpha (t))  = \Xi (\alpha'(0), \alpha'(0), Y_1, Y_2)  \]
at $t = 0$, which is non-zero for $\alpha'(0) = V$ as in Lemma \ref{l4.2}. It follows that $\hat \Xi$ is non-constant algebraic. We choose $L$ to be the complement of the pre-image of zero. 
\end{proof}

\begin{proof}[Proof of Proposition \ref{nicefunction}] Let $L \subset N^+$ be the set from Corollary \ref{c4.3}.  Choose $h_0 \in L$ such that $(gh_0)^+$ is in the (Zariski dense) limit set $\Lambda(\Gamma)$. We define the function 
\[ \tilde \Phi: N^- \times N^+ \times N^- \rightarrow G \]
by
\[ \tilde{\Phi} (n_1, h_2, n_3) = h_0 n_1 h_0^{-1} h_2 n_3. \]
By the implicit function theorem we may choose a neighborhood $\tilde U$ of $e$ in $N^-$ and smooth functions $f_2: \tilde U \rightarrow N^+$ and $f_3: \tilde U \rightarrow N^-$ such that
\[ (\tilde {\Phi} (n_1, f_2(n_1), f_3(n_1)))^-  = v_0^- \mbox{ and } (\tilde {\Phi} (n_1, f_2(n_1), f_3(n_1)))^+  = v_0^+ \]
for all $n_1 \in \tilde U$. It follows that $\hat \Phi(n_1) :=  \tilde {\Phi} (n_1, f_2(n_1), f_3(n_1))$ is a smooth map $\tilde U \rightarrow MA$. Let
\[ U = \lbrace \xi \in \partial \tilde  X \setminus  \lbrace gh_0 v_0^+ \rbrace :n((gh_0)^{-1} \xi) \in \tilde U \rbrace,  and  \]
\[ \Phi(\xi) =  \hat \Phi (n((gh_0)^{-1} \xi)). \]
If we fix $g$ and $h_0$ and write $n_\xi := n((gh_0)^{-1} \xi)$, then we can express this explicitly as 
\[ \Phi(\xi) = h_0 .n_\xi. h_0^{-1} f_2(n_\xi) . f_3(n_\xi). \]

\hspace{7mm}
\begin{tikzpicture}[scale=1]

\draw (2,0) circle [radius= 2];
\draw[fill] (2, 2) circle [radius=0.05];
\node[above] at (2, 2) {$g^+$};
\draw[fill] (2, -2 ) circle [radius=0.05];
\node[below] at (2, -2) {$g^-$};
\draw[fill] (2 + 1.414, 1.414 ) circle [radius=0.05];
\node[above right] at (2 + 1.414, 1.414 ) {$gh_0^+$};
\draw[fill] (2+ 1.414, -1.414 ) circle [radius=0.05];
\node[below right ] at (2 + 1.414, -1.414 ) {$\xi$};
\draw (2, -2) -- (2, 2);
\draw[fill] (2, 0) circle [radius=0.05];
\node[above left] at (2, 0 ) {$g$};
\draw [->] (2, 0) -- (2, 0.5);
\draw[dashed] (2, -1) circle [radius = 1];
\draw (2, -2) arc (180:135:4.85) ;
\draw[fill] (2.4, -.08) circle [radius=0.05];
\node[above right] at (2.38, -0.11 ) {$\thickspace g_1 := gh_0$};
\node[below] at (2, -2.5 ) {with $g_1 := gh_0$};
\draw [->] (2.38, -0.11) -- (2.64, 0.39);

\draw (7,0) circle [radius= 2];
\draw[fill] (7, 2) circle [radius=0.05];
\node[above] at (7, 2) {$g^+$};
\draw[fill] (7, -2 ) circle [radius=0.05];
\node[below] at (7, -2) {$g^-$};
\draw[fill] (7 + 1.414, 1.414 ) circle [radius=0.05];
\node[above right] at (7 + 1.414, 1.414 ) {$gh_0^+$};
\draw[fill] (7+ 1.414, -1.414 ) circle [radius=0.05];
\node[below right ] at (7 + 1.414, -1.414 ) {$\xi$};
\node[below left] at (7.38, 0 ) {$g_1$};
\draw[dashed] (7.75, 0.75) circle [radius = 0.92];
\draw (7, -2) arc (180:135:4.85) ;
\draw[fill] (7.39, -.105) circle [radius=0.05];
\draw[fill] (7.83,  -.168 ) circle [radius=0.05];
\draw [->] (7.38, -0.11) -- (7.64, 0.39);
\draw [->] (7.82, -0.16) -- (7.81, 0.37);
\draw (8.414, -1.414) arc (-135:-225:2) ;
\node[below right] at (7.82, -0.16 ) {$\negmedspace g_2\negmedspace:=\negmedspace g_1n_\xi$};
\node[below] at (7, -2.5 ) {with $g_2 := g_1n_\xi$};
\end{tikzpicture}

\hspace{7mm}
\begin{tikzpicture} [scale = 1]

\draw (2,0) circle [radius= 2];
\draw[fill] (2, 2) circle [radius=0.05];
\node[above] at (2, 2) {$g^+$};
\draw[fill] (2, -2 ) circle [radius=0.05];
\node[below] at (2, -2) {$g^-$};
\draw[fill] (2 + 1.414, 1.414 ) circle [radius=0.05];
\node[above right] at (2 + 1.414, 1.414 ) {$gh_0^+$};
\draw[fill] (2+ 1.414, -1.414 ) circle [radius=0.05];
\node[below right ] at (2 + 1.414, -1.414 ) {$\xi$};
\node[left] at (2.55, -0.25 ) {$g_3$};
\draw[dashed] (2.9,- 0.9) circle [radius = 0.72];
\draw (2, 2) arc (180:225:4.85) ;
\draw[fill] (2.56, -.26) circle [radius=0.05];
\draw[fill] (2.83,  -.172 ) circle [radius=0.05];
\draw [->] (2.56, -0.26) -- (2.32, 0.19);
\draw [->] (2.82,- 0.16) -- (2.81, 0.37);
\draw (3.414, 1.414) arc (135:225:2) ;
\node[above right] at (2.82, -0.16 ) {$g_2$};
\node[below] at (2, -2.5 ) {with $g_3 := g_2h_0^{-1} f_2(n_\xi)$};

\draw (7,0) circle [radius= 2];
\draw[fill] (7, 2) circle [radius=0.05];
\node[above] at (7, 2) {$g^+$};
\draw[fill] (7, -2 ) circle [radius=0.05];
\node[below] at (7, -2) {$g^-$};
\draw[fill] (7 + 1.414, 1.414 ) circle [radius=0.05];
\node[above right] at (7 + 1.414, 1.414 ) {$gh_0^+$};
\draw[fill] (7+ 1.414, -1.414 ) circle [radius=0.05];
\node[below right ] at (7 + 1.414, -1.414 ) {$\xi$};
\node[right] at (7.55, -0.25 ) {\thickspace $g_3$};
\draw[dashed] (7, 0.8) circle [radius = 1.2];
\draw (7, 2) arc (180:225:4.85) ;
\draw[fill] (7.56, -.26) circle [radius=0.05];
\draw[fill] (7,  -.4 ) circle [radius=0.05];
\draw [->] (7.56, -0.26) -- (7.32,0.19);
\draw [->] (7, -0.4) -- (7, 0.11);
\draw (7, -2) -- (7, 2);
\node[below left] at (7, -0.4 ) {$ g\Phi(\xi)$};
\node[below] at (7, -2.5 ) {with $g\Phi(\xi)= g_3f_3(n_\xi)$};
\end{tikzpicture}

\noindent We can now check the required properties directly. The subset $U$ is indeed a neighborhood of $g^-$ in $\partial \tilde X$. The function $\Phi$ is a composition of smooth maps, so is smooth. If $\xi \in \Lambda(\Gamma)\cap U$, then 
\[ h_0, n_\xi, h_0^{-1} f_2(n_\xi),   f_3(n_\xi)\]
and $e\in\Gamma$ form a sequence for $\Phi(\xi)$ in $\mathcal{H}_\Gamma(g)$ (see the picture). If $Y_i \in \mathrm{T}_eN^-$ are the tangent vectors from Lemma \ref{l4.2}, then $\lbrace  d \tilde \Phi (Y_1),  d \tilde \Phi (Y_2) \rbrace  $ spans $\mathfrak{g/\hat g}$; in fact 
\[ d \tilde \Phi(Y_i) = \operatorname{Ad}_{h_0}(Y_i) \mbox { in } \mathfrak{g/\hat g}.\]
It follows that $d(\tilde \pi \circ \Phi): \mathrm{T}_{g^-}(\partial \tilde X) \rightarrow \mathrm{T}_e (M^{\mathrm{ab}}A)$ is surjective as required. 
\end{proof}

Proposition \ref{nicefunction} is useful because of the following corollary.

\begin{corollary} \label{inasmoothsubmanifold}
Suppose that $\Gamma <G$ is Zariski dense and there exists $g \in \mathrm{supp}(m^{\operatorname{BMS}})$ such that $\tilde \pi(\mathcal{H}_\Gamma(g))<  M^{\mathrm{ab}} \times A$ is not dense. Then there is a neighborhood $U'$ of $g^-$ in $\partial \tilde X$ and a smooth embedded submanifold $S \subset U'$ of codimension one such that ${\Lambda (\Gamma) \cap U' \subset S}$. 
\end{corollary}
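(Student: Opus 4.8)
The plan is to feed the map from Proposition \ref{nicefunction} into a transversality argument. Write $L = M^{\mathrm{ab}} \times A$ and set $F = \tilde \pi \circ \Phi : U \to L$, where $\Phi$ and $U$ are as in Proposition \ref{nicefunction}. Three features of $F$ drive everything: it carries $\Lambda(\Gamma) \cap U$ into $H := \tilde \pi(\mathcal{H}_\Gamma(g))$ (since $\Phi$ sends $\Lambda(\Gamma)\cap U$ into $\mathcal{H}_\Gamma(g)$); it sends $g^-$ to the identity $e \in L$; and its differential at $g^-$ is surjective. Since surjectivity of the differential is an open condition, after shrinking $U$ to a smaller neighborhood $U'$ of $g^-$ we may assume $F$ is a submersion on all of $U'$.

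First I would pin down the structure of the target. The group $M$ is compact and connected (Lemma \ref{L2.4}) and its center has dimension at most one, so the abelianization $M^{\mathrm{ab}}$ is a torus of dimension at most one and hence $\dim L \le 2$. The hypothesis that $H$ is not dense says exactly that the closure $\overline H$ is a \emph{proper} closed subgroup of $L$. Its identity component $\overline H^{0}$ is therefore a connected closed subgroup of $L$ of dimension at most $\dim L - 1 \le 1$. Because $\overline H^{0}$ is open in $\overline H$, there is a neighborhood $V$ of $e$ in $L$ together with an embedded codimension-one submanifold $T \subset L$ through $e$ such that $\overline H \cap V = \overline H^{0} \cap V \subset T$: when $\dim \overline H^{0} = \dim L - 1$ one takes $T = \overline H^{0}$ locally, and otherwise $\overline H$ meets $V$ only in $e$ and any codimension-one $T$ through $e$ serves.

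It then remains to pull $T$ back. Shrinking $U'$ further so that $F(U') \subset V$, define $S := F^{-1}(T) \cap U'$. Since $F$ is a submersion it is transverse to $T$, so by the preimage theorem $S$ is a smooth embedded submanifold of $U'$ of codimension one, and it contains $g^-$ because $F(g^-) = \tilde\pi(e,e) = e \in T$. Finally, for any $\xi \in \Lambda(\Gamma) \cap U'$ we have $F(\xi) \in H \cap V \subset \overline H \cap V \subset T$, so $\Lambda(\Gamma) \cap U' \subset S$, which is the desired statement.

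The real content of the corollary is the bookkeeping that lands us in codimension exactly one rather than two. A naive attempt would intersect $\Lambda(\Gamma)$ with a single fibre $F^{-1}(e)$, but since $\dim L$ may equal two this fibre has codimension two; the improvement comes from replacing the point $e$ by the whole local subgroup $\overline H^{0}$, whose codimension in $L$ is exactly one precisely because $\overline H$ is proper and $\dim L \le 2$. The only place demanding care is the degenerate case in which $\overline H$ is discrete, where one must shrink $U'$ enough that $F(U')$ avoids the non-identity points of $\overline H$; this is arranged automatically by the choice of the neighborhood $V$ above.
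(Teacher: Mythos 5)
Your proof is correct and takes essentially the same route as the paper: both feed the map $\tilde\pi \circ \Phi$ from Proposition \ref{nicefunction} into the pre-image theorem, using that non-density makes $\overline{H} = \overline{\tilde\pi(\mathcal{H}_\Gamma(g))}$ a proper closed (hence embedded Lie) subgroup of $M^{\mathrm{ab}} \times A$. If anything you are more careful than the paper, which simply sets $S = (\tilde\pi\circ\Phi)^{-1}(\overline{H}) \cap U'$ and asserts codimension one; your local replacement of $\overline{H}$ by a codimension-one slice $T$ explicitly handles the degenerate case where $\overline{H}$ is discrete inside a two-dimensional $M^{\mathrm{ab}} \times A$ (where the naive pre-image would have codimension two), a point the paper's proof glosses over even though the downstream Proposition \ref{algcon} needs codimension exactly one.
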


\begin{proof} We consider the projection $H = \tilde \pi(\mathcal{H}_\Gamma(g))$ of the transitivity group to the abelianization $M^{\mathrm{ab}}A$. If $H$ is not dense, then $\overline{H} < M^{ab}A$ is a proper Lie subgroup, so is a smooth embedded submanifold of $M^{\mathrm{ab}}A$ with positive codimension. Now apply the pre-image theorem to the composition $\tilde \pi \circ \Phi$ (see Proposition \ref{nicefunction});  we conclude that there is a neighborhood $U'$ of $g^-$ in $\partial \tilde X$ and an embedded submanifold $S =  (\tilde \pi \circ \Phi)^{-1}(\overline{H}) \cap U'$ with $\Lambda(\Gamma) \cap U' \subset S$ as required. \end{proof}

\begin{remark}
There's a reasonable question of whether we can extend this corollary to show that $\Lambda(\Gamma)$ is globally contained in a smooth submanifold of the boundary. The next argument will show that we can.
\end{remark}

\subsection{Algebraic constraints on $\Lambda(\Gamma)$: an example.}
Our next task is to show that the the projection ${\tilde \pi(\mathcal{H}_\Gamma(g)) < M^{\operatorname{ab}} \times A}$ is dense whenever $\Gamma < G$ is Zariski dense and $g \in \mathrm{supp}(m^{\operatorname{BMS}})$. We seek a contradiction with the outcome of Corollary \ref{inasmoothsubmanifold}. Begin again with an example. 
\begin{example} Consider $G = \operatorname{PSL}_2(\mathbb{C})$ acting as the isometry group of real hyperbolic three space $\mathbb{H}^3_\mathbb{R}$. Identify the boundary $\partial (\mathbb{H}^3_\mathbb{R})$ with $\mathbb{R}^2 \cup \lbrace \infty \rbrace$ in the usual way. Suppose that $\Gamma < G$ is discrete. If 
\[ a_{\log 2} =   \left(\begin{array}{cc} 2 & 0 \\ 0 & \frac{1}{2} \end{array} \right)  \in \Gamma, \]
then $(0, 0) \in \Lambda(\Gamma)$. If in addition there is a smooth curve $S \subset \mathbb{R}^2$ passing through $(0, 0)$, and a positive $\epsilon$ such that
\[ \Lambda(\Gamma) \cap \lbrace (x, y)  \in \mathbb{R}^2: |(x, y)| < \epsilon \rbrace \subset S, \]
then $\Gamma$ is not Zariski dense in $\operatorname{PSL}_2(\mathbb{C})$, considered as a real algebraic group. 
\end{example}
It is clear that $(0, 0) \in \Lambda (\Gamma)$. Shrinking $\epsilon$ if necessary, we may assume that $S$ is given either as the graph of a smooth real function $y = f(x)$ or as the graph of a smooth real function $x = g(y)$. For simplicity assume the former. There is, therefore, some constant $C > 0$ with  
\[ |y - f'(0) x| \leq   Cx^2 \]
for all pairs $(x, y) \in S$ with $|(x, y)| < \epsilon$, and so for all sufficiently small pairs $(x, y) \in \Lambda(\Gamma)$. 

\begin{lemma} If $(x_0, y_0) \in \Lambda(\Gamma) \setminus \lbrace \infty \rbrace $, then $y_0 - f'(0)x_0 = 0$. \end{lemma}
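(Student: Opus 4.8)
The plan is to exploit the fact that $a_{\log 2} \in \Gamma$ acts on the boundary $\partial(\mathbb{H}^3_\mathbb{R}) = \mathbb{C} \cup \lbrace \infty \rbrace$ as a conformal linear map fixing the origin, together with $\Gamma$-invariance of the limit set. First I would compute this action explicitly: the M\"obius transformation associated with $a_{\log 2} = \left(\begin{smallmatrix} 2 & 0 \\ 0 & 1/2 \end{smallmatrix}\right)$ sends $z \mapsto 4z$, so its inverse $a_{\log 2}^{-1}$ acts as the contraction $z \mapsto z/4$ toward the origin $0 = (0,0)$. This is also why $(0,0) \in \Lambda(\Gamma)$: it is the attracting fixed point of the loxodromic element $a_{\log 2}^{-1}$.

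Now fix $(x_0, y_0) \in \Lambda(\Gamma) \setminus \lbrace \infty \rbrace$ and write $z_0 = x_0 + i y_0$. Since $a_{\log 2}^{-1} \in \Gamma$ and the limit set is $\Gamma$-invariant, every iterate $4^{-n} z_0$ lies in $\Lambda(\Gamma)$. For $n$ large enough these iterates enter the $\epsilon$-ball about the origin, so the quadratic constraint established just above applies to the pair $(4^{-n} x_0, 4^{-n} y_0)$:
\[ |4^{-n} y_0 - f'(0) \cdot 4^{-n} x_0| \leq C (4^{-n} x_0)^2 = C \cdot 4^{-2n} x_0^2. \]
Dividing both sides by $4^{-n}$ gives $|y_0 - f'(0) x_0| \leq C \cdot 4^{-n} x_0^2$, and letting $n \to \infty$ forces $y_0 - f'(0) x_0 = 0$, as claimed.

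The essential mechanism, and the only point requiring any care, is the mismatch in scaling: the deviation of a boundary point from the tangent line $y = f'(0)x$ is homogeneous of degree one, whereas the error bound $Cx^2$ is homogeneous of degree two. Iterating the contraction toward the fixed point therefore shrinks the admissible deviation by an extra factor $4^{-n}$ at each stage, and the only degree-one quantity compatible with a degree-two bound at every scale is zero. There is no genuine obstacle beyond verifying the explicit boundary action and confirming that the contraction keeps the iterates inside the neighborhood where the quadratic estimate is valid.
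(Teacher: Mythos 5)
Your proof is correct and is essentially the paper's own argument: iterate the inverse of the loxodromic element to contract $(x_0,y_0)$ toward the fixed point $0$, apply the quadratic bound inside the $\epsilon$-neighborhood, and let the scaling mismatch (degree one versus degree two) force $y_0 - f'(0)x_0 = 0$. The only discrepancy is cosmetic: the paper writes the contraction as $2^{-k}$ while you correctly compute the M\"obius action of the displayed matrix $\left(\begin{smallmatrix} 2 & 0 \\ 0 & 1/2 \end{smallmatrix}\right)$ as $z \mapsto 4z$, and the argument is insensitive to which contraction constant $\lambda > 1$ is used.
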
 
\begin{proof}  Since $ a_{\log 2} \in \Gamma$, and since the limit set is $\Gamma$ invariant, we have 
\[ a_{\log 2}^{-k}  (x_0, y_0)  =(x_0/2^k, y_0/2^k)  \in \Lambda(\Gamma)\]
for all $k$. For $k$ large enough we have that $|(x_0 / 2^k, y_0/2^k)| < \epsilon$, and so that 
\[  \frac{ |y_0 - f'(0) x_0|}{2^k} \leq C\left( \frac{x_0}{2^k} \right) ^2.  \]
But this implies that $|y_0 - f'(0) x_0| < C x_0^2/2^k$ for all $k$, which in turn implies that $y_0 - f'(0)x_0 = 0$ as expected. 
\end{proof}

From the lemma it follows that $\Lambda (\Gamma) \cap \mathbb{R}^2$ is contained in some straight line, so is not Zariski dense. It also follows that the orbit $\Gamma (0, 0)  \subset \partial (\mathbb{H}^3)$ is not Zariski dense, and so that $\Gamma < \mathrm{PSL}_2(\mathbb{C})$ is not Zariski dense. This completes the example. 

\subsection{Algebraic constraints on $\Lambda(\Gamma)$: the general case.}
The example above actually contains all the necessary ideas to prove our next proposition. We now repeat the argument in a more general setting and with fewer simplifying assumptions. 
\begin{proposition} \label{algcon}
Suppose that there is a loxodromic element $\gamma$ of $\Gamma$ whose expanding fixed point is $v_0^-$ and whose contracting fixed point is $v_0^+$ (so \\ ${\gamma  = ma \in MA}$). Suppose further that there exists a neighborhood $U$ of $v_0^-$ in $\partial \tilde X$, and a smooth submanifold $S \subset U$ of codimension one, such that $\Lambda(\Gamma) \cap U \subset S$. Then $\Gamma < G$ is not Zariski dense. 
\end{proposition}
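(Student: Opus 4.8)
The plan is to argue by contradiction: assume that $\Gamma$ is Zariski dense and contradict Proposition \ref{dense in U and N}. Using the diffeomorphism $\partial \tilde X \setminus \{v_0^+\} \cong N^-$, $\xi \mapsto n(\xi)$, together with the exponential chart $\exp : \mathfrak{n}^- = \mathrm{T}_e N^- \to N^-$, I will identify a neighborhood of $v_0^-$ in the boundary with a neighborhood of $0$ in $\mathfrak{n}^-$; under this identification $v_0^-$ becomes $0$. The key simplification is that, since $\gamma = ma$ fixes $v_0^-$ and normalizes $N^-$, its action on $\partial \tilde X \setminus \{v_0^+\}$ is the conjugation $n \mapsto \gamma n \gamma^{-1}$, which in the exponential chart is exactly the \emph{linear} map $\operatorname{Ad}_{ma}$ of $\mathfrak{n}^-$. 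Because $v_0^-$ is the expanding fixed point, $\operatorname{Ad}_{ma}$ is expanding, so $\operatorname{Ad}_{ma}^{-k} X \to 0$ for every $X \in \mathfrak{n}^-$; as $\Lambda(\Gamma)$ is closed and $\gamma$-invariant, the points $\operatorname{Ad}_{ma}^{-k}X$ lie in $\Lambda(\Gamma) \cap U$, hence in $S$, for all large $k$.

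Next I record the block structure. Write $\mathfrak{n}^- = \mathfrak{g}_{-1} \oplus \mathfrak{g}_{-2}$ for the restricted-root grading (the second summand vanishes in the real hyperbolic case), and decompose $X = X_1 + X_2$ accordingly. Since $M = Z_K(A)$ commutes with $A$, the map $\operatorname{Ad}_m$ preserves each $\mathfrak{g}_{-j}$ and is orthogonal there, while $\operatorname{Ad}_a$ acts by a scalar $\mu^{j}$ with $\mu > 1$; thus $\operatorname{Ad}_{ma}^{-k}$ acts on $\mathfrak{g}_{-j}$ as $\mu^{-jk} R_j^{-k}$ with $R_j := \operatorname{Ad}_m|_{\mathfrak{g}_{-j}}$ orthogonal. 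Let $\ell \neq 0$ be a linear functional on $\mathfrak{n}^-$ whose kernel is the tangent hyperplane $\mathrm{T}_{v_0^-} S$, and write $\ell_j := \ell|_{\mathfrak{g}_{-j}}$. Since $S$ is tangent to $\ker \ell$ at $0$, a point $X \in S$ near $0$ satisfies the estimate $\ell(X) = \tfrac12 Q(\pi X) + o(\|X\|^2)$, where $\pi$ is the projection onto $\ker\ell$ and $Q$ is the Hessian quadratic form of the defining graph.

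The heart of the argument is a renormalization. Fix $X \in \Lambda(\Gamma) \cap N^-$ and apply this estimate to $Y^{(k)} := \operatorname{Ad}_{ma}^{-k} X \in S$. If $\ell_1 \neq 0$ the leading term is linear: the left side has order $\mu^{-k}$ and the right side order $\mu^{-2k}$, so dividing by $\mu^{-k}$ and letting $k \to \infty$ forces $\ell_1(R_1^{-k} X_1) \to 0$; choosing a subsequence with $R_1^{-k} \to I$ (possible since the powers of $\operatorname{Ad}_m$, an element of the compact group $\operatorname{Ad}(M)$, accumulate at the identity) yields $\ell_1(X_1) = 0$. If instead $\ell_1 = 0$, so $\ell = \ell_2 \neq 0$, then both sides are of order $\mu^{-2k}$; after dividing by $\mu^{-2k}$ the dominant contribution to the Hessian term comes from the $\mathfrak{g}_{-1}$-block $Q_{11} := Q|_{\mathfrak{g}_{-1}}$, and passing to a subsequence with $R_1^{-k} \to I$ and $R_2^{-k} \to I$ simultaneously gives the polynomial identity $\ell_2(X_2) = \tfrac12 Q_{11}(X_1)$. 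In either case every $X \in \Lambda(\Gamma) \cap N^-$ satisfies a single nonzero polynomial equation $p(X) = 0$, with $p(X) = \ell_1(X_1)$ in the first case and $p(X) = \ell_2(X_2) - \tfrac12 Q_{11}(X_1)$ in the second; in particular $p \not\equiv 0$, since its degree-one part is nonzero. Hence $\Lambda(\Gamma) \cap N^-$ is contained in a proper algebraic subvariety of $N^-$.

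Finally, taking $\xi = v_0^-$ (a limit point, being a fixed point of the loxodromic $\gamma \in \Gamma$), Proposition \ref{dense in U and N} asserts that $\{ n(\gamma v_0^-) : \gamma \in \Gamma,\ \gamma v_0^- \neq v_0^+ \}$ is Zariski dense in $N^-$ whenever $\Gamma$ is Zariski dense. These points all lie in $\Lambda(\Gamma) \cap N^-$, hence in the zero set of $p$; since $p \not\equiv 0$ this is impossible, and we conclude that $\Gamma$ is not Zariski dense. I expect the only delicate point to be the second ($\mathfrak{g}_{-2}$) case: there the normal direction to $S$ lies in the faster-contracting root space, the linear part of the constraint disappears, and one must retain the quadratic Hessian term and exploit the simultaneous recurrence $R_j^{-k} \to I$ of the compact rotation part $\operatorname{Ad}_m$ in order to extract a genuine polynomial identity rather than a mere inequality.
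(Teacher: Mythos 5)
Your proof is correct, and its engine is the same as the paper's: push an arbitrary limit point toward $v_0^-$ by $\gamma^{-k}$, feed the iterates into the local defining equation of $S$, renormalize by the contraction rate, and use recurrence of the compact part ($m^{-k_j} \to e$ in the compact group $\overline{\langle m \rangle} \subset M$) to extract a nonzero polynomial vanishing on all of $\Lambda(\Gamma) \cap N^-$, which contradicts Proposition \ref{dense in U and N}. Where you genuinely differ is in the bookkeeping. The paper writes $S$ as a graph $x_j = f(x_1,\ldots,\hat x_j,\ldots,x_k)$ in an eigenbasis for the $a$-action, takes a Taylor polynomial of \emph{any} degree $l$ with $\alpha^{l+1} > \alpha_j$, decomposes $x_j - p$ into $a$-eigenpolynomials $p_0, \ldots, p_w$, and shows the lowest component $p_0$ survives the renormalized limit and must vanish on the limit set; this requires no case analysis and is indifferent to how many contraction rates occur. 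You instead exploit the rank-one grading $\mathfrak{n}^- = \mathfrak{g}_{-1}\oplus\mathfrak{g}_{-2}$ with rates $\mu, \mu^2$, so a second-order expansion suffices, and you split on whether the conormal $\ell$ of $S$ at $0$ has nonzero $\mathfrak{g}_{-1}$-component, retaining the Hessian block $Q_{11}$ only in the degenerate case. In fact your Case-2 polynomial $\ell_2(X_2) - \tfrac12 Q_{11}(X_1)$ is exactly the paper's $p_0$ when the graph coordinate lies in $\mathfrak{g}_{-2}$: the coordinate $x_j$ and the quadratic monomials in $\mathfrak{g}_{-1}$-coordinates share the $a$-eigenvalue $\mu^2$, and the eigenvalue-$\mu$ component dies precisely because $\mathrm{T}_0 S \supset \mathfrak{g}_{-1}$ kills the linear $\mathfrak{g}_{-1}$-part of $f$. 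So the two arguments produce the same algebraic constraint; yours makes the geometric dichotomy (tangency of $S$ to the $\mathfrak{g}_{-1}$-distribution or not) explicit at the cost of being tied to the two-step grading, while the paper's eigenpolynomial formulation absorbs all cases uniformly. Two of your supporting claims deserve the one-line justifications you essentially gave: the simultaneous recurrence $R_1^{-k_j}, R_2^{-k_j} \to I$ is automatic since both are restrictions of $\operatorname{Ad}_{m^{-k_j}}$ along a single subsequence with $m^{-k_j} \to e$; and in Case 2 the projection $\pi$ restricts to the identity on $\mathfrak{g}_{-1} \subset \ker\ell$, which is what makes $Q_{11}$ independent of the choice of transverse complement and justifies discarding the cross terms at order $\mu^{-3k}$. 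Finally, you make explicit the closing step the paper leaves implicit until Corollary \ref{notzdense}: taking $\xi = v_0^-$ in Proposition \ref{dense in U and N} and using that $\exp$ is an isomorphism of varieties onto the unipotent group $N^-$, so your polynomial cuts out a proper subvariety containing $\{n(\gamma' v_0^-)\}$.
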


\begin{proof} 
We identify $N^-$ with $ \partial \tilde X \setminus \lbrace v_0^+ \rbrace$ as usual by taking $n$ to its negative end point  $n^-$. We further identify $\mathrm{T}_eN^-$ and $N^-$ via the exponential map. The action of $\gamma$ on $\partial \tilde  X$ corresponds to the action of $\gamma$ on $N^-$ by conjugation, and so to the adjoint action of $\gamma$ on $\mathrm{T}_eN^-$. The action of $a$ on $\mathrm{T}_eN^-$ is diagonalizable over $\mathbb{R}$, so take an eigenbasis $v_1, \ldots v_k$ with eigenvalues $\alpha_i$. We choose the corresponding coordinates $x_i$ on $\mathrm{T}_eN^-$. Since $v_0^-$ is an expanding fixed point we know that $\alpha := \min \lbrace \alpha_i \rbrace  > 1$. Since $m$ commutes with $a$, the $m$ action on $\mathrm{T}_eN^- = \mathbb{R}^k$ respects the eigenspaces of $a$. 

Now consider the hypersurface $S\subset \mathrm{T}_eN^-$. We know that $0 \in S$. Shrinking $U\subset \mathrm{T}_eN^-$ if necessary, we may assume that there is a coordinate $x_j$ such that $S$ is the graph
\[ S = \lbrace (x_1, \ldots x_k) \in\mathbb{R}^k \mbox{ such that } x_j = f(x_1, \ldots \hat x_j \ldots x_k ) \rbrace  \]
of a smooth function $f$. Shrinking $U$ again we may assume that $U$ is pre-compact, and that $f$ extends smoothly to the closure $\overline U$. Choose $l$ such that $\alpha^{l+1} > \alpha_j$ and let $ p(x_1, x_2, \ldots \hat x_j, \ldots x_k)$ be the degree $l$ Taylor polynomial for $f$. There is a constant $C > 0$ such that
\[  |f(x_1, \ldots  \hat  x_j \ldots x_k )   - p(x_1, \ldots \hat   x_j \ldots x_k)| < C | (x_1, \ldots \hat  x_j \ldots x_k)|^{(l+1)} \]
for all $x \in U$. Now $a$ acts on the set of polynomials in the $x_i$. The action is diagonalizable, so let $p_0, p_1, \ldots p_w$ be the eigenfunction decomposition of $ x_j - p$ with eigenvalues $0 < \beta_0 < \ldots < \beta_w$. Note that $\beta_0 \leq \alpha_j$. As in the example we will show that $p_0$ vanishes on $\Lambda(\Gamma)$. To this end fix $y = (y_1 \ldots y_k) \in\mathrm{T}_eN^- \cap \Lambda(\Gamma)$. Then $\gamma^{-r}y \in \Lambda(\Gamma)$ for all $r$, and $ \gamma ^{-r} y \in \Lambda(\Gamma) \cap U \subset S$ once $r$ is sufficiently large. It follows that 
\[ |\sum_0^w p_i(\gamma^{-r} y)|\leq C|\gamma^{-r}y|^{l + 1} \]
for all large $r$. Thus
\[ |\beta_0^{-r}p_0(m^{-r} y)| \leq | \sum_1^w \beta_i^{-r} p_i(m^{-r} y)|+  C\alpha^{l+1}|m^{-r}y|^{l + 1} \]
for all large $r$. Rearranging we obtain
\[ |p_0(m^{-r} y)|  \leq  |\sum_1^w \left( \frac{\beta_i}{\beta_0}\right)^{-r} p_i(m^{-r} y)|+ C\left( \frac{ \alpha^{(l+1)} }{\beta_0} \right)^{-r} |m^{-r}y|^{l + 1}. \]
Now choose a subsequence $r_j \rightarrow \infty$ such that $m^{-r_j} \rightarrow e$. The left hand side converges to $|p_0(y)|$, while the right is bounded by
\[ |p_0(m^{-r_j} y)| \leq \left( \frac{\beta_1}{\beta_0}\right)^{-r_j} \sum_1^w \max | p_i(M y)| +C \left( \frac{ \alpha^{(l+1)} }{\beta_0} \right)^{-r_j} \max |My|^{l + 1}\]
which converges to zero.
\end{proof}

\begin{corollary} \label{notzdense}
If there exist $\xi \in \Lambda(\Gamma)$, a neighborhood $U$ of $\xi$ in the boundary $\partial \tilde X$, and a smooth codimension one submanifold $S$ of $U$ such that ${\Lambda (\Gamma) \cap U \subset S}$, then $\Gamma < G$ is not Zariski dense. In particular, if $\Gamma < G$ is Zariski dense, then $\tilde \pi(\mathcal{H}_\Gamma(g))$ is dense in $M^{\operatorname{ab}}\times A$. 
\end{corollary}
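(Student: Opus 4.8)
The plan is to deduce this corollary from Proposition \ref{algcon} by a conjugation argument, and then to obtain the ``in particular'' clause by combining the first assertion with Corollary \ref{inasmoothsubmanifold}. The point is that Proposition \ref{algcon} already carries out the substantive analytic work of propagating the smooth constraint along a loxodromic axis, but only for the distinguished configuration in which the expanding and contracting fixed points are $v_0^-$ and $v_0^+$; the task here is merely to move an arbitrary limit point $\xi$ into that standard position by an element of $G$.

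First I would produce a loxodromic element of $\Gamma$ whose \emph{expanding} fixed point lies in $U$. Since $\Gamma$ is non-elementary, the fixed points of loxodromic elements are dense in $\Lambda(\Gamma)$, so there is a sequence of such fixed points converging to $\xi$; for large index one of them, say $p$, lies in $U$. A priori $p$ might be the attracting fixed point of its loxodromic element $\delta \in \Gamma$, but then it is the expanding fixed point of $\delta^{-1} \in \Gamma$, so after possibly replacing $\delta$ by its inverse we obtain a loxodromic $\gamma_1 \in \Gamma$ with expanding fixed point $p = \gamma_1^- \in U$ and contracting fixed point $q = \gamma_1^+$. Because $G$ acts transitively on $\partial^2 \tilde X$ (every ordered pair of distinct boundary points is joined by a geodesic, and $G$ acts transitively on $\mathrm{T}^1 \tilde X = G/M$), I may choose $g \in G$ with $g v_0^- = p$ and $g v_0^+ = q$.

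Now I would conjugate, setting $\Gamma' = g^{-1} \Gamma g$. Conjugation by a fixed element of $G$ is an algebraic automorphism, so $\Gamma'$ is Zariski dense if and only if $\Gamma$ is, and it transforms limit sets equivariantly, $\Lambda(\Gamma') = g^{-1} \Lambda(\Gamma)$. The element $g^{-1} \gamma_1 g \in \Gamma'$ is loxodromic with expanding fixed point $g^{-1} p = v_0^-$ and contracting fixed point $g^{-1} q = v_0^+$; fixing both $v_0^\pm$, it lies in the intersection of their stabilizers $MA$. Shrinking $U$ to a precompact neighborhood $U_p \subset U$ of $p$ and setting $U' = g^{-1} U_p$ and $S' = g^{-1}(S \cap U_p)$, the set $U'$ is a neighborhood of $v_0^-$, the set $S'$ is a smooth codimension one submanifold of $U'$, and $\Lambda(\Gamma') \cap U' \subset S'$. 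Thus $\Gamma'$ satisfies the hypotheses of Proposition \ref{algcon}, whence $\Gamma'$, and therefore $\Gamma$, is not Zariski dense.

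For the ``in particular'' statement I would argue by contraposition: if $\tilde\pi(\mathcal{H}_\Gamma(g))$ fails to be dense in $M^{\operatorname{ab}} \times A$ for some $g \in \mathrm{supp}(m^{\operatorname{BMS}})$, then Corollary \ref{inasmoothsubmanifold} furnishes a neighborhood $U'$ of $g^-$ and a smooth codimension one submanifold $S$ with $\Lambda(\Gamma) \cap U' \subset S$; since $g^- \in \Lambda(\Gamma)$, applying the first part with $\xi = g^-$ shows that $\Gamma$ is not Zariski dense. I expect the only delicate bookkeeping to be the orientation issue, namely guaranteeing that the fixed point pushed into $U$ is the expanding one rather than the contracting one, which the passage to $\gamma_1^{-1}$ resolves, together with the routine verifications that conjugation preserves Zariski density, limit sets, and the smooth submanifold condition.
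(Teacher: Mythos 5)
Your proposal is correct and follows essentially the same route as the paper: choose a loxodromic element of $\Gamma$ whose expanding fixed point lies in $U$ (using density of loxodromic fixed points in $\Lambda(\Gamma)$), conjugate by an element of $G$ moving $v_0^\pm$ to its fixed points so as to apply Proposition \ref{algcon}, note that conjugation preserves Zariski density and limit sets, and deduce the ``in particular'' clause from Corollary \ref{inasmoothsubmanifold}. Your extra bookkeeping (passing to $\gamma_1^{-1}$ to fix the orientation, and transitivity of $G$ on $\partial^2\tilde X$) only makes explicit what the paper leaves implicit.
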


\begin{proof} Let $\xi, U, S$ be as in the statement. Choose a loxodromic element $ \gamma_0 \in \Gamma$ whose expanding end point falls in $U$, therefore in $S$. Let $\xi_0$ and $\xi_\infty$ be the expanding and contracting end points of $ \gamma_0$. Choose an element $g \in G$ such that $g(0) = \xi_0$ and $g(\infty) = \xi_\infty$. Then $g^{-1} \Gamma g$ contains the element $\tilde \gamma_0 = g^{-1} \gamma g \in MA$ and has limit set $\Lambda(g^{-1}\Gamma g) = g^{-1} \Lambda(\Gamma)$. The sets $g^{-1}U$ and $g^{-1}S$ satisfy the conditions of the Proposition \ref{algcon}, so we see that $g^{-1}  \Gamma g$ is not Zariski dense. But then neither is $\Gamma$. The second statement follows from the first statement via Corollary \ref{inasmoothsubmanifold}. \end{proof}

\begin{theorem} \label{transitivitygroupsaredense}
If $\Gamma < G$ is Zariski dense and $g \in \mathrm{supp}(m^{\operatorname{BMS}})$, then the transitivity group $\mathcal{H}_\Gamma(g)$ is dense in $M\times A$.
\end{theorem}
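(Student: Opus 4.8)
The plan is to assemble the two structural results already in hand. Corollary \ref{transitivitygropuisnotsmall} shows that the closure of the (strong) transitivity group contains the commutator $[M,M]$, while Corollary \ref{notzdense} shows that the projection $\tilde\pi(\mathcal{H}_\Gamma(g))$ is dense in the abelianization $M^{\mathrm{ab}} \times A$. Together these pin the closure down completely, and the final step is purely a matter of topological group theory.

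First I would set $H = \overline{\mathcal{H}_\Gamma(g)}$, a closed subgroup of $MA$. Since $\mathcal{H}_\Gamma^s(g) \subseteq \mathcal{H}_\Gamma(g)$, Corollary \ref{transitivitygropuisnotsmall} gives $\tilde M := [M,M] \subseteq H$, and $\tilde M$ is exactly the kernel of $\tilde \pi : MA \to M^{\mathrm{ab}}A$. The key observation is that a subgroup containing a kernel is saturated: $H = \tilde\pi^{-1}(\tilde\pi(H))$. Because $\tilde\pi$ is the (open) quotient map by the normal subgroup $\tilde M$, a subset of $M^{\mathrm{ab}}A$ is closed precisely when its $\tilde\pi$-preimage is; as $\tilde\pi^{-1}(\tilde\pi(H)) = H$ is closed, the image $\tilde\pi(H)$ is closed. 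On the other hand $\tilde\pi(H) \supseteq \tilde\pi(\mathcal{H}_\Gamma(g))$ is dense by Corollary \ref{notzdense}. A closed dense set is everything, so $\tilde\pi(H) = M^{\mathrm{ab}}A$ and hence $H = \tilde\pi^{-1}(M^{\mathrm{ab}}A) = MA$, which is the claim.

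The assembly above is short; the real work lies in the two corollaries it invokes, and the subtle point worth flagging is the role of $\tilde M$. Density of the projection $\tilde\pi(\mathcal{H}_\Gamma(g))$ forces density of $H$ \emph{only} because $H$ already contains the entire kernel $[M,M]$; the containment cannot be bypassed, and this is precisely why the case where $M$ has nontrivial center (so that knowing only Corollary \ref{wtgd} is insufficient) requires the commutator estimate of Corollary \ref{transitivitygropuisnotsmall}. Thus the genuine obstacles are upstream: establishing Corollary \ref{notzdense}, which chains together Proposition \ref{nicefunction} (a smooth map from $\Lambda(\Gamma)$ into $\mathcal{H}_\Gamma(g)$ whose differential surjects onto $\mathrm{T}_e(M^{\mathrm{ab}}A)$), Corollary \ref{inasmoothsubmanifold} (turning non-density of $\tilde\pi(\mathcal{H}_\Gamma(g))$ into a codimension-one smooth constraint on the limit set), and Proposition \ref{algcon} (exploiting the expansion dynamics of a loxodromic element fixed at $v_0^\pm$ to contradict Zariski density).
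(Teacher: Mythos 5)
Your proof is correct and takes essentially the same route as the paper: both assemble Corollary \ref{transitivitygropuisnotsmall} (so that $\overline{\mathcal{H}_\Gamma(g)}$ contains the kernel $[M,M]$ of $\tilde\pi$ and is hence saturated) with Corollary \ref{notzdense} (density of the projection) along the exact sequence $[M,M] \to M\times A \to M^{\mathrm{ab}}\times A$. The only cosmetic difference is the final topological step: the paper deduces that the image of the closed saturated set is closed by invoking compactness of the fibers of $\tilde\pi$, whereas you use the quotient-map characterization of closed sets directly, which is equally valid and marginally more economical.
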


\begin{proof}
Consider the principal bundle
\[
\begin{CD}
0 @>>> [M, M]    @>>> M\times A @>\tilde \pi >> M^{\mathrm{ab}} \times A @>>>0.
\end{CD}
\]
We already know that $ \overline {\mathcal{H}_\Gamma^s(g)}  >  [M, M]$. It follows that $\overline{ \mathcal{H}_\Gamma(g)}$ is a union of fibers of $\tilde \pi$. But $\tilde \pi(\mathcal{H}_\Gamma(g)) \subset M^{\mathrm{ab}} \times A$ is dense, so (using the compactness of the fibers)  $\overline{ \mathcal{H}_\Gamma(g)}$ meets every fiber of $\tilde \pi$. Thus $ \mathcal{H}_\Gamma(g)   \subset M\times A$ is dense as claimed. 
\end{proof}
We finish the section by studying the case when $\tilde X=\mathbb{H}^3_\mathbb{R}$ more carefully.

\begin{lemma} \label{H^3 case}
Suppose that $G = \operatorname{PSL}_2(\mathbb{C})$ and that $\Gamma <G$ is discrete and non-elementary. Then $\overline {\mathcal{H}_\Gamma(g)} =\overline{ \mathcal{H}^w_\Gamma(g)} \times A$.
\end{lemma}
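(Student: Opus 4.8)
The plan is to reduce the statement to the single assertion that $\{e\}\times A\subseteq\overline{\mathcal{H}_\Gamma(g)}$, and then to establish this by splitting on whether or not $\Gamma$ is Zariski dense. The reduction is a soft topological-group argument. Since $G=\operatorname{PSL}_2(\mathbb{C})$ has $M=\operatorname{SO}(2)$, the group $M\times A$ is abelian (indeed $MA\cong\mathbb{C}^*$), so the commutator method of Corollary~\ref{transitivitygropuisnotsmall} is vacuous and a genuinely different input is needed. Granting $\{e\}\times A\subseteq\overline{\mathcal{H}_\Gamma(g)}$, the closed subgroup $\overline{\mathcal{H}_\Gamma(g)}$ is a union of full $A$-fibres; writing $S=\pi_M(\overline{\mathcal{H}_\Gamma(g)})$ one checks directly, using that $M$ is compact and $\overline{\mathcal{H}_\Gamma(g)}$ is closed, that $S$ is closed in $M$ and that $S=\overline{\mathcal{H}^w_\Gamma(g)}$, whence $\overline{\mathcal{H}_\Gamma(g)}=\overline{\mathcal{H}^w_\Gamma(g)}\times A$. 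So everything comes down to producing the $A$-direction in the closure.

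If $\Gamma$ is Zariski dense this is immediate: Theorem~\ref{transitivitygroupsaredense} gives that $\mathcal{H}_\Gamma(g)$ is already dense in $M\times A$, so $\overline{\mathcal{H}_\Gamma(g)}=M\times A\supseteq\{e\}\times A$ and $\overline{\mathcal{H}^w_\Gamma(g)}=M$. The substance of the lemma is therefore the case in which $\Gamma<\operatorname{PSL}_2(\mathbb{C})$ is discrete and non-elementary but not Zariski dense. Here I would first invoke the structure theory of subgroups of $\operatorname{PSL}_2(\mathbb{C})\cong\operatorname{SO}(3,1)_+$: a non-elementary discrete subgroup whose real Zariski closure is proper cannot lie in a point-stabiliser at infinity nor in a compact subgroup, so its identity component is (conjugate to) $\operatorname{SO}(2,1)_+\cong\operatorname{PSL}_2(\mathbb{R})$, possibly extended by the plane-swapping involution. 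Thus $\Gamma$ stabilises a totally geodesic plane $\mathbb{H}^2_\mathbb{R}\subset\mathbb{H}^3_\mathbb{R}$ and $\Lambda(\Gamma)$ lies in the corresponding boundary circle. Because every element of $\Gamma$ preserves this plane and is orientation preserving on $\mathbb{H}^3_\mathbb{R}$, its frame-rotation (its $M$-component) either fixes or reverses the normal direction; hence all holonomies lie in $\{0,\pi\}\cong\mathbb{Z}/2<\operatorname{SO}(2)=M$, and in particular $\mathcal{H}^w_\Gamma(g)$ is finite, so $\overline{\mathcal{H}^w_\Gamma(g)}=\mathcal{H}^w_\Gamma(g)$.

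It remains to produce the $A$-direction in this planar case, and for this I would run the explicit $\operatorname{PSL}_2(\mathbb{R})$ computation from subsection~\ref{useful equation} inside the invariant plane. After conjugating so that a loxodromic element of $\Gamma$ has fixed points $0,\infty$ and a third limit point sits at $1$ (possible since $\Gamma$ is non-elementary), the explicit map $\xi\mapsto\operatorname{diag}(\xi,\xi^{-1})$ carries $\Lambda(\Gamma)\cap U$ into $\mathcal{H}_\Gamma(g)$ as a local diffeomorphism sending $1$ to the identity. The relevant reachability sequences use only real horospherical elements, so they remain admissible in $\operatorname{PSL}_2(\mathbb{C})$ and the intermediate points stay in $\mathrm{supp}(m^{\operatorname{BMS}})$; moreover, for $\xi\in\mathbb{R}_{>0}$ the image $\operatorname{diag}(\xi,\xi^{-1})$ has trivial $M$-component, i.e. lies in $A$. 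Since $\Lambda(\Gamma)$ is perfect, the real limit points accumulate at $1$, so these $A$-elements accumulate at the identity and generate a dense subgroup of $A$; thus $\{e\}\times A\subseteq\overline{\mathcal{H}_\Gamma(g)}$, and the reduction above finishes the proof. I expect the main obstacle to be precisely the non-Zariski-dense case: combining the Lie-theoretic fact that such a $\Gamma$ must stabilise a geodesic plane (which pins the holonomy to the finite group $\{0,\pi\}$) with the verification that the planar $\operatorname{PSL}_2(\mathbb{R})$ construction survives the embedding into $\operatorname{PSL}_2(\mathbb{C})$ and genuinely outputs pure-$A$ elements. The Zariski-dense case and the topological reduction are routine by comparison.
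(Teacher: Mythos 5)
Your proposal is correct, but it reaches the key geometric fact by a genuinely different route than the paper. The paper never case-splits on Zariski density: it assumes $\mathcal{H}_\Gamma(g)$ is not dense in $M\times A$ and derives the geometry from that hypothesis alone --- since $MA\cong\mathbb{C}^{*}$ is abelian, the explicit map $F$ of subsection \ref{usefulequation} is a local diffeomorphism into $M\times A$ carrying the limit set into $\mathcal{H}_\Gamma(e)$, so non-density forces $\Lambda(\Gamma)$ to lie locally in a smooth curve through $1$, and the polynomial expansion argument of Proposition \ref{algcon} (run after conjugating a loxodromic to fix $0$ and $\infty$) globalizes this to $\Lambda(\Gamma)\subset\mathbb{R}\cup\{\infty\}$. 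You instead split on Zariski density, dispatch the dense case with Theorem \ref{transitivitygroupsaredense}, and in the non-dense case import the standard structure theorem for Kleinian groups (the identity component of the Zariski closure is normalized by $\Gamma$, and the solvable and compact possibilities would make $\Gamma$ elementary, so it is a conjugate of $\operatorname{PSL}_2(\mathbb{R})$), obtaining $\Lambda(\Gamma)\subset$ a circle directly. From there the two proofs share the same endgame: normalize three limit points to $0,1,\infty$ with $\Lambda(\Gamma)\subset\mathbb{R}\cup\{\infty\}$, feed real limit points near $1$ into $F(\xi)=\operatorname{diag}(\xi,\xi^{-1})\in A$, and use that a subgroup of $A\cong\mathbb{R}$ whose identity is not isolated is dense; your topological reduction --- $\{e\}\times A\subset\overline{\mathcal{H}_\Gamma(g)}$ implies $\overline{\mathcal{H}_\Gamma(g)}=\overline{\mathcal{H}^w_\Gamma(g)}\times A$ --- is sound and makes explicit what the paper compresses into ``the result follows.'' The trade-off: the paper's argument is self-contained given Section \ref{sec3} (and in effect re-proves the Fuchsian dichotomy as a by-product of non-density), while yours leans on an external classification but avoids Proposition \ref{algcon} in this case and records the extra fact that the holonomy is confined to a two-element subgroup of $M$. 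One caution on that side claim: containment of $\mathcal{H}^w_\Gamma(g)$ in $\{0,\pi\}$ does not follow from the holonomy of individual elements of $\Gamma$ alone, since transitivity-group elements are built from horospherical moves as well; you need the additional observation that admissibility (all intermediate endpoints lying in $\Lambda\subset\mathbb{R}\cup\{\infty\}$) forces those moves to be real and hence to preserve plane-adapted frames up to a fixed twist. This is harmless for the lemma as stated, which only needs $A\subset\overline{\mathcal{H}_\Gamma(e)}$ together with your reduction.
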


\begin{proof} Fix $g\in \mathrm{supp}(m^{\operatorname{BMS}})$, and suppose that ${\mathcal{H}_\Gamma(g)} < M \times A$ is not dense. We identify the boundary $\partial( \mathbb{H}^3)$ with $\mathbb{C}\cup \lbrace \infty \rbrace $ as usual. It is not difficult to see that if $h \in G$ then $(hg)^\pm \in \Lambda(h\Gamma h^{-1})$, so that $hg$ is in the support of the BMS measure associated to $h\Gamma h^{-1}$. It is also straightforward to check that $\mathcal{H}_\Gamma (g) = \mathcal{H}_{h\Gamma h^{-1}} (hg)$. We may therefore assume, without loss of generality, that $\lbrace 0, 1, \infty\rbrace \subset \Lambda(\Gamma)$. The mapping $\xi \mapsto F(\xi)$ (defined near the beginning of subsection \ref{usefulequation}) extends to  local diffeomorphism carrying a neighborhood of $1\in \mathbb{C}$, to a neighborhood of the identity in $M\times A$ and carrying the limit set into the transitivity group. Non-density of the transitivity group $\mathcal{H}_\Gamma(e)$ then implies that $\Lambda(\Gamma)$ is locally contained in some smooth curve through $1 \in \mathbb{C}$. 

Conjugating $\Gamma$ again, we may now assume that there is a loxodromic element $\gamma$ of $\Gamma$ whose contracting fixed point is $0$ and whose expanding fixed point is $\infty$. We may also assume that there exist $\epsilon > 0$ and a smooth curve $\alpha(t)$ with $\alpha(0) = 0$,  $\alpha'(0) = 1$ and
\[ \lbrace \xi \in \Lambda(\Gamma) \cap \mathbb{C} : |\xi| <  \epsilon \rbrace  \subset \alpha((-1, 1)). \]
The expansion argument of Proposition \ref{algcon} now implies that $\Lambda(\Gamma) \subset \mathbb{R} \cup \lbrace \infty \rbrace  \subset \mathbb{C} \cup \lbrace \infty \rbrace$. At this stage we may apply yet another conjugation to $\Gamma$ to assume that $\lbrace 0, 1, \infty \rbrace \subset \Lambda( \Gamma) \subset \mathbb{R}\cup\lbrace \infty \rbrace $. A final application of the map $F$ together with non-isolation of points in the limit set, now implies that $A \subset \overline{ \mathcal{H}_\Gamma(e)}$.  The result follows.
\end{proof}

\section{Invariant functions for the horospherical groups}  \label{sec5}
This section contains the measure theory we need to prove mixing of frame flow. The aim is to prove Corollary \ref{C5.3}. We assume finiteness of the BMS measure throughout this section. 
\subsection{Periods of $\tilde{\Sigma}$-measurable functions.}
\begin{definition}
We write $\mathcal{B}(G)$ for the Borel sigma algebra on $G$ and define subalgebras
\[   \Sigma_\pm = \lbrace B \in \mathcal{B}(G) : B = \Gamma BN^\pm \rbrace \] 
consisting of all Borel measurable sets that are invariant under $\Gamma$ and the stable (unstable) horospherical group. We further define
\[ \tilde \Sigma = \Sigma_- \wedge \Sigma_+; \]
in other words, a Borel measurable set $B \subset G$ belongs to $ \tilde \Sigma$ if there exist $B_\pm  \in \Sigma_\pm$ such that $\tilde m^{\operatorname{BMS}}(B \triangle B_\pm ) = 0$. Note that the definition of $\tilde \Sigma$ depends on the group $\Gamma$. 
\end{definition}
Our task now is to prove that $\tilde \Sigma$ is trivial whenever the transitivity groups are dense in $M \times A$. Let $\psi$ be a bounded $\tilde \Sigma$ measurable function on $G$. We assume without loss of generality that $\psi$ is strictly invariant for the left $\Gamma$ action. Let $\psi_\pm$ be bounded $\Sigma_\pm$ measurable functions satisfying $\psi_\pm = \psi$ almost everywhere (with respect to $\tilde m^{\operatorname{BMS}}$). Let
\begin{eqnarray*} E &=& \left\lbrace gMA: \begin{array}{l}  \psi|_{gMA} \mbox{ is measurable, and }\\ \psi(gma) = \psi_+(gma) = \psi_-(gma) \\ \mbox{ for Haar almost every } ma \in MA \end{array}  \right\rbrace\\ 
&\subset& G/MA. \end{eqnarray*}
It will also be useful to identify $G/MA$ with $\partial ^2 \tilde X $ via the map ${gMA \mapsto (g^-, g^+)}$. By Fubini, $E$ has full measure in $(\partial^2 \tilde X, \sigma_o \times \sigma_o)$. For $\epsilon > 0$ let $M_\epsilon$ be the $\epsilon$ ball around the identity in $M$, and define
\[ \psi^\epsilon(g) = \int_{M_\epsilon} \int_0^\epsilon \psi(gma_t) dt dm , \mbox{ and}\]
\[ \psi_\pm ^\epsilon(g) = \int_{M_\epsilon} \int_0^\epsilon \psi_\pm(gma_t) dt dm. \]
Note that if $gMA \in E$, then the functions $\psi^\epsilon, \psi^\epsilon_\pm$ are continuous, well defined, and identical on $gMA$. Now let
\[ E^- = \lbrace \xi \in \Lambda(\Gamma) : (\xi, \eta') \in E \mbox{ for $\sigma_o$ almost every $\eta' \in \partial \tilde X \setminus \lbrace \xi \rbrace $}  \rbrace  \]
and 
\[ E^+ = \lbrace \eta \in \Lambda(\Gamma) : (\xi', \eta) \in E \mbox{ for $\sigma_o$ almost every $\xi' \in \partial \tilde X \setminus  \lbrace \eta \rbrace $}  \rbrace. \]
Another application of Fubini tells us that $E^+$ and $E^-$ both have full measure in $\partial \tilde X$. To complete our setup we define
\[ E^+_\xi  = \lbrace \eta \in \Lambda(\Gamma)\setminus \lbrace \xi \rbrace  : (\xi, \eta) \in E \rbrace , \mbox{ and}\]
\[ E^-_\eta  = \lbrace \xi \in\Lambda(\Gamma)\setminus \lbrace \eta \rbrace  : (\xi, \eta) \in E \rbrace. \]
\begin{lemma}
Let $\epsilon> 0$.  Suppose that $gMA \in E$, that $g^+ \in E^+$, that $g^- \in E^-$, and that $m_0 a_0 \in \mathcal{H}_\Gamma(g)$. Then ${\psi^\epsilon(gm_0a_0) = \psi^\epsilon(g)}$. 
\end{lemma}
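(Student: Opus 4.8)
The plan is to exploit that, although $\psi^\epsilon$ is only guaranteed to be continuous along the $MA$-cosets lying in $E$, the two auxiliary functions $\psi_+^\epsilon$ and $\psi_-^\epsilon$ carry \emph{exact} invariances that survive the smoothing. First I would record these invariances. Since $\psi_\pm$ is $\Sigma_\pm$-measurable it is genuinely right $N^\pm$-invariant, and because $MA$ normalizes $N^\pm$ the identity $hma_t = ma_t\,(a_t^{-1}m^{-1}hma_t)$ with $a_t^{-1}m^{-1}hma_t \in N^+$ shows that $\psi_+^\epsilon$ is right $N^+$-invariant on all of $G$; symmetrically $\psi_-^\epsilon$ is right $N^-$-invariant. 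All three functions $\psi^\epsilon,\psi_+^\epsilon,\psi_-^\epsilon$ are left $\Gamma$-invariant, inherited from $\psi,\psi_\pm$, and on every coset in $E$ they coincide and are continuous. I would also note that $E$ and $E^\pm$ are $\Gamma$-invariant, since $\psi$ and $\psi_\pm$ are.

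Next I would reduce to a statement along the reachability path. Writing $\gamma g h_1\cdots h_k = gm_0a_0$ for a sequence realizing $m_0a_0 \in \mathcal{H}_\Gamma(g)$, left $\Gamma$-invariance gives $\psi^\epsilon(gm_0a_0) = \psi^\epsilon(g h_1\cdots h_k)$, so it suffices to show that $\psi^\epsilon$ is unchanged along $g = g_0, g_1, \ldots, g_k = gh_1\cdots h_k$, where $g_r = g_{r-1}h_r$ and every $g_r$ lies in $\mathrm{supp}(m^{\operatorname{BMS}})$, hence has both endpoints in $\Lambda(\Gamma)$. The two basic moves behave well on the boundary: an $N^+$-move fixes the backward endpoint $g_r^-$ (since $N^+$ stabilizes $v_0^-$) and an $N^-$-move fixes the forward endpoint $g_r^+$ (since $N^-$ stabilizes $v_0^+$). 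Grouping consecutive moves of the same type, I would organize the path into maximal segments alternately of $N^+$- and $N^-$-type, with junction points $x_1,\ldots,x_{p-1}$.

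The chain then runs as follows: on a segment of type $\sigma$ the function $\psi_\sigma^\epsilon$ is constant by its exact invariance, and at each junction $x_i$ I switch from $\psi_{\sigma_i}^\epsilon$ to $\psi_{\sigma_{i+1}}^\epsilon$ using that on a coset in $E$ one has $\psi_+^\epsilon = \psi_-^\epsilon = \psi^\epsilon$; the two ends are handled by $g_0MA = gMA \in E$ and $g_kMA = \gamma^{-1}gMA \in E$. The crux, and the only place where the hypotheses $g^-\in E^-$ and $g^+\in E^+$ are really used, is that this switching requires every junction coset $x_iMA$ to lie in $E$. I would arrange this by steering: perturbing the moves internal to an $N^+$-segment changes the adjacent junction only in the $N^+$-direction, leaving $\psi_+^\epsilon$ unchanged while moving its free (forward) endpoint through a set of full $\sigma_o$-measure in $\Lambda(\Gamma)$, so whenever the fixed (backward) endpoint of that segment lies in $E^-$ the junction can be placed on a coset of $E$; the dual remark applies to $N^-$-segments and $E^+$. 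The boundary conditions $g^\pm \in E^\pm$, together with their $\Gamma$-translates $\gamma^{-1}g^\pm \in E^\pm$ (by $\Gamma$-invariance of $E^\pm$), seed this process at the first and last junctions, and it propagates inward. Since steering moves the final point only within the good coset $gMA$, I would in fact establish $\psi^\epsilon(gm_0a_0\,m'a') = \psi^\epsilon(g)$ for a dense set of $m'a' \in MA$, and then invoke continuity of $\psi^\epsilon$ along the coset $gMA \in E$ to pass to $m'a' = e$.

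I expect the junction management to be the main obstacle. The essential difficulty is that $E$ is a union of entire $MA$-cosets, so a bad junction coset cannot be approximated by good points along its own orbit, while $\psi^\epsilon$ is genuinely discontinuous transversally; the argument must therefore route the reachability path through good cosets using only the exact horospherical invariances of $\psi_\pm^\epsilon$ together with the conull nature of $E$, and never transverse continuity. Verifying that the free endpoints can be steered into $E$ at every junction while still realizing a dense set of displacements $m'a'$ is where the bookkeeping concentrates.
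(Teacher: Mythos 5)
Your proposal is correct and takes essentially the same route as the paper's proof: exact right $N^\pm$-invariance of the smoothed functions $\psi^\epsilon_\pm$, chaining $\psi^\epsilon$ along a sequence whose junction cosets all lie in $E$, perturbing the sequence through the conull sets $E^\pm$ and $E^\pm_\xi$ (seeded by $g^\pm \in E^\pm$ and $\Gamma$-invariance of $E, E^\pm$, with the final moves pinned to the exact endpoints of $\gamma^{-1}g$ so the chain lands in $gMA$), and concluding via continuity of $\psi^\epsilon$ along the coset $gMA \in E$. The paper implements your ``steering'' as a single left-to-right approximation producing permissible sequences with $m_l a_l \rightarrow m_0 a_0$, which is precisely the bookkeeping you anticipate.
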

\begin{proof}
Consider a sequence $gm_0a_0 = \gamma g h_1 h_2 \ldots h_k$ for $m_0a_0$ in the transitivity group. We will say that this sequence is permissible if $gh_1 \ldots h_r \in E$ for all $1 \leq r \leq k$.  In this case 
\[ \psi^\epsilon(g) = \psi^\epsilon(gh_1) = \ldots = \psi^\epsilon(gh_1 \ldots h_k) = \psi^\epsilon(\gamma g h_1 \ldots h_k) = \psi^\epsilon(gm_0a_0), \]
using invariance of $\psi_+^\epsilon$ under $N^+$ , invariance of $\psi_-^\epsilon$ under $N^-$, invariance of $\psi$ under  $\Gamma$, and the fact that all three agree on $E$.

More generally, we need a limiting argument allowing us to approximate any element of the transitivity group by a permissible sequence. Suppose that $m_0a_0 \in \mathcal{H}_\Gamma(g)$. We can choose a sequence $\gamma \in \Gamma$ and $n_1, h_1, n_2 \ldots n_k, h_k$ for $m_0a_0$ with $n_i \in N^-$ and $h_i \in N^+$. For $0 \leq i \leq k$ we define 
\[ \xi_i = (gn_1h_1 \ldots n_i h_i)^- \in \Lambda(\Gamma) \mbox{ and } \eta_i = (gn_1h_1 \ldots n_i h_i)^+ \in \Lambda(\Gamma). \]
Note that $\xi_k  = (\gamma^{-1} g)^-$ and that $\eta_k = (\gamma^{-1} g)^+$. We also observe that 
\[ n_i = n( (gn_1 h_1 \ldots n_{i-1} h_{i-1})^{-1} \xi_i) \mbox{ and } h_i = h( (gn_1 h_1 \ldots n_{i-1} h_{i-1}n_i)^{-1} \eta_i), \]
so we can reconstruct the horospherical group elements $n_i, h_i$ from the boundary points $\xi_i, \eta_i$. Now we need to do our approximation. Fix $\xi_0^{l} = \xi_0$, and $\eta_0^{l} = \eta_0$ for all $l$. We can then define sequences $\xi_1^l \in \Lambda(\Gamma)$ and $n_1^{(l)} \in N^-$ as follows:
\[ \xi_{1}^l \in E_{\eta_0^l}^- \cap E^- \mbox{ with } \xi_{1}^l \rightarrow \xi_{1} , \mbox{ and}\]
\[ n_{1}^{(l)} = n(g^{-1}\xi_{1} ^l). \]
This is possible because $\xi_1$ is in the limit set, and because full measure subsets of $\Lambda(\Gamma)$ are necessarily dense. Note that 
\begin{itemize}
\item{  $(gn_1^{(l)})^- = \xi_1^l \in E^- \subset \Lambda(\Gamma),$}
\item{  $(gn_1^{(l)})^+ = \eta_0^l  = \eta_0 \in E^+ \subset \Lambda(\Gamma),$}
\item{  $n_1^{(l)} \rightarrow n_1$ as $l \rightarrow \infty$, and }
\item{ $gn_1^{(l)} \in E$ for all $l$.}
\end{itemize}
With $\xi_1^l$ and $n_1^{(l)}$ in hand we then choose a sequence $\eta_1^l$ and $h_1^{(l)}$ such that 
\[ \eta_{1}^l \in E_{\xi_1^l}^- \cap E^- \mbox{ with } \eta_{1}^l \rightarrow \eta_{1} , \mbox{ and}\]
\[ h_1^{(l)} = h((gn_1^{(l)}) ^{-1}\eta_{1} ^l).\]
As before, we note that the forward and backward end points of $gn_1^{(l)} h_1^{(l)}$ are in the limit set, that $h_1^{(l)} \rightarrow h_1$, and that $gn_1^{(l)} h_1^{(l)} \in E$ for all $l$.

Iterating these arguments we obtain sequences $\xi_i^l, n_i^{(l)}, \eta_i^l, h_i^{(l)}$ for $1 \leq i \leq k$ such that:
\begin{itemize}
\item{ $(gn_1^{(l)} h_1^{(l)} \ldots n_i^{(l)} h_i^{(l)})^+ = \eta^l_i  \in E^+ \subset \Lambda(\Gamma) $ and 

$(gn_1^{(l)} h_1^{(l)} \ldots n_i^{(l)} h_i^{(l)})^- = \xi^l_i \in E^- \subset \Lambda(\Gamma)$ for all $i, l$;}
\item{ $(gn_1^{(l)} h_1^{(l)} \ldots  h_i^{(l)} n_{i+1}^{(l)})^+ = \eta^l_i  \in E^+ \subset \Lambda(\Gamma) $ and 

$(gn_1^{(l)} h_1^{(l)} \ldots  h_i^{(l)} n_{i+1}^{(l)})^- = \xi^l_{i+1} \in E^- \subset \Lambda(\Gamma)$ for all $i, l$;}
\item{ $\xi_k^l = (\gamma^{-1} g)^-$ and $\eta_k^l = (\gamma^{-1}g)^{+}$ for all $l$;}
\item{ $\gamma gn_1^{(l)} h_1^{(l)} \ldots  n_k^{(l)} h_k^{(l)} \in gMA$ for all $l$, so the sequence $\gamma, n_1^{(l)} \ldots h_k^{(l)}$ defines an element $m_la_l$ of the transitivity group $ \mathcal{H}_\Gamma(g)$; }
\item{ for each $l$ and $i$ we have $gn_1^{(l)} h_1^{(l)} \ldots  h_{i-1}^{(l)} n_{i}^{(l)} \in E$ and 

$gn_1^{(l)} h_1^{(l)} \ldots  n_i^{(l)} h_i^{(l)} \in E$ so the sequence above is permissible;}
\item{ $n_i^{(l)} \rightarrow n_i$ and $h_i^{(l)} \rightarrow h_i$ for all $i$, so}
\item{ $m_la_l \rightarrow m_0a_0$.} 
\end{itemize}
The lemma is now immediate. We have
\[ \psi^\epsilon(gm_0a_0) = \lim_{l\rightarrow\infty}  \psi^\epsilon(gm_la_l) = \psi^\epsilon(g)  \]
as required. 
\end{proof}

\begin{corollary}
If $(g^-, g^+) \in E \cap (E^- \times E^+)$, and if $m_0 a_0 \in \mathcal{H}_\Gamma(g)$, then $m_0a_0$ is a period of $\psi^{\epsilon}|_{gMA}$: that is,
\[ \psi^{\epsilon} (gm_0a_0 ma) = \psi^{\epsilon}(gma)\]
for all $ma \in MA$.  
\end{corollary}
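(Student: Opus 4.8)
The plan is to deduce the Corollary from the preceding Lemma by sliding the base point along the coset $gMA$. Fix $ma \in MA$ and set $g' = gma$. The whole point is that every hypothesis of the Lemma is invariant under replacing $g$ by $g'$, so the Lemma can simply be reapplied at $g'$ and then unwound.

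First I would check that $g'$ still lies in $\mathrm{supp}(m^{\operatorname{BMS}})$ and carries the same endpoints as $g$. Since $m$ fixes $v_0$ (hence $v_0^\pm$) and $a \in A$ preserves the endpoints of the geodesic through $v_0$, we have $(g')^\pm = g(ma\,v_0)^\pm = g(v_0)^\pm = g^\pm$. Consequently $g'MA = gMA$ as an element of $E$, and $(g')^- = g^- \in E^-$, $(g')^+ = g^+ \in E^+$. Thus the first three hypotheses of the Lemma hold at $g'$ precisely because they hold at $g$, which is exactly the content of the assumption $(g^-, g^+) \in E \cap (E^- \times E^+)$.

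Next I would transport the transitivity-group datum. Combining the base-point relations already established — namely $\mathcal{H}_\Gamma(gm) = m^{-1}\mathcal{H}_\Gamma(g)m$ and the invariance $\mathcal{H}_\Gamma(g) = \mathcal{H}_\Gamma(gu)$ for $u \in A$ — with the fact that $A$ is central in $MA$ (so conjugation by $a$ acts trivially on $\mathcal{H}_\Gamma(g) \subset MA$), one obtains $\mathcal{H}_\Gamma(g') = (ma)^{-1}\mathcal{H}_\Gamma(g)(ma)$. Hence the conjugate $(ma)^{-1}(m_0a_0)(ma)$ lies in $\mathcal{H}_\Gamma(g')$. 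All four hypotheses of the Lemma now hold at $g'$ for this conjugated element, so the Lemma yields
\[ \psi^\epsilon\bigl(g'\,(ma)^{-1}(m_0a_0)(ma)\bigr) = \psi^\epsilon(g'). \]
The right-hand side is $\psi^\epsilon(gma)$, while the left-hand side, using $g' = gma$, simplifies to $\psi^\epsilon\bigl(gma\,(ma)^{-1}m_0a_0\,ma\bigr) = \psi^\epsilon(gm_0a_0\,ma)$. As $ma \in MA$ was arbitrary, this is exactly the asserted periodicity of $\psi^\epsilon|_{gMA}$.

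The argument is essentially bookkeeping, with no analytic content beyond the Lemma itself; the only two points needing care are the verifications above. The first is that the endpoints — and therefore membership in $E$ and in $E^\pm$ — are constant along the $MA$-coset. The second, which is the one genuine subtlety, is that the transitivity group transforms by honest conjugation $\mathcal{H}_\Gamma(gma) = (ma)^{-1}\mathcal{H}_\Gamma(g)(ma)$ rather than merely the $M$-conjugation recorded earlier; this is where centrality of $A$ in $MA$ is invoked, and without it the conjugated element would not be guaranteed to remain in $\mathcal{H}_\Gamma(g')$.
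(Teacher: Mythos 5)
Your proof is correct and takes essentially the same approach as the paper: the paper's proof is the one-line version of your argument, observing $m^{-1}m_0a_0m \in \mathcal{H}_\Gamma(gma)$ (with centrality of $A$ in $MA$ left implicit) and applying the preceding lemma at the shifted basepoint $gma$. Your write-up merely spells out the bookkeeping the paper leaves tacit, namely that $(gma)^{\pm} = g^{\pm}$, so membership in $E$, $E^-$, $E^+$ and in $\mathrm{supp}(m^{\operatorname{BMS}})$ is constant along the coset $gMA$.
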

\begin{proof}
Since $m_0a_0 \in \mathcal{H}_\Gamma(g)$ we have $m^{-1} m_0a_0m \in \mathcal{H}_\Gamma(gma)$. Thus 
\[ \psi^\epsilon(gma) = \psi^{\epsilon}(gmam^{-1}m_0a_0m) = \psi^{\epsilon} (gm_0a_0 ma) \]
as required. 
\end{proof}

\begin{theorem} \label{C5.3}
If $\mathcal{H}_\Gamma(g) < MA$ is dense for any $g \in \mathrm{supp}(m^{\operatorname{BMS}})$, then $\tilde \Sigma$ is trivial. 
\end{theorem}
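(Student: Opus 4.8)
The plan is to reduce the statement to showing that every bounded $\tilde\Sigma$-measurable function is $\tilde m^{\operatorname{BMS}}$-a.e.\ constant, which is equivalent to triviality of $\tilde\Sigma$. So let $\psi$ be bounded and $\tilde\Sigma$-measurable, normalized to be strictly left $\Gamma$-invariant, and form the auxiliary objects $\psi_\pm$, $\psi^\epsilon$, $\psi^\epsilon_\pm$, $E$, $E^\pm$ exactly as above. The section has been arranged so that almost all of the content is already packaged in the preceding corollary; what remains is to feed in density of the transitivity group and then to pass from the smoothings back to $\psi$ itself.

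Fix $\epsilon>0$. For every $g$ with $(g^-,g^+)\in E\cap(E^-\times E^+)$, the preceding corollary says that every $m_0a_0\in\mathcal{H}_\Gamma(g)$ is a period of the continuous function $ma\mapsto\psi^\epsilon(gma)$ on $MA$; equivalently, this function is invariant under left translation by the subgroup $\mathcal{H}_\Gamma(g)$. By hypothesis $\mathcal{H}_\Gamma(g)$ is dense in $MA$, and a continuous function invariant under a dense subgroup is constant, so $\psi^\epsilon(gma)=\psi^\epsilon(g)$ for all $ma\in MA$. Since $E\cap(E^-\times E^+)$ has full $\sigma_o\times\sigma_o$ measure in $\partial^2\tilde X\cong G/MA$, the corresponding $MA$-saturated set in $G$ is $\tilde m^{\operatorname{BMS}}$-conull, and I conclude that for each fixed $\epsilon$ the function $\psi^\epsilon$ is $MA$-invariant off a $\tilde m^{\operatorname{BMS}}$-null set.

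Next I would let $\epsilon\to0$ along $\epsilon=1/n$. Up to the normalizing constant $c_\epsilon=\operatorname{Haar}_M(M_\epsilon)\cdot\epsilon$, the value $\psi^\epsilon(g)$ is the average of $\psi$ over the shrinking box $M_\epsilon\times[0,\epsilon]$ acting on the right, i.e.\ over a shrinking neighbourhood of the identity in the fibre $gMA$ with respect to the fibrewise Haar measure. Because $\tilde m^{\operatorname{BMS}}$ is right $MA$-invariant and so disintegrates over $G/MA$ with fibrewise Haar, the Lebesgue differentiation theorem on the Lie group $MA$ gives $c_\epsilon^{-1}\psi^\epsilon\to\psi$ for $\tilde m^{\operatorname{BMS}}$-a.e.\ $g$. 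Each $c_\epsilon^{-1}\psi^\epsilon$ is $MA$-invariant by the previous paragraph, so a short Fubini argument upgrades this to genuine measure-theoretic $MA$-invariance of $\psi$: the conull, $MA$-saturated sets on which the individual $\psi^{1/n}$ are invariant intersect the conull convergence set to yield, for a.e.\ $g$, the equality $\psi(gma)=\psi(g)$ for a.e.\ $ma\in MA$. Finally, an $MA$-invariant and $\Gamma$-invariant $\psi$ descends to an $A$-invariant function on $(\Gamma\backslash G/M,m^{\operatorname{BMS}})$; since the geodesic flow there is ergodic (indeed mixing, by Babillot \cite{babillot_mixing_2002}, under our standing finiteness assumption), $\psi$ is a.e.\ constant, and hence $\tilde\Sigma$ is trivial.

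I expect the main obstacle to be the passage $\epsilon\to0$ and the accompanying measure-theoretic bookkeeping, rather than any single hard estimate. The two delicate points are (i) justifying fibrewise Lebesgue differentiation, which rests on right $MA$-invariance of $\tilde m^{\operatorname{BMS}}$ and the resulting disintegration into fibrewise Haar measures; and (ii) combining, over the countable family $\epsilon=1/n$, the null sets where invariance fails with the null set where differentiation fails, so that one genuinely recovers $MA$-invariance of the \emph{limit} and not merely of each smoothing. The geometric heart of the statement — that reachability together with density of the transitivity group forces invariance — has already been discharged in the preceding lemma and corollary, so this theorem is really the clean measure-theoretic harvest of that work.
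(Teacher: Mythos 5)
Your proposal is correct and is essentially the paper's own argument: the paper's (very terse) proof likewise deduces right $MA$-invariance of $\psi^\epsilon$ from the preceding corollary together with density of $\mathcal{H}_\Gamma(g)$, passes from $\psi^\epsilon$ to $\psi$ as $\epsilon\to 0$, and concludes by ergodicity of the right $MA$-action on $(\Gamma\backslash G, m^{\operatorname{BMS}})$, which is equivalent to your use of ergodicity of the geodesic flow on $\Gamma\backslash G/M$. Your explicit fibrewise Lebesgue differentiation and the countable null-set bookkeeping over $\epsilon = 1/n$ merely spell out the step the paper compresses into the single word ``Thus,'' so there is no substantive divergence.
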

\begin{proof}
Let $\psi$ be some bounded $\tilde \Sigma$ measurable function. Then $\psi^{\epsilon}$ is right $MA$ invariant for any $\epsilon > 0$. Thus $\psi$ is right $MA$ invariant. But the right $MA$ action on $(\Gamma  \backslash G, m^{\operatorname{BMS}})$ is ergodic \cite{corlette1999limit}, so $\psi$ is constant almost everywhere. 
\end{proof}

\section{Triviality of $\tilde \Sigma$ and mixing of the frame flow}  \label{sec6}
We now have almost all the technical pieces in place to prove Theorem \ref{target}. The last piece of the argument is to show that mixing follows from triviality of $\tilde \Sigma$. We assume finiteness of the BMS measure throughout this section.

\begin{proposition}\label{trivialisenough} If $\tilde\Sigma$ is trivial (i.e.\ if every $\tilde \Sigma$-measurable set is either null or co-null for the \textup{BMS} measure), then the $A$ action on $(\Gamma \backslash G, m^{\operatorname{BMS}})$ is mixing. 
\end{proposition}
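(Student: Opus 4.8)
The plan is to run the standard weak-limit argument in the spirit of Babillot \cite{babillot_mixing_2002}. Work in $L^2(\Gamma\backslash G, m^{\operatorname{BMS}})$ and let $U_t$ denote the unitary $\psi\mapsto\psi^{a_t}$, which is well defined because the $A$-action preserves $m^{\operatorname{BMS}}$. Since $1\in L^2$ (the measure is finite) and the pairing $\langle\cdot,1\rangle$ is preserved by $U_t$, it suffices to prove the following claim: for every $\psi\in C_c(\Gamma\backslash G)$ and every sequence $t_n\to+\infty$, any weak limit $\phi$ of $\psi^{a_{t_n}}$ in $L^2$ is constant. Granting this, every weak subsequential limit has the same integral $m^{\operatorname{BMS}}(\psi)$ as $\psi$, and hence equals the single constant $m^{\operatorname{BMS}}(\psi)/|m^{\operatorname{BMS}}|$; as the family $\{\psi^{a_t}\}$ is bounded in $L^2$, this upgrades to full weak convergence $\psi^{a_t}\rightharpoonup m^{\operatorname{BMS}}(\psi)/|m^{\operatorname{BMS}}|$ as $t\to+\infty$. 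Pairing this with $\psi_2\in C_c(\Gamma\backslash G)$ then produces exactly the limit asserted in the statement.

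It remains to prove the claim, and I would do so by showing that any such weak limit $\phi$ is $\tilde\Sigma$-measurable; the hypothesis then finishes the argument. Invariance of $\phi$ under $\Gamma$ is automatic. Invariance under the contracting group $N^-$ is immediate from hyperbolic dynamics: for $n\in N^-$ one has $na_t = a_t(a_{-t}na_t)$ with $a_{-t}na_t\to e$ as $t\to+\infty$, so uniform continuity of the compactly supported $\psi$ gives $\sup_g|\psi^{a_t}(gn)-\psi^{a_t}(g)|\to 0$; since the measure is finite this forces $\psi^{a_{t_n}}(\,\cdot\, n)-\psi^{a_{t_n}}\to 0$ in $L^2$, and passing to the weak limit shows that $\phi$ is $N^-$-invariant (in the almost-everywhere sense appropriate to the quasi-invariance of $m^{\operatorname{BMS}}$ under $N^-$). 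The same computation fails for the expanding group $N^+$, because right translation by $N^+$ is stretched, not contracted, under $a_t$ as $t\to+\infty$. That $\phi$ is nonetheless $N^+$-invariant is the fact recorded in the introduction as known; it is the heart of Babillot's method \cite{babillot_mixing_2002} (see also \cite{roblin_ergodicite_2003}) and rests on the local product structure of $m^{\operatorname{BMS}}$ together with the conservativity coming from finiteness. I expect this $N^+$-invariance of the weak limit to be the main obstacle, precisely because it cannot be read off from the naive contraction estimate.

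Combining the two invariances, $\phi$ coincides almost everywhere both with a $\Sigma_-$-measurable and with a $\Sigma_+$-measurable $\Gamma$-invariant function, so $\phi$ is $\tilde\Sigma$-measurable. By hypothesis $\tilde\Sigma$ is trivial, whence $\phi$ is constant almost everywhere; this proves the claim and hence the proposition. The only point needing care beyond the cited invariance is the passage from almost-everywhere $N^\pm$-invariance to a genuine $\Sigma_\pm$-measurable representative, which is the routine measure-theoretic bookkeeping built into the definition of $\tilde\Sigma$; all of the conceptual weight sits in the $N^+$-invariance discussed above.
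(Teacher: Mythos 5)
Your skeleton (weak limits, invariance under both horospherical groups, then triviality of $\tilde\Sigma$) matches the paper's strategy in outline, but there is a genuine gap at exactly the step you flag as ``the main obstacle'': you assert that any weak limit of $\psi^{a_{t_n}}$ along an arbitrary sequence $t_n\to+\infty$ is $N^+$-invariant, and you propose to quote this from Babillot. No such black-box statement exists in \cite{babillot_mixing_2002}, and as stated your claim is essentially equivalent to the mixing theorem itself (a posteriori every forward weak limit is constant, hence invariant; as an intermediate step it has no direct proof). What Babillot's Lemma~1 --- quoted in the paper as Lemma~\ref{Babillotslemma} --- actually provides is the device that makes the argument close: \emph{if} mixing fails, one can manufacture a single sequence $s_n\to+\infty$ and a single non-constant $\phi$ such that $\psi\circ a_{s_n}\rightharpoonup\phi$ \emph{and} $\psi\circ a_{-s_n}\rightharpoonup\phi$ simultaneously. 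Then $N^-$-invariance of $\phi$ comes from the forward sequence by your contraction estimate, and $N^+$-invariance comes from the \emph{backward} sequence by the very same estimate (since $a_sha_{-s}\to e$ for $h\in N^+$ as $s\to+\infty$); no mysterious invariance of forward limits under the expanding group is ever needed. The paper's proof is contrapositive for precisely this reason: Babillot's lemma starts from the failure of mixing, whereas your direct formulation (``every forward weak limit is constant'') bypasses the only available mechanism and leaves the central step unproved.

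A secondary problem: your passage from the $L^2$ statement $\psi^{a_{t_n}}(\cdot\, n)-\psi^{a_{t_n}}\to 0$ to ``$\phi$ is $N^-$-invariant'' appeals to quasi-invariance of $m^{\operatorname{BMS}}$ under $N^-$, which is false in general. Right translation by $n\in N^-$ fixes $g^+$ but moves $g^-$, so it does not preserve $\operatorname{supp}(m^{\operatorname{BMS}})=\lbrace g: g^{\pm}\in\Lambda(\Gamma)\rbrace$ unless $\Lambda(\Gamma)=\partial\tilde X$; consequently $n_*m^{\operatorname{BMS}}$ and $m^{\operatorname{BMS}}$ need not share null sets, and one cannot test $\phi(\cdot\, n)$ against $L^2$ functions by a change of variables. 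The paper circumvents this by upgrading weak convergence: Banach--Saks gives strong $L^2$ convergence of Ces\`aro averages $S_R(\psi)$, a further subsequence converges almost everywhere, and the contraction estimate makes the differences $S_R(\psi)(gn)-S_R(\psi)(g)$ tend to zero \emph{uniformly} in $g$, so the set where the pointwise limit exists is strictly $N^-$-invariant and the limit is strictly invariant there --- which is precisely the $\Sigma_\pm$-measurability built into the definition of $\tilde\Sigma$, with no quasi-invariance required. Your final deduction of mixing from the claim (all subsequential weak limits equal the same constant $m^{\operatorname{BMS}}(\psi)/|m^{\operatorname{BMS}}|$) is fine; the repairs needed are to run the argument contrapositively through Lemma~\ref{Babillotslemma} and to route the invariance statements through pointwise limits of Ces\`aro averages rather than raw weak limits.
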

\noindent The key point in the proof is the following lemma (Lemma  1 from \cite{babillot_mixing_2002}). 

\begin{lemma} \label{Babillotslemma}
Let $(X, \mathcal{B}, m, (a_t)_{t \in \mathbb{R}})$ be a measure preserving dynamical system where $(X, \mathcal{B})$ is a standard Borel space, $m$ a finite Borel measure and $a_t$ an action of $\mathbb{R}$ on $X$ by measure preserving transformations. Let \\${\psi \in L^2(X, m)}$ be a real valued function on $X$ such that $\int_X \psi dm = 0$.

If there exist $t_n \rightarrow \infty$ in $\mathbb{R}$ such that $\psi \circ a_{t_n}$ does not converge weakly to zero in $L^2$, then there is a sequence $s_m \in\mathbb{R}$ tending to $+\infty$ and a non-constant function $\phi \in L^2(X, m)$ such that 
\[ \psi \circ a_{s_n} \rightarrow \phi \mbox{ and }  \psi \circ a_{-s_n} \rightarrow \phi \mbox{ in the weak $L^2$ topology}. \]
\end{lemma}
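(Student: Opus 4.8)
The plan is to pass to the spectral picture and exploit the reflection symmetry coming from $\psi$ being real. Let $U_t\colon L^2(X,m)\to L^2(X,m)$, $U_t\psi=\psi\circ a_t$, be the Koopman group (a strongly continuous one‑parameter unitary group, since the $a_t$ are measure preserving). Restricting to the cyclic subspace $H_\psi=\overline{\operatorname{span}}\{U_t\psi:t\in\mathbb R\}$, the spectral theorem gives a finite positive measure $\sigma$ on $\mathbb R$ and a unitary $H_\psi\cong L^2(\mathbb R,\sigma)$ under which $\psi\leftrightarrow\mathbb 1$ and $U_t\psi\leftrightarrow e_t$, where $e_t(\lambda)=e^{it\lambda}$. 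Because $\psi$ is real, the matrix coefficient $\widehat\sigma(t)=\langle U_t\psi,\psi\rangle=\int(\psi\circ a_t)\psi\,dm$ is real and even, so $\sigma$ is symmetric under $\lambda\mapsto-\lambda$. Since $U_{t_n}\psi\in H_\psi$ always, weak convergence to $0$ in $L^2(X,m)$ is the same as weak convergence to $0$ in $H_\psi$, which (testing against the total family $\{U_s\psi\}$) means $\widehat\sigma(t_n-s)\to 0$ for every $s$. The hypothesis therefore furnishes an $s_0$ with $\widehat\sigma(t_n-s_0)\not\to0$; replacing $t_n$ by $t_n-s_0$ and passing to a subsequence, I may assume $\widehat\sigma(t_n)\to c$ with $c\in\mathbb R$, $|c|\ge\delta>0$, i.e.\ $\langle e_{t_n},\mathbb 1\rangle\to c\neq 0$.

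Next I record what the conclusion becomes. Let $S\colon g(\lambda)\mapsto g(-\lambda)$; by symmetry of $\sigma$ this is a (weakly continuous) unitary involution of $L^2(\sigma)$, and $Se_s=e_{-s}$. Hence if I can produce $s_k\to+\infty$ with $e_{s_k}\rightharpoonup h$ for some \emph{even} ($Sh=h$) vector $h$ with $\langle h,\mathbb 1\rangle\neq0$, then $e_{-s_k}=Se_{s_k}\rightharpoonup Sh=h$ as well, so the corresponding vector $\phi\in H_\psi$ satisfies $U_{s_k}\psi\rightharpoonup\phi$ and $U_{-s_k}\psi\rightharpoonup\phi$. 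Such a $\phi$ is non‑constant: $\langle\phi,\psi\rangle=\langle h,\mathbb 1\rangle\neq0$ forces $\phi\neq0$, while weak convergence against the invariant vector $\mathbb 1_X$ gives $\int\phi\,dm=\lim\langle\psi,U_{-s_k}\mathbb 1_X\rangle=\int\psi\,dm=0$, and a constant of integral zero is zero. So everything reduces to exhibiting one even nonzero vector as a genuine one‑sided ($+\infty$) weak limit of the orbit.

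This is the heart of the argument, and the key step is the following symmetrization. Passing to a further subsequence, let $u\in L^\infty(\sigma)$ be a weak‑$*$ limit of $e_{t_n}$ (the unit ball of $L^\infty$ is weak‑$*$ compact and metrizable, $\sigma$ being finite); since $L^2\subset L^1(\sigma)$ this is also the weak $L^2$‑limit, so $\langle u,\mathbb 1\rangle=c$ and $\|u\|_{L^2(\sigma)}^2\ge|c|^2/\sigma(\mathbb R)>0$. From $\overline{e_{t_n}}=e_{-t_n}=Se_{t_n}$ and weak‑$*$ continuity of conjugation I get $Su=\overline u$, so
\[
h:=u\cdot Su=|u|^2
\]
is real, even, and satisfies $\int h\,d\sigma=\|u\|^2>0$. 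It remains to see that $h$ is a forward weak limit of the orbit. For fixed $m$, multiplying by the unitary $e_{-t_m}$ gives $e_{t_n-t_m}=e_{-t_m}e_{t_n}\rightharpoonup e_{-t_m}u$ as $n\to\infty$ (through times $t_n-t_m\to+\infty$); then, using $e_{-t_m}\rightharpoonup Su=\overline u$ weak‑$*$ and testing against $u\bar g\in L^1$, one finds $e_{-t_m}u\rightharpoonup (Su)\,u=h$ as $m\to\infty$. A diagonal extraction (legitimate since the weak topology is metrizable on bounded sets of the separable space $L^2(\sigma)$) yields a single sequence $s_k=t_{n_k}-t_k\to+\infty$ with $e_{s_k}\rightharpoonup h$, completing the reduction of the previous paragraph.

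The main obstacle is exactly the coordination of the $+\infty$ and $-\infty$ limits into one common vector $\phi$: weak limits of $U_{t_n}\psi$ and of $U_{-t_n}\psi$ need not agree, and there is no reason an arbitrary forward weak limit should be even. The trick that dissolves this difficulty is the passage to the \emph{difference} times $t_n-t_m$, whose forward limits recover the manifestly even, manifestly nonzero vector $|u|^2$; the symmetry of $\sigma$ (hence of $S$) is what both makes $|u|^2$ even and guarantees $e_{-s_k}\rightharpoonup h$ for free. The remaining verifications—the spectral identification, the metrizability used in the diagonalization, and transporting weak convergence in $H_\psi$ back to $L^2(X,m)$—are routine.
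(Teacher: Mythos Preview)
The paper does not give its own proof of this lemma; it simply cites it as Lemma~1 of Babillot \cite{babillot_mixing_2002}. Your argument is correct and is essentially Babillot's spectral proof: the symmetry of the spectral measure $\sigma$ (from $\psi$ real) makes the reflection $S$ unitary with $Se_t=e_{-t}$; the weak-$*$ limit $u\in L^\infty(\sigma)$ of the $e_{t_n}$ yields the even nonzero vector $h=|u|^2=u\,Su$; and the difference-time device $e_{t_n-t_m}\rightharpoonup e_{-t_m}u\rightharpoonup h$ together with a diagonalization (legitimate by metrizability of the weak topology on bounded sets of the separable $L^2(\sigma)$) exhibits $h$ as a one-sided $+\infty$ weak limit, after which the $-\infty$ limit follows from evenness. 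The checks that $\phi\neq 0$ (via $\langle h,\mathbb 1\rangle=\|u\|_{L^2}^2>0$) and $\int\phi\,dm=0$ (testing against $\mathbb 1_X$) are handled correctly.
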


\begin{proof}[Proof of Proposition \ref{trivialisenough}] Suppose that the right $A$ action is not mixing. Then we may choose a uniformly continuous function
\[ \psi : \Gamma \backslash G \rightarrow \mathbb{R} \]
such that $\int_{\Gamma \backslash G}\phi dm^{\operatorname{BMS}} = 0$ and $\psi\circ a_t$ does not converge weakly to zero in $L^2(\Gamma \backslash G, m^{\operatorname{BMS}})$. By Lemma \ref{Babillotslemma} we get a non-constant function $\phi$ and $s_n \rightarrow +\infty$ such that
\[ \psi \circ a_{s_n} \rightarrow \phi \mbox{ and }  \psi \circ a_{-s_n} \rightarrow \phi \mbox{ in the weak $L^2$ topology}. \]
Passing to a subsequence $s_{n_k}$ we may assume (by Banach-Saks) that the Cesaro averages
\[ S_R(\psi) := \frac{1}{R}  \sum_{r = 1}^{R} \psi \circ a_{s_{n_r}} \mbox{ and }  \tilde S_R(\psi) := \frac{1}{R}  \sum_{r = 1}^{R} \psi\circ {a_{-s_{n_r}}} \]
converge strongly to $\phi$ in $L^2$. Passing to a further subsequence we may assume that $S_{R_l}(\psi)$ and $\tilde S_{R_l}(\psi)$ converge almost everywhere to $\phi$. 

It is standard to show that any almost everywhere limit of $S_{R_l}(\psi)$ (respectively $\tilde S_{R_l}(\psi)$) is $N^-$ invariant  (respectively $N^+$ invariant). So on the one hand $\phi = \lim_l S_{R_l}(\psi)$ is $\Sigma_-$ measurable, while on the other $\phi = \lim_l \tilde S_{R_l}(\psi)$ is $\Sigma_+$ measurable. Thus $\phi$ is a non-constant $\tilde \Sigma$ measurable function, and  $\tilde \Sigma$ cannot be trivial. 
\end{proof}

\begin{proof}[Proof of Theorem \ref{target}]  Suppose that $\Gamma < G$ is Zariski dense and that the associated BMS measure is finite. By Theorem \ref{transitivitygroupsaredense} the transitivity group $\mathcal{H}_\Gamma(g)$ is dense for any $g \in \mathrm{supp}(m^{\operatorname{BMS}})$, so $\tilde \Sigma$ is trivial by Theorem \ref{C5.3}, and the frame flow is mixing by Proposition \ref{trivialisenough}. \end{proof}

\section{Consequences of mixing of the frame flow}  \label{sec7}
This section contains the principal applications of Theorem \ref{target}. We begin in subsection \ref{ss7.1} by recalling some measures and metrics on the horospheres. These allow us to state a number of results on equidistribution and the decay of matrix coefficients in subsection \ref{ss7.2}. Those equidistributional results are essential steps in the classification theorem for Radon measures invariant under the horospherical group, which we address in subsection \ref{ss7.3}.  In subsection \ref{ss7.4} we return to consider the case when the transitivity group is not dense in $MA$. 

Throughout this Section we assume that the BMS measure is finite on $\Gamma \backslash G$. In subsections \ref{ss7.2} and \ref{ss7.3} we make the additional assumption that the frame flow is mixing in the sense of Theorem \ref{target}. Subsection \ref{ss7.4} assumes only the finiteness of the BMS measure.

\subsection{Metrics and measures on horospheres.} \label{ss7.1} 
For each $g \in G$ we have the strong stable (unstable) horosphere ${H^-(g) = gN^-}$ (respectively \\${H^+(g) = gN^+}$). We identify $H^{\pm}(g)$ with the boundary ${\partial \tilde X \setminus \lbrace g^{\mp}\rbrace} $ as usual by sending $h \rightarrow h^{\pm}$. We denote the inverse map by
\[ P_{H^\pm(g)} : \partial \tilde X \setminus \lbrace g^{\mp}\rbrace \rightarrow H^{\pm}(g).  \]
In fact this is nothing very new. We have
\[ P_{H^+(g)}(\xi) = gh(g^{-1}\xi) \mbox{ and }  P_{H^-(g)}(\xi) = gn(g^{-1}\xi) .\]
It will also be useful to notice that, for all $m \in M$ and $a\in A$,
\[ P_{H^{\pm}(gma)}(\xi) = P_{H^{\pm}(g)}(\xi) ma.\]
Using these functions, we can convert the visual distance on the boundary to give a metric on each horosphere (see \cite[Section 1G]{roblin_ergodicite_2003}):

\[ d_H(P_H(\xi), P_H(\eta)) := e^{\frac{1}{2} [\beta_\xi(o, P_H(\xi)) + \beta_{\eta}(o, P_H(\eta))]}d_{\partial \tilde X}(\xi, \eta) \]
where $d_{\partial \tilde X}$ is the Gromov visual distance on the boundary associated to our base point $o \in \tilde X$. It is worth noting that this metric is left invariant by $G$ in the sense that 
  \[ d_{gH}(P_{gH}(g\xi), P_{gH}(g\eta))  =  d_H(P_H(\xi), P_H(\eta)) \]
  for all $g \in G$. The right $MA$ action is also straightforward:
  \[ d_{H^\pm ma_t} (g_1ma_t, g_2ma_t) =e^{\pm t} d_{H^\pm} (g_1, g_2)\]
  for all  $g_i \in H^\pm, t\in \mathbb{R}$ and $m \in M$. Our last observation for now is that stable and unstable leaves have infinite diameter for these metrics. We will write $B^{\pm}(g, r)$ for the open metric balls around $g$ in $H^{\pm}(g)$ respectively.

For $\mu$ a conformal density of dimension $\delta_\mu$ on $\partial \tilde X$, and $H^{\pm}$ a horosphere, we define the measure
\[ \int_{H^\pm} \phi d\mu_{H^{\pm}} : = \int_{\partial \tilde X}  e^{\delta_\mu\beta_{\xi}(o,  P_{H^\pm} (\xi))} \phi(P_{H^{\pm}}(\xi)) d\mu_o(\xi) .\]
A special case of this is the Patterson-Sullivan measure $\sigma_o$ which gives rise to measures $\sigma_{H^\pm}$ on the stable and unstable leaves respectively. These measures have good invariance properties for the right action of $M\times A$: for all $g_0 \in G$, all $\mu$ of dimension $\delta_\mu$, and all $ m \in M, a_t \in A$, one has that
\[ e^{\pm \delta_\mu t} \int_{H^{\pm}(g_0)}\Phi(gma_t) d\mu_{H^{\pm}(g_0)}(g) =   \int_{H^{\pm}(g_0ma_t )}\Phi(g) d\mu_{H^{\pm}(g_0ma_t)}(g). \]
Also note that
\[ \gamma_* \mu_{H^{\pm}} = \mu_{\gamma H^{\pm}}. \]

\subsection{Mixing, equidistribution, and the  decay of matrix coefficients.} \label{ss7.2}
Roblin's thesis demonstrates how to use mixing of the $A$ action to prove a variety of other results on equidistribution and decay rates of matrix coefficients. We will be particularly interested in three examples, which we now state. Each of these results follows from mixing of frame flow using the arguments of \cite{roblin_ergodicite_2003}.
 \begin{theorem}  \label{R3.2} Let $ \phi \in C_c(\Gamma \backslash G)$ and let $\tilde \phi$ be the lift of $\phi$ to $G$. For all $u \in G$ and all bounded Borel sets $E^+ \subset H^+(u)$,
 \[ \lim_{t\rightarrow +\infty} \int_{E^+}\tilde \phi(ga_t) d \sigma_{H^+(u)} (g) = \sigma_{H^+(u)}(E^+)\frac{m^{\operatorname{BMS}}(\tilde \phi)}{|m^{\operatorname{BMS}}|}.\] 
 \end{theorem}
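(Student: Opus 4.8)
The plan is to deduce the equidistribution of expanding horospheres from the mixing of the frame flow (Theorem~\ref{target}) via a ``thickening'' or ``change of variables'' argument, exactly in the spirit of Roblin~\cite{roblin_ergodicite_2003} and the classical Margulis thickening technique. The essential idea is to relate the horospherical integral $\int_{E^+}\tilde\phi(ga_t)\,d\sigma_{H^+(u)}(g)$ to a BMS integral over a small flow box, apply mixing of $A$, and then unwind.

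Let me sketch how I would prove it.

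First I would reduce to the case where $E^+$ is a small ball $B^+(u,r)$ and $\phi$ is supported near the relevant piece of $\Gamma\backslash G$; the general bounded Borel set then follows by the usual inner/outer approximation of $\sigma_{H^+(u)}$-measurable sets, using that $\sigma_{H^+}$ is a Radon measure. So assume $E^+$ is a small relatively compact piece of the unstable horosphere through $u$.

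Second, the heart of the argument is the local product structure of the BMS measure. Recall from subsection~\ref{ss2.5} that in the coordinates $(g^-,g^+,s)$ the measure $\tilde m^{\operatorname{BMS}}$ factors, and more precisely near a point $g$ in the support one has a local product decomposition of $m^{\operatorname{BMS}}$ as (roughly) $d\sigma_{H^+}\,d\sigma_{H^-}\,dm\,dt$, reflecting the transversality of the stable and unstable foliations together with the $MA$ directions. The plan is to introduce a small parameter $\epsilon>0$ and thicken $E^+$ in the stable and central ($MA$) directions: set
\[
  \mathcal{O}_\epsilon = E^+ \cdot \left(B^-(e,\epsilon)\,M_\epsilon\,A_\epsilon\right),
\]
so that $\mathcal{O}_\epsilon$ is a flow box whose $m^{\operatorname{BMS}}$ measure is comparable to $\sigma_{H^+(u)}(E^+)$ times the transverse mass $\kappa(\epsilon)$. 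Choosing a suitable bump function $\chi_\epsilon$ supported on $\mathcal{O}_\epsilon$ with $\int \chi_\epsilon\,dm^{\operatorname{BMS}} = \kappa(\epsilon)\,\sigma_{H^+(u)}(E^+)$, I would compare
\[
  \int_{E^+}\tilde\phi(ga_t)\,d\sigma_{H^+(u)}(g)
  \quad\text{with}\quad
  \frac{1}{\kappa(\epsilon)}\int_{\Gamma\backslash G}\chi_\epsilon(g)\,\phi(ga_t)\,dm^{\operatorname{BMS}}(g).
\]
The point is that as $a_t$ expands the unstable direction, the product $\chi_\epsilon(g)\phi(ga_t)$ concentrates along the unstable leaf, and the right-invariance properties of $\sigma_{H^+}$ under $MA$ recorded in subsection~\ref{ss7.1} (the relation $e^{\pm\delta_\mu t}\int_{H^\pm(g_0)}\Phi(gma_t)\,d\mu_{H^\pm(g_0)} = \int_{H^\pm(g_0ma_t)}\Phi\,d\mu$) let me absorb the Jacobian of the flow. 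Applying Theorem~\ref{target} to the pair $(\chi_\epsilon,\phi)$ gives
\[
  \int_{\Gamma\backslash G}\chi_\epsilon(g)\,\phi(ga_t)\,dm^{\operatorname{BMS}}(g)
  \longrightarrow
  \frac{m^{\operatorname{BMS}}(\chi_\epsilon)\,m^{\operatorname{BMS}}(\phi)}{|m^{\operatorname{BMS}}|}
  = \kappa(\epsilon)\,\sigma_{H^+(u)}(E^+)\,\frac{m^{\operatorname{BMS}}(\phi)}{|m^{\operatorname{BMS}}|}
\]
as $t\to+\infty$. Dividing by $\kappa(\epsilon)$ yields the claimed limit, up to an error controlled by the modulus of continuity of $\phi$ on the $\epsilon$-box. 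Finally I would let $\epsilon\to 0$ to kill that error.

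The hard part, and where the argument must be handled with care, is making the comparison between the horospherical integral and the thickened BMS integral uniform in $t$ and genuinely $O(\epsilon)$ rather than merely convergent. The subtlety is that over the long time $t$ the flow box $\mathcal{O}_\epsilon\cdot a_t$ gets sheared and stretched, so one must verify that the local product structure is preserved well enough that the transverse mass factor $\kappa(\epsilon)$ does not itself drift with $t$, and that the contribution of the stable and central directions really does factor out cleanly against the Patterson--Sullivan conformal density of dimension $\delta$. This is precisely the content of Roblin's estimates, and is where the conformal scaling $e^{\delta t}$ built into $\sigma_{H^+}$ is indispensable; one must also ensure $\phi$ is approximately constant along $B^-(e,\epsilon)M_\epsilon A_\epsilon$ uniformly, which is where uniform continuity of $\phi\in C_c(\Gamma\backslash G)$ enters. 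Since this is exactly the mechanism carried out in detail in Chapters~I--III of~\cite{roblin_ergodicite_2003}, I would organize the write-up so as to invoke those estimates rather than reprove them, emphasizing only that the single input needed---mixing of the $A$-action on $(\Gamma\backslash G, m^{\operatorname{BMS}})$---is now supplied by Theorem~\ref{target}.
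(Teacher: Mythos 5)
Your proposal is correct and takes essentially the same route as the paper: the paper's proof likewise adapts Roblin's thickening argument from \cite{roblin_ergodicite_2003}, replacing mixing of the geodesic flow by mixing of the frame flow (Theorem \ref{target}) and the $N^- \times A \times N^+$ product structure on $\mathrm{T}^1\tilde X$ by the $N^- \times A \times M \times N^+$ product structure on $G$. Your sketch merely spells out the flow-box mechanism that the paper delegates to Roblin's estimates, and your closing decision to invoke those estimates rather than reprove them matches the paper exactly.
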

 \begin{proof} The proof of \cite[Theorem 3.2]{roblin_ergodicite_2003} relies only on the mixing property of geodesic flow, expansion (contraction) properties of the unstable (stable) horospheres, and the local product structure of the BMS measure on $\mathrm{T}^1\tilde X$ with respect to $N^- \times A \times N^+$. Replacing the mixing of geodesic flow with the mixing of frame flow, and the $N^- \times A \times N^+$ product structure for $\mathrm{T}^1\tilde X$ with the $N^-\times A \times M \times N^+$ product structure for $G$, one can adapt Roblin's proof without difficulty. \end{proof}
 \begin{theorem} \label{leavesequidistribute} Let $\phi$ be a bounded $L^1$ function on $\Gamma \backslash G$, and let $\tilde \phi$ be the lift of $\phi$ to $G$. For $r > 0$ let $M_r(\phi)$ be the measurable function defined $\tilde m^{\operatorname{BMS}}$ almost everywhere by
\[ M_r(\phi)(g) =\frac{1}{\sigma_{H^+(g)} (B^+(g, r))}\int_{B^+(g, r)}\tilde \phi\thinspace  d\sigma_{H^+(g)}.  \]
Then,
\[ M_r(\tilde \phi) \rightarrow \frac{1}{|m^{\operatorname{BMS}}|} \int_{\Gamma\backslash G} \phi\thinspace dm^{\operatorname{BMS}} \]
 in $L^1(\Gamma \backslash G, m^{\operatorname{BMS}})$. 
\end{theorem}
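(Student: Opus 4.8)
The plan is to reduce to the case $\phi \in C_c(\Gamma \backslash G)$ and then to deduce the $L^1$ convergence from the pointwise equidistribution of Theorem \ref{R3.2} via a change of variables along the geodesic flow. Write $c_\phi := |m^{\operatorname{BMS}}|^{-1}\int_{\Gamma\backslash G}\phi\, dm^{\operatorname{BMS}}$ for the target value. The averaging operators $M_r$ are positivity preserving and satisfy $M_r(1)=1$; moreover, using the regularity (doubling) of the Patterson--Sullivan measures $\sigma_{H^+}$ on the horospheres together with a Vitali covering argument, the family $\{M_r\}$ is uniformly bounded on $L^1(\Gamma\backslash G, m^{\operatorname{BMS}})$. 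Since $c_\phi$ depends continuously on $\phi$ in the $L^1$ norm and $C_c$ is dense in $L^1(m^{\operatorname{BMS}})$, it suffices to prove the claim for $\phi \in C_c$.

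The heart of the argument is a geometric substitution. Fix $r_0 > 0$ and, given $r$, set $t = \log(r/r_0)$, so that $r = e^t r_0$ and $t \to \infty$ as $r\to\infty$. Writing $g_0 = ga_{-t}$, the metric scaling $d_{H^+ a_t}(g_1 a_t, g_2 a_t) = e^{t}d_{H^+}(g_1, g_2)$ from subsection \ref{ss7.1} shows that right translation by $a_t$ carries $B^+(g_0, r_0)$ bijectively onto $B^+(g, r)$, while the measure-scaling identity there (with $\mu = \sigma$, $\delta_\mu = \delta$) gives
\[ \int_{B^+(g, r)}\tilde\phi\, d\sigma_{H^+(g)} = e^{\delta t}\int_{B^+(g_0, r_0)}\tilde\phi(ha_t)\, d\sigma_{H^+(g_0)}(h), \]
together with the same identity for $\tilde\phi \equiv 1$. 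Dividing the two, the factors $e^{\delta t}$ cancel and we obtain $M_r(\tilde\phi)(g) = F_t(ga_{-t})$, where
\[ F_t(g_0) := \frac{1}{\sigma_{H^+(g_0)}(B^+(g_0, r_0))}\int_{B^+(g_0, r_0)}\tilde\phi(ha_t)\, d\sigma_{H^+(g_0)}(h). \]

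Because $m^{\operatorname{BMS}}$ is $A$ invariant, substituting $g_0 = ga_{-t}$ yields $\|M_r(\tilde\phi) - c_\phi\|_{L^1(m^{\operatorname{BMS}})} = \|F_t - c_\phi\|_{L^1(m^{\operatorname{BMS}})}$; this converts the moving-basepoint average into a fixed-basepoint average to which Theorem \ref{R3.2} applies directly. Indeed, for $m^{\operatorname{BMS}}$ almost every $g_0$ we have $g_0^+ \in \Lambda(\Gamma)$, so that $\sigma_{H^+(g_0)}(B^+(g_0, r_0)) \in (0, \infty)$, and Theorem \ref{R3.2} with $u = g_0$ and $E^+ = B^+(g_0, r_0)$ gives $F_t(g_0) \to c_\phi$ as $t\to\infty$. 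Since $|F_t(g_0)| \le \|\phi\|_\infty$ pointwise and $|m^{\operatorname{BMS}}| < \infty$, the dominated convergence theorem forces $\|F_t - c_\phi\|_{L^1(m^{\operatorname{BMS}})} \to 0$, completing the proof for $\phi \in C_c$ and hence, by the first paragraph, in general.

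I expect the main obstacle to lie in the reduction step rather than in the core estimate: establishing the uniform $L^1(m^{\operatorname{BMS}})$ boundedness of $\{M_r\}$ requires controlling how $\sigma_{H^+(g)}(B^+(g, r))$ varies as $g$ slides along a horosphere, which is precisely where the doubling regularity of the Patterson--Sullivan measures on horospheres is needed. Once this is in hand, the remaining content is the clean change of variables above, whose essential point is that the $A$ invariance of the BMS measure turns the $r \to \infty$ horospherical average into the $t \to \infty$ equidistribution already supplied by Theorem \ref{R3.2}.
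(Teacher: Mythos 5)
Your core computation for $\phi \in C_c(\Gamma \backslash G)$ is correct, and it is exactly the paper's intended argument: the scaling identities of subsection \ref{ss7.1} (metric scaling by $e^t$ and measure scaling by $e^{\delta t}$, which cancel in the normalized average) convert $M_r(\tilde\phi)(g)$ into $F_t(ga_{-t})$; right $A$ invariance of $m^{\operatorname{BMS}}$ equates $\|M_r(\tilde\phi)-c_\phi\|_{L^1}$ with $\|F_t - c_\phi\|_{L^1}$; Theorem \ref{R3.2} applied with $u = g_0$ and $E^+ = B^+(g_0, r_0)$ gives pointwise a.e.\ convergence (the needed positivity $\sigma_{H^+(g_0)}(B^+(g_0,r_0))>0$ holds a.e.\ because $\operatorname{supp}(\sigma_o) = \Lambda(\Gamma)$); and dominated convergence, using $|F_t| \leq \|\phi\|_\infty$ and $|m^{\operatorname{BMS}}|<\infty$, finishes. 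This fleshes out what the paper compresses into ``follows immediately from Theorem \ref{R3.2} and $A$ invariance of the BMS measure.''

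The genuine gap is in your reduction to $C_c$. You assert uniform $L^1(m^{\operatorname{BMS}})$ boundedness of $\{M_r\}$ from a doubling property of the leafwise Patterson--Sullivan measures $\sigma_{H^+}$ plus a Vitali covering argument, but neither ingredient is available under the standing hypotheses. The theorem is proved assuming only that $\Gamma$ is Zariski dense with $|m^{\operatorname{BMS}}|<\infty$: this includes geometrically finite groups with cusps (where doubling of $\sigma_{H^+}$ would require a global shadow lemma the paper never develops) and even geometrically infinite groups such as Peign\'e's examples cited in the introduction, where no such regularity is known. Separately, covering arguments are themselves delicate here: outside the real hyperbolic case $N^+$ is a nonabelian group of Heisenberg type, and Besicovitch-type covering properties can fail for the natural homogeneous metrics on horospheres. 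The paper avoids all of this by following Roblin's density argument built on a \emph{self-adjointness lemma} \cite[Chapter 6]{roblin_ergodicite_2003}: the local product structure of $m^{\operatorname{BMS}}$ with respect to $N^+ \times MAN^-$ and the symmetry of the leafwise relation $d_{H^+}(g,h)<r$ make the unnormalized ball averages $\phi \mapsto \int_{B^+(\cdot, r)} \tilde\phi \thinspace d\sigma_{H^+}$ self-adjoint on $L^2(m^{\operatorname{BMS}})$, and combining this duality with the trivial bound $\|M_r\phi\|_\infty \leq \|\phi\|_\infty$ upgrades the $C_c$ case to all bounded $L^1$ functions with no regularity of $\sigma$ whatsoever. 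Note the warning sign in your version: your uniform-boundedness route would prove the statement for all of $L^1$, making the theorem's boundedness hypothesis on $\phi$ superfluous --- that hypothesis is there precisely because the actual extension mechanism uses $\|M_r\phi\|_\infty \leq \|\phi\|_\infty$. You should replace your first paragraph by this duality argument (or restrict your claim to settings, such as convex cocompact groups, where $\delta$-Ahlfors regularity of $\sigma$ is actually proved).
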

\begin{proof} The proof of \cite[Theorem 6.1]{roblin_ergodicite_2003} applies with only very minor modifications. The case of $\phi$ compactly supported and smooth follows immediately from Theorem \ref{R3.2} and $A$ invariance of the BMS measure. The general case then follows from a density argument using a self adjointness lemma \cite[Chapter 6]{roblin_ergodicite_2003}.  \end{proof}
\begin{theorem}  Suppose that $\nu^-, \nu^+$ are non-atomic $\Gamma$ invariant conformal densities of dimensions $\delta_-, \delta_+$. Let $\delta$ be the critical exponent of $\Gamma$. For all pairs $\psi_i \in C_c(\Gamma \backslash G)$, one has
\[ \lim_{t \rightarrow \infty} e^{(\delta_+ - \delta)t}\int_{\Gamma \backslash G } \psi_1(g) \psi_2(ga_t) dm^{\nu^-, \nu^+}(g)  = \frac{m^{\nu^-,\sigma }(\psi_1)  m^{\sigma, \nu^+}(\psi_2) }{|m^{\operatorname{BMS}}|}.\]
\end{theorem}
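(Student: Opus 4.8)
The plan is to treat this as the arbitrary--conformal--density generalization of the Haar matrix--coefficient asymptotics (Theorem~\ref{T1.3}): exactly as for Theorems~\ref{R3.2} and \ref{leavesequidistribute}, the statement should follow from mixing of the frame flow (Theorem~\ref{target}) by transcribing Roblin's Chapter~III arguments into the frame bundle, replacing the $N^-\times A\times N^+$ product structure of $\mathrm{T}^1\tilde X$ by the $N^-\times A\times M\times N^+$ structure of $G$. First I would reduce to smooth, compactly supported $\psi_1,\psi_2$ adapted to the local product coordinates, using density of such functions in $C_c(\Gamma\backslash G)$ together with the $M$--invariance of $m^{\nu^-,\nu^+}$; this makes all the foliated integrals below well defined, and the non--atomicity of $\nu^\pm$ guarantees the leafwise measures carry no atoms, so the unfolding is legitimate.

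The heart of the argument is a disintegration of $m^{\nu^-,\nu^+}$ along the strong unstable foliation: by the construction in subsection~\ref{ss2.5} and the leafwise measures of subsection~\ref{ss7.1}, the conditional measures are the $\nu^+_{H^+}$ and the transverse measure is built from $\nu^-_{H^-}$ on the stable leaves, Lebesgue along $A$, and Haar on $M$. Writing $I(t)=\int_{\Gamma\backslash G}\psi_1(g)\,\psi_2(ga_t)\,dm^{\nu^-,\nu^+}(g)$ in these coordinates, I would use the leafwise scaling relation of subsection~\ref{ss7.1}, namely $e^{\delta_+ t}\int_{H^+(u)}\Phi(ga_t)\,d\nu^+_{H^+(u)}(g)=\int_{H^+(ua_t)}\Phi\,d\nu^+_{H^+(ua_t)}$, to convert the factor $\psi_2(\cdot\,a_t)$ into $\psi_2$ integrated over the forward--translated expanding horosphere. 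Applying the equidistribution of expanding horospheres (Theorem~\ref{R3.2}, now in the frame bundle) then replaces the leafwise average by a global one, and the mechanism simultaneously pairs $\psi_2$ with $m^{\sigma,\nu^+}$ and $\psi_1$ with $m^{\nu^-,\sigma}$: the appearance of $\sigma$ in the complementary slot of each is precisely the hallmark of the equidistribution, while each function retains its original density ($\nu^+$ in the unstable slot seen by the flowed $\psi_2$, $\nu^-$ in the stable slot seen by $\psi_1$). The normalization $1/|m^{\operatorname{BMS}}|$ is inherited from the right--hand side of Theorem~\ref{R3.2}, and the renormalizing factor $e^{(\delta_+-\delta)t}$ is exactly the mismatch between the dimension $\delta_+$ of the leaf measure $\nu^+_{H^+}$ and the Patterson--Sullivan exponent $\delta$ that governs the equidistribution (consistent with the case $\nu^\pm=\lambda$, $\delta_+=D$ of Theorem~\ref{T1.3}).

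I expect the main obstacle to be uniformity across the transverse direction in the presence of non--compactness: since $m^{\nu^-,\nu^+}$ is typically infinite, Theorem~\ref{R3.2} only supplies leafwise convergence for a fixed bounded $E^+$, and the real work is to push this convergence through the (possibly infinite) transverse integral. This requires a dominated--convergence or equicontinuity input together with control of mass escaping into the cusps, where I would invoke geometric finiteness, the disjoint--horoball decomposition recalled in Section~\ref{sec2}, the finiteness of $m^{\operatorname{BMS}}$, and Roblin's self--adjointness device already used in the proof of Theorem~\ref{leavesequidistribute}. Once the limit can be interchanged with the transverse integral, collecting the two factors yields the stated asymptotic.
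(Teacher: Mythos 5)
Your proposal follows the paper's proof exactly: the paper simply cites Roblin's Theorem 3.4 and notes that the argument transfers to the frame bundle, the key inputs being the local product structure of the generalized BMS measures, the horospherical equidistribution of Theorem \ref{R3.2}, and the expansion/contraction properties of $N^{\pm}$ --- precisely the three ingredients around which you organize your sketch, and in fact you supply more detail than the paper does. Two small caveats for the record: Theorem \ref{R3.2} is stated for the Patterson--Sullivan leafwise measure $\sigma_{H^+}$ and so cannot be applied verbatim to the $\nu^+_{H^+}$-leafwise integrals (since $\nu^+$ and $\sigma$ may be mutually singular, the bridge inside Roblin's argument is the near-constancy of flowed functions along contracted directions together with a transverse thickening, not a change of leafwise density in Theorem \ref{R3.2} itself), and your appeal to geometric finiteness and horoball decompositions is unnecessary --- the theorem assumes only $|m^{\operatorname{BMS}}|<\infty$ and mixing of frame flow, the compact supports of $\psi_1,\psi_2$ already localizing all transverse integrals.
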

\begin{remark} Theorem \ref{T1.3} is obtained as the special case $\nu^- = \nu^+ = \lambda$ (see the notation of Paragraph \ref{ss5.3}) \end{remark}
\begin{proof} The proof of \cite[Theorem 3.4]{roblin_ergodicite_2003} applies with only minor modifications. The key points are the local product structure of BMS measure, the equidistribution of horospheres given by Theorem \ref{R3.2}, and the expansion and contraction properties of $N^+$ and $N^-$. \end{proof}

\subsection{Measure classification for the horospherical groups.}  \label{ss7.3} As a consequence of Theorem \ref{leavesequidistribute} we obtain ergodicity of $m^{\operatorname{BR}}$ under $N^+$.
\begin{corollary} \label{c6.4} The Burger-Roblin measure $m^{\operatorname{BR}}$ is $N^+$ ergodic. 
\end{corollary}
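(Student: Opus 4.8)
The plan is to combine the equidistribution of unstable horospheres (Theorem \ref{leavesequidistribute}) with the local product structure of the two measures, exploiting the fact that $m^{\operatorname{BMS}}$ and $m^{\operatorname{BR}}$ are built from the \emph{same} Patterson--Sullivan density $\sigma$ in the contracting direction. Since $m^{\operatorname{BR}}$ is $\sigma$-finite (being Radon), ergodicity amounts to showing that every right $N^+$-invariant Borel set $B \subset \Gamma \backslash G$ is either $m^{\operatorname{BR}}$-null or $m^{\operatorname{BR}}$-conull. Put $\phi = \mathbf{1}_B$ and let $\tilde\phi$ be its ($\Gamma$-invariant) lift to $G$; this is bounded, hence lies in $L^1(\Gamma\backslash G, m^{\operatorname{BMS}})$, and it is constant along every unstable horosphere $H^+(g) = gN^+$.

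First I would pin down the value of $\phi$ on the support of $m^{\operatorname{BMS}}$. Because $\tilde\phi$ is constant on each leaf $H^+(g)$, the horospherical average in Theorem \ref{leavesequidistribute} collapses to $M_r(\tilde\phi)(g) = \tilde\phi(g)$ for every $r$ and $m^{\operatorname{BMS}}$-almost every $g$. Theorem \ref{leavesequidistribute} simultaneously asserts that $M_r(\tilde\phi) \to c := |m^{\operatorname{BMS}}|^{-1} m^{\operatorname{BMS}}(\phi)$ in $L^1(m^{\operatorname{BMS}})$, so $\tilde\phi = c$ holds $m^{\operatorname{BMS}}$-almost everywhere. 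As $\phi$ takes only the values $0$ and $1$ we have $c \in \{0,1\}$; replacing $B$ by its complement if necessary (which is again $N^+$-invariant), it suffices to treat the case $c = 0$, that is $m^{\operatorname{BMS}}(B) = 0$, and to deduce that then $m^{\operatorname{BR}}(B) = 0$.

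Second I would transfer this null statement to $m^{\operatorname{BR}}$ through the disintegration along the unstable foliation. On the open Bruhat cell one may write $g = n\,m\,a_s\,h$ with $n \in N^-$ and $h \in N^+$, and right multiplication by $N^+$ then fixes the transverse data $(n,m,a_s)$, equivalently $(g^-, \text{frame}, s)$; thus the unstable leaves are precisely the fibres of this transverse coordinate. By the defining formula of subsection \ref{ss2.5} (lifted to $G$ using Haar measure $dm$ on $M$), both measures disintegrate over the \emph{common} transverse measure $d\bar\mu = d\sigma_o(g^-)\,dm\,ds$, the leafwise conditional being the strictly positive measure $\sigma_{H^+}$ for $m^{\operatorname{BMS}}$ and $\lambda_{H^+}$ for $m^{\operatorname{BR}}$; the only factor coupling the leaf and transverse directions is $e^{\delta \beta_{g^-}(o,\cdot)}$, which is identical for the two measures because both use $\sigma$ (dimension $\delta$) in the $g^-$ variable. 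Since $B$ is a union of whole leaves, $\mathbf{1}_B$ is a function of the transverse coordinate alone; writing $\bar B$ for the corresponding transverse set, the vanishing of $m^{\operatorname{BMS}}(B) = \int_{\bar B}\big(\int_{H^+} e^{\delta\beta_{g^-}(o,\cdot)}\,d\sigma_{H^+}\big)\,d\bar\mu$ together with positivity of the inner integral forces $\bar\mu(\bar B) = 0$. The same transverse measure governs $m^{\operatorname{BR}}$, so $m^{\operatorname{BR}}(B) = \int_{\bar B}(\cdots)\,d\bar\mu = 0$ as well.

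The main obstacle is this second step, and specifically the insistence on working with \emph{entire} unstable horospheres rather than local plaques. A plaque lying over a region of the boundary disjoint from $\Lambda(\Gamma)$ carries no BMS mass but positive BR mass, so a naive chart-by-chart disintegration would lose information; what saves the argument is that every full horosphere $H^+(g)$ meets $\{g^+ \in \Lambda(\Gamma)\}$ in a set of positive $\sigma_{H^+}$-measure (as $\Lambda(\Gamma)$ is infinite), so the constant value of an $N^+$-invariant function on a leaf is always detected by $m^{\operatorname{BMS}}$. Making this rigorous requires care with the $\sigma$-finiteness of $m^{\operatorname{BR}}$ and of the leafwise measures, which is why I would set up the disintegration globally over $\Lambda(\Gamma) \times M \times \mathbb{R}$ rather than over relatively compact product boxes.
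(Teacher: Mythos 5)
Your proposal is correct and follows essentially the same route as the paper: apply Theorem \ref{leavesequidistribute} to the indicator of a (strictly) $N^+$-invariant set, note that the leafwise averages $M_r$ reduce to the function itself, and conclude that $m^{\operatorname{BMS}}(B)$ is zero or full before transferring the null statement to $m^{\operatorname{BR}}$. The only difference is one of exposition: the paper simply asserts the two transfer steps ($m^{\operatorname{BR}}(B)>0 \Rightarrow m^{\operatorname{BMS}}(B)>0$ and $m^{\operatorname{BMS}}(B^c)=0 \Rightarrow m^{\operatorname{BR}}(B^c)=0$), whereas you justify them explicitly via the common transverse factor $d\sigma_o(g^-)\,dm\,ds$ in the product structure of the two measures, which is a worthwhile detail to have spelled out.
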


\begin{proof}
Choose an $N^+$ invariant set $B\subset \Gamma \backslash G$ with $m^{\operatorname{BR}}(B) > 0$. Without loss of generality we may assume that $B$ is strictly $N^+$ invariant, i.e. that $B = BN^+$. Then $m^{\operatorname{BMS}}(B)>0$ as well. Let $\chi_B$ be the indicator function of $B$. We consider the function $M_r(\tilde \chi_B)(g)$  from Theorem \ref{leavesequidistribute}. For each $g$ this is either identically zero or identically one (independent of $r$). Thus $ \frac{m^{\operatorname{BMS}}(\chi_B)}{|m^{\operatorname{BMS}}|} $ is either zero or one. It can't be zero so it must be one. It follows that $m^{\operatorname{BMS}}(B^c)=0$. But then $m^{\operatorname{BR}}(B^c)=0$ as required. 
\end{proof}

\begin{corollary} \label{c6.5}
Let $\nu$ be an $N^+\negthickspace$ invariant Radon measure on $\Gamma \backslash G$ such that the integral over all right $M$ translates of $\nu$ is equal to $m^{\operatorname{BR}}$, i.e.
\[ \int_M m_* \nu(\psi) dm = m^{\operatorname{BR}}(\psi) \]
for all $\psi \in C_c(\Gamma \backslash G)$. Then $\nu = m^{\operatorname{BR}}$. 
\end{corollary}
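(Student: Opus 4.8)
The plan is to exploit the $N^+$-ergodicity of $m^{\operatorname{BR}}$ (Corollary \ref{c6.4}) together with the fact that $m^{\operatorname{BR}}$ is itself right $M$-invariant (by construction the generalized BMS measures are lifted $M$-invariantly from $\mathrm{T}^1\tilde X$), and to show that the hypothesis forces each right $M$-translate $m_*\nu$ to coincide with $m^{\operatorname{BR}}$. The first thing I would record is that each $m_*\nu$ is again $N^+$-invariant. This is because $M = Z_K(A)$ normalizes $N^+$: for $h \in N^+$ we have $mh = (mhm^{-1})m$ with $mhm^{-1} \in N^+$, so right translation by $h$ intertwines with the push-forward by $m$ via an element of $N^+$ under which $\nu$ is already invariant. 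Thus $\{m_*\nu : m \in M\}$ is a family of $N^+$-invariant Radon measures whose average against the Haar probability measure on $M$ is the $N^+$-ergodic measure $m^{\operatorname{BR}}$.

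Next, for a.e.\ $m$ I would compare $m_*\nu$ with $m^{\operatorname{BR}}$ via the Lebesgue decomposition $m_*\nu = f_m\, m^{\operatorname{BR}} + \sigma_m$, where $\sigma_m \perp m^{\operatorname{BR}}$. Since both $m_*\nu$ and $m^{\operatorname{BR}}$ are $N^+$-invariant, uniqueness of the decomposition makes $f_m\, m^{\operatorname{BR}}$ and $\sigma_m$ each $N^+$-invariant, and ergodicity of $m^{\operatorname{BR}}$ forces $f_m$ to be a constant $c_m$. Using $R_{m'} = R_{m^{-1}m'} \circ R_m$ one has the relation $m'_*\nu = (m^{-1}m')_*(m_*\nu)$; since $m^{\operatorname{BR}}$ is right $M$-invariant, $(m^{-1}m')_*\sigma_m$ is concentrated on a right $M$-translate of an $m^{\operatorname{BR}}$-null set and so is again singular to $m^{\operatorname{BR}}$. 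Comparing the two Lebesgue decompositions of $m'_*\nu$ then gives $c_{m'} = c_m$ (so $c_m \equiv c$) and $\sigma_{m'} = (m^{-1}m')_*\sigma_m$. Integrating $m_*\nu = c\, m^{\operatorname{BR}} + \sigma_m$ over $M$ and invoking $\int_M m_*\nu\, dm = m^{\operatorname{BR}}$ yields $\int_M \sigma_m\, dm = (1-c)\, m^{\operatorname{BR}}$.

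The crux is to show that the singular parts vanish, equivalently that $c = 1$. Here I would use that the $N^+$-action on $(\Gamma \backslash G, m^{\operatorname{BR}})$ is conservative and ergodic. The point is that $\sigma_m$ is an $N^+$-invariant measure concentrated on an $N^+$-invariant, $m^{\operatorname{BR}}$-null set $S_m$, while its $M$-average is a positive multiple of $m^{\operatorname{BR}}$; this would require the right $M$-translates of $S_m$ to sweep out a set of full $m^{\operatorname{BR}}$-measure. The equidistribution of horospheres (Theorem \ref{leavesequidistribute}), which is precisely what controls the conditionals of $m^{\operatorname{BR}}$ along $N^+$-orbits, is what obstructs such an $N^+$-invariant ``$M$-section'' and forces $\sigma_m = 0$ and $c = 1$. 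Granting this, $m_*\nu = m^{\operatorname{BR}}$ for a.e.\ $m$; choosing one such $m_0$ and applying $(m_0^{-1})_*$ to the identity $(m_0)_*\nu = m^{\operatorname{BR}}$, the right $M$-invariance of $m^{\operatorname{BR}}$ gives $\nu = (m_0^{-1})_* m^{\operatorname{BR}} = m^{\operatorname{BR}}$.

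I expect the last step — ruling out a nonzero singular $N^+$-invariant component whose $M$-average is absolutely continuous with respect to $m^{\operatorname{BR}}$ — to be the main obstacle. The purely soft part of the argument (that $f_m$ is a constant $c_m$ independent of $m$, and that the final normalization is forced) follows cleanly from ergodicity and $M$-invariance; the genuine content lies in the absolute continuity of each $m_*\nu$, which is exactly the place where the abstract ergodic-theoretic reasoning has to be supplemented by the geometric equidistribution results of subsection \ref{ss7.2} and the conservativity of the horospherical action.
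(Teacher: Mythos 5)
Your soft steps are fine and match what the paper implicitly uses: each $m_*\nu$ is again $N^+$-invariant because $M$ normalizes $N^+$, and once absolute continuity is known, ergodicity of $m^{\operatorname{BR}}$ (Corollary \ref{c6.4}) forces the Radon--Nikodym derivative to be constant. But there is a genuine gap at exactly the point you flag yourself: you never prove $\sigma_m = 0$. The appeal to Theorem \ref{leavesequidistribute} (``what obstructs such an $N^+$-invariant $M$-section'') is a gesture, not an argument --- the equidistribution theorem has already been consumed in proving ergodicity, and nothing in its statement directly controls a hypothetical singular $N^+$-invariant component of $m_*\nu$. Note that mere averaging cannot rule out singular parts: Lebesgue measure on $[0,1]$ is the average of the Dirac measures $\delta_t$, each singular to it, so the fact that $\int_M \sigma_m\, dm = (1-c)\, m^{\operatorname{BR}}$ is absolutely continuous proves nothing by itself. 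Some use of the invariance of each $\sigma_m$ together with ergodicity, beyond ``the average is $m^{\operatorname{BR}}$,'' is mandatory, and your proposal does not supply it.

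The paper's proof is a one-line extremality argument, and spelling it out both closes your gap and shows the decomposition scaffolding is unnecessary. For every Borel set $E \subseteq M$ one has $\nu_E := \int_E m_*\nu\, dm \leq m^{\operatorname{BR}}$, hence $\nu_E \ll m^{\operatorname{BR}}$ \emph{with no singular part to worry about}; the derivative $d\nu_E/dm^{\operatorname{BR}}$ is $N^+$-invariant (both measures are $N^+$-invariant), hence constant by Corollary \ref{c6.4}, so $\nu_E = c(E)\, m^{\operatorname{BR}}$. Since $E \mapsto c(E)$ is absolutely continuous with respect to Haar measure on $M$, disintegrating in $m$ over a countable determining family of relatively compact sets gives $m_*\nu = \phi(m)\, m^{\operatorname{BR}}$ for almost every $m$; right $M$-invariance of $m^{\operatorname{BR}}$ then shows $\nu = \phi(m)\, m^{\operatorname{BR}}$ for a.e.\ $m$, so $\phi$ is a.e.\ constant, and the normalization $\int_M \phi(m)\, dm = 1$ forces $\nu = m^{\operatorname{BR}}$. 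If you insist on your Lebesgue decomposition, the identical domination argument applied to $\int_E \sigma_m\, dm \leq (1-c)\, m^{\operatorname{BR}}$ yields $\sigma_m = \psi(m)\, m^{\operatorname{BR}}$ a.e., which combined with $\sigma_m \perp m^{\operatorname{BR}}$ gives $\sigma_m = 0$ and $c = 1$. In short: the decisive mechanism is that an ergodic invariant measure is extremal in the cone of $N^+$-invariant Radon measures, detected through the pointwise domination $\nu_E \leq m^{\operatorname{BR}}$ --- not equidistribution of horospheres, whose only role here is upstream, in establishing Corollary \ref{c6.4}.
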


\begin{proof}
Since $m^{\operatorname{BR}}$ is ergodic, and since the measures $m_* \nu$ are right $N^+$ invariant Radon measures we have that $m_*\nu = m^{\operatorname{BR}}$ almost surely. The result follows. 
\end{proof}

In fact $m^{\operatorname{BR}}$ is not the only possible $N^+$ invariant and ergodic measure on $\Gamma \backslash G$: there are others coming from closed $MN^+$ orbits, which we aim to understand next.  Suppose that $g \in G$ and that $\Gamma gMN^+ \subset G$ is closed; this occurs either when $g^-$ is outside the limit set, or when it is a parabolic fixed point of $\Gamma$. We would like to understand the orbit closure $\overline{\Gamma gN^+}$. Let $M_0(g)$ be the virtually abelian group (see \cite[Theorem 8.24]{raghunathan_discrete_1972})
\[ M_0(g) = \lbrace m \in M \mbox{ such that } mh \in g^{-1}\Gamma g \mbox{ for some } h \in N^+ \rbrace. \]

\begin{remark}
It is useful to note that 
\[ \mbox{stab}_\Gamma(g^-) = \mbox{stab}_\Gamma(gMN^+) =  \mbox{stab}_\Gamma(g\overline{M_0(g)}N^+ ) = \mbox{stab}_\Gamma(gM_0(g)N^+ ) . \]
\end{remark}

\begin{lemma} Suppose that $\Gamma$ is geometrically finite. If $g^-$ is a parabolic fixed point of $\Gamma$, or is outside the limit set $\Lambda(\Gamma)$, then the collection of closed sets
\[ \lbrace \gamma g\overline{M_0(g)}N^+: \gamma \in \Gamma / \mathrm{stab}_\Gamma(g^-) \rbrace \subset G \]
is locally finite. In particular, the union is closed, and 
\[ \overline{ \Gamma g N^+} = \Gamma g N^+ \overline{M_0(g)}.\] 
\end{lemma}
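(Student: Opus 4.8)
The plan is to reduce the whole statement to a fact about horoballs in $\tilde X$ and then feed in geometric finiteness. The starting observation is that, since $\overline{M_0(g)}\subset M\subset K$ fixes the base point $o$ and normalises $N^+$, each member of the family projects under $\pi:G\to\tilde X$ to a single horosphere. Indeed, for $\bar m\in\overline{M_0(g)}$ and $h\in N^+$ one has $g\bar m h o=g(\bar m h\bar m^{-1})(\bar m o)=g h' o$ with $h'\in N^+$, so
\[ \pi\bigl(\gamma g\overline{M_0(g)}N^+\bigr)=\gamma g N^+ o, \]
the horosphere based at $\gamma g^-=\gamma\xi$, where I abbreviate $\xi=g^-$. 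By the Remark preceding the lemma the family is genuinely indexed by $\Gamma/\mathrm{stab}_\Gamma(\xi)$, and distinct cosets give distinct base points $\gamma\xi$, hence distinct horospheres. Therefore, if $\gamma g\overline{M_0(g)}N^+$ meets a compact $C\subset G$, then $\gamma g N^+ o$ meets the compact set $\pi(C)$; so local finiteness in $G$ follows once I show that only finitely many of the horospheres $\{\gamma g N^+ o\}$ meet a fixed compact $K\subset\tilde X$.

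The crux, and what I expect to be the \emph{main obstacle}, is this horosphere statement. Let $\mathrm{HB}$ be the horoball bounded by $g N^+ o$ (based at $\xi$); I must bound the number of translates $\gamma\,\partial\mathrm{HB}$ meeting $K$. My plan is to produce a \emph{precisely invariant} horoball at $\xi$ — one whose $\Gamma$-translates are pairwise disjoint — and reduce to it. In the parabolic case this is exactly the disjoint $\Gamma$-equivariant family $\{B_\eta:\eta\in\Gamma\xi\}$ supplied by the disjoint horoball lemma recalled in Section \ref{sec2}. The case $\xi\notin\Lambda(\Gamma)$ is the delicate one, since no such lemma is available: here $\mathrm{stab}_\Gamma(\xi)$ is finite (an infinite-order stabiliser would force $\xi\in\Lambda(\Gamma)$), and $\Gamma$ acts properly discontinuously on $\tilde X\cup(\partial\tilde X\setminus\Lambda(\Gamma))$, so $\xi$ has a neighbourhood meeting only finitely many of its $\Gamma$-translates; shrinking a horoball at $\xi$ into this neighbourhood makes its translates pairwise disjoint. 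In either case, two horoballs based at the same point are nested, so $\mathrm{HB}$ and the precisely invariant horoball differ by a bounded geodesic shift $s$; a translate of $\partial\mathrm{HB}$ meeting $K$ then forces the boundary of the corresponding disjoint horoball to meet the compact $s$-neighbourhood $K_s$. It therefore remains to check the elementary fact that a pairwise disjoint family of horoballs can have only finitely many boundary horospheres meeting a fixed compact set: in the upper half-space model a horosphere passing through a point bounded away from $\partial\tilde X$ has Euclidean radius bounded below, and pairwise disjoint horoballs of radius bounded below that all pass through a bounded region must be finite in number.

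With local finiteness established, the remaining assertions are formal. Each set $\gamma g\overline{M_0(g)}N^+$ is closed, because $\overline{M_0(g)}$ is compact and $N^+$ is a closed subgroup, so $\overline{M_0(g)}N^+$ is closed in $G$ and left translation is a homeomorphism. A locally finite union of closed sets is closed, so
\[ \Gamma g N^+\overline{M_0(g)}=\bigcup_{\gamma\in\Gamma/\mathrm{stab}_\Gamma(\xi)}\gamma g\overline{M_0(g)}N^+ \]
is closed. Since it contains $\Gamma g N^+$, we get $\overline{\Gamma g N^+}\subseteq\Gamma g N^+\overline{M_0(g)}$. For the reverse inclusion, if $m\in M_0(g)$ then $gmhg^{-1}\in\Gamma$ for some $h\in N^+$, whence $gN^+m=gmN^+=(gmhg^{-1})\,gN^+\subseteq\Gamma g N^+$; thus $gN^+M_0(g)\subseteq\Gamma g N^+$, and taking closures together with $\Gamma$-invariance of $\overline{\Gamma g N^+}$ yields $\Gamma g N^+\overline{M_0(g)}\subseteq\overline{\Gamma g N^+}$. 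The two inclusions give the claimed identity.
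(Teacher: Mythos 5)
Your proposal is correct, and while it rests on the same key geometric input as the paper's proof --- a $\Gamma$-equivariant, pairwise disjoint family of horoballs based at the orbit of $g^-$ --- the way you extract local finiteness is genuinely different. The paper works directly in $G$: given $h$, it flows $h$ into the relevant horoball by some $a_t$ and pulls back to get an explicit open neighborhood $\pi^{-1}(B_{\gamma_0 g^-})a_{-t}$ of $h$ meeting the family only in the single coset of $\gamma_0$; points $h$ with $h^-\notin\Gamma g^-$ are handled using closedness of $\Gamma gMN^+$, which the paper asserted just before the lemma. You instead project to $\tilde X$ (using that $\overline{M_0(g)}\subset K$ fixes $o$ and normalizes $N^+$, so each member projects to the single horosphere $\gamma gN^+o$), compare the given horoball with the precisely invariant one via a fixed Busemann shift $s$, and finish with a packing count: a pairwise disjoint family of horoballs can have only finitely many boundary horospheres meeting a fixed compact set. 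Your half-space radius sketch for that count works (a cleaner variant: center a uniform hyperbolic ball inside each horoball near its intersection point with the compact set and apply a volume bound; also note at most one translate can be based at the point at infinity of the model, since the base points $\gamma g^-$ are distinct). Your route buys two things the paper's does not make explicit: it avoids any appeal to closedness of $\Gamma gMN^+$, and it actually constructs the precisely invariant horoball in the case $g^-\notin\Lambda(\Gamma)$ --- where the horoball lemma recalled in Section \ref{sec2} does not literally apply --- via proper discontinuity of $\Gamma$ on $\tilde X\cup(\partial\tilde X\setminus\Lambda(\Gamma))$ and triviality (by torsion-freeness) of $\mathrm{stab}_\Gamma(g^-)$; the paper simply chooses the disjoint family in both cases without comment. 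The paper's construction, in exchange, yields neighborhoods directly in $G$ without the projection and packing detour. Your closing formal steps (closedness of $\overline{M_0(g)}N^+$ as a product of a compact set and a closed subgroup, closedness of locally finite unions, and $gN^+M_0(g)\subseteq\Gamma gN^+$ via $gmhg^{-1}\in\Gamma$) match the paper's, with rather more detail on the reverse inclusion, which the paper compresses into the identity $\Gamma gN^+=\Gamma gM_0(g)N^+$.
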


 \begin{proof}
We choose a disjoint and $\Gamma$ equivariant collection of  open horoballs $\lbrace B_\xi: \xi \in \Gamma g^- \rbrace $ based at the translates of $g^-$. We may further assume that the closures $\overline {B_\xi}$ are also pairwise disjoint. 

Now consider $h \in G$. If $h^- \notin \Gamma g^-$, then $h$ belongs to the complement $  G \setminus  \Gamma gMN^+$, which gives an open neighborhood of $h$ disjoint from the family $\lbrace \gamma g\overline{M_0(g)}N^+: \gamma \in \Gamma \rbrace  $.

 Conversely, if $h^- = \gamma_0 g^-$, then the projection to $\tilde X$ given by  $\pi(ha_t)$ belongs to  $B_{\gamma_0 g^-}$ for some $t$. Decreasing $t$ if necessary, we may also assume that $\pi(ga_tMN^+) \subset B_{g^-}$. But then $\pi^{-1} (B_{\gamma_0 g^-})a_{-t}$ is an open neighborhood of $h$ in $G$, and
\[ \pi^{-1}( B_{\gamma_0 g^-}) a_{-t} \cap \gamma  g\overline{M_0(g)}N^+ \neq \emptyset \]
only when $\gamma \sim \gamma_0$ in $\Gamma /  \mbox{stab}_\Gamma(g^-)$. It follows immediately that $\Gamma g \overline{M_0(g)}N^+$ is closed. On the other hand,
\[ \Gamma gN^+ = \Gamma g M_0(g) N^+, \]
so
\[ \overline {\Gamma gN^+} = \Gamma g\overline{ M_0(g)} N^+.\]
 \end{proof}
With this in mind it is natural to define the measure $\rho_g$ as the unique (up to proportionality) $\overline{ M_0(g)} N^+$ invariant measure supported on $\Gamma \backslash \overline {\Gamma gN^+} $. This measure is right invariant by $N^+$ and locally finite.
\begin{lemma} Suppose that $\Gamma < G$ is geometrically finite, and let $g \in G$ such that $g^-$ is not a radial limit point of $\Gamma$. The measure $\rho_g$ is ergodic for the $N^+$ action on $\Gamma \backslash G$.
\end{lemma}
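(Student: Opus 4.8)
The plan is to identify the orbit $L(g)=\Gamma\backslash\overline{\Gamma g N^+}$ with a homogeneous space of the group $Q:=N^+\overline{M_0(g)}$ and to reduce the $N^+$-ergodicity of $\rho_g$ to the ergodicity of a dense subgroup acting by translation on a compact group. By the previous lemma $\overline{\Gamma g N^+}=\Gamma g N^+\overline{M_0(g)}=\Gamma g Q$ is closed, so setting $\Delta:=g^{-1}\Gamma g\cap Q$ the orbit map $\Delta q\mapsto\Gamma g q$ is a homeomorphism $\Delta\backslash Q\xrightarrow{\sim}L(g)$ (injectivity is immediate, and properness follows from the local finiteness established above). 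Since $\overline{M_0(g)}$ is a closed, hence compact, subgroup of $M$ which normalizes $N^+$ and meets it trivially, $Q=N^+\rtimes\overline{M_0(g)}$ is a genuine semidirect product with $N^+\triangleleft Q$ and quotient homomorphism $\pi:Q\to Q/N^+\cong\overline{M_0(g)}$. Under the identification, $\rho_g$ corresponds to the right $Q$-invariant Radon measure on $\Delta\backslash Q$ induced by Haar measure of $Q$, and the image $\pi(\Delta)$ is precisely $M_0(g)$, which is dense in $\overline{M_0(g)}$ by definition.

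If $g^-\notin\Lambda(\Gamma)$ the argument collapses: any nontrivial $\gamma\in\mathrm{stab}_\Gamma(g^-)$ would be loxodromic or parabolic and hence fix a point of $\Lambda(\Gamma)$, so torsion-freeness forces $\mathrm{stab}_\Gamma(g^-)=\{e\}$, whence $M_0(g)$ and $\Delta$ are trivial and $N^+$ acts simply transitively on $L(g)\cong N^+$. So I would assume $g^-$ is a bounded parabolic point, where $M_0(g)$ may be nontrivial. The first real step is to show that a right $N^+$-invariant measurable set $B\subseteq\Delta\backslash Q$ descends to $\overline{M_0(g)}$: lifting $B$ to a strictly right $N^+$-invariant, left $\Delta$-invariant set $\tilde B\subseteq Q$ and using the fibration $\pi:Q\to\overline{M_0(g)}$ together with Fubini over the $N^+$-fibers (on which $\rho_g$ restricts to Haar), one produces a measurable $\bar B\subseteq\overline{M_0(g)}$ with $\tilde B=\pi^{-1}(\bar B)$ up to $\rho_g$-null sets. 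Because $\pi$ is a homomorphism with $\pi(\Delta)=M_0(g)$, left $\Delta$-invariance of $\tilde B$ translates into invariance of $\bar B$ under left translation by $M_0(g)$.

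To conclude I would invoke the standard fact that a dense subgroup of a compact group acts ergodically on it by translation with respect to Haar measure: if $f\in L^2(\overline{M_0(g)})$ is invariant under left translation by the dense set $M_0(g)$, then continuity of $h\mapsto L_hf$ forces $f$ to be fully left invariant, hence constant. Since $\rho_g$ is right $\overline{M_0(g)}$-invariant, its pushforward along $\pi$ is a nonzero $\overline{M_0(g)}$-invariant measure, hence lies in the Haar class; thus the $M_0(g)$-invariant set $\bar B$ is null or conull, and therefore so is $B$. This yields $N^+$-ergodicity of $\rho_g$. I expect the main obstacle to be the measurable descent in the middle step: because $N^+$ is noncompact and $\rho_g$ is in general an infinite Radon measure, one must justify the disintegration of $\rho_g$ over the compact base $\overline{M_0(g)}$ with $N^+$-invariant conditionals and verify that the resulting base measure genuinely lies in the Haar class. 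This is exactly where geometric finiteness is used — both through the local finiteness and properness underlying the identification $L(g)\cong\Delta\backslash Q$, and through the bounded-parabolic structure of $\mathrm{stab}_\Gamma(g^-)$, which controls the geometry of the $N^+$-fibers.
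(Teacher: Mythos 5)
Your proposal is correct and is essentially the paper's own argument: the paper likewise identifies $\rho_g$ locally with Haar measure on $\overline{M_0(g)}N^+$ and observes that an invariant set is invariant under left translation by $M_0(g)\times N^+$, so everything reduces to the standard fact that a dense subgroup acts ergodically on a compact group with Haar measure. The disintegration issue you flag as the main obstacle is in fact a non-issue (and the paper silently skips it): after correcting to a \emph{strictly} right $N^+$-invariant lift, the set is literally a union of $\pi$-fibers, so the descent to $\overline{M_0(g)}$ requires no Fubini argument, and your ``pushforward of $\rho_g$'' should be phrased as the base measure of the product structure (the naive pushforward of an infinite measure along $\pi$ is not Radon).
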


\begin{proof} Denote by $\tilde \rho_g$ the lift of $\rho_g$ to $G$. 
Suppose that $B \subset G$ has positive measure for $\tilde \rho_g$, and is left $\Gamma$ invariant and right $N^+$ invariant. Without loss of generality, assume that $B \subset \Gamma g\overline{M_0(g)}N^+$. Then $ B\cap g\overline{M_0(g)}N^+$ also has positive measure for $\tilde \rho_g$. We observe that $B$ is invariant for the action of $M_0(g) \times N^+$ on ${g\overline{M_0(g)}N ^+\sim  \overline{M_0(g)} N^+}$ by left translation. But dense subgroups are ergodic for the Haar measure, so 
\[ B\cap g\overline{M_0(g)}N^+ \subset g\overline{M_0(g)}N^+ \]
has full $\tilde \rho_g$ measure and $B \subset G$ has full $\tilde \rho_g$ measure. 

\end{proof}

\begin{lemma} \label{l7.8} Suppose that $\Gamma < G$ is geometrically finite and that $g \in G$ such that $g^-$ is not a radial limit point of $\Gamma$. Any measure on $\Gamma \backslash G$ that is invariant and ergodic for the right $N^+$ action and is supported on a single closed $MN^+$ orbit $\Gamma \backslash \Gamma gMN^+$ is actually supported on a single orbit $\Gamma \backslash \Gamma g'\overline{M_0(g')}N^+$ for some $g' \in gM$. 
\end{lemma}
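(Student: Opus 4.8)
The plan is to realize the closed orbit as a homogeneous space for $MN^+$ and to exploit the fact that $N^+$ is normal in $MN^+$, so that the right $N^+$-action is fibered over a compact quotient of $M$. Write $\Delta = g^{-1}\Gamma g \cap MN^+$. Since $\Gamma g MN^+$ is closed by hypothesis, the orbit map $h \mapsto \Gamma g h$ descends to a homeomorphism $\Delta \backslash MN^+ \xrightarrow{\sim} \Gamma\backslash \Gamma g MN^+$ intertwining the two right $N^+$-actions, and $\nu$ pulls back to an $N^+$-invariant ergodic Radon measure on $\Delta\backslash MN^+$. Because $M$ normalizes $N^+$, the projection $p : MN^+ = M\ltimes N^+ \to M$ is a homomorphism with $p(\Delta) = M_0(g)$, and right multiplication by $N^+$ preserves the fibers of $p$.

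First I would build an $N^+$-invariant classifying map into a \emph{compact} target. Passing to the closure $H := \overline{M_0(g)}$, which is a compact subgroup of $M$ since $M$ is compact, the composite $MN^+ \xrightarrow{p} M \to H\backslash M$ is left $\Delta$-invariant (as $p(\Delta) = M_0(g) \subset H$) and constant on right $N^+$-orbits, so it descends to a continuous map $q : \Delta\backslash MN^+ \to H\backslash M$ invariant under the right $N^+$-action. The crucial point is to quotient by the closure $H$ rather than by $M_0(g)$ itself: $M_0(g)$ need not be closed, whereas $H\backslash M$ is a compact metrizable, hence second countable, space, which is exactly what the ergodicity argument requires.

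Next, since $\nu$ is $N^+$-ergodic, the invariant Borel map $q$ is $\nu$-almost everywhere equal to a single point $H m_0 \in H\backslash M$. The only subtlety is that $\nu$ may be infinite: for a $\sigma$-finite ergodic measure and an invariant Borel map into a second countable space one still obtains almost-everywhere constancy, because the support of $q_*\nu$ cannot contain two distinct points, their disjoint preimages being $N^+$-invariant and hence each null or conull, and two disjoint conull sets cannot coexist. Thus $\nu$ is supported on the single fiber $q^{-1}(H m_0)$. Finally I would identify this fiber: unwinding the identifications it equals $\Gamma\backslash \Gamma g\,\overline{M_0(g)}\,m_0 N^+$, and setting $g' = g m_0 \in gM$ and using the elementary relation $M_0(g m_0) = m_0^{-1} M_0(g) m_0$ together with the normalization $m_0 N^+ = N^+ m_0$ rewrites this as $\Gamma\backslash \Gamma g' \overline{M_0(g')} N^+$, which by the preceding lemma is precisely the closed orbit $\Gamma\backslash\overline{\Gamma g' N^+}$. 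I expect the main obstacle to be the careful handling of the possibly infinite measure in the ergodicity-implies-constancy step, and the bookkeeping needed to check that quotienting by the closure $H$ keeps $q$ well defined while producing a Hausdorff target and that the fiber of $q$ matches the stated orbit.
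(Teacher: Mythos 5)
Your proposal is correct, but it runs along a genuinely different track from the paper's proof, which is a three-line shrinking-neighborhood argument: the paper picks $g'$ directly in the support of $\nu$, observes that for each $\epsilon>0$ the set $\Gamma\backslash\Gamma g'\overline{M_0(g')}M_\epsilon N^+$ is right $N^+$ invariant and has positive $\nu$-measure (it contains a relative neighborhood of $g'$ in the orbit), hence is conull by ergodicity, and then lets $\epsilon\to 0$ to conclude that $\Gamma\backslash\Gamma g'\overline{M_0(g')}N^+$ is conull; the passage to the limit tacitly uses the closedness and local finiteness of the translates $\gamma g'\overline{M_0(g')}N^+$ established in the preceding lemma. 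You instead build an honest factor map: identify the closed orbit with $\Delta\backslash MN^+$ for $\Delta=g^{-1}\Gamma g\cap MN^+$, project along the normal subgroup $N^+$ to the compact homogeneous space $\overline{M_0(g)}\backslash M$ (noting $p(\Delta)=M_0(g)$), invoke almost-everywhere constancy of an invariant Borel map under an ergodic, possibly infinite, Radon measure, and then read off $g'=gm_0$ from the constant value via $M_0(gm_0)=m_0^{-1}M_0(g)m_0$ and the normality of $N^+$ in $MN^+$. All of these steps check out, including the fiber identification $q^{-1}(\overline{M_0(g)}m_0)=\Gamma\backslash\Gamma g\overline{M_0(g)}m_0N^+=\Gamma\backslash\Gamma g'\overline{M_0(g')}N^+$. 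Your route is longer but makes explicit two points the paper leaves implicit: the null/conull dichotomy for a measure that need not be finite (both proofs rely on it, since $\nu$ is only Radon), and the need to quotient by the closure $\overline{M_0(g)}$ rather than $M_0(g)$ itself so that the target is Hausdorff and second countable. The paper's version buys brevity and locates $g'$ in $\mathrm{supp}(\nu)$ at the outset, at the cost of a slightly glossed limit $\bigcap_\epsilon\Gamma g'\overline{M_0(g')}M_\epsilon N^+=\Gamma g'\overline{M_0(g')}N^+$, which is where the local finiteness from the previous lemma does its work; your version trades that for the standard bookkeeping of the homeomorphism $\Delta\backslash MN^+\simeq\Gamma\backslash\Gamma gMN^+$ for a closed orbit.
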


\begin{proof} Let $\nu$ be such a measure and pick $g'$ in the support of $\nu$. For $\epsilon > 0$ let $M_\epsilon$ be an $\epsilon$ neighborhood of the identity in $M$. The set $\Gamma \backslash \Gamma g' \overline{M_0(g')}M_\epsilon N^+ $ is  $N^+$ invariant, and has positive $\nu$-measure, so has full $\nu$-measure. It follows that $\Gamma \backslash \Gamma g' \overline{M_0(g')} N^+ $ has full $\nu$-measure as expected. \end{proof}

\begin{lemma} \label{l7.9}
 Suppose that $\Gamma < G$ is geometrically finite, and that $g\in G$ such that $g^-$ is not a radial limit point of $\Gamma$. Any Radon measure $\nu$ on $\Gamma \backslash G$ that is invariant for the right $N^+$ action, is supported on $\Gamma \backslash \Gamma g\overline{M_0(g)}N^+$, and satisfies 
\[ \int_M m_* \nu = \int_M m_*\rho_g \]
is equal to $\rho_g$. 
\end{lemma}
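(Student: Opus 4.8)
The plan is to follow the proof of Corollary \ref{c6.5}, with the $M$-average and the ergodicity of $m^{\operatorname{BR}}$ there replaced by an average over $\overline{M_0(g)}$ and the $N^+$-ergodicity of $\rho_g$ proved in the preceding lemma. Two facts are used throughout: $\rho_g$ is, by its construction, invariant under right translation by $\overline{M_0(g)}$, and the orbit $Y_g := \Gamma\backslash\Gamma g\overline{M_0(g)}N^+$ carries a unique (up to scale) $\overline{M_0(g)}N^+$-invariant Radon measure, namely $\rho_g$.

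First I would average $\nu$ over the compact group $\overline{M_0(g)}$. Writing $m_*$ for right translation by $m$ as in the statement, set
\[ \nu' := \int_{\overline{M_0(g)}} \mu_*\nu \, d\mu, \]
the integral against the Haar probability measure on $\overline{M_0(g)}$. Since each $\mu$ normalizes $N^+$, every $\mu_*\nu$ is again $N^+$-invariant and supported on $Y_g$, so $\nu'$ is an $\overline{M_0(g)}N^+$-invariant Radon measure supported on $Y_g$. By the uniqueness recalled above, $\nu' = c\,\rho_g$ for some constant $c \geq 0$.

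Next I would determine $c$ from the hypothesis. Using invariance of Haar measure on $M$ together with $\overline{M_0(g)}\subset M$, a short Fubini computation gives $\int_M m_*\nu'\,dm = \int_M m_*\nu\,dm$; combined with the assumption $\int_M m_*\nu\,dm = \int_M m_*\rho_g\,dm$ and with $\nu' = c\,\rho_g$ this yields
\[ c\int_M m_*\rho_g\,dm = \int_M m_*\rho_g\,dm. \]
Since $\int_M m_*\rho_g\,dm \neq 0$, we conclude $c = 1$, i.e. $\int_{\overline{M_0(g)}}\mu_*\nu\,d\mu = \rho_g$.

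Finally I would appeal to ergodicity. This last identity exhibits the $N^+$-ergodic measure $\rho_g$ as an average of the $N^+$-invariant Radon measures $\mu_*\nu$; exactly as in Corollary \ref{c6.5}, extremality of an ergodic measure forces $\mu_*\nu = \rho_g$ for almost every $\mu \in \overline{M_0(g)}$. Fixing any such $\mu$ and applying $(\mu^{-1})_*$ gives $\nu = (\mu^{-1})_*\rho_g = \rho_g$, the last equality because $\mu^{-1}\in\overline{M_0(g)}$ and $\rho_g$ is $\overline{M_0(g)}$-invariant. The only genuinely delicate step is this final extremality argument, since $\rho_g$ is in general an infinite Radon measure; but it is exactly the device already employed in Corollary \ref{c6.5} and is licensed by the $N^+$-ergodicity established immediately above, so it introduces no new difficulty.
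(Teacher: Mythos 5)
Your proof is correct and follows essentially the same route as the paper: average $\nu$ over the compact group $\overline{M_0(g)}$, identify the average with $\rho_g$, and then use the $N^+$-ergodicity of $\rho_g$ (the same extremality device as in Corollary \ref{c6.5}) to get $\mu_*\nu = \rho_g$ for almost every $\mu \in \overline{M_0(g)}$, whence $\nu = \rho_g$ by the $\overline{M_0(g)}$-invariance of $\rho_g$. The only (cosmetic) difference is how the key identity $\int_{\overline{M_0(g)}} \mu_*\nu \, d\mu = \rho_g$ is reached: the paper restricts the assumed $M$-average identity to the common support $\Gamma \backslash \Gamma g\overline{M_0(g)}N^+$, while you invoke the uniqueness (up to scale) of the $\overline{M_0(g)}N^+$-invariant measure on that orbit and then pin down the constant by a Haar--Fubini normalization, which is equally valid and, if anything, slightly more explicit.
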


\begin{proof}
Since 
\[ \int_M m_* \nu = \int_M m_*\rho_g \]
and since both $\nu$ and $\rho_g$ are supported on $\Gamma \backslash \Gamma g\overline{M_0(g)}N^+$ we see that
\[ \int_{\overline{M_0(g)}} m_* \nu = \int_{\overline{M_0(g)}} m_* \rho_g=  \rho_g. \]
Since $\rho_g$ is  ergodic it follows that $m_* \nu = \rho_g$ for almost every $m \in \overline{M_0(g)}$. Thus $\nu =\rho_g$ as expected. 
\end{proof}

\begin{theorem} \label{classifyfNmeasures} Suppose that $\Gamma < G$ is geometrically finite. Up to proportionality, the only measures on $\Gamma \backslash G$ that are invariant and ergodic for the right $N^+$ action are 
\begin{itemize}
\item{ the $M$ invariant lift $m^{\operatorname{BR}}\negthickspace$ of the Burger-Roblin measure, and }
\item{ measures of the form $\rho_g$ where $g^-$ is not in the radial limit set.}
\end{itemize}
\end{theorem}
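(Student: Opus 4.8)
The plan is to remove the structure group $M$ from Roblin's classification of $N^+M$-invariant ($=NM$-invariant) ergodic measures by averaging over the compact group $M$. Let $\nu$ be an $N^+$-invariant and ergodic Radon measure on $\Gamma\backslash G$. Since $M$ is compact and normalizes $N^+$, the $M$-average $\bar\nu := \int_M m_*\nu\, dm$ is again a Radon measure, and it is invariant under $M$ (by construction) and under $N^+$ (because each $m_*\nu$ is $mN^+m^{-1}=N^+$ invariant); hence $\bar\nu$ is $N^+M$-invariant. The first step is to observe that $\bar\nu$ is in fact $N^+M$-\emph{ergodic}: if $B$ is an $N^+M$-invariant Borel set, then its right $M$-invariance gives $\bar\nu(B)=\int_M \nu(Bm^{-1})\,dm=\nu(B)$, and by $N^+$-ergodicity of $\nu$ we have $\nu(B)=0$ or $\nu(B^c)=0$, so $\bar\nu(B)=0$ or $\bar\nu(B^c)=0$.

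Now I would invoke Roblin's measure classification for $N^+M=NM$ \cite{roblin_ergodicite_2003}: every $N^+M$-invariant ergodic Radon measure is, up to proportionality, either the Burger--Roblin measure $m^{\operatorname{BR}}$ or the unique $N^+M$-invariant measure supported on a closed orbit $\Gamma\backslash\Gamma g MN^+$, the latter occurring exactly when $g^-$ is parabolic or lies outside $\Lambda(\Gamma)$. Applying this to $\bar\nu$ splits the argument into two cases. In the first case $\bar\nu$ is proportional to $m^{\operatorname{BR}}$; rescaling $\nu$ we may assume $\int_M m_*\nu\,dm=m^{\operatorname{BR}}$, and then Corollary \ref{c6.5} yields $\nu=m^{\operatorname{BR}}$ directly. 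In the second case $\bar\nu$ is proportional to the invariant measure on some closed orbit $\Gamma\backslash\Gamma g MN^+$ with $g^-$ non-radial; since $\int_M m_*\rho_g\,dm$ is precisely that invariant measure, we have $\bar\nu\propto\int_M m_*\rho_g\,dm$. Because $\Gamma g MN^+$ is right $M$-invariant, the same averaging identity $\bar\nu(O^c)=\nu(O^c)$ for $O=\Gamma\backslash\Gamma gMN^+$ forces $\nu$ to be supported on $O$ as well. Lemma \ref{l7.8} then refines the support of the ergodic measure $\nu$ to a single orbit $\Gamma\backslash\Gamma g'\overline{M_0(g')}N^+$ with $g'\in gM$, and after rescaling so that $\int_M m_*\nu=\int_M m_*\rho_{g'}$, Lemma \ref{l7.9} identifies $\nu=\rho_{g'}$.

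The step I expect to be the main obstacle is the second case: one must match the abstract $N^+M$-invariant measure produced by Roblin's theorem with the explicit average $\int_M m_*\rho_g$, which rests on the uniqueness up to scale of the $N^+M$-invariant measure on a closed orbit, and then one must carefully normalize in order to meet the exact hypothesis $\int_M m_*\nu=\int_M m_*\rho_{g'}$ of Lemma \ref{l7.9}. All of this leans on geometric finiteness of $\Gamma$, which (via the preceding lemmas) guarantees that $\rho_g$ exists, is $N^+$-ergodic, and that there are only finitely many $\Gamma$-orbits of parabolic points; the remaining verifications, including local finiteness of $\bar\nu$ and the compatibility of the support arguments with the normalizing relations of Corollary \ref{c6.5} and Lemmas \ref{l7.8} and \ref{l7.9}, are routine bookkeeping.
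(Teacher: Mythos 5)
Your proposal is correct and follows essentially the same route as the paper's own proof: average $\nu$ over $M$, invoke Roblin's classification of $MN^+$-invariant ergodic measures for the averaged measure, and conclude via Corollary \ref{c6.5} in the Burger--Roblin case and via Lemmas \ref{l7.8} and \ref{l7.9} in the closed-orbit case. You additionally spell out two details the paper leaves implicit (the $N^+M$-ergodicity of $\int_M m_*\nu\,dm$ and the transfer of support from $\bar\nu$ to $\nu$ using right $M$-invariance of the closed orbit), and both verifications are correct.
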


\begin{proof}
Let $\nu$ be such a measure and consider the $MN^+$ invariant and ergodic measure $ \hat \nu =  \int_M m_* \nu $ obtained by averaging over $M$ orbits. By Roblin (see \cite{roblin_ergodicite_2003}) we have either that $\hat \nu $ is proportional to $m^{\operatorname{BR}}$, in which case $\nu$ is proportional to $m^{\operatorname{BR}}$ (Corollary \ref{c6.5}), or that $\hat \nu$ is the homogeneous $MN^+$ invariant measure on some closed $MN^+$ orbit $\Gamma \backslash \Gamma gMN^+$. In this second case Lemma \ref{l7.8} tells us that $\nu$ is supported on some closed orbit $ \Gamma \backslash \Gamma g' \overline{M_0(g')} N^+$, and (see \cite{roblin_ergodicite_2003} again) that
\[ \hat \nu = \int_M m_* \nu = \int_M m_*\rho_{g'}\]
up to proportionality. Then Lemma \ref{l7.9} says that $\nu = \rho_{g'}$ up to proportionality.  \end{proof}

\subsection{Non-dense transitivity groups.} \label{ss7.4}
It is natural to ask what happens if the transitivity groups are not dense in $MA$. We're now in a position to provide a partial answer. We will construct a closed $\Gamma$ and $N^+$ invariant subset of $G$. For $g$ in the support of the BMS measure, we define a subset $\mathcal{D}(g) \subset \operatorname{supp}(m^{\operatorname{BMS}})$ by
\[ \mathcal{D}(g) = \lbrace g' \in \mathrm{supp}(m^{\operatorname{BMS}}): g' \mbox{ is reachable from } g \rbrace; \]
see the definition of reachability at the beginning of Section \ref{sec3}. This is, of course, strongly related to the definition of the transitivity group. One consequence of that relationship is the following lemma.
\begin{lemma}
For any $g \in \mathrm{supp}(m^{\operatorname{BMS}})$ we have
\[ \mathcal{D}(g) \cap gMAN^-N^+ =g \mathcal{H}_\Gamma(g) N^-N^+ \cap \operatorname{supp}(m^{\operatorname{BMS}}).\]
\end{lemma}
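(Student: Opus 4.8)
The plan is to prove the asserted set equality by double inclusion. Note first that both sides are contained in $\operatorname{supp}(m^{\operatorname{BMS}})$ — the left side because $\mathcal{D}(g)\subset\operatorname{supp}(m^{\operatorname{BMS}})$ by definition, the right side explicitly — so it suffices to match up the reachability conditions. The essential mechanism throughout is that the endpoint maps interact cleanly with the right $MAN^-N^+$ factors: recall from subsection \ref{properties of U and N} that $MA\subset\operatorname{stab}(v_0^+)\cap\operatorname{stab}(v_0^-)$, that $N^-\subset P=\operatorname{stab}(v_0^+)$, and that $N^+\subset N^+MA=\operatorname{stab}(v_0^-)$. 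Hence for $g'=gmanh$ with $m\in M$, $a\in A$, $n\in N^-$, $h\in N^+$ one may read off endpoints factor by factor, obtaining $(gman)^+=(gma)^+=g^+$, $(gman)^-=(g')^-$, and $(gma)^\pm=g^\pm$. Combined with the $\Gamma$-invariance of $\operatorname{supp}(m^{\operatorname{BMS}})=\{x:x^+,x^-\in\Lambda(\Gamma)\}$, this is what lets me certify that the various intermediate products lie in the support.

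For the inclusion $\supseteq$, I would start from $g'=gmanh\in\operatorname{supp}(m^{\operatorname{BMS}})$ with $(m,a)\in\mathcal{H}_\Gamma(g)$. Fixing a reachability sequence $h_1,\dots,h_k$ and $\gamma\in\Gamma$ for $gma$, I would append $n$ and then $h$ to form the candidate sequence $h_1,\dots,h_k,n,h$ (still paired with $\gamma$), which terminates at $\gamma g h_1\cdots h_k\, n\, h=\gamma(gma)nh=g'$. The only thing to check is admissibility of the two new intermediate points, namely $g h_1\cdots h_k\, n=\gamma^{-1}gman$ and $g h_1\cdots h_k\, n\, h=\gamma^{-1}g'$. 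By the endpoint computation above $gman\in\operatorname{supp}(m^{\operatorname{BMS}})$, so both lie in the support by $\Gamma$-invariance; hence $g'$ is reachable from $g$ and $g'\in\mathcal{D}(g)\cap gMAN^-N^+$.

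For the inclusion $\subseteq$, I would take $g'\in\mathcal{D}(g)$ together with a chosen decomposition $g'=gmanh\in gMAN^-N^+$, and run the previous argument in reverse. Starting from a reachability sequence for $g'$, I append $h^{-1}\in N^+$ and then $n^{-1}\in N^-$, peeling back through $gman$ to $gma$; admissibility of the two new intermediate points reduces once more to $gman,gma\in\operatorname{supp}(m^{\operatorname{BMS}})$, which is precisely the endpoint computation. This shows $gma$ is reachable from $g$, i.e. $(m,a)\in\mathcal{H}_\Gamma(g)$, so $g'\in g\mathcal{H}_\Gamma(g)N^-N^+$ (and $g'\in\operatorname{supp}(m^{\operatorname{BMS}})$ since $g'\in\mathcal{D}(g)$).

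I expect the only genuine content — and the step most worth writing carefully — to be the verification that the intermediate elements $gman$ and $gma$ land in $\operatorname{supp}(m^{\operatorname{BMS}})$; the reachability bookkeeping (appending and deleting horospherical factors, tracking $\gamma$) is then purely formal, essentially a replay of the inverse/product computations in Lemma \ref{l3.1}. I would also remark that the argument uses only the \emph{existence} of a decomposition $g'=gmanh$, not its uniqueness, so no appeal to injectivity of the product map onto $gMAN^-N^+$ is required.
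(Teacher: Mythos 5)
Your proposal is correct and takes essentially the same route as the paper: for the inclusion $\subseteq$ the paper appends exactly your sequence $h^{-1}, n^{-1}$ (with the same $\gamma$) to a reachability sequence for $g' = gmanh$, peeling back to $gma$, and it simply declares the reverse inclusion clear. Your endpoint computations certifying $gman, gma \in \operatorname{supp}(m^{\operatorname{BMS}})$ only make explicit the admissibility check that the paper leaves implicit, so the two arguments coincide in substance.
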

\begin{proof}
It is clear that 
\[ g \mathcal{H}_\Gamma(g) N^-N^+ \cap \operatorname{supp}(m^{\operatorname{BMS}}) \subset \mathcal{D}(g) \cap gMAN^-N^+.\]
On the other hand, suppose that $g' \in \mathcal{D}(g) \cap gMAN^-N^+$. Choose  ${a \in A},\\ {m\in M}, n \in N^-$ and $h \in N^+$ such that $g' = gmanh$. Also choose $\gamma \in \Gamma$ and a sequence $h_i \in N^-\cup N^+$ such that  
\[ g' = \gamma g h_1 \ldots h_k \mbox{ and } gh_1 \ldots h_r \in  \operatorname{supp}(m^{\operatorname{BMS}})\]
for all $1 \leq r \leq k$. Then $h_1, \ldots, h_k,  h^{-1}, n^{-1}$ and $\gamma$ is a sequence for $ma $ in $\mathcal{H}_\Gamma(g)$ and $g' \in g\mathcal{H}_\Gamma(g)N^-N^+ \cap  \operatorname{supp}(m^{\operatorname{BMS}})$ as required. 
\end{proof}

\begin{remark}
The relation $g_1 \sim g_2$ if and only if $g_1 \in \mathcal{D}(g_2)$ defines an equivalence on $\operatorname{supp}(m^{\operatorname{BMS}})$. 
\end{remark}

\begin{remark}
Both the equivalence classes $\mathcal{D}(g)$ and their closures $\overline{\mathcal{D}(g)}$ are $\Gamma$ invariant. 
\end{remark}
\begin{corollary}  \label{ifnotdense} For each $g \in \mathrm{supp}(m^{\operatorname{BMS}})$ the set $\overline{ \mathcal{D}(g)}N^+$ is closed, right $N^+$ invariant and left $\Gamma$ invariant. In addition we have
\[ \overline{ \mathcal{D}(g)}N^+ \cap gN^-MAN^+ = (gN^- \cap \operatorname{supp}(m^{\operatorname{BMS}}) )\overline{\mathcal{H}_\Gamma(g)} N^+.\]
In particular, if $\mathcal{H}_\Gamma(g)$ is not dense in $MA$, then $m^{\operatorname{BR}}$ is not $N^+$ ergodic on $\Gamma \backslash G$,  and $m^{\operatorname{BMS}}$ is not mixing for frame flow on $\Gamma \backslash G$.
\end{corollary}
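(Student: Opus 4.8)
The plan is to derive the two topological assertions from the displayed structural identity and then read off the two measure-theoretic conclusions, so I would prove the identity first. The preceding lemma already describes $\mathcal{D}(g)$ inside the big cell, $\mathcal{D}(g)\cap gN^-MAN^+=g\mathcal{H}_\Gamma(g)N^-N^+\cap\operatorname{supp}(m^{\operatorname{BMS}})$, and since $\mathcal{H}_\Gamma(g)\subset MA$ normalizes $N^-$ this equals $gN^-\mathcal{H}_\Gamma(g)N^+\cap\operatorname{supp}(m^{\operatorname{BMS}})$. The key observation is that saturating by $N^+$ on the right \emph{frees} the forward-endpoint constraint: because $N^+$ moves $g'^+$ over all of $\partial\tilde X\setminus\{g'^-\}$ and $\Lambda(\Gamma)$ has more than one point, every $gn$ with $g(n^-)\in\Lambda(\Gamma)$ already lies in $\operatorname{supp}(m^{\operatorname{BMS}})$ (its forward endpoint is $g^+\in\Lambda(\Gamma)$), so $\mathcal{D}(g)N^+\cap gN^-MAN^+=(gN^-\cap\operatorname{supp}(m^{\operatorname{BMS}}))\mathcal{H}_\Gamma(g)N^+$. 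Passing to closures replaces $\mathcal{H}_\Gamma(g)$ by $\overline{\mathcal{H}_\Gamma(g)}$: for ``$\supseteq$'' I would note $gn\in\mathcal{D}(g)$ whenever $gn\in\operatorname{supp}(m^{\operatorname{BMS}})$, that $\mathcal{H}_\Gamma(gn)=\mathcal{H}_\Gamma(g)$ by the base-point lemma, and approximate $l\in\overline{\mathcal{H}_\Gamma(g)}$ by $l_j\in\mathcal{H}_\Gamma(g)$ so that $gnl_j\in\mathcal{D}(g)\to gnl$; for ``$\subseteq$'' I would use that the $MA$-component in the cell is continuous, so limits of elements of $\mathcal{D}(g)$ have $MA$-component in $\overline{\mathcal{H}_\Gamma(g)}$.

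With the identity in hand, the first assertion is short. Left $\Gamma$-invariance is the remark preceding the corollary (that $\overline{\mathcal{D}(g)}$ is $\Gamma$-invariant) combined with left/right commutation, and right $N^+$-invariance is automatic. For closedness I would argue locally: under the Bruhat diffeomorphism $N^-\times MA\times N^+\to gN^-MAN^+$ the right-hand side of the identity is the image of the product of the closed sets $\{n:g(n^-)\in\Lambda(\Gamma)\}$, $\overline{\mathcal{H}_\Gamma(g)}$ and $N^+$, hence closed in that cell; since $\mathcal{H}_\Gamma$ and the reachability class are unchanged (up to conjugation) when the base point is replaced by a reachable point or translated by $\Gamma$, the same product description holds in a cell around every point of $\overline{\mathcal{D}(g)}N^+$, so the set is locally closed everywhere and therefore closed in $G$. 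This patching is the most delicate topological point, because a product of a closed set with the closed subgroup $N^+$ is not closed in general; what saves the argument is precisely the transverse product structure furnished by the identity, which confines the $N^+$-orbits to a single $\overline{\mathcal{H}_\Gamma(g)}$-slab.

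For the non-ergodicity of $m^{\operatorname{BR}}$ I would first stress a subtlety: when $\mathcal{H}_\Gamma(g)$ is not dense, $\overline{\mathcal{H}_\Gamma(g)}\subsetneq MA$ is a proper closed subgroup (as $MA$ is connected), and in the local product coordinates $m^{\operatorname{BR}}\sim\sigma_{N^-}\times\operatorname{Leb}_{MA}\times(\text{full on }N^+)$ the identity shows $\overline{\mathcal{D}(g)}N^+$ meets the cell in a slab whose $MA$-factor is $\overline{\mathcal{H}_\Gamma(g)}$, of Lebesgue measure zero; thus the single class is $m^{\operatorname{BR}}$-null and cannot by itself witness non-ergodicity. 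Instead I would use the whole family of cosets. The Bruhat $MA$-component descends to a continuous map $g'\mapsto (MA\text{-part})\,\overline{\mathcal{H}_\Gamma(g)}\in MA/\overline{\mathcal{H}_\Gamma(g)}$ that is constant along $N^+$-orbits (right $N^+$-multiplication leaves the $MA$-component unchanged), and by $\Gamma$-equivariance of reachability together with the base-point lemmas for $\mathcal{H}_\Gamma$ it glues to a global $N^+$-invariant measurable function $q$ on $\operatorname{supp}(m^{\operatorname{BR}})=\{g':(g')^-\in\Lambda(\Gamma)\}$. Its law $q_*m^{\operatorname{BR}}$ is the push-forward of $\operatorname{Leb}_{MA}$ under $l\mapsto l\,\overline{\mathcal{H}_\Gamma(g)}$, a smooth positive, hence non-atomic, measure on the positive-dimensional space $MA/\overline{\mathcal{H}_\Gamma(g)}$; choosing $\mathcal{V}$ with $q_*m^{\operatorname{BR}}(\mathcal{V})$ and $q_*m^{\operatorname{BR}}(\mathcal{V}^c)$ both positive, the set $q^{-1}(\mathcal{V})$ is $N^+$-invariant and neither null nor co-null, so $m^{\operatorname{BR}}$ is not $N^+$-ergodic.

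Finally, non-mixing of frame flow is the contrapositive of Corollary \ref{c6.4}: mixing of $(\Gamma\backslash G,m^{\operatorname{BMS}},A)$ forces $N^+$-ergodicity of $m^{\operatorname{BR}}$ through the equidistribution of horospheres (Theorem \ref{leavesequidistribute}), so the failure of $N^+$-ergodicity just established forces the failure of mixing. I expect the main obstacle to be twofold: the closedness-patching in the second paragraph, and, conceptually, recognizing that non-density must be detected by the non-atomic coset-valued invariant $q$ rather than by the individual $m^{\operatorname{BR}}$-null class $\overline{\mathcal{D}(g)}N^+$.
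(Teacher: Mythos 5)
Your verification of the displayed identity is correct and in fact more complete than the paper's own proof, which declares the structural statements ``immediate'' and never writes out the two inclusions; your use of the base-point lemmas, the equivalence-relation structure of reachability, and continuity of the Bruhat coordinates on the open cell is exactly the right machinery. Where you genuinely diverge is the non-ergodicity step. The paper never introduces your coset-valued invariant $q$: it sidesteps the nullity problem you correctly identify (the single class $\overline{\mathcal{D}(g)}N^+$ is $m^{\operatorname{BR}}$-null) by \emph{thickening} --- choose a small neighborhood $M_\epsilon A_\epsilon$ of $e$ in $MA$ so that $\overline{\mathcal{H}_\Gamma(g)}M_\epsilon A_\epsilon$ has positive but not full Haar measure in $MA$; since $MA$ normalizes $N^+$, the set $\Gamma \backslash \overline{\mathcal{D}(g)}N^+M_\epsilon A_\epsilon$ is $N^+$-invariant, and by the identity (plus nullity of the cell's complement, from non-atomicity of the Patterson--Sullivan density) it has positive but not full BR measure. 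That one-line device buys everything your $q$ buys; your construction yields finer information (a continuum of invariant sets) but at the cost of a gluing argument the paper avoids entirely.

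Two concrete defects in your write-up. First, closedness: ``closed in a cell around every point of $\overline{\mathcal{D}(g)}N^+$, hence locally closed everywhere, therefore closed'' is invalid as stated --- locally closed does not imply closed (consider $(0,1)\subset\mathbb{R}$). You need the product description in cells covering every potential \emph{limit} point, i.e.\ all of $G$. This is true, but requires an observation you omit: given any $x \in G$, pick $\xi \in \Lambda(\Gamma)\setminus\lbrace x^-, g^-\rbrace$ (possible since $\Lambda(\Gamma)$ is infinite) and $h \in N^+$ with $(gh)^+ = \xi$; then $y_0 := gh \in \mathcal{D}(g)$, one has $\mathcal{H}_\Gamma(y_0) = \mathcal{H}_\Gamma(g)$ and $\mathcal{D}(y_0) = \mathcal{D}(g)$, and $x \in y_0N^-MAN^+$ because $y_0^+ \neq x^-$; the identity at base point $y_0$ then shows the set is closed in a cell containing $x$.

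Second, the cosets in the definition of $q$. You use the left-coset map $l \mapsto l\overline{\mathcal{H}_\Gamma(g)}$, implicitly treating $MA/\overline{\mathcal{H}_\Gamma(g)}$ as if $\overline{\mathcal{H}_\Gamma(g)}$ were normal. But subsection 7.4 assumes only $|m^{\operatorname{BMS}}|<\infty$, and by Theorem \ref{transitivitygroupsaredense} the corollary has content precisely when $\Gamma$ is \emph{not} Zariski dense --- exactly the regime where Corollary \ref{transitivitygropuisnotsmall} is unavailable, so $[M,M] \subset \overline{\mathcal{H}_\Gamma(g)}$ and hence normality in $MA$ cannot be assumed. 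Matching two cell decompositions $gn_1l_1h_1 = \gamma gn_2l_2h_2$ (with both $gn_i \in \operatorname{supp}(m^{\operatorname{BMS}})$, which is automatic on the BR support) produces, after moving the $N^+$ discrepancy past $l_1$, the relation $\gamma^{-1}gn_1h_3 = gn_2\,l_2l_1^{-1}$ with $h_3 \in N^+$ and all intermediate points in $\operatorname{supp}(m^{\operatorname{BMS}})$; hence $l_2l_1^{-1} \in \mathcal{H}_\Gamma(g)$, i.e.\ the discrepancy acts by \emph{left} multiplication. The well-defined $\Gamma$- and $N^+$-invariant is therefore the right-coset map $l \mapsto \overline{\mathcal{H}_\Gamma(g)}\,l$ into $\overline{\mathcal{H}_\Gamma(g)}\backslash MA$, which is a positive-dimensional manifold with non-atomic pushed-forward Haar measure regardless of normality. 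With these two repairs your argument goes through, and the final reduction of non-mixing to Corollary \ref{c6.4} agrees with the paper.
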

\begin{proof}
The statements on closure and $\Gamma$ and $N^+$ invariance of $\overline{ \mathcal{D}(g)}N^+$ are immediate. If we assume now that the transitivity group $\mathcal{H}_\Gamma(g)$ is not dense in MA then we may choose a small neighborhood $M_\epsilon A_\epsilon$ of the origin in $MA$ such that $ \overline{\mathcal{H}_\Gamma(g)} M_\epsilon A_\epsilon$ has positive measure (for the Haar measure on $MA$) but not full measure. The set $\Gamma \backslash \overline{ \mathcal{D}(g)}N^+M_\epsilon A_\epsilon$ is $N^+$ invariant and has positive BR measure, but not full BR measure, and so we conclude that $m^{\operatorname{BR}}$ is not $N^+$ ergodic. Non-mixing of frame flow follows by Corollary \ref{c6.4}. 
\end{proof}
Even in the case that the transitivity group is not dense, however, one can sometimes still salvage a mixing result.

\begin{theorem}
Suppose that $|m^{\operatorname{BMS}}| < \infty$ and fix $g\in\mathrm{supp}(m^{\operatorname{BMS}})$. The set $\Gamma \backslash  \overline{\mathcal{D}(g)A}$ is $\overline {\mathcal{H}^w_\Gamma(g)}A$ invariant, and carries a unique $\overline{\mathcal{H}^w_\Gamma(g)}A$ invariant measure $\theta_g$ whose projection to $\Gamma \backslash G / M$ coincides with $m^{\operatorname{BMS}}$. The measure $\theta_g$ is $A$ ergodic, and the $A$ action on $(\Gamma \backslash G, \theta_g)$ is mixing whenever $\overline{\mathcal{H}_\Gamma(g)} = \overline{\mathcal{H}_\Gamma ^w (g)} A < M A$. One has $m_* \theta_g = \theta_{gm}$ for all $m \in M$, and any two such measures $\theta_g, \theta_{g'}$ with $g, g'\in \mathrm{supp}(m^{\operatorname{BMS}})$ are related by some element $m \in M$. 
\end{theorem}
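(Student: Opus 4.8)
The plan is to realise $(\Gamma\backslash\overline{\mathcal{D}(g)A},\theta_g)$ as a compact group extension of the geodesic flow $(\Gamma\backslash G/M,m^{\operatorname{BMS}})$ with fibre the group $H:=\overline{\mathcal{H}^w_\Gamma(g)}<M$, and then to transport the arguments of Sections \ref{sec5} and \ref{sec6} to this extension. Write $\pi\colon\Gamma\backslash G\to\Gamma\backslash G/M$ for the projection and $Y_g=\Gamma\backslash\overline{\mathcal{D}(g)A}$. For the invariance in the first assertion, note that $\mathrm{supp}(m^{\operatorname{BMS}})$, and hence $\mathcal{D}(g)A$, is right $A$-invariant, and that for $g'$ reachable from $g$ one has $\mathcal{H}_\Gamma(g')=\mathcal{H}_\Gamma(g)$; so if $m\in\mathcal{H}^w_\Gamma(g)$, choosing $a$ with $(m,a)\in\mathcal{H}_\Gamma(g')=\mathcal{H}_\Gamma(g)$ gives $g'ma\in\mathcal{D}(g)$ and thus $g'm\in\mathcal{D}(g)A$. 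Hence $\mathcal{D}(g)A$ is right invariant under the subgroup $\mathcal{H}^w_\Gamma(g)A$, and taking closures yields right $\overline{\mathcal{H}^w_\Gamma(g)}A$-invariance of $Y_g$. The crucial structural point for $\theta_g$ is that the $\pi$-fibres of $Y_g$ are single $H$-cosets. I would read this off the local product description in Corollary \ref{ifnotdense}: in Bruhat coordinates $N^-\times A\times M\times N^+$ the set $\overline{\mathcal{D}(g)A}$ is (a piece of the limit set) $\times\,\overline{\mathcal{H}_\Gamma(g)A}\times$ (a piece of the limit set), and since $\mathcal{H}_\Gamma(g)A=\mathcal{H}^w_\Gamma(g)\times A$ closes up to $H\times A$, fixing a point of $\Gamma\backslash G/M$ leaves exactly one $H$-coset in the fibre. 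Granting this, I define $\theta_g$ by disintegrating $m^{\operatorname{BMS}}$ over the base and placing normalized Haar measure of $H$ on each coset fibre; independence of the section, the projection property, and $\overline{\mathcal{H}^w_\Gamma(g)}A$-invariance are then immediate, and uniqueness is the standard disintegration argument, since an $H$-invariant conditional measure on an $H$-coset must be Haar.

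Next I would dispose of $m_*\theta_g=\theta_{gm}$. From $\mathcal{H}_\Gamma(gm)=m^{-1}\mathcal{H}_\Gamma(g)m$ and the identity $\mathcal{D}(gm)=\mathcal{D}(g)m$ — obtained by conjugating a reaching sequence by $m$, using that $M$ normalizes $N^\pm$ and preserves $\mathrm{supp}(m^{\operatorname{BMS}})$ — one gets $\overline{\mathcal{D}(gm)A}=\overline{\mathcal{D}(g)A}m$ and $\overline{\mathcal{H}^w_\Gamma(gm)}A=m^{-1}(\overline{\mathcal{H}^w_\Gamma(g)}A)m$. Since right $M$-translation is trivial on $\Gamma\backslash G/M$, the measure $m_*\theta_g$ is an $\overline{\mathcal{H}^w_\Gamma(gm)}A$-invariant lift of $m^{\operatorname{BMS}}$ supported on $Y_{gm}$, so it equals $\theta_{gm}$ by uniqueness.

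For $A$-ergodicity and mixing I adapt Sections \ref{sec5}--\ref{sec6} verbatim, replacing $M$ by $H$, the BMS lift by the lift of $\theta_g$, and smoothing over $H_\epsilon\times A_\epsilon$; the local product structure of $\theta_g$ is exactly what those arguments exploit. For ergodicity, let $\phi\in L^2(Y_g,\theta_g)$ be $A$-invariant; the Hopf argument (as in Proposition \ref{trivialisenough}) renders $\phi$ invariant under $N^+$ and $N^-$, the period argument of Section \ref{sec5} then renders it invariant under $\overline{\mathcal{H}_\Gamma(g)}$, and combining this with $A$-invariance cancels the $A$-component and yields invariance under $\overline{\mathcal{H}^w_\Gamma(g)}=H$ — here I only need the weak transitivity group dense in $H$, which holds by definition, so no extra hypothesis is required. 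An $H$-invariant function descends to an $A$-invariant function on the ergodic base $(\Gamma\backslash G/M,m^{\operatorname{BMS}})$ and is therefore constant. Under the hypothesis $\overline{\mathcal{H}_\Gamma(g)}=HA$ I get mixing by the Babillot dichotomy: a non-mixing defect produces a non-constant weak limit that is $\tilde\Sigma$-measurable for $\theta_g$; the period argument now upgrades this to full $HA$-invariance (this is exactly where density in all of $HA$ enters), and since $HA$ acts ergodically on $(Y_g,\theta_g)$ — an $HA$-invariant function again descends to the ergodic base — the limit is constant, a contradiction.

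Finally, for the $M$-transitivity statement I work on the frame bundle: averaging $m_*\theta_g=\theta_{gm}$ over $M$ gives an $M$-invariant lift of $m^{\operatorname{BMS}}$ with full Haar fibre measure, so $\int_M m_*\theta_g\,dm=m^{\operatorname{BMS}}$ on $\Gamma\backslash G$, and as each $\theta_{gm}$ is $A$-ergodic the family $\{\theta_{gm}\}_{m\in M}$ is precisely the $A$-ergodic decomposition of $m^{\operatorname{BMS}}$. Because $m^{\operatorname{BMS}}$ is $MA$-ergodic on $\Gamma\backslash G$ (any $MA$-invariant function descends to $\Gamma\backslash G/M$ and is constant by base ergodicity), the group $M$ acts ergodically, hence transitively, on the space of $A$-ergodic components; since $\theta_{g'}$ is a component of the same measure, it equals $m_*\theta_g$ for some $m\in M$. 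The main obstacle I anticipate is the structural claim that the $\pi$-fibres of $Y_g$ are exactly single $H$-cosets, together with checking that the period and Hopf arguments of Sections \ref{sec5}--\ref{sec6} transport faithfully to the fibre group $H$; one must also keep the ergodicity case (weak transitivity group dense in $H$) carefully separate from the mixing case (full transitivity group equal to $HA$).
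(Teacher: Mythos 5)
Your proposal is correct and follows essentially the same route as the paper's proof: existence, uniqueness and $M$-equivariance of $\theta_g$ come from the local product structure (Haar conditionals on the $\overline{\mathcal{H}^w_\Gamma(g)}$-coset fibres, uniqueness by the averaging argument of Lemma \ref{l7.9}), $A$-ergodicity from the Hopf/Birkhoff argument plus the Section \ref{sec5} period argument and ergodicity of the base geodesic flow, and mixing from Babillot's dichotomy once the full transitivity group closes up to $\overline{\mathcal{H}^w_\Gamma(g)}A$. The only cosmetic differences are that the paper makes the $\overline{\mathcal{H}^w_\Gamma(g)}A$-invariant limit function $MA$-invariant on all of $\Gamma\backslash G$ rather than descending it to $\Gamma\backslash G/M$ as you do, and it leaves the final $M$-relation between $\theta_g$ and $\theta_{g'}$ to the local product structure, where you instead invoke uniqueness of the $A$-ergodic decomposition of $m^{\operatorname{BMS}}=\int_M m_*\theta_g\,\dd m$ together with the equivariance $m_*\theta_{g'}=\theta_{g'm}$ --- both equally valid.
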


\begin{proof} The existence and $M$ equivariance properties of $\theta_g$ are clear from the local product structure of $G$, while uniqueness follows by an argument similar to that for Lemma \ref{l7.9}. It remains to check that the ergodicity and mixing properties hold. 

First we aim to prove $A$ ergodicity. Consider a function $\psi \in C_c (\Gamma \backslash G)$. Let 
\[ \psi_T(h) := \frac{1}{T} \int_0^T \psi(ha_t) dt\]
be the ergodic average of $\psi$. Write $\hat \Sigma$ for the sigma algebra of all $A$ invariant Borel subsets of $\Gamma \backslash G$, and $\psi_\infty := \mathbb{E}(\psi| \hat\Sigma, m_g)$ for the expectation of $\psi$ conditioned on $\hat \Sigma$. We aim first to show that $\psi_\infty$ is $\theta_g$ almost everywhere constant. The Birkhoff-Khinchin Theorem tells us that $\psi_T\rightarrow \psi_\infty$ almost everywhere as $T\rightarrow \pm \infty$. It is well known that point wise limits of $\psi_T$ as $T\rightarrow +\infty$ (as $T\rightarrow - \infty$ respectively) are $N^-$ (respectively $N^+$) invariant. The arguments of Section \ref{sec5} now apply with minimal modification to tell us that $\psi_\infty$ is invariant for the right $\overline {\mathcal{H}^w_\Gamma(g)}A$ action. Changing the function on a set of $\theta_g$ measure zero we may then assume that $\psi_\infty$ is $MA$ invariant. Hence $\psi_\infty$ is $m^{\operatorname{BMS}}$ almost everywhere constant; this together with strict $M$ invariance implies that $\psi_\infty$ is $\theta_g$ almost everywhere constant.

The map $\psi \mapsto \psi_\infty$ is orthogonal projection to the space $L^2(\Gamma \backslash G, \hat \Sigma, \theta_g)$ of $\hat \Sigma$ measurable functions; it is continuous and surjective. Since every continuous function is projected to a constant function we conclude that the only $\hat \Sigma$ measurable functions are constants, and so that $\theta_g$ is $A$ ergodic. 

The proof of the mixing property when $\overline{\mathcal{H}_\Gamma(g)} = \overline{\mathcal{H}_\Gamma ^w (g)} A$ is similar to the Zariski dense case. We fix $\psi \in C_0(\Gamma \backslash G)$ and aim to show that any $L^2(\theta_g)$ weak limit of $\psi\circ a_t$ is constant. The arguments of Section \ref{sec5} imply that $\psi$ is invariant for the action of $\overline{\mathcal{H}_\Gamma (g)} = \overline {\mathcal{H}^w_\Gamma(g)}A$. Ergodicity of the $A$ action on $(\Gamma \backslash G, \theta_g)$ then completes the argument. 
\end{proof}

\begin{remark}
Suppose that $G= \operatorname{PSL}_2(\mathbb{C})$, that $\Gamma$ is non-elementary, and that $|m^{\operatorname{BMS}}| < \infty$. Then Lemma \ref{H^3 case} tells us that the measures $\theta_g$ (where $g \in \mathrm{supp}(m^{\operatorname{BMS}})$) are always mixing. 
\end{remark}

\section{Lifting the Bernoulli property} \label{sec8}

We will finish by showing that the frame flow has stronger properties than being mixing. Throughout this Section we assume the conditions of Theorem \ref{target}; namely that $\Gamma <G$ is Zariski dense and that $m^{\operatorname{BMS}}$ is finite. See \cite{cornfeld1982ergodic} for the relevant definitions from ergodic theory (in particular for the notions of K-system and Bernoulliness). 

\begin{theorem} \label{K}
The $A$ action on $(\Gamma \backslash G, m^{\operatorname{BMS}})$ defines a $K$-system. 
\end{theorem}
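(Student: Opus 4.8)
The plan is to show that the Pinsker $\sigma$-algebra $\Pi$ of the flow $(a_t)$ on $(\Gamma\backslash G, m^{\operatorname{BMS}})$ is trivial; by the Rokhlin--Sinai theorem \cite{cornfeld1982ergodic} this is equivalent to the $K$-property. The whole strategy rests on the single inclusion $\Pi \subseteq \tilde\Sigma$ (mod $m^{\operatorname{BMS}}$-null sets), for then Theorems \ref{transitivitygroupsaredense} and \ref{C5.3} --- both available since $\Gamma$ is Zariski dense and $m^{\operatorname{BMS}}$ is finite --- force $\tilde\Sigma$ to be trivial, and hence $\Pi$ to be trivial as well. Thus the entire problem reduces to showing that every $\Pi$-measurable set is, up to null sets, both $\Sigma_-$- and $\Sigma_+$-measurable.

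I would establish the two inclusions from a single entropy computation together with a time-reversal symmetry. Let $k_0 \in K$ be the representative of the non-trivial Weyl element used in Lemma \ref{l4.2}. Right translation by $k_0$ descends to $\Gamma\backslash G$ and preserves $m^{\operatorname{BMS}} = m^{\sigma,\sigma}$ (the Patterson--Sullivan construction being symmetric in the two endpoints and the $M$-fibre carrying Haar measure); moreover it interchanges $g^+$ and $g^-$, interchanges $N^+ \leftrightarrow N^-$, and conjugates $a_t$ to $a_{-t}$. Consequently it carries $\Sigma_+$ to $\Sigma_-$ and carries $\Pi$ (which is insensitive to time reversal) to itself, so it suffices to prove the single inclusion $\Pi \subseteq \Sigma_-$. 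For this I would use that $m^{\operatorname{BMS}}$ is the measure of maximal entropy \cite{otal2004principe}, hence an equilibrium state whose conditional measures along the expanding leaves $N^+A$ are the Patterson--Sullivan measures. All of the entropy of $(a_t)$ is carried transverse to the contracting foliation, so the maximal zero-entropy factor --- the Pinsker algebra --- is subordinate to the strong stable foliation: concretely, one builds an increasing measurable partition $\eta$ refining the unstable leaves with $\bigvee_n a_n\eta = \mathcal{B}$ and $h(a_1)=H(a_1\eta \mid \eta)>0$, and identifies $\bigcap_n a_{-n}\eta$ with the $\Sigma_-$-completion of $\Pi$, exactly as in the Ornstein--Weiss treatment of geodesic flows, adapted to the product structure $N^- \times A \times M \times N^+$ of $m^{\operatorname{BMS}}$ exploited in Section \ref{sec6}.

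The main obstacle is precisely this subordination step $\Pi \subseteq \Sigma_-$. The difficulty is that $m^{\operatorname{BMS}}$ is \emph{not} invariant under $N^{\pm}$ --- it transforms by the conformal Jacobians of the Patterson--Sullivan density --- so one cannot quote the invariant-measure versions of the entropy theory directly, and must instead work leafwise with the PS conditional measures and the local product decomposition, checking that the Radon--Nikodym cocycles do not disturb the entropy bookkeeping. An attractive alternative that avoids building partitions from scratch is to take as known that the geodesic flow $(\Gamma\backslash G/M, m^{\operatorname{BMS}}, A)$ is a $K$-system and to lift the property through the compact $M$-extension $\Gamma\backslash G \to \Gamma\backslash G/M$: the fibrewise mixing supplied by density of the transitivity group in $MA$ (Theorem \ref{transitivitygroupsaredense}) is exactly what prevents the extension from producing a new zero-entropy factor, so the lifted system remains $K$. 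Either route terminates at the same conclusion, namely $\Pi \subseteq \tilde\Sigma = \{\emptyset, \Gamma\backslash G\}$.
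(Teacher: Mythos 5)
Your primary route is, at the structural level, the paper's own: show that the Pinsker algebra $\Pi$ satisfies $\Pi \subseteq \tilde\Sigma$ by proving that Pinsker-measurable sets are almost invariant under each horospherical group, invoke Theorems \ref{transitivitygroupsaredense} and \ref{C5.3} to get triviality of $\tilde\Sigma$, and conclude completely positive entropy, hence the K-property, via Sinai \cite{sinai_dynamical_1964}. Two of your choices differ in detail, both legitimately. First, your time reversal by the Weyl representative $k_0$ is a valid substitute for the paper's Lemma \ref{l7.4}: right translation by $k_0$ does preserve $m^{\operatorname{BMS}}$ (it swaps $g^{\pm}$, the BMS density is symmetric in the two endpoints since both densities are $\sigma$ of dimension $\delta$, $k_0$ normalizes $M$, and the induced flip in the flow coordinate is Lebesgue-preserving), it intertwines $a_t$ with $a_{-t}$ and exchanges $N^{+}$ with $N^{-}$; the paper instead simply observes that the partition $\lbrace B, B^c\rbrace$ has zero entropy for $a_{-1}$ as well and reruns the stable-leaf argument. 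Second, and more substantively, the paper's proof of Lemma \ref{l7.3} does not need $m^{\operatorname{BMS}}$ to be a measure of maximal entropy or an equilibrium state, nor any leafwise analysis of Patterson--Sullivan conditionals: it imports from Section 9 of \cite{margulis_invariant_1994} a measurable partition $\mathcal{P}$ refined by $a_1\mathcal{P}$ whose atoms are comparable to stable balls $B^-(x,\epsilon)$ on a set of positive measure, and then quotes Theorem 2 of \cite{rohlin_construction_1961} to get $\pi(a_1) \leq \mathcal{Q}$, the partition into $N^-$ leaves. This bypasses exactly the Radon--Nikodym-cocycle difficulty you flag, since only the geometry of the atoms enters, not invariance of conditional measures. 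Note also that the inclusion $\pi(a_1) \leq \mathcal{Q}$ suffices; your stronger identification of $\bigcap_n a_{-n}\eta$ with the $\Sigma_-$-completion of $\Pi$ is not needed and is stated too loosely to use as is.

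By contrast, the ``attractive alternative'' in your final paragraph has a genuine gap and cannot carry the proof. There is no off-the-shelf theorem that a mixing compact group extension of a K-system is K: the assertion that fibrewise mixing ``prevents the extension from producing a new zero-entropy factor'' is precisely the content of Brin's transitivity-group theory \cite{brin_ergodic_1979}, i.e., of the Pinsker argument you are trying to avoid. (Contrast Rudolph's theorem \cite{rudolph_classifying_1978}, which genuinely is quotable but only upgrades \emph{Bernoulli} extensions, and which is what the paper uses for Corollary \ref{Bernoulliness}.) Moreover, the input you propose to ``take as known'' --- the K-property of the base geodesic flow $(\Gamma\backslash G/M, m^{\operatorname{BMS}}, A)$ --- is not available in this generality: the only general base result in play is Babillot's mixing \cite{babillot_mixing_2002}, and even Bernoullicity of the base enters the paper only as a \emph{hypothesis} in Corollary \ref{Bernoulliness}. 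So your first route, fleshed out with the Margulis partition and Rohlin's theorem as above, is the one that closes.
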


\begin{corollary} \label{Bernoulliness}
Suppose further that the $A$ action on $(\Gamma \backslash G/M, m^{\operatorname{BMS}})$ is Bernoulli. Then the $A$ action on $(\Gamma \backslash G, m^{\operatorname{BMS}})$ is also Bernoulli. 
\end{corollary}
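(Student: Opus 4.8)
The plan is to realize frame flow as an isometric extension of geodesic flow by the compact structure group $M$ and then to invoke the theory of compact group extensions of Bernoulli systems. First I would record the extension structure: since $M = Z_K(A)$, the right $M$-action on $\Gamma\backslash G$ commutes with the frame flow $A$, and $m^{\operatorname{BMS}}$ is $MA$-invariant and disintegrates over $m^{\operatorname{BMS}}$ on $\Gamma\backslash G/M$ with normalized Haar measure along the $M$-fibers. Thus $(\Gamma\backslash G, m^{\operatorname{BMS}}, A)$ is a compact group ($M$) extension of the geodesic flow $(\Gamma\backslash G/M, m^{\operatorname{BMS}}, A)$, and the base is Bernoulli by hypothesis.

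Next I would pass from flows to single transformations. By Ornstein's theory, a measure-preserving $\mathbb{R}$-action is a Bernoulli flow if and only if its time-one map is a Bernoulli automorphism, and likewise it is a $K$-flow if and only if its time-one map is a $K$-automorphism; the same passage applies downstairs. Hence the time-one map $a_1$ of frame flow is a compact group extension of the time-one map of the geodesic flow, the latter being a Bernoulli automorphism.

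The core step is Rudolph's classification of isometric extensions of a Bernoulli shift: an ergodic compact group extension of a Bernoulli automorphism is measurably isomorphic to the direct product of a Bernoulli automorphism with a rotation on a compact group. Ergodicity of the extension here is automatic, since Theorem \ref{K} shows that $a_1$ is in fact a $K$-automorphism. A $K$-automorphism has completely positive entropy, hence no nontrivial zero-entropy factor; the compact group rotation factor furnished by Rudolph's theorem has zero entropy and must therefore be trivial. It follows that $a_1$ is itself Bernoulli, and so, reversing the flow-to-time-one reduction, the frame flow is a Bernoulli flow.

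The main obstacle, and the point demanding the most care, is invoking Rudolph's theorem in the generality we need: the structure group $M$ is in general nonabelian (for instance $\operatorname{SO}(n-1)$, $\operatorname{Spin}(7)$, or $\operatorname{SO}(3)\times\operatorname{Sp}(n-1)$), so one must use the form of the extension theory valid for arbitrary compact groups rather than merely for toral extensions. One should also verify the flow-to-time-one reductions and the identification of the fiber measures with Haar measure, though these are routine given the $MA$-invariance and local product structure of $m^{\operatorname{BMS}}$ already established. An alternative route, perhaps closer in spirit to Section \ref{sec6}, would be to prove the relative very weak Bernoulli property of the $M$-extension directly by lifting Ornstein's criterion through the fibers; I expect the extension-theoretic argument above to be cleaner.
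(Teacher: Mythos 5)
Your proposal is correct and takes essentially the same route as the paper, whose entire proof is to view frame flow as a compact $M$-extension of the Bernoulli geodesic flow and cite Rudolph's theorem in the form that compact group extensions of Bernoulli flows are Bernoulli if and only if they are mixing, with mixing supplied by Theorem \ref{target}. Your only deviations --- the flow-to-time-one reduction via Ornstein, and using Theorem \ref{K} together with a completely-positive-entropy argument to kill the rotation factor in Rudolph's splitting, instead of quoting mixing directly --- are routine repackagings of the same argument.
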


\begin{proof}[ Proof of Corollary \ref{Bernoulliness}] Rudolph \cite{rudolph_classifying_1978} showed that compact group extensions of Bernoulli flows are Bernoulli if and only if they are mixing. The corollary is then immediate from Theorem \ref{target}. 
\end{proof}

\begin{lemma} \label{l7.3}
Any set $B \subset \Gamma \backslash G$ measurable with respect to the Pinsker algebra $\pi(a_1)$ is almost invariant for the $N^-$ action;  there is a measurable set $B_-$ with $m^{\operatorname{BMS}}(B \triangle B_-) = 0$ and $B_- = B_-N^-$.
\end{lemma}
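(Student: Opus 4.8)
The plan is to identify the Pinsker algebra $\pi(a_1)$ with a sub-$\sigma$-algebra of the $N^-$-invariant sets by means of the classical Rokhlin--Sinai description of the Pinsker algebra through an exhaustive partition adapted to the stable foliation. Recall that the forward flow contracts the stable horospherical group: for $n\in N^-$ one has $a_{-t}na_t\to e$ as $t\to+\infty$, and quantitatively the horospherical metrics of subsection \ref{ss7.1} satisfy $d_{H^-a_t}(g_1a_t,g_2a_t)=e^{-t}d_{H^-}(g_1,g_2)$, so $a_1$ contracts $N^-$-plaques while $a_{-1}$ expands them. Since $\Gamma$ is Zariski dense with finite BMS measure, the flow is mixing by Theorem \ref{target}; in particular $a_1$ is an ergodic (indeed mixing) automorphism of the finite Lebesgue space $(\Gamma\backslash G, m^{\operatorname{BMS}})$, so the Rokhlin--Sinai machinery applies.

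First I would construct a measurable partition $\xi$ of $\Gamma\backslash G$ subordinate to the stable foliation, i.e. with each atom contained in a single $N^-$-orbit and equal (mod $0$) to a bounded plaque $gV$ with $V\subset N^-$ a neighbourhood of $e$. Here the local product structure of $m^{\operatorname{BMS}}$ with respect to $N^-\times M\times A\times N^+$ (as used in the proof of Theorem \ref{R3.2}) is what guarantees that such plaques carry a well-defined conditional measure and that $\xi$ can be chosen with finite conditional entropy. Because $a_1$ contracts $N^-$, the refined partition $a_1\xi$ is finer than $\xi$, so $\xi$ is increasing in the sense that $\mathcal{F}_\xi\subseteq a_1\mathcal{F}_\xi$. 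Because the atoms of $a_n\xi$ are $N^-$-plaques of radius tending to $0$ (contraction under the forward flow) that already lie in single stable leaves, the future $\bigvee_{n\ge 0}a_n\xi$ separates points mod $0$ and hence generates the full Borel $\sigma$-algebra; thus $\xi$ is exhaustive.

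Next I would invoke the Rokhlin--Sinai theorem: for an exhaustive partition $\xi$ of an ergodic automorphism $a_1$ one has $\pi(a_1)=\bigcap_{n\ge 0}a_{-n}\mathcal{F}_\xi$. It then remains to identify this remote-past $\sigma$-algebra. Since $a_{-n}$ expands $N^-$, the atoms of $a_{-n}\xi$ are plaques $g\,(a_n V a_{-n})$ filling out larger and larger portions of the stable leaf through $g$; in the limit a set measurable with respect to every $a_{-n}\xi$ cannot depend on the $N^-$-coordinate, so $\bigcap_{n\ge 0}a_{-n}\mathcal{F}_\xi$ is contained in the $\sigma$-algebra of (mod $0$) $N^-$-invariant sets. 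Combining the two facts gives $\pi(a_1)\subseteq\{B : B=BN^-\ (\mathrm{mod}\ 0)\}$, which is exactly the assertion of the lemma: every $B\in\pi(a_1)$ admits a representative $B_-$ with $m^{\operatorname{BMS}}(B\triangle B_-)=0$ and $B_-=B_-N^-$.

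The main obstacle I anticipate is the construction and verification of the partition $\xi$, not the abstract entropy step. Concretely there are two delicate points: first, that $\xi$ may genuinely be chosen subordinate to the stable foliation with finite conditional entropy and with atoms honest $N^-$-plaques — this relies on the measurable local product structure of the BMS measure and needs care in the cusp regions of the geometrically finite (or merely finite-BMS) quotient, where the injectivity radius and the size of product neighbourhoods degenerate; and second, the generating property $\bigvee_{n\ge 0}a_n\xi=\mathcal{B}$ together with the precise identification of $\bigcap_{n\ge 0}a_{-n}\mathcal{F}_\xi$ with the $N^-$-invariant algebra, which is exactly where the uniform contraction and expansion estimates of subsection \ref{ss7.1} are needed. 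Once these are in place the conclusion is immediate.
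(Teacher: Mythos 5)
Your proposal is correct and follows essentially the same route as the paper: both arguments run the Rohlin--Sinai theorem on an increasing measurable partition subordinate to the stable foliation, identify the tail $\sigma$-algebra with the partition into $N^-$ leaves, and conclude that Pinsker-measurable sets can be modified on a null set to be strictly $N^-$ invariant. The only cosmetic differences are that the paper imports the partition from \cite{margulis_invariant_1994} --- with atoms comparable to stable balls $B^-(x,\epsilon)\subset [x]_{\mathcal P}\subset B^-(x,\epsilon^{-1})$ only on a positive-measure set $F$, using a two-sided recurrence set $E$ to obtain generation and the tail identification, rather than demanding honest plaque atoms everywhere as you do --- and that only the containment $\pi(a_1)\leq \bigwedge_n a_{-n}\mathcal{F}_\xi$ is needed, which holds for any exhaustive partition, so your statement of Rohlin--Sinai as an equality is a harmless imprecision.
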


\begin{proof}
Write $T$ for the automorphism of $\Gamma \backslash G$ induced by $a_1$. By \cite[Section 9]{margulis_invariant_1994} we may choose a measurable partition $\mathcal{P}$ of $\Gamma\backslash G$, a positive number $\epsilon$, and a measurable set $F \subset \Gamma \backslash G$ of positive BMS measure such that 

\begin{itemize}
\item{ $T\mathcal{P}$ is a refinement of $\mathcal{P}$, and  }
\item{ for all $ x \in F$ the $\mathcal{P}$-atom of $x$ satisfies  $B^-(x, \epsilon) \subset [x]_\mathcal{P} \subset B^-(x, \epsilon^{-1})$ (see subsection \ref{ss7.1} for notation).}
\end{itemize}
Define the set
\[ E  = \lbrace x \in \Gamma \backslash G : T^n(x) \in F \mbox{ infinitely often for $n$ positive and for $n$ negative}  \rbrace. \]
This set is $T$ invariant and has full measure. We write $\tilde {\mathcal{P}}$ and $\tilde T$ for the restrictions of $\mathcal{P}$ and $T$ to $E$. Also write $\mathcal{Q}$ for the partition of $\Gamma \backslash G$ into $N^-$ leaves, and $\tilde{\mathcal{Q}}$ for the restriction of $\mathcal{Q}$ to $E$. It is not hard to see that 

\begin{itemize}
\item{ $\tilde T\tilde{ \mathcal{P}}$ is a refinement of $\tilde{\mathcal{P}}$.  }
\item{ For every $x \in E$ the atom for the product of the partitions $\tilde T^n \tilde{ \mathcal{P }}$ is $[x]_{\Pi^{n} \tilde T^n \tilde{ \mathcal{P}}} = \lbrace x \rbrace$. }
\item{The intersection of the partitions $\tilde T^n \tilde{ \mathcal{P }}$ is $\tilde{\mathcal{Q}}$.}
\end{itemize}
Theorem 2 of \cite{rohlin_construction_1961} tells us  that $\pi(\tilde T) \leq \tilde{\mathcal{Q}}$. Now if $B\subset \Gamma \backslash G$ is measurable by the Pinsker algebra $\pi(T)$, we know that the associated partition $\lbrace B, B^c \rbrace$ has zero entropy. It follows the partition of $E$ generated by $\tilde B := B \cap E$ has zero entropy, so is measurable with respect to $\pi(\tilde T)$. Thus $\tilde B$ is measurable with respect to $\tilde{\mathcal{Q}}$.  It is then an  exercise to modify $B$ by a set of measure zero to make it strictly $N^-$ invariant. 

\end{proof}

\begin{lemma} \label{l7.4}
Any set $B \subset \Gamma \backslash G$ measurable with respect to the Pinsker algebra $\pi(a_1)$ is almost invariant for the $N^+$ action;  there is a measurable set $B_+$ with $m^{\operatorname{BMS}}(B \triangle B_+) = 0$ and $B_+ = B_+N^+$.
\end{lemma}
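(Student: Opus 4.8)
The plan is to deduce this from Lemma \ref{l7.3} by reversing the direction of the flow, rather than re-running the partition machinery from scratch. The crucial point is that the Pinsker algebra is insensitive to time orientation: for any finite partition $\mathcal{P}$ one has $h(a_1, \mathcal{P}) = h(a_{-1}, \mathcal{P})$, so the zero-entropy factors of $a_1$ and of $a_{-1}$ coincide. Consequently $\pi(a_1) = \pi(a_{-1})$, and it suffices to show that any $\pi(a_{-1})$-measurable set is almost $N^+$ invariant.

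To see this, I would apply the argument of Lemma \ref{l7.3} with the automorphism $T' = a_{-1}$ in place of $T = a_1$. Under the reversed flow $\{a_{-t}\}$ the roles of the two horospherical groups are exchanged. Indeed, the scaling relation $d_{H^\pm ma_t}(g_1 ma_t, g_2 ma_t) = e^{\pm t} d_{H^\pm}(g_1, g_2)$ from subsection \ref{ss7.1} shows that the metric on $H^+$ is contracted by $a_{-1}$ exactly as the metric on $H^-$ is contracted by $a_1$. Thus the stable leaves for $a_{-1}$ are precisely the $N^+$ leaves, the balls $B^-(x,\epsilon)$ are replaced by the corresponding $N^+$ balls, and $\mathcal{Q}$ becomes the partition into $N^+$ leaves. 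The construction of a measurable partition $\mathcal{P}$ with $T'\mathcal{P}$ refining $\mathcal{P}$ and subordinate to the stable manifolds (via \cite[Section 9]{margulis_invariant_1994}), the set $E$ of points recurrent to $F$ in both time directions, and the entropy computation that yields $\pi(\tilde T') \leq \tilde{\mathcal{Q}}$ (via Theorem 2 of \cite{rohlin_construction_1961}) all go through verbatim with $N^+$ in place of $N^-$.

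The reversed argument therefore produces, for any $\pi(a_{-1})$-measurable set $B$, a representative $B_+$ with $m^{\operatorname{BMS}}(B \triangle B_+) = 0$ and $B_+ = B_+ N^+$. Since $\pi(a_{-1}) = \pi(a_1)$, every $\pi(a_1)$-measurable set is almost $N^+$ invariant, as claimed.

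The only point requiring genuine care is confirming that the proof of Lemma \ref{l7.3} used the orientation of the flow nowhere except through the contraction property of the stable leaves under the chosen time step; once one checks that the Margulis partition construction and the Rohlin entropy criterion are both symmetric under passing from $T$ to $T^{-1}$, the conclusion is immediate. This symmetry is exactly what makes the pair of lemmas (Lemmas \ref{l7.3} and \ref{l7.4}) yield, in the proof of Theorem \ref{K}, that every Pinsker set is simultaneously almost $N^-$ and almost $N^+$ invariant.
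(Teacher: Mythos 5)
Your proposal is correct and is essentially the paper's own argument: the paper likewise observes that $\{B, B^c\}$ has zero entropy for $a_{-1}$ because it does for $a_1$, and then applies the proof of Lemma \ref{l7.3} with the flow reversed, so that $N^+$ plays the role of $N^-$. Your additional check that the contraction property (via the scaling relation from subsection \ref{ss7.1}) is the only orientation-dependent ingredient is a fair elaboration of what the paper leaves implicit, but the route is the same.
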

\begin{proof}
Since the partition $\lbrace B, B^c \rbrace$ has zero entropy for $a_1$ it also has zero entropy for $a_{-1}$. The proof of the preceding lemma now applies. 
\end{proof}

\begin{proof}[ Proof of Theorem \ref{K}]  Let $\pi(a_1)$ be the Pinsker $\sigma$-algebra of the time one frame flow on $\Gamma \backslash G$. Lemmas  \ref{l7.3} and \ref{l7.4} imply that $\pi(a_1) < \tilde \Sigma$. But $\tilde \Sigma$ is trivial in our setting by Corollary \ref{C5.3}. Thus $a_1$ has totally positive entropy. It follows (see \cite{sinai_dynamical_1964}) that $a_1$ is a K-automorphsim and so (see \cite[Section 8, Theorem 2]{cornfeld1982ergodic}) that $a_t$ is a K-system. 
\end{proof}

\bibliography{working_copy.bib}{}
\bibliographystyle{alpha}

\end{document}